\long\def\comment#1\endcomment{}
\def\mcgb{\MCG (S)_e^\omega}
\def\fo{\phi_\omega}
\def\sm{\seq\mu}
\def\sn{\seq\nu}
\def\sa{\seq\alpha}
\def\sb{\seq\beta}
\def\sc{\seq\gamma}
\def\delg{\seq\Delta_g}
\def\delh{\seq\Delta_h}
\def\bs{\mathbf{S}}
\def\bu{\mathbf{U}}
\def\lu{\mathcal{M}^\omega \mathbf{U}}
\def\plu{\pi_{\mathcal{M}^\omega \mathbf{U}}}
\def\Fix{\mathrm{Fix}}
\def\rt{\mathrm{retr}}
\def\dlu{\mathrm{dist}_{\mathbf{U}}}
\def\td{\widetilde{\mathrm{dist}}}
\def\tdlu{\widetilde{\mathrm{dist}}_{\mathbf{U}}}
\def\tdls{\widetilde{\mathrm{dist}}_{\mathbf{S}}}
\def\bv{\mathbf{V}}
\def\buv{_{\bu , \bv}}
\def\lv{\mathcal{M}^\omega \mathbf{V}}
\def\plv{\pi_{\mathcal{M}^\omega \mathbf{V}}}
\def\tdlv{\widetilde{\mathrm{dist}}_{\mathbf{V}}}
\def\by{\mathbf{Y}}
\def\bz{\mathbf{Z}}
\def\MCG{\mathcal{MCG}}
\def\MM{\mathcal {M}}
\def\ultrau{\mathcal{U}}
\def\upss{\Pi \ultrau /\omega}
\def\UM{\MM(S)_b^{\omega}}
\newcommand{\coneM}{\mathrm{Con}^\omega(\MM (S); (d_n))}
\newcommand{\coneMx}{\mathrm{Con}^\omega(\MM (S); (x_n), (d_n))}
\newcommand{\coneMmu}{\mathrm{Con}^\omega(\MM (S); (\mu_n^0), (d_n))}
\def\AM{\mathcal{AM}}
\def\GM{\mathcal {GM}}
\def\stab{{\rm stab}}
\def\CC{{\mathcal C}}
\def\QQ{{\mathcal Q}}
\newcommand{\Cutp}{{\mathrm{Cutp}}\, }
\newcommand{\tdist}{\widetilde{\mathrm{dist}}}
\def\diam{{\rm diam}}
\def\ulim{\lim_\omega}
\newcommand {\Notat}{\noindent {\it{Notation}}:} 
\newcommand{\dist}{{\mathrm{dist}}}
\newcommand{\dcs}{\dist_{\CC (S)}}
\newcommand{\dcy}{\dist_{\CC (Y)}}
\newcommand{\pcs}{\pi_{\CC (S)}}
\newcommand{\dmm}{\dist_{\MM(S)}}
\newcommand{\dam}{\dist_{\coneMx}}
\newcommand {\q}{\mathfrak q} 
\newcommand {\g}{\mathfrak g} 
\newcommand {\pgot}{\mathfrak p}
\newcommand {\cf}{\mathfrak c}
\newcommand {\ft}{\mathfrak t}
\newcommand {\fct}{\mathfrak T}
\newcommand{\Stab}{{\mathrm{Stab}}}
\newcommand{\la}{\langle}
\newcommand{\ra}{\rangle}
\newcommand{\fh}{\mathfrak{h}}
\newcommand{\uas}{{$\omega$-almost surely}}
\newcommand{\uass}{$\omega$-almost surely }
\newcommand{\oae}{$\omega$-almost every }
\newcommand {\N}{\mathbb{N}} 
\newcommand {\R}{\mathbb{R}} 
\newcommand{\card}{{\mathrm{card}}\, }
\newcommand{\pp}{{\mathcal P}}
\newcommand {\free}{\mathbb{F}} 
\def\calp{\mathcal{P}}   
\def\calt{\mathcal{T}}   
\def\calc{\mathcal{C}}   
\def\ck{\mathcal{K}}   
\def\yy{\mathcal{Y}}   
\def\Z{{\mathbb Z}}
\newcommand\ulimn{\ulim}
\long\def\@savemarbox#1#2{\global\setbox#1\vtop{\hsize\marginparwidth
  \@parboxrestore\tiny\raggedright #2}}
\newcommand\notpitchfork{\not\pitchfork}
\newcommand{\tsh}[1]{\left\{\kern-.9ex\left\{#1\right\}\kern-.9ex\right\}}
\newcommand{\Tsh}[2]{\tsh{#2}_{#1}}
\newcommand {\me}{\medskip}
\newtheorem{theorem}{Theorem}[section]
\newtheorem{proposition}[theorem]{Proposition}
\newtheorem{corollary}[theorem]{Corollary}
\newtheorem{question}[theorem]{Question}
\newtheorem{lemma}[theorem]{Lemma}
\theoremstyle{definition}
\newtheorem{defn}[theorem]{Definition}
\theoremstyle{definition}
\newtheorem{notation}[theorem]{Notation}
\theoremstyle{remark}
\newtheorem{remark}[theorem]{Remark}
\newtheorem{remarks}[theorem]{Remarks}
\newtheorem{cvn}[theorem]{Convention}
\def\square{\hfill${\vcenter{\vbox{\hrule height.4pt \hbox{\vrule
width.4pt
height7pt \kern7pt \vrule width.4pt} \hrule height.4pt}}}$}
\def\<{\langle}
\def\>{\rangle}
\newcommand\seq[1]{\mathchoice{\mbox{\boldmath$#1$}}{\mbox{\boldmath$#1$}}{\mbox{\boldmath$\scriptstyle#1$}}{\mbox{\boldmath$\scriptstyle#1$}}}
\newcommand\seqrep[1]{\lm_\omega\left(#1_{n}\right)}
\newcommand\subseq[1]{\mbox{\boldmath$\scriptstyle#1$}}
\newcommand\ultra[1]{( #1_{n})^{\omega}}
\newcommand{\Con}{{\mathrm{Con}}}
\newcommand{\lm}{{\lim}}
\newcommand {\iv}{^{-1}}
\newcommand{\lio}[1]{\lm_\omega\left(#1\right)}
\newcommand{\ko}[1]{\Con^\omega(#1)}
\newcommand{\co}{\colon\thinspace}
\newcommand{\muk}{\mu^{(k)}}
\newcommand{\nuk}{\nu^{(k)}}
\def\fg{{\mathfrak g}}
\def\fu{{\mathfrak U}}
\def\fp{{\mathfrak P}}
\newcommand{\base}{\operatorname{base}}
\def\boundary{\partial}
\begin{document}

\title[Median structures and homomorphisms into mapping class groups]{Median structures on asymptotic cones and homomorphisms into mapping class groups}
\author{Jason Behrstock}\thanks{The research of the first author was supported in part by
NSF grant DMS-0812513.}
\address{Lehman College,
City University of New York,
U.S.A.}
\email{jason.behrstock@lehman.cuny.edu}
\author{Cornelia Dru\c{t}u}\thanks{The research of the second author was supported in part by
the ANR project ``Groupe de recherche de G\'eom\'etrie et Probabilit\'es dans
les Groupes''.}
\address{Mathematical Institute,
24-29 St Giles,
Oxford OX1 3LB,
United Kingdom.}
\email{drutu@maths.ox.ac.uk}
\author{Mark Sapir}\thanks{The research of the third author was supported in part by NSF grants DMS-0455881 and DMS-0700811 and a BSF (US-Israeli) grant 2004010.}
\address{Department of Mathematics,
Vanderbilt University,
Nashville, TN 37240, U.S.A.}
\email{m.sapir@vanderbilt.edu}
\subjclass[2000]{{Primary 20F65; Secondary 20F69, 20F38, 22F50}} \keywords{mapping class group,
median metric spaces, property {(T)}, asymptotic cone}
\date{ }

\begin{abstract}
The main goal of this paper is a detailed study of asymptotic cones of
the mapping class groups. In particular, we prove that every
asymptotic cone of a mapping class group has a bi-Lipschitz
equivariant embedding into a product of real trees, sending limits of
hierarchy paths onto geodesics, and with image a
median subspace. One of the applications is that a group with
Kazhdan's property (T) can have only finitely many pairwise
non-conjugate homomorphisms into a mapping class group. We also give a
new proof of the rank conjecture of Brock and Farb (previously
proved by Behrstock and Minsky, and independently by Hamenstaedt).

\end{abstract}

\maketitle

\tableofcontents

\section{Introduction}

Mapping class groups of surfaces (denoted in this paper by
$\MCG(S)$, where $S$ always denotes a compact connected orientable
surface) are very interesting
geometric objects, whose geometry is not yet completely understood.
Aspects of their geometry are especially striking when compared
with lattices
in semi-simple Lie groups.  Mapping class groups are known to share some properties with lattices in rank 1 and some others with lattices in higher rank
semi-simple Lie groups.  For instance the intersection
pattern of quasi-flats in $\MCG(S)$ is reminiscent of intersection patterns
of quasi-flats in uniform lattices of higher rank semi-simple groups
\cite{BehrstockKleinerMinskyMosher}.  On the other hand, the
pseudo-Anosov elements in $\MCG(S)$ are `rank 1 elements', i.e. the
cyclic subgroups generated by them are quasi-geodesics satisfying the
Morse property (\cite{FarbLubotzkyMinsky}, \cite{Behrstock:thesis},
\cite{DrutuMozesSapir}).

Non-uniform lattices in rank one semi-simple groups are (strongly)
relatively hyperbolic \cite{Farb:RelHyp}.  As mapping class groups act
by isometries on their complex of curves, and the latter are
hyperbolic \cite{MasurMinsky:complex1}, mapping class groups are
weakly relatively hyperbolic with respect to finitely many stabilizers of multicurves.  On the other hand, mapping class groups are not strongly
relatively hyperbolic with respect to any collection of subgroups (\cite{AASh:RelHypMCG}, \cite{BehrstockDrutuMosher:thick1}), they are not even metrically relatively hyperbolic with respect to any collection of subsets \cite{BehrstockDrutuMosher:thick1}.  Still,
$\MCG(S)$ share further properties with relatively hyperbolic groups.
A subgroup of $\MCG(S)$ either contains a pseudo-Anosov element or it is
parabolic, that is it stabilizes a (multi-)curve in $S$
\cite{Ivanov:subgroups}.  A similar property is one of the main `rank
1' properties of relatively hyperbolic groups
\cite{DrutuSapir:TreeGraded}.

Another form of rank 1 phenomenon, which is also a weaker version of relative hyperbolicity, is existence of cut-points in the asymptotic cones. In an asymptotic cone of a relatively hyperbolic group every point is a cut-point. In general, a set of cut-points $\calc$ in a geodesic metric space determines a \emph{tree-graded structure} on that space \cite[Lemma 2.30]{DrutuSapir:TreeGraded}, that is it determines a collection of proper geodesic subspaces, called {\em pieces}, such that every two pieces intersect in at most one point (contained in $\calc$) and every simple loop is contained in one piece. One can consider as pieces maximal path connected subsets with no cut-points in $\calc$, and singletons. When taking the quotient of a tree-graded space $\free$ with respect to the closure of the equivalence relation `two points are in the same piece' (this corresponds, roughly, to shrinking all pieces to points) one obtains a real tree $T_\free$ \cite[Section 2.3]{DrutuSapir:splitting}.

It was proved in \cite{Behrstock:asymptotic} that in the asymptotic cones of mapping class groups every point is a cut-point, consequently such asymptotic cones are tree-graded. The pieces of the corresponding tree-graded structure, that is the maximal path connected subsets with no cut-points, are described in \cite{BehrstockKleinerMinskyMosher} (see Theorem \ref{classifypieces} in this paper). Further information about the pieces  is contained in Proposition \ref{unique}. The canonical projection of an asymptotic cone $\AM (S)$ of a mapping class group onto the corresponding asymptotic cone $\mathcal{AC} (S)$ of the complex of curves (which is a real tree, since the complex of curves is hyperbolic) is a composition of  the projection of $\AM (S)$ seen as a tree-graded space onto the quotient tree described above, which we denote by $T_S$, and a projection of $T_S$ onto $\mathcal{AC} (S)$. The second projection has large pre-images of singletons, and is therefore very far from being injective (see Remark \ref{trees}).

Asymptotic cones were used to prove quasi-isometric rigidity of lattices in higher rank semi-simple Lie groups (\cite{KleinerLeeb:buildings}, \cite{Drutu:HrankRigidity}; see \cite{EskinFarb}, \cite{Schwartz:Diophantine}, \cite{Eskin:HigherRank} for proofs without the use of asymptotic cones) and of relatively hyperbolic groups (\cite{DrutuSapir:TreeGraded}, \cite{Drutu:RelHyp}). Unsurprisingly therefore asymptotic cones of mapping class groups play a central part in the proof of the quasi-isometric rigidity of mapping class groups (\cite{BehrstockKleinerMinskyMosher}, \cite{Hamenstadt:qirigidity}), as well as in the proof of the Brock-Farb rank conjecture that the rank of every quasi-flat in $\MCG(S)$ does not exceed $\xi (S)=3g+p-3$, where $g$ is the genus of the surface $S$ and $p$ is the number of punctures (\cite{BehrstockMinsky:rankconj}, \cite{Hamenstadt:qirigidity}). Many useful results about the structure of asymptotic cones of mapping class groups can be found in \cite{BehrstockKleinerMinskyMosher,Hamenstadt:qirigidity,BehrstockMinsky:rankconj}.

In this paper, we continue the study of asymptotic cones of mapping class groups, and show that the natural metric on every asymptotic cone of $\MCG(S)$ can be deformed in an equivariant and bi-Lipschitz way, so that the new metric space is inside an $\ell_1$-product of $\R$-trees, it is a median space and the limits of hierarchy paths become geodesics. To this end, the projection of the mapping class group onto mapping class groups of subsurfaces (see Section \ref{projmy}) is used to define the projection of an asymptotic cone $\AM (S)$ onto limits $\MM (\bu )$ of sequences of  mapping class groups of subsurfaces $\bu = (U_n)$. A limit $\MM (\bu )$ is isometric to an asymptotic cone of $\MCG(Y)$ with $Y$ a fixed subsurface, the latter is a tree-graded space, hence $\MM (\bu )$ has a projection onto a real tree $T_\bu$ obtained by shrinking pieces to points as described previously. These projections allow us to construct an embedding of $\AM (S)$ into an $\ell_1$-product of $\R$-trees.

\begin{theorem}[Theorem \ref{tilde}, Theorem \ref{thmedian}]\label{prodtrees}
The map $\psi\co \AM (S) \to \prod_{\bu} T_\bu $
whose components
are the canonical projections of $\AM (S)$ onto $T_\bu $ is a
bi-Lipschitz map, when $\prod_{\bu} T_\bu $ is endowed with the $\ell^1$-metric. Its image $\psi (\AM (S) )$ is a median space. Moreover $\psi$ maps limits of hierarchy paths onto geodesics in $\prod_{\bu} T_\bu $.
\end{theorem}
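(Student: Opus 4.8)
The plan is to work with the pseudometric $\td$ on $\AM(S)$ obtained by pulling back the $\ell^1$-metric of $\prod_\bu T_\bu$ along $\psi$, i.e.\
\[
\td(x,y)=\sum_{\bu}\dist_{T_\bu}\bigl(\psi(x)_\bu,\psi(y)_\bu\bigr),
\]
and to prove, by induction on the complexity $\xi(S)$, that (1) $\td$ is bi-Lipschitz equivalent to the cone metric $\dist_{\AM(S)}$ (Theorem~\ref{tilde}; this at once shows that $\psi$ is a bi-Lipschitz embedding onto its image with the $\ell^1$-metric, and that $\td$ is a genuine metric), (2) $\psi(\AM(S))$ is stable under the coordinatewise median of $\prod_\bu T_\bu$, hence is a median space (Theorem~\ref{thmedian}), and (3) $\psi$ carries limits of hierarchy paths to geodesics. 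The induction on $\xi$ is legitimate because every $\MM(\bu)$ is isometric to an asymptotic cone of $\MCG(Y)$ for a subsurface $Y$, so when $\bu$ indexes a proper subsurface the conclusion for $\MCG(Y)$ is already available. Equivariance of $\psi$ under $\mcgb$ is automatic, since subsurface projection, passage to the ultralimit, and collapse of a tree-graded space onto its quotient $\R$-tree are all natural. The easy half of (1) is the inequality $\td\le C\,\dist_{\AM(S)}$: each composite $\AM(S)\to\MM(\bu)\to T_\bu$ is Lipschitz with a uniform constant (the first map is a limit of suitably normalized coarsely Lipschitz subsurface projections, the second the collapse map, which is $1$-Lipschitz), and the sum over $\bu$ of the $T_\bu$-contributions is bounded by a multiple of $\dist_{\AM(S)}(x,y)$, a consequence of the Masur--Minsky distance formula transported to the asymptotic cone, where the additive constant disappears.

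The reverse inequality $\dist_{\AM(S)}\le C\,\td$ is the heart of the matter, and I expect it to be the \emph{main obstacle}. The difficulty is that collapsing each $\MM(\bu)$ onto $T_\bu$ a priori erases all metric information carried inside the pieces of $\MM(\bu)$, and one must show this is globally harmless because that information is recorded in other coordinates of $\psi$. By the classification of pieces (Theorem~\ref{classifypieces}) a piece of $\MM(\bu)$ is a limit of an $\ell^1$-product of limits $\MM(\bv)$ of mapping class groups of subsurfaces of strictly smaller complexity, and each such $\bv$ is again one of the indices of $\psi$. I would fix a limit $P$ of hierarchy paths from $x$ to $y$, and note that the portion of the length of $P$ that survives in the quotient tree $T_{\bs}$ (with $\bs=(S)$) is $\dist_{T_{\bs}}(\psi(x)_{\bs},\psi(y)_{\bs})$, while the rest of the length is spent inside pieces of $\AM(S)$, i.e.\ inside $\ell^1$-products of lower-complexity limits, where it is controlled coordinatewise by the induction hypothesis; summing over the pieces traversed and over all levels yields $\dist_{\AM(S)}(x,y)\le C\,\td(x,y)$. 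The delicate points are that one must argue with hierarchy-path limits rather than arbitrary paths, so that the projections to subsurfaces are monotone and the splitting of $P$ into a transversal part and in-piece parts is clean, and that one must use the finiteness features of the tree-graded structure to prevent the recursion from over- or under-counting.

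For (2), an $\R$-tree is a median space and an $\ell^1$-product of median spaces is median, the median being computed coordinatewise; hence it suffices to show $\psi(\AM(S))$ is closed under that coordinatewise median. Given $x,y,z\in\AM(S)$, put $m_\bu=\mathrm{median}_{T_\bu}\bigl(\psi(x)_\bu,\psi(y)_\bu,\psi(z)_\bu\bigr)$; one must exhibit $m\in\AM(S)$ with $\psi(m)_\bu=m_\bu$ for every $\bu$. The tool is the realization theorem in the form adapted to the asymptotic cone and to the maps $\MM(\bu)\to T_\bu$: a family of points of the $\MM(\bu)$, or equivalently of the $T_\bu$, satisfying the consistency inequalities (the asymptotic-cone version of Behrstock's inequality and of the realization theorem of \cite{BehrstockKleinerMinskyMosher}) is the image of a single point of $\AM(S)$ under the canonical projections. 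One lifts the $m_\bu$ to a consistent family in the $\MM(\bu)$, using that each $\MM(\bu)$ is itself covered by the inductive hypothesis so that medians are available there, then checks that consistency is preserved under passing to coordinatewise medians --- the consistency relations being of ``betweenness'' type and betweenness in trees being preserved by the median --- and applies realization to obtain $m$; unwinding the definitions gives $\psi(m)_\bu=m_\bu$.

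Finally, for (3): a limit $P$ of hierarchy paths joining $x$ and $y$ projects, in each coordinate $\bu$, to a monotone path in $T_\bu$ --- this is the asymptotic-cone form of the Masur--Minsky fact that hierarchy paths progress coarsely monotonically in every curve complex, together with the fact that $P$ moves monotonically through the pieces of $\MM(\bu)$. Monotonicity of $\psi\circ P$ in every coordinate of an $\ell^1$-product is exactly the assertion that $\psi\circ P$ is a geodesic: for $s<t<u$ one has, in each coordinate, $\dist_{T_\bu}(\psi(P(s))_\bu,\psi(P(u))_\bu)=\dist_{T_\bu}(\psi(P(s))_\bu,\psi(P(t))_\bu)+\dist_{T_\bu}(\psi(P(t))_\bu,\psi(P(u))_\bu)$, and summing over $\bu$ gives additivity of $\td$ along $P$. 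Together with (1) this completes the proof of Theorem~\ref{prodtrees}.
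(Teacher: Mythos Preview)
Your treatment of parts (1) and (3) is essentially the paper's approach. For the bi-Lipschitz inequality you correctly identify the induction on $\xi(S)$, with the key step being that the portion of a hierarchy path spent inside pieces of $\AM(S)$ is, by Theorem~\ref{classifypieces} and Lemma~\ref{distone}, controlled by distances in finitely many $\MM(\by_j)$ with $\by_j\subsetneq\bs$, to which the inductive hypothesis applies; the paper carries this out as Theorem~\ref{tilde}. One comment: your ``easy'' direction $\td\le C\,\dist_{\AM}$ is not quite as immediate as you suggest, since having each projection Lipschitz does not bound the sum; the paper handles this with Lemma~\ref{atmost2xi} (each large domain is nested in at most $2\xi$ others), which caps the overcounting in the distance formula. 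For (3), coordinatewise monotonicity of $\psi$ along limits of hierarchy paths (the paper's Lemma~\ref{projtu}) plus the elementary Lemma~\ref{l1geod} is exactly the argument.

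Your approach to (2), however, diverges from the paper and has a gap. You propose to take the coordinatewise median $(m_\bu)$ in $\prod T_\bu$, lift it to a ``consistent family'' in the $\MM(\bu)$, and invoke a realization theorem. Two problems: first, the lift $T_\bu\to\MM(\bu)$ is many-to-one over every piece, so ``lifting the $m_\bu$'' is not well-defined, and if you instead mean taking medians of the three projections in each $\MM(\bu)$, you must explain why those medians project to $m_\bu$ in $T_\bu$. Second, and more seriously, your assertion that ``consistency is preserved under passing to coordinatewise medians'' is precisely the content that needs proof; it is not a formality about betweenness. The paper does \emph{not} invoke realization. Instead it constructs the median in $\AM(S)$ directly: Lemma~\ref{meddisj} handles a finite family of pairwise disjoint $\bu$'s by working in $\QQ(\seq\Delta)$ and splicing hierarchy paths in the factors; Lemma~\ref{medgen} then treats an arbitrary finite family by induction on its size, doing an explicit case analysis for overlapping and nested pairs via the structure of $\psi_{\bu,\bv}(\AM)$ given in Theorem~\ref{projection trichotomy}, and producing a point on a hierarchy path between two approximate medians; finally a completeness argument upgrades ``$\epsilon$-median for every finite $F$'' to an honest median. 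If you want to salvage the realization route, you must prove a tree-level consistency-preservation lemma whose content is equivalent to Lemma~\ref{medgen}; absent that, the argument is incomplete.
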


\begin{remark}
The subspaces of a median space that are strongly convex (i.e. for
every two points in the subspace all geodesics connecting these points are
in the subspace) are automatically median. Convex subspaces (i.e.
containing one geodesic for every pair of points) are not necessarily
median. The image of the asymptotic cone in Theorem 1.1 is convex but not
strongly convex. Indeed in the asymptotic cone every point is a cut-point
[Beh06], and the cone itself is not a tree. On the other hand, a strongly
convex subspace $Y$ with a (global) cut-point in a product of trees
(possibly infinite uncountable) $\prod_{i\in I} T_i$ must be a factor tree
$T_{j}\times \prod_{i\neq j} \{ a_i\}$. Indeed, let $x$ be a cut-point of
$Y$ and $y,z$ two points in two distinct connected components of
$Y\setminus \{ x\}$. If there existed at least two indices $i,j\in I$ such
that $y_i\neq z_i$ and $y_j \neq z_j$ then $y,z$ could be connected by a
geodesic in  $Y\setminus \{ x\}$, a contradiction. It follows that there
exists a unique $i\in I$ such that $\{ y_i, x_i, z_i\}$ are pairwise
distinct. By repeating the argument for every two points in every two
components of $Y\setminus \{ x\}$ we obtain the result.
\end{remark}

The bi-Lipschitz equivalence between the limit metric on $\AM (S)$ and the pull-back of the $\ell^1$-metric on $\prod_{\bu } T_\bu $ yields a distance formula in the asymptotic cone $\AM (S)$,
similar to the  Masur--Minsky distance formula for the marking complex \cite{MasurMinsky:complex2}.

The embedding $\psi$ allows us to give in Section \ref{sec:dimcone} an
alternative proof of the Brock-Farb conjecture, which essentially
follows the ideas outlined in \cite{Behrstock:thesis}.  We prove that
the covering dimension of any locally compact subset of any asymptotic cone $\AM (S)$ does not
exceed $\xi (S)$ (Theorem \ref{rank}). This is done by showing that for every compact subset $K$ of $\AM (S)$ and every $\epsilon >0$ there exists an $\epsilon$-\emph{map}
$f:K \to X$ (i.e. a
continuous map with diameter of $f^{-1}(x)$ at most $\epsilon$ for
every $x\in X$) from $K$ to a product of finitely many $\R$-trees $X$ such that $f(K)$ is of dimension at most $\xi (S)$.
This, by a standard statement from dimension theory, implies that the dimension of $\AM (S)$ is at most $\xi (S)$.

One of the typical `rank 1' properties of groups is the following result essentially due to Bestvina \cite{Bestvina:degener} and Paulin \cite{Paulin:arbres}: if a group $A$ has infinitely many injective homomorphisms $\phi_1,\phi_2,...$ into a hyperbolic group $G$ which are pairwise non-conjugate in $G$, then $A$ splits over a virtually abelian subgroup. The reason for this is that $A$ acts without global fixed point on an asymptotic cone of $G$ (which is an $\R$-tree) by the natural action: \begin{equation}\label{act} a\cdot (x_i)=(\phi_i(a)x_i).\end{equation}
Similar statements hold for relatively hyperbolic groups (see \cite{OhshikaPotyag}, \cite{DelzantPotyag:kleinian}, \cite{Groves:limit}, \cite{Groves:Hopf}, \cite{Groves:MakaninR}, \cite{DrutuSapir:splitting}, \cite{BelegradekSz}).

It is easy to see that this statement does not hold for mapping class
groups.  Indeed, consider the right angled Artin group $B$
corresponding to a finite graph $\Gamma$ ($B$ is generated by the set
$X$ of vertices of $\Gamma$ subject to commutativity relations: two
generators commute if and only if the corresponding vertices are
adjacent in $\Gamma$).  There clearly exists a surface $S$ and a
collection of curves $X_S$ in one-to-one correspondence with $X$ such
that two curves $\alpha,\beta$ from $X_S$ are disjoint if and only if
the corresponding vertices in $X$ are adjacent (see \cite{CP}).  Let $d_\alpha$,
$\alpha\in X$, be the Dehn twist corresponding to the curve $\alpha$.
Then every map $X\to \MCG(S)$ such that $\alpha\mapsto
d_\alpha^{k_\alpha}$ for some integer $k_\alpha$ extends to a
homomorphism $B\to \MCG(S)$.  For different choices of the $k_\alpha$ one obtains homomorphisms that are pairwise non-conjugate in
$\MCG(S)$ (many of these homomorphisms are injective by \cite{CP}). This set of homomorphisms can be further increased by changing the collection of curves $X_S$ (while preserving the intersection patterns), or by considering more complex subsurfaces of $S$ than just simple closed curves (equivalently, annuli around those curves), and replacing Dehn twists with pseudo-Anosovs on those subsurfaces. Thus $B$ has many pairwise non-conjugate homomorphisms into $\MCG(S)$. But the group $B$ does not necessarily split over any
``nice" (e.g. abelian, small, etc.)  subgroup.

Nevertheless, if a group $A$ has infinitely many pairwise non-conjugate
homomorphisms into a mapping class group $\MCG(S)$, then it acts
as in (\ref{act}) without global fixed point on an asymptotic cone $\AM (S)$ of $\MCG(S)$.  Since $\AM (S)$
is a tree-graded space, we apply the theory of actions of groups on
tree-graded spaces from \cite{DrutuSapir:splitting}.  We prove
(Corollary \ref{ii}) that in this case either $A$ is virtually
abelian, or it splits over a virtually abelian subgroup, or the action
(\ref{act}) fixes a piece of  $\AM (S)$ set-wise.

We also prove, in Section \ref{sect:T}, that the action
(\ref{act}) of $A$ has unbounded orbits, via an inductive argument on the complexity of the surface $S$. The main ingredient is a careful analysis of the sets of fixed points of pure elements in $A$, comprising a proof of the fact that a pure element in $A$ with bounded orbits has fixed points (Lemmas \ref{middlepA} and \ref{middlered}), a complete description of the sets of fixed points of pure elements in $A$ (Lemmas \ref{fixpA} and \ref{fixred}), and an argument showing that distinct pure elements in $A$ with no common fixed point generate a group with infinite orbits (Lemma \ref{2redgen} and Proposition \ref{nredgen}). The last argument follows the same outline as the similar result holding for isometries of an $\R$-tree, although it is much more complex.

The above and the fact that a group with property (T) cannot act on a
median space with unbounded orbits (see Section \ref{sect:T} for
references)
allow us to apply our results in the case when $A$ has property (T).

\begin{theorem}[Corollary \ref{cor:thmT}]\label{introT}
A finitely generated group with property (T) has at most finitely many pairwise non-conjugate homomorphisms into a mapping class group.
\end{theorem}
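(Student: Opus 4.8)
The plan is to argue by contradiction, converting a hypothetical infinite family of pairwise non-conjugate homomorphisms into an isometric action of $A$ on a median space with unbounded orbits, which property (T) forbids.

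First I would run the usual Bestvina--Paulin reduction. Suppose $A$ has infinitely many pairwise non-conjugate homomorphisms $\phi_1,\phi_2,\dots\colon A\to\MCG(S)$. Fix a finite generating set $X$ of $A$ and a word metric $d$ on $\MCG(S)$, and for each $n$ pick $x_n\in\MCG(S)$ so that $d_n:=\max_{a\in X} d(x_n,\phi_n(a)x_n)$ is at most twice $\inf_{x}\max_{a\in X} d(x,\phi_n(a)x)$. If $(d_n)$ were bounded along a set in the ultrafilter, then conjugating $\phi_n$ by $x_n$ would make $\phi_n(X)$ land in a fixed ball; since that ball is finite (the word metric is proper), only finitely many conjugated families occur, forcing two of the $\phi_n$ to be conjugate --- a contradiction. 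Hence $d_n\to\infty$ $\omega$-almost surely. In the asymptotic cone $\AM(S)=\Con^\omega(\MCG(S);(x_n),(d_n))$ the rule $a\cdot(y_n):=(\phi_n(a)y_n)$ is a well-defined isometric action of $A$ (left translations are isometries, and $d(\phi_n(a)y_n,x_n)\le d(y_n,x_n)+d_n$), and the near-minimal choice of $x_n$ makes it fixed-point free: if $(z_n)$ were fixed then $\max_{a\in X} d(\phi_n(a)z_n,z_n)=o(d_n)$, contradicting $\max_{a\in X} d(\phi_n(a)z_n,z_n)\ge d_n/2$.

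Next I would transfer the action into a median space. By Theorem \ref{prodtrees} the canonical projections assemble into an $A$-equivariant bi-Lipschitz embedding $\psi\colon\AM(S)\to\prod_{\bu}T_\bu$ whose image $M$ is a median subspace of the $\ell^1$-product (the group $A$ permutes the index sequences $\bu=(U_n)$ via $a\mapsto(\phi_n(a)U_n)$ and acts by isometries within and between factors, so it acts on $M$ by isometries of the $\ell^1$, i.e.\ median, metric). It then remains to prove this action has unbounded orbits, and here I would invoke the analysis of Section \ref{sect:T}: by induction on the complexity $\xi(S)$, combining the description of the fixed-point sets of pure elements of $A$ and the fact that a pure element with bounded orbits has a fixed point (Lemmas \ref{middlepA}, \ref{middlered}, \ref{fixpA}, \ref{fixred}) with the statement that finitely many pure elements with no common fixed point generate a subgroup with unbounded orbits (Lemma \ref{2redgen}, Proposition \ref{nredgen}), one shows the action (\ref{act}) on $\AM(S)$ has unbounded orbits; since $\psi$ is bi-Lipschitz and equivariant, the orbits of $A$ on $M$ are unbounded as well. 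Finally, a finitely generated group with property (T) cannot act by isometries on a median space with unbounded orbits (via the correspondence between median spaces and spaces with measured walls and the characterisation of (T) in those terms; see the references in Section \ref{sect:T}). This contradicts the existence of the infinite family, so $A$ has at most finitely many pairwise non-conjugate homomorphisms into $\MCG(S)$.

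The real obstacle is the unboundedness of orbits established in Section \ref{sect:T}. The reduction to an action on $\AM(S)$ and the transfer to the median space $M$ via Theorem \ref{prodtrees} are essentially formal once that theorem is available, and the property (T) input is off the shelf; but because $\AM(S)$ is only tree-graded and not an $\R$-tree, one cannot simply invoke a center lemma, and a fixed-point-free action could a priori stabilise a bounded piece (the scenario isolated in Corollary \ref{ii}). Ruling this out is what forces the inductive scheme over subsurface complexity --- feeding the fixed-point structure of pure elements back into the mapping class groups of proper subsurfaces --- together with a careful, $\R$-tree-style analysis of how pure elements with disjoint fixed-point sets interact.
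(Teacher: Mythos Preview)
Your proposal is correct and follows essentially the same route as the paper: the Bestvina--Paulin reduction (Lemmas \ref{Pau} and \ref{Pau1}) gives a fixed-point-free isometric action on $\AM(S)$, Theorem \ref{thmT} (whose proof is exactly the inductive scheme over complexity using Proposition \ref{nredgen} and the lemmas you cite) upgrades this to unbounded orbits, Theorem \ref{thmedian} supplies the median structure on $(\AM,\td)$, and Theorem \ref{eqmw} provides the contradiction with property~(T). The only cosmetic difference is that you phrase the median step as acting on the image $M\subset\prod_\bu T_\bu$ via the equivariant embedding $\psi$, whereas the paper works directly with $(\AM,\td)$; since $\psi$ is an isometry between these, the two formulations are identical.
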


D. Groves announced the same result (personal communication).

A result similar to Theorem \ref{introT} is that given a one-ended group $A$, there are finitely many pairwise non-conjugate injective homomorphisms $A\to \MCG(S)$ such that every non-trivial element in $A$ has a pseudo-Anosov image (\cite{Bowditch:atoroidal}, \cite{DahmaniFujiwara}, \cite{Bowditch:oneended}). Both this result and Theorem \ref{introT} should be seen as evidence that there are few subgroups (if any) with these properties in the mapping class group of a surface. We recall that the mapping class group itself does not have property (T) \cite{Andersen:mcg-T}.

Via Theorem~\ref{introT}, an affirmative answer to the
following natural question would yield a new proof of Andersen's
result that $\MCG(S)$ does not have property (T):

\begin{question}
Given a surface $S$, does there exists a surface $S'$ and an infinite set of pairwise nonconjugate homomorphisms from $\MCG(S)$ into $\MCG(S')$?
\end{question}

See \cite{AramayonaLeiningerSouto} for some interesting
constructions of nontrivial homomorphisms from the mapping class
group of one surface into the mapping class group of another, but
note that their constructions only yield finitely many conjugacy
classes of homomorphisms for fixed $S$ and $S'$.

\medskip

\noindent {\textbf{Organization of the paper.}}
In Section \ref{sec:back} we recall results on asymptotic cones,
complexes of curves, and mapping class groups, while in Section
\ref{stg} we recall properties of tree-graded metric spaces and prove
new results on groups of isometries of such spaces.  In Section
\ref{sacmcg} we prove Theorem \ref{prodtrees}, and we give a new proof
that the dimension of any locally compact subset of any asymptotic cone of $\MCG(S)$ is at most $\xi
(S)$. This provides a new proof of the Brock-Farb
conjecture.
 In Section \ref{sec:actions} we describe further the asymptotic
cones of $\MCG(S)$ and deduce that for groups not virtually abelian
nor splitting over a virtually abelian subgroup sequences of pairwise
non-conjugate homomorphism into $\MCG(S)$ induce an action on the
asymptotic cone fixing a piece (Corollary~\ref{ii}).  In Section \ref{sect:T} we study in more detail actions on asymptotic cones of $\MCG(S)$ induced by sequences of pairwise
non-conjugate homomorphisms, focusing on the relationship between bounded orbits and existence either of fixed points or of fixed multicurves. We prove that if a group $G$ is not virtually cyclic and has infinitely
many pairwise non-conjugate homomorphisms into $\MCG(S)$, then $G$ acts on the
asymptotic cone of $\MCG(S)$ viewed as a median space with unbounded orbits. This allows us to prove
Theorem \ref{introT}.

\medskip

\noindent {\textbf{Acknowledgement.}} We are grateful to Yair Minsky
and Lee Mosher for helpful conversations. We also thank the referee
for a very careful reading and many useful comments.

\section{Background}\label{sec:back}

\subsection{Asymptotic cones}\label{ac}

A \textit{non-principal ultrafilter} $\omega$ over a countable set
$I$ is a finitely additive measure on the class $\mathcal{P}(I)$
of subsets of $I$, such that each subset has measure either $0$ or
$1$ and all finite sets have measure 0. Since we only use
non-principal ultrafilters, the word non-principal will be omitted
in what follows.

If a statement $S(i)$ is satisfied for all $i$ in a set $J$ with
$\omega (J)=1$, then we say that
$S(i)$ holds {\em $\omega$--a.s.}

Given a sequence of sets $(X_n)_{n\in I}$ and an ultrafilter
$\omega$, the {\em ultraproduct corresponding to $\omega$}, $\Pi
X_n/\omega$, consists of equivalence classes of sequences
$(x_{n})_{n\in I}$, with $x_n\in X_n$, where two sequences
$(x_{n})$ and
$(y_{n})$ are identified if $x_n=y_n$ $\omega$--a.s. The equivalence
class of a sequence $x=(x_{n})$ in $\Pi X_n/\omega$ is denoted
either by $x^{\omega}$ or by $\ultra x$. In particular, if all
$X_n$ are equal to the same $X$, the ultraproduct is called the
{\em ultrapower} of $X$ and it is denoted  by $\Pi X/\omega$.

If $G_n$, $n\ge 1$, are groups then $\Pi G_n/\omega$ is again a group
with the multiplication law
$\ultra x\ultra y =(x_{n}y_{n})^{\omega}$.

If $\Re$ is a relation on $X$, then one can define a
relation $\Re_\omega$ on $\Pi X/\omega$ by setting $\ultra x
\Re_\omega \ultra y$ if and only if $x_n\, \Re\, y_n$ \uas.

\begin{lemma}[\cite{universalalg}, Lemma 6.5]
    \label{D}

Let $\omega$ be an ultrafilter over $I$ and let $(X_n)_{n\in I}$ be a
sequence of sets which $\omega$--a.s.\ have cardinality at
most $N$. Then the ultraproduct $\Pi X_i/\omega$
has cardinality at most~$N$.
\end{lemma}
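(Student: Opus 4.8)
The plan is to prove a cardinality bound on ultraproducts of uniformly bounded sets by a counting/pigeonhole argument combined with the fact that an ultrafilter is a $\{0,1\}$-valued finitely additive measure.

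First I would reduce to the case where every $X_n$ has cardinality exactly at most $N$: let $J = \{ n \in I : \card X_n \le N \}$, so $\omega(J) = 1$ by hypothesis, and note that the ultraproduct $\Pi X_n/\omega$ depends only on the $X_n$ for $n \in J$ (since sets of $\omega$-measure zero do not affect equivalence classes of sequences). So without loss of generality $\card X_n \le N$ for all $n$. Next, for each $n$ fix an injection $f_n \co X_n \to \{1, \dots, N\}$ (possible since $\card X_n \le N$; pad with dummy values if $X_n$ is empty, which is harmless). These assemble into a map $F \co \Pi X_n/\omega \to \Pi \{1,\dots,N\}/\omega$ sending $\ultra x$ to $\ultra{f_n(x_n)}$; this is well-defined (if $x_n = y_n$ $\omega$-a.s.\ then $f_n(x_n) = f_n(y_n)$ $\omega$-a.s.) and injective (if $f_n(x_n) = f_n(y_n)$ $\omega$-a.s., then since each $f_n$ is injective, $x_n = y_n$ on that same $\omega$-large set, so $\ultra x = \ultra y$). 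Hence it suffices to bound the cardinality of the ultrapower $\Pi \{1,\dots,N\}/\omega$ of the fixed finite set $\{1,\dots,N\}$.

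Then I would show $\Pi \{1,\dots,N\}/\omega$ has cardinality exactly $N$. For each $k \in \{1,\dots,N\}$, let $\bar k = (k,k,k,\dots)^\omega$ denote the class of the constant sequence; these are $N$ distinct elements. Conversely, given any sequence $x = (x_n)$ with values in $\{1,\dots,N\}$, the sets $A_k = \{ n \in I : x_n = k \}$ for $k = 1, \dots, N$ partition $I$, so by finite additivity $\sum_{k=1}^N \omega(A_k) = \omega(I) = 1$; since each $\omega(A_k) \in \{0,1\}$, exactly one index $k_0$ has $\omega(A_{k_0}) = 1$. Then $x_n = k_0$ $\omega$-a.s., so $\ultra x = \bar{k_0}$. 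Thus every element of the ultrapower is one of the $N$ constant classes, giving $\card(\Pi \{1,\dots,N\}/\omega) = N$, and combining with the injection $F$ yields $\card(\Pi X_n/\omega) \le N$.

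I do not expect any serious obstacle here; this is an elementary argument. The only point requiring a small amount of care is the reduction step — making sure that discarding the $\omega$-null set of indices where $\card X_n > N$ genuinely does not change the ultraproduct, and handling possibly-empty $X_n$ when defining the injections $f_n$ — but both are routine once one recalls that $\omega$-a.s.\ equality is the defining equivalence relation on sequences. The conceptual heart is simply that a $\{0,1\}$-valued finitely additive measure, applied to a finite partition, concentrates on a single block.
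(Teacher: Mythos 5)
Your argument is correct and complete. Note that the paper itself gives no proof of this lemma --- it simply cites Lemma 6.5 of Burris--Sankappanavar --- so there is nothing internal to compare against; your write-up supplies the standard elementary argument (reduce to a single finite set via coordinatewise injections, then use that a $\{0,1\}$-valued finitely additive measure concentrates on exactly one block of the finite partition $A_k=\{n: x_n=k\}$). The two small points you flag are indeed the only ones needing care, and you handle them adequately: discarding the $\omega$-null set of bad indices does not change equivalence classes, and for empty $X_n$ the ``injection'' is just the empty map. The one alternative route worth knowing is that ``has at most $N$ elements'' is expressible by a single first-order sentence, so the bound also follows immediately from \L o\'s's theorem; that is essentially how the cited reference frames it, but your direct counting argument is self-contained and arguably preferable in this context.
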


For every sequence of points $(x_n)_{n\in I}$ in a topological space
$X$, its \emph{$\omega$-limit},
denoted by $\ulim x_n$, is a point $x$ in $X$ such that every
neighborhood $U$ of $x$ contains $x_n$
for \oae~$n$. If a metric space $X$ is Hausdorff and  $(x_n)$ is a
sequence in $X$, then when the
$\omega$--limit of  $(x_n)$ exists, it is unique. In a
compact metric space every sequence
has an $\omega$--limit \cite[$\S $ I.9.1]{Bourbaki}.

\begin{defn}[ultraproduct of metric spaces]

Let $(X_n,\dist_n)$, $n\in I$, be a sequence of metric spaces and
let $\omega$ be an ultrafilter over $I$. Consider the ultraproduct
$\Pi X_n/\omega$. For every two points $x^\omega=\ultra x,
y^\omega=\ultra y$ in $\Pi X_n/\omega$ let
$$D(x^\omega ,y^\omega )=\lm_\omega \dist_{n}(x_n,y_n)\, .$$

The function $D\colon \Pi
X_n/\omega\times \Pi X_n/\omega\to [0,\infty]$ is a pseudometric, that is, it satisfies all
the properties of a metric except it can take infinite values and need not satisfy $D(x^\omega,y^\omega)=0 \Rightarrow x^\omega=y^\omega$. We may make the function $D$ take only finite values by restricting it to a subset of the ultrapower in the following way.

Consider an {\em observation point} $e=\ultra e$ in $\Pi X_n/\omega$
and define $\Pi_e X_n/\omega$ to
be the subset of $\Pi X_n/\omega$ consisting of elements which are
finite distance from $e$ with
respect to $D$. Note that transitivity of  $D$ implies that the distance is finite between any pair of points in
  $\Pi_e X_n/\omega$.
\end{defn}

Note that if $X$ is a group $G$ endowed with a word metric then
$\Pi_1 G/\omega$ is a subgroup of the
ultrapower of $G$.

\begin{defn}[ultralimit of metric spaces]

The {\em $\omega$-limit  of the metric spaces
$(X_n,\dist_n)$ relative to the observation point $e$} is the
metric space obtained from $\Pi_e X_n/\omega$ by identifying all
pairs of points $x^\omega,y^\omega$ with $D(x^\omega,y^\omega)=0$; this space is denoted
$\ulim(X_n, e)$. When there is no need to specify the ultrafilter $\omega$ then we also call
 $\ulim(X_n, e)$ {\em ultralimit  of $(X_n,\dist_n)$ relative to $e$.}

The equivalence class of a
sequence $x=(x_n)$ in $\ulim(X_n, e)$ is denoted either by $\ulim{x_n}$ or by $\seq x$.
\end{defn}

Note that if $e,e'\in \Pi X_n/\omega$ and $D(e,e')<\infty$ then
$\ulim(X_n,e)=\ulim(X_n,e')$.

\begin{defn}
 For a sequence $A=(A_n)_{n\in I}$ of subsets $A_n \subset X_n$, we write $\seq A$ or $\ulim{A_n}$  to denote the subset of  $\ulim(X_n, e)$
    consisting of all the elements $\ulim{x_n}$ such that $\ulim
    x_n\in A_n$.

    Notice that if $\lim_\omega
    \dist_n (e_n,A_n)=\infty $ then the set $\lio{A_n}$ is
empty.
\end{defn}

Any ultralimit of metric spaces is a complete metric space
\cite{DriesWilkie}. The same proof gives that $\seq A=\seqrep A$ is
always a closed subset of the ultralimit $\ulim(X_n, e)$.

\begin{defn}[asymptotic cone] Let $(X,\dist)$ be a metric
space, $\omega$ be an ultrafilter over a countable set $I$, $e=\ultra e$
be an observation point. Consider a sequence of numbers
$d=(d_n)_{n\in I}$ called {\em scaling constants} satisfying
$\lm_\omega d_n=\infty$.

The space $\ulim\left( X,\frac{1}{d_n}\dist, e \right)$ is called an
{\em asymptotic cone of $X$.} It is denoted by $\ko{X;e,d}$.
\end{defn}

\begin{remark}\label{grascone}
 Let $G$ be a finitely generated group endowed with a
word metric.

\noindent (1) The group $\Pi_1 G/\omega$ acts on $\ko{G;1,d}$
transitively by isometries:
$$\ultra g \lio{x_n}=\lio{g_nx_n}.$$
\noindent (2) Given an arbitrary sequence of observation points
$e$,
the group $e^\omega (\Pi_1 G/\omega) (e^\omega)\iv$ acts
transitively by isometries on the asymptotic cone $\ko{G;e,d}$. In
particular, every asymptotic cone of $G$ is homogeneous.
\end{remark}

\me

\begin{cvn}
By the above remark, when we consider
an asymptotic cone of a finitely generated group, it is no loss of
generality to assume that the observation
point is $(1)^\omega$. We shall do this unless explicitly
stated otherwise.
\end{cvn}

\subsection{The complex of curves}
\label{Section:CC}

Throughout, $S=S_{g,p}$ will denote a compact connected orientable
surface of genus $g$ and with $p$
boundary components. Subsurfaces $Y\subset S$ will always be
considered to be essential (i.e., with non-trivial fundamental group
which injects into the fundamental group of
$S$), also they will not be assumed
to be proper unless explicitly stated. We will often measure the
\emph{complexity of a surface} by
$\xi(S_{g,p})=3g+p-3$; this complexity is additive under disjoint
union. Surfaces and curves are always
considered up to homotopy unless explicitly stated otherwise; we
refer to a pair of curves (surfaces,
etc) intersecting if they have non-trivial intersection
independent of the choice of representatives.

The ($1$-skeleton of the) \emph{complex of curves} of a surface $S$,
denoted by $\CC(S)$, is defined as
follows. The set of vertices of $\CC(S)$, denoted by $\CC_0(S)$, is
the set of homotopy classes of
essential non-peripheral simple closed curves on $S$.
When $\xi(S)>1$, a collection of $n+1$ vertices span an $n$--simplex if the
corresponding curves can be realized
(by representatives of the homotopy classes)
disjointly on $S$. A simplicial complex is quasi-isometric to its
1-skeleton, so when it is convenient we abuse notation and use
the term  complex of curves to refer to its 1-skeleton.

A {\em multicurve} on $S$ are the homotopy classes of
curves associated to a simplex in $\CC(S)$.

If $\xi(S)=1$ then two vertices are connected by an edge if they can
be realized so that they intersect
in the {minimal possible number of points} on the surface $S$ (i.e.
$1$ if $S=S_{1,1}$ and $2$ if
$S=S_{0,4}$). If $\xi(S)=0$ then $S=S_{1,0}$ or $S=S_{0,3}$.  In the
first case we do the same as for
$\xi(S)=1$ and in the second case the curve complex is empty since this
surface doesn't support any essential
simple closed curve. The complex is also empty if $\xi(S)\le -2$.

Finally if $\xi(S)=-1$, then $S$ is an annulus. We only consider the case when the annulus is
a subsurface of a surface $S'$. In this case we define $\CC(S)$ by
looking in the annular cover
$\tilde{S'}$ in which $S$ lifts homeomorphically.
We use the
compactification of the hyperbolic plane
as the closed unit disk to obtain a closed annulus $\hat{S'}$.

We define the vertices of $\CC(S)$ to be
the homotopy classes of arcs connecting the two boundary components
of $\hat{S'}$, where the homotopies
are required to fix the endpoints. We define a pair of vertices to be
connected by an edge if they have
representatives which can be realized with disjoint interior.
Metrizing edges to be isometric to the unit interval, this
space is quasi-isometric to $\Z$.

A fundamental result on the curve complex is the following:

\begin{theorem}[Masur--Minsky; \cite{MasurMinsky:complex1}]
    For any surface $S$, the complex of curves of $S$ is an
    infinite-diameter $\delta$-hyperbolic space (as long as it is
    non-empty), with $\delta$ depending only on $\xi(S)$.
\end{theorem}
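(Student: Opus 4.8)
The plan is to follow the strategy of Masur and Minsky: reduce hyperbolicity of $\CC(S)$ to a workable geometric criterion, model $\CC(S)$ by a space of hyperbolic structures on $S$, and verify the criterion there. First I would dispose of the low-complexity cases, where the simplicial structure was defined separately. When $\xi(S)\le 1$ the complex $\CC(S)$ is non-empty only for $S$ an annulus, $S=S_{0,4}$, $S=S_{1,1}$, or $S=S_{1,0}$: for the annulus $\CC(S)$ is quasi-isometric to $\Z$, and in the other three cases $\CC(S)$ is isomorphic to the Farey graph, each of which one checks directly to be $\delta$-hyperbolic of infinite diameter. So the substantive case is $\xi(S)\ge 2$.

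For that case the core of the plan is the hyperbolicity criterion of Masur--Minsky: a geodesic space $X$ is $\delta$-hyperbolic, with $\delta$ depending only on the constants appearing, provided there is a constant $D$ and, for every ordered pair $x,y$, a path $g_{xy}$ from $x$ to $y$ such that (i) the family $\{g_{xy}\}$ is coarsely Lipschitz and coarsely reproduces its subpaths (subsegments of $g_{xy}$ lie uniformly close to members of the family), and (ii) there is a coarsely well-defined nearest-point projection $\pi_g$ onto each $g=g_{xy}$ which is \emph{strongly contracting}: if $\dist(z,g)\ge D$ then $\pi_g(B(z,\tfrac12\dist(z,g)))$ has diameter at most $D$. (One could equivalently use the later ``guessing geodesics'' repackaging of this criterion.) To apply it I would move to a model of $\CC(S)$ in which a natural family of paths appears, namely the \emph{electrified Teichmüller space} $\widehat{\calt}(S)$, obtained from $(\calt(S),\dist_{\mathrm{Teich}})$ by coning off, for each multicurve $\gamma$, the region $\mathrm{Thin}_\epsilon(\gamma)$ where every component of $\gamma$ has hyperbolic length less than $\epsilon$. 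One shows that the coarse ``systole map'' $\mathrm{sys}\co\calt(S)\to\CC(S)$ (pick a shortest curve) becomes a quasi-isometry once $\calt(S)$ is replaced by $\widehat{\calt}(S)$: coarse Lipschitz-ness and coarse surjectivity follow from the collar lemma, and the reverse inequality is the content of the electrification. As the path family on $\widehat{\calt}(S)$ I would take the images of Teichmüller geodesic segments $[X,Y]$.

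The hard part will be verifying the contraction property (ii) for Teichmüller geodesics in the electrified metric; this is where the geometry of quadratic differentials enters. The two main inputs are: along a Teichmüller geodesic the extremal length of any simple closed curve $\alpha$ is, up to bounded ratio, a function with a single minimum, so $\alpha$ can be short --- and hence force electrification --- only over a controlled portion of $[X,Y]$, unless it is short throughout, in which case the endpoints are already $\CC(S)$-close; together with Minsky's Product Regions Theorem, which identifies each $\mathrm{Thin}_\epsilon(\gamma)$, up to additive error, with the sup-metric product $\prod_{\alpha\in\gamma}\mathbb{H}_\alpha\times\calt(S\setminus\gamma)$, this lets one bound the electrified projection of a far ball onto $[X,Y]$. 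Granting (i) and (ii), the criterion yields that $\widehat{\calt}(S)$, hence $\CC(S)$, is $\delta(S)$-hyperbolic. Since for each value of $3g+p-3$ there are only finitely many homeomorphism types $S_{g,p}$, the maximum of these finitely many constants gives a single $\delta$ depending only on $\xi(S)$, covering also the $\xi(S)\le 1$ cases.

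Finally, for infinite diameter I would use the action of a pseudo-Anosov $\psi$ on $\CC(S)$: since $\psi$ fixes no multicurve, a ping-pong argument applied to the iterates $\psi^n\alpha$ of any vertex $\alpha$ shows $\dcs(\alpha,\psi^n\alpha)\to\infty$. Alternatively, one could avoid Teichmüller theory and instead run Bowditch's combinatorial argument via tight geodesics, or Hamenstädt's via train tracks, or the unicorn-path proof, each of which replaces the contraction estimate above by a direct surgery argument verifying a thin-triangle condition for an explicitly constructed path system; the Teichmüller route above is the one matching the cited reference, and the contraction estimate is the step I expect to be the main obstacle.
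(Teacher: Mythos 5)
This theorem is stated in the paper as background and is not proved there; it is quoted directly from Masur--Minsky \cite{MasurMinsky:complex1}, so the only meaningful comparison is with the cited source. Your outline is a faithful reconstruction of that proof: the contraction criterion for hyperbolicity, the electrified Teichm\"uller space with the systole map as quasi-isometry, Teichm\"uller geodesics as the path family, and the verification of contraction via quasi-convexity of extremal length together with Minsky's product regions theorem are exactly the ingredients of \cite{MasurMinsky:complex1}, and your handling of the low-complexity cases and of the uniformity of $\delta$ over the finitely many topological types with a given $\xi(S)$ is correct.

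The one step I would flag is the infinite-diameter claim: ``ping-pong'' is not really the mechanism by which one shows $\dcs(\alpha,\psi^n\alpha)\to\infty$ for a pseudo-Anosov $\psi$ (ping-pong produces free subgroups, not lower bounds on displacement in $\CC(S)$). The argument in \cite{MasurMinsky:complex1} (their Proposition 4.6) instead uses nested train-track neighborhoods of the attracting lamination of $\psi$, showing that any curve meeting the $n$-th neighborhood but not the $(n+1)$-st is forced to have curve-complex distance growing linearly in $n$ from a fixed base curve; alternatively one can quote the earlier finiteness-of-diameter obstruction via filling laminations. This is a presentational gap rather than a mathematical one, since the result is standard, but as written that step would not go through.
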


There is a particular family of geodesics in a complex of curves
$\CC(S)$ called \emph{tight geodesics}.  We use Bowditch's notion of
tight geodesic, as defined in \cite[$\S 1$]{Bowditch:tightgeod}.
Although $\CC (S)$ is a
locally infinite complex, some finiteness phenomena appear when
restricting to the family of tight geodesics
(\cite{Bowditch:tightgeod}, \cite{MasurMinsky:complex2}).

\subsection{Projection on the complex of curves of a subsurface}

We shall need the natural projection of a curve (multicurve) $\gamma$
onto an essential subsurface $Y\subseteq S$ defined in
\cite{MasurMinsky:complex2}.  By definition, the projection
$\pi_{\CC(Y)}(\gamma)$ is a possibly empty subset of $\CC(Y)$.

The definition of this projection is from
\cite[$\S$~2]{MasurMinsky:complex2} and is given below.
Roughly speaking, the projection consists of all closed curves of the
intersections of $\gamma$ with $Y$ together with all the arcs of
$\gamma\cap Y$ combined with parts of the boundary of $Y$ (to form
essential non-peripheral closed curves).

\begin{defn}\label{projcy} Fix an essential subsurface $Y\subset S$.
Given an element $\gamma\in\CC(S)$ we define the projection
$\pi_{\CC(Y)}(\gamma)\in 2^{\CC(Y)}$ as follows.
\begin{enumerate}
    \item If either $\gamma\cap Y=\emptyset$, or $Y$ is an annulus and
    $\gamma $ is its core curve (i.e. $\gamma$ is the unique homotopy
    class of essential simple closed curve in $Y$), then we define
    $\pi_{\CC(Y)}(\gamma)=\emptyset$.

    \item If $Y$ is an annulus
    which transversally intersects $\gamma$, then we define
    $\pi_{\CC(Y)}(\gamma)$ to be the union of the elements of $\CC(Y)$ defined by every
    lifting of $\gamma$ to an annular cover
    as in the definition of the curve complex of an annulus.

    \item In all remaining cases, consider the arcs and simple closed
    curves obtained by intersecting $Y$ and $\gamma$.  We define
    $\pi_{\CC(Y)}(\gamma)$ to be the set of vertices in $\CC(Y)$
    consisting of:
\begin{itemize}
    \item the essential non-peripheral
    simple closed curves in the
    collection above;
    \item the essential non-peripheral simple closed curves obtained by taking arcs from the above
    collection union a subarc of $\partial Y$ (i.e. those curves which
    can be obtained by resolving the arcs in $\gamma\cap Y$ to
    curves).
\end{itemize}
\end{enumerate}
\end{defn}

One of the basic properties is the following result of Masur--Minsky
(see \cite[Lemma~2.3]{MasurMinsky:complex2} for the original proof, in
\cite{Minsky:ELC1} the bound was corrected from 2 to 3).

\begin{lemma}[\cite{MasurMinsky:complex2}] \label{r1}
If $\Delta$ is a multicurve in $S$ and $Y$ is a subsurface of $S$ intersecting nontrivially every homotopy
class of curve composing $\Delta$, then
$$\diam_{\CC(Y)}(\pi_{\CC(Y)}(\Delta))\leq 3\, .$$
\end{lemma}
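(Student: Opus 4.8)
The plan is to unwind the definition of $\pi_{\CC(Y)}$ from Definition~\ref{projcy} and bound the diameter of the projection one curve at a time, then combine. Write $\Delta = \gamma_1 \cup \cdots \cup \gamma_k$ as a union of disjoint (in minimal position) homotopy classes of essential simple closed curves, each of which intersects $Y$ essentially by hypothesis. The first step is to show that for a \emph{single} curve $\gamma_j$, the set $\pi_{\CC(Y)}(\gamma_j)$ has diameter at most $2$ in $\CC(Y)$. This is the heart of the original Masur--Minsky argument: any two vertices of $\pi_{\CC(Y)}(\gamma_j)$ come either from two disjoint arcs/curves of $\gamma_j \cap Y$ (so they are within distance $2$ after surgering with $\boundary Y$, since disjoint arcs resolve to curves meeting in a controlled way), or from the same arc resolved two different ways along $\boundary Y$ (again distance $\le 2$). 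One must also invoke the non-annular versus annular cases of the definition separately; in the annular case the lifts of $\gamma_j$ to the annular cover are disjoint, giving an even sharper bound.

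The second step handles distinct components. If $\gamma_i$ and $\gamma_j$ both meet $Y$ essentially, they are disjoint on $S$, hence $\gamma_i \cap Y$ and $\gamma_j \cap Y$ are disjoint collections of arcs and curves in $Y$. So picking $a \in \pi_{\CC(Y)}(\gamma_i)$ and $b \in \pi_{\CC(Y)}(\gamma_j)$, either $a$ and $b$ are already disjoint (distance $1$), or they each need a subarc of $\boundary Y$ to be closed up, in which case the two resolved curves can be realized meeting only near $\boundary Y$ and one checks the distance is at most $2$. The bookkeeping here is the same surgery estimate as in step one. Finally, combining via the triangle inequality: any two vertices of $\pi_{\CC(Y)}(\Delta)$ lie in $\pi_{\CC(Y)}(\gamma_i) \cup \pi_{\CC(Y)}(\gamma_j)$ for some $i,j$, and chaining the at-most-$2$ estimates across at most one ``jump'' between components, we should recover a uniform bound; getting it down to the sharp constant $3$ (as corrected in \cite{Minsky:ELC1}) requires being careful that the intra-curve diameter and the inter-curve distance do not simply add to $4$ but overlap appropriately.

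The main obstacle I expect is precisely this constant-chasing at the interface of $\boundary Y$: the resolution operation (arc $\cup$ subarc of $\boundary Y$ $\mapsto$ closed curve) is not canonical — an arc with both endpoints on the same or different boundary components can be closed up in several non-homotopic ways — and one must verify that \emph{all} such resolutions of \emph{all} arcs coming from \emph{all} components of $\Delta$ stay within a ball of radius $3$. The cleanest route is to fix, once and for all, a single resolution convention and show (a) any two curves produced from one arc by two conventions differ by at most $1$ in $\CC(Y)$, and (b) with the convention fixed, disjoint arcs yield curves at distance $\le 1$, and an arc disjoint from a closed curve of $\Delta \cap Y$ likewise. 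Then the worst case is: convention-switch ($\le 1$) $+$ disjointness step ($\le 1$) $+$ convention-switch ($\le 1$) $= 3$. Since the statement cites \cite{MasurMinsky:complex2} and \cite{Minsky:ELC1} directly, in the paper it would suffice to quote these references, but the above is how I would reconstruct the proof if needed.
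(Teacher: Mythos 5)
The paper offers no proof of Lemma~\ref{r1} at all: it is imported verbatim from \cite{MasurMinsky:complex2}, with the remark that the constant was corrected from $2$ to $3$ in \cite{Minsky:ELC1}. Your bottom line --- that in this paper it suffices to quote those references --- is therefore exactly what the authors do, and your reconstruction follows the standard surgery argument (disjointness of the components of $\Delta$ forces $\Delta\cap Y$ to be a disjoint union of arcs and curves; resolving arcs along $\partial Y$ produces essential curves with controlled mutual intersection). The one point at which your sketch is genuinely incomplete, rather than merely compressed, is the quantitative core: the claim that two curves obtained by surgering disjoint arcs along $\partial Y$ are at distance at most $2$ in $\CC(Y)$ is not a formal consequence of ``they meet in a controlled way'' --- one must actually bound the geometric intersection number of the surgered curves and then convert that into a distance bound, and this conversion is delicate precisely in the low-complexity cases $\xi(Y)=1$ (where disjoint realization is impossible and distance is governed by intersection number) and the annular case (where distance is measured by twisting in the annular cover). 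Your final accounting ``convention-switch $+$ disjointness $+$ convention-switch $=3$'' is a plausible heuristic for where the $3$ comes from, but as written it is not a verification; the actual bookkeeping in \cite{Minsky:ELC1} runs through explicit intersection-number estimates. Since the lemma is background cited wholesale, none of this affects the paper, but if you intended your sketch as a self-contained proof it would need that step filled in.
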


\begin{notation}\label{Ndistproj}
{Given $\Delta , \Delta'$ a pair of multicurves in  $\CC(S)$, for
brevity we
    often write $\dist_{\CC(Y)}(\Delta , \Delta')$ instead of
    $\dist_{\CC(Y)}(\pi_{Y}(\Delta),\pi_{Y}(\Delta'))$.}
\end{notation}

\subsection{Mapping class groups}\label{sec:mcg}

The \emph{mapping class group}, $\MCG (S)$, of a surface $S$ of finite
type is the quotient of the group of homeomorphisms of $S$ by the
subgroup of homeomorphisms isotopic to the identity.  Since the mapping class group of a
surface of finite type is finitely generated \cite{Birman:Braids}, we may consider a word
metric on the group --- this metric is unique up to bi-Lipschitz equivalence.
Note that $\MCG (S)$ acts on $\CC(S)$ by simplicial automorphisms (in
particular by isometries) and with finite quotient,
and that the
family of tight geodesics is invariant with respect to this action.

Recall that according to the Nielsen-Thurston classification, any
element $g\in \MCG (S)$ satisfies one
of the following three properties, where the first two are not mutually exclusive:
\begin{enumerate}
    \item $g$ has finite order;
    \item there exists a multicurve $\Delta$ in $\CC (S)$ invariant
by $g$ (in this case $g$ is called
    \textit{reducible});
    \item $g$ is pseudo-Anosov.
\end{enumerate}

We call an element $g\in \MCG(S)$ \emph{pure} if there exists a
multicurve $\Delta$ (possibly empty) component-wise invariant by $g$ and such that
$g$ does not permute the connected components of $S\setminus \Delta$,
and it induces on each component of $S\setminus \Delta$ and on each
annulus with core curve in $\Delta$ either a pseudo-Anosov or the
identity map (we use the convention that a Dehn twist on an annulus
is considered to be pseudo-Anosov).
In particular every pseudo-Anosov is pure.

\begin{theorem}(\cite[Corollary 1.8]{Ivanov:subgroups}, \cite[Theorem
7.1.E]{Ivanov:mcg})\label{thpure}
Consider the homomorphism from $\MCG (S)$ to the finite group $
\mathrm{Aut} (H_1 (S, \Z /k\Z ))$
defined by the action of diffeomorphisms on homology.

If $k\geq 3$ then the kernel $\MCG_k (S)$ of the homomorphism is
composed only of pure elements, in
particular it is torsion free.
\end{theorem}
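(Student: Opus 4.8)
The plan is to treat the two assertions in order, proving torsion-freeness first since it is also the inductive input for the stronger statement that every element of $\MCG_k(S)$ is pure. For the torsion-free part, suppose some $g\in\MCG_k(S)$ has finite order; passing to a power we may assume $g$ has prime order $p$, and by the Nielsen realization theorem for finite cyclic subgroups, $g$ is represented by a finite-order orientation-preserving homeomorphism $\phi$ of $S$. Since $\phi$ has finite order and acts trivially on $H_1(S,\Z/k\Z)$ with $k\geq 3$, Serre's lemma --- the finite-order refinement of Minkowski's theorem, asserting that a torsion element of $GL_n(\Z)$ congruent to the identity modulo $k\geq 3$ is the identity --- shows that $\phi$ acts trivially on $H_1(S,\Z)$, hence on $H_*(S,\Q)$. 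The Lefschetz fixed point formula then gives, when $\partial S=\emptyset$, $L(\phi)=\chi(S)=2-2g$, while every fixed point of a nontrivial periodic orientation-preserving surface homeomorphism is isolated of Lefschetz index $+1$ (such a $\phi$, after averaging a Riemannian metric, is an isometry acting near each fixed point as a nontrivial rotation), so $L(\phi)=\#\Fix(\phi)\geq 0$; this contradiction completes the argument once $g\geq 2$. The low-complexity cases are disposed of directly: for $S=S_{1,0}$, $\MCG(S)=SL_2(\Z)$ acts faithfully on $H_1$ and the claim is Minkowski's theorem for the principal congruence subgroup, while punctured spheres and surfaces with boundary are handled analogously after doubling or capping, or by quoting Ivanov.

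For the purity statement, let $g\in\MCG_k(S)$ be arbitrary and let $\sigma=\sigma(g)$ be its Birman--Lubotzky--McCarthy canonical reduction system, a multicurve canonically associated to $g$ and hence $g$-invariant. One must check the three conditions defining a pure element: $g$ fixes each component of $\sigma$ with its orientation; $g$ permutes the components of $S\setminus\sigma$ trivially; and on each component $R$ of $S\setminus\sigma$, and on each annulus with core in $\sigma$, the induced homeomorphism is pseudo-Anosov or the identity. The last point follows by induction on $\xi(S)$: the restriction $g|_R$ lies in $\MCG(R)$ and again in its level-$k$ congruence subgroup (as $\partial R\subset\sigma\cup\partial S$, the restriction is compatible with the homology actions modulo $k$), so by the torsion-freeness already proved $g|_R$ has infinite order, and it is not reducible by the defining property of $\sigma$, hence pseudo-Anosov or trivial. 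For the first two points, a nontrivial permutation by $g$ of the curves of $\sigma$ or of the complementary pieces would, in conjunction with the Thurston canonical form of $g$, force a genuine periodic symmetry on an appropriate $g$-invariant subsurface, which is again excluded by the torsion-freeness argument applied there; this is the step where Ivanov's detailed bookkeeping is needed.

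The main obstacle is thus the purity statement, concentrated in the two claims just made: that the congruence condition modulo $k$ is inherited by the restrictions to the pieces of the canonical reduction system, and that $\MCG_k(S)$ contains no element permuting reducing curves or complementary subsurfaces nontrivially. Both are exactly the content of the cited theorems of Ivanov, so we invoke them rather than reproducing the somewhat lengthy verification here.
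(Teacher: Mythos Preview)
The paper does not prove this theorem; it is stated as a known result with citations to Ivanov \cite{Ivanov:subgroups,Ivanov:mcg} and then used as a black box (e.g.\ in Lemma~\ref{fs}). There is therefore no proof in the paper to compare your proposal against.

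For what it is worth, your sketch follows the standard line of argument that underlies Ivanov's proof: Serre's congruence lemma plus Nielsen realization and the Lefschetz fixed-point formula for torsion-freeness, then the canonical reduction system and induction on complexity for purity. The outline is sound, but as you yourself note, the two substantive steps---inheritance of the level-$k$ congruence condition by restrictions to complementary subsurfaces, and the exclusion of nontrivial permutations of reducing curves or pieces---are precisely what the cited theorems supply, and you do not carry them out. Your Lefschetz computation is also written only for closed surfaces of genus $\geq 2$; the punctured and bounded cases require the extra care you allude to. So the proposal is really a roadmap back to Ivanov rather than an independent proof.
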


\medskip

We now show that versions of some of the previous results hold for
the ultrapower $\Pi\MCG (S)/\omega$
of a mapping class group. The elements in $\MCG (S)^\omega$ can also be
classified into finite order,
reducible and pseudo-Anosov elements, according to whether their components satisfy that property \uas.

Similarly, one may define pure elements in $\MCG (S)^\omega$. Note that non-trivial pure
elements both in $\MCG (S)$ and in
its ultrapower are of infinite order.

Theorem \ref{thpure} implies the following statements.
\begin{lemma}\label{fs}
\begin{itemize}
    \item[(1)] The ultrapower $\MCG (S)^\omega$ contains a finite
index normal subgroup $\MCG (S)_p^\omega$
    which consists only of pure elements.
    \item[(2)] The orders of finite subgroups in the ultrapower $\MCG
(S)^\omega$ are boun\-ded by a constant
$N=N(S)$.
\end{itemize}
\end{lemma}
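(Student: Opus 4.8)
The plan is to deduce both statements of Lemma~\ref{fs} directly from Theorem~\ref{thpure} by an ultrapower argument. Fix $k=3$ and let $\MCG_3(S)\le \MCG(S)$ be the kernel of the homology action $\rho\co \MCG(S)\to \mathrm{Aut}(H_1(S,\Z/3\Z))$, which by Theorem~\ref{thpure} is torsion free and consists only of pure elements, and is of finite index, say $[\MCG(S):\MCG_3(S)]=m$. The homomorphism $\rho$ induces a homomorphism $\rho_\omega\co \MCG(S)^\omega\to \bigl(\mathrm{Aut}(H_1(S,\Z/3\Z))\bigr)^\omega$ on the ultrapowers, defined componentwise by $\rho_\omega(\ultra g)=(\rho(g_n))^\omega$; this is well defined since if $g_n=g_n'$ $\omega$--a.s.\ then $\rho(g_n)=\rho(g_n')$ $\omega$--a.s. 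Because $\mathrm{Aut}(H_1(S,\Z/3\Z))$ is a fixed finite group, Lemma~\ref{D} shows that its ultrapower is again a finite group, of the same cardinality; call it $\Gamma$. I would set $\MCG(S)_p^\omega:=\ker \rho_\omega$.

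\textbf{Proof of (1).} First I would check that $\MCG(S)_p^\omega=\ker\rho_\omega$ has finite index in $\MCG(S)^\omega$: it is the kernel of a homomorphism into the finite group $\Gamma$, so its index is at most $|\Gamma|=|\mathrm{Aut}(H_1(S,\Z/3\Z))|<\infty$, and being a kernel it is normal. Next, that it consists only of pure elements: if $\ultra g\in\ker\rho_\omega$ then $\rho(g_n)=\mathrm{id}$ $\omega$--a.s., i.e.\ $g_n\in\MCG_3(S)$ $\omega$--a.s., so $g_n$ is a pure element of $\MCG(S)$ $\omega$--a.s.; by the definition of pure elements in the ultrapower recalled in the excerpt (``their components satisfy that property \uas''), $\ultra g$ is pure. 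This gives~(1).

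\textbf{Proof of (2).} Let $H\le \MCG(S)^\omega$ be a finite subgroup. Then $H\cap \MCG(S)_p^\omega$ is a finite subgroup of $\MCG(S)_p^\omega$; but every nontrivial pure element of the ultrapower has infinite order (stated in the excerpt just before Lemma~\ref{fs}), so $H\cap\MCG(S)_p^\omega=\{1\}$. Hence the restriction of $\rho_\omega$ to $H$ is injective, so $|H|\le |\Gamma|=|\mathrm{Aut}(H_1(S,\Z/3\Z))|$, a constant $N=N(S)$ depending only on $S$. This proves~(2).

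The only genuinely delicate point — really a matter of being careful rather than a true obstacle — is matching the two notions of ``pure'': I must invoke exactly the definition of a pure element of $\MCG(S)^\omega$ given in the paragraph preceding Lemma~\ref{fs}, namely that $\ultra g$ is pure iff $g_n$ is pure in $\MCG(S)$ for $\omega$--almost every $n$, and combine it with the fact that $\MCG_3(S)$ is precisely composed of pure elements (Theorem~\ref{thpure}). One should also note that $\MCG(S)$ being finitely generated is what makes $\MCG(S)^\omega$, and in particular $\Pi_1\MCG(S)/\omega$, the relevant groups, but that plays no role in the counting argument above. Everything else is the standard observation that kernels of maps to finite groups have finite index and that ultrapowers of finite sets stay finite (Lemma~\ref{D}).
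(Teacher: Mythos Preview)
Your proof is correct and follows essentially the same route as the paper: induce the finite-target homomorphism of Theorem~\ref{thpure} on the ultrapower, take its kernel for~(1), and use that finite subgroups meet this kernel trivially to bound their order for~(2). You have simply spelled out in more detail (via Lemma~\ref{D}) why the target remains finite and why the kernel consists of pure elements.
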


\proof (1) The homomorphism in Theorem \ref{thpure} for $k\geq 3$
induces a homomorphism from $\MCG
(S)^\omega$ to a finite group whose kernel consists only of pure
elements.

(2) Since any finite subgroup of  $\MCG (S)^\omega$ has trivial
intersection with the group of pure elements $\MCG (S)_p^\omega$,
it follows that it injects into the quotient group, hence its
cardinality is at most the index of $\MCG
(S)_p^\omega$.
\endproof

\subsection{The marking complex}

For most of the sequel, we do not work with the mapping class group
directly, but rather with a particular quasi-isometric model which is
a graph called
the \emph{marking complex}, $\MM(S)$,  which is defined as follows.

The vertices of the marking graph are called \emph{markings}. Each
marking $\mu\in\MM(S)$ consists of the following pair of data:
\begin{itemize}
    \item \emph{base curves}: a multicurve consisting of $3g+p-3$
    components, i.e. a maximal simplex in $\CC(S)$. This collection
    is denoted $\base(\mu)$.

    \item \emph{transversal curves}: to each curve
    $\gamma\in\base(\mu)$ is associated an essential curve in the
    complex of curves of the annulus with core curve $\gamma$ with a
    certain compatibility condition.
    More precisely, letting $T$ denote the complexity $1$
    component of
    $S\setminus \bigcup_{\alpha\in\base{\mu}, \alpha \neq
\gamma}\alpha$, the
    transversal curve to $\gamma$ is any curve $t(\gamma)\in\CC(T)$
with
    $\dist_{\CC(T)}(\gamma,t(\gamma))=1$; since
$t(\gamma)\cap\gamma\neq\emptyset$,
    the curve $t(\gamma)$ is a representative of a point in the curve
complex
    of the annulus about $\gamma$, i.e.
    $t(\gamma)\in\CC(\gamma)$.
\end{itemize}

We define two vertices $\mu,\nu\in\MM(S)$ to be connected by an edge
if either of the two conditions hold:

\begin{enumerate}
    \item \emph{Twists}: $\mu$ and $\nu$ differ by a Dehn twist along
    one of the base curves.  That is, $\base(\mu)=\base(\nu)$ and all
    their transversal curves agree except for about one element
    $\gamma\in \base(\mu)=\base(\nu)$ where $t_{\mu}(\gamma)$ is
    obtained from $t_{\nu}(\gamma)$ by twisting once
    about the curve $\gamma$.

    \item \emph{Flips}: The base curves and transversal curves of
    $\mu$ and $\nu$ agree except for one pair $(\gamma,
    t(\gamma))\in\mu$ for which the corresponding pair consists of the
    same pair but with the roles of base and transversal reversed.
    Note that the second condition to be a marking requires that each
    transversal curve intersects exactly one base curve, but the Flip
    move may violate this condition.  It is shown in
    \cite[Lemma 2.4]{MasurMinsky:complex2}, that there is a
    finite set of natural ways to resolve this issue, yielding a
    finite (in fact uniformly bounded) number of flip moves which can
    be obtained by flipping the pair $(\gamma, t(\gamma))\in\mu$; an
    edge connects each of these possible flips to $\mu$.
\end{enumerate}

The following result is due to Masur--Minsky
\cite{MasurMinsky:complex2}.

\begin{theorem}\label{MM:marking} The graph $\MM(S)$ is locally finite
and the mapping class group acts cocompactly and properly
discontinuously on it.  In particular the mapping class group of $S$
endowed with a word metric is quasi-isometric to $\MM(S)$ endowed with the
simplicial distance, denoted by $\dist_{\MM(S)}$.
\end{theorem}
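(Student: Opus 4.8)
The plan is to deduce the statement from the \v{S}varc--Milnor lemma, so I would establish three things: that $\MM(S)$ is a connected, locally finite graph; that $\MCG(S)$ acts on it cocompactly; and that the action is properly discontinuous. The quasi-isometry with $\MCG(S)$ equipped with a word metric is then immediate, because a connected locally finite graph with the simplicial metric is a proper geodesic space, and all word metrics on a finitely generated group are bi-Lipschitz equivalent.

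\textbf{Local finiteness.} I would bound the valence of an arbitrary marking $\mu$. The base curves of $\mu$ form a pants decomposition with $\xi(S)=3g+p-3$ components. A twist move changes the transversal $t(\gamma)$ of exactly one base curve $\gamma$ by a single Dehn twist about $\gamma$, so there are at most two such moves per base curve, hence at most $2\xi(S)$ twist neighbours. A flip move replaces one pair $(\gamma,t(\gamma))$ by the same pair with the roles reversed and then resolves the resulting failure of the marking axiom; by \cite[Lemma 2.4]{MasurMinsky:complex2} there is a uniformly bounded number $N_0$ of resolutions, hence at most $N_0\,\xi(S)$ flip neighbours. Thus every vertex has valence at most $(N_0+2)\xi(S)$, and $\MM(S)$ is locally finite. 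Connectivity of $\MM(S)$ I would take from \cite{MasurMinsky:complex2}: it reflects the fact that $\MCG(S)$ is generated by Dehn twists together with the elementary moves, which is part of the Masur--Minsky construction.

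\textbf{Cocompactness.} It suffices to show that $\MCG(S)$ has finitely many orbits of markings, since then the quotient graph $\MM(S)/\MCG(S)$ is finite (a locally finite graph with finitely many vertex orbits has a finite quotient). By the change-of-coordinates principle, $\MCG(S)$ acts on $\CC(S)$ with finitely many orbits of top-dimensional simplices, so up to the action we may fix the base multicurve $\Delta=\base(\mu)$. The stabilizer $\Stab(\Delta)$ surjects, with finite kernel, onto a direct product of mapping class groups of the complexity-$1$ pieces and annuli of $S\setminus\Delta$; the transversal data of $\mu$ is exactly a choice of one vertex at distance $1$ from $\Delta$ in the curve complex of each relevant complexity-$1$ piece, and $\Stab(\Delta)$ acts on each such (finite-valence) vertex set with finitely many orbits. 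Combining, $\MCG(S)$ has finitely many orbits of markings.

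\textbf{Proper discontinuity and conclusion.} Since $\MM(S)$ is locally finite and $\MCG(S)$ acts by simplicial automorphisms, proper discontinuity reduces to finiteness of the stabilizer of each marking $\mu$. Any $g\in\Stab(\mu)$ preserves the finite curve system $\base(\mu)\cup\{t(\gamma):\gamma\in\base(\mu)\}$, hence lies in the finite-index subgroup of $\Stab(\base\mu)$ fixing every base curve; preserving moreover each transversal $t(\gamma)\in\CC(\gamma)$ forces the twisting coordinate of $g$ about each $\gamma$ to be trivial, so $g$ lies in the finite group of mapping classes of the complementary pieces fixing the relevant arcs. Thus $\Stab(\mu)$ is finite, the action is properly discontinuous, and by the \v{S}varc--Milnor lemma $\MCG(S)$ is finitely generated and quasi-isometric to $(\MM(S),\dist_{\MM(S)})$. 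I expect the subtlest point to be cocompactness: making precise that over a fixed pants decomposition the set of transversal configurations carries only finitely many $\Stab(\base\mu)$-orbits requires a careful description of $\Stab(\base\mu)$ and of the $\Stab(\base\mu)$-action on the complexity-$1$ curve complexes attached to the pants curves. Local finiteness and proper discontinuity are essentially bookkeeping once \cite[Lemma 2.4]{MasurMinsky:complex2} is invoked.
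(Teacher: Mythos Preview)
The paper does not prove this theorem at all: it is stated as a result of Masur--Minsky and simply cited from \cite{MasurMinsky:complex2}, with no argument given. So there is no ``paper's own proof'' to compare against; you have supplied what the paper omits.

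Your outline is the standard \v{S}varc--Milnor reduction and is essentially correct. A couple of small comments on the writing. In the cocompactness step, the key fact you are using is that, once the pants decomposition $\Delta$ is fixed, the Dehn twists about the curves of $\Delta$ lie in $\Stab(\Delta)$ and act with finitely many orbits on the set of allowed transversals (the curves at distance $1$ from $\gamma$ in the complexity-$1$ piece $T_\gamma$); you state this but could make the mechanism more explicit. In the proper-discontinuity step, the cleanest way to phrase it is that the subgroup of $\MCG(S)$ fixing each curve of $\base(\mu)$ is, up to a finite group, the free abelian group generated by the Dehn twists about those curves (since the complementary pairs of pants have finite mapping class groups), and fixing each $t(\gamma)\in\CC(\gamma)$ kills the corresponding twist coordinate; this is exactly what you say, just slightly tidied. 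None of this affects correctness.
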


\begin{notation}\label{Ndistm}
In what follows we sometimes denote $\dist_{\MM (S)}$ by $\dist_\MM$, when there is no
possibility of confusion.
\end{notation}

The subsurface projections introduced in Section~\ref{Section:CC}
allow one to consider     the
projection of a marking on $S$ to the curve complex of a subsurface
$Y\subseteq S$. Given a marking
$\mu\in\MM(S)$ we define $\pi_{\CC(S)}(\mu)$ to be $\base\mu$. More
generally, given a subsurface
$Y\subset S$, we define $\pi_{\CC(Y)}(\mu)=\pi_{\CC(Y)}(\base(\mu))$,
if $Y$ is not an annulus about an
element of $\base(\mu)$; if $Y$ is an annulus about an element
$\gamma\in\base(\mu)$, then we define
$\pi_{\CC(Y)}(\mu)=t({\gamma})$, the transversal curve to $\gamma$.

\begin{notation}
For two markings $\mu,\nu\in\MM(S)$ we often use the following
standard simplification of notation:
$$\dcy(\mu,\nu)=\dcy(\pi_{\CC(Y)}(\mu),\pi_{\CC(Y)}(\nu)).$$
\end{notation}

\begin{remark}\label{r2}
By Lemma \ref{r1}, for every marking $\mu$ and every subsurface
$Y\subseteq S$, the diameter of the
projection of $\mu$ into $\CC(Y)$ is at most $3$. This implies that
the difference between
$\dcy(\mu,\nu)$ as defined above and the Hausdorff distance between
$\pi_{\CC(Y)}(\mu)$ and
$\pi_{\CC(Y)}(\nu)$ in $\CC (Y)$ is at most six.
\end{remark}

\subsubsection*{Hierarchies}\label{shierar}

In the marking complex, there is an important
family of quasi-geodesics called \emph{hierarchy paths} which
have several useful geometric properties.
The concept of
hierarchy was first developed by Masur--Minsky in
\cite{MasurMinsky:complex2}, which the reader may consult for
further details. For a survey see \cite{Min06:ICMAddress}.
We recall below the properties of hierarchies
that we shall use in the sequel.

Given two subsets $A,B\subset \R$,
a map $f\co A\to B$ is said to be
\emph{coarsely increasing} if there exists a constant $D$ such that
for each $a,b$ in $A$ satisfying $a+D<b$, we have that $f(a)\leq f(b)$. Similarly, we define
\emph{coarsely decreasing} and \emph{coarsely monotonic} maps. We say a
map between quasi-geodesics is coarsely monotonic if it defines a
coarsely monotonic map between suitable nets in their domain.

We say a quasi-geodesic $\fg$ in $\MM(S)$ is
\emph{$\CC(U)$--monotonic} for some subsurface $U\subset S$ if one can
associate a geodesic $\ft_U$ in $\CC(U)$ which
{\em shadows} $\fg$ in the sense that $\ft_U$ is a path from
a vertex of
$\pi_{U}(\base(\mu))$ to a vertex of $\pi_{U}(\base(\nu))$
and there is a coarsely
monotonic map $v\co\fg\to\ft_U$ such that $v(\rho)$
is a vertex in $\pi_{U}(\base(\rho))$ for every vertex $\rho\in\fg$.

Any pair of points $\mu,\nu\in\MM(S)$ are connected by at least one
hierarchy path.  Hierarchy paths are quasi-geodesics with uniform
constants depending only on the surface $S$. One of the important
properties of hierarchy paths is that
they are $\CC(U)$--monotonic for every $U\subseteq S$ and moreover the
geodesic onto which they project is a tight geodesic.

The following is an immediate consequence of  Lemma 6.2 in \cite{MasurMinsky:complex2}.

\begin{lemma}\label{MM2:LLL}
There exists a constant $M=M(S)$ such that, if $Y$ is an essential
proper
subsurface of $S$
and $\mu,\nu$ are
two markings in $\MM (S)$ satisfying $\dist_{\CC(Y)}(\mu,\nu) > M$,
then any hierarchy path $\fg$ connecting $\mu$ to $\nu$ contains a
marking $\rho$ such that the multicurve $\base(\rho)$ includes the
multicurve $\boundary Y$.  Furthermore, there exists a vertex $v$ in
the geodesic $\ft_\fg$ shadowed by $\fg$ for which $v\in\base(\rho)$,
and hence satisfying $Y\subseteq S\setminus v$.
\end{lemma}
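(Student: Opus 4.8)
The plan is to deduce this from Lemma 6.2 of \cite{MasurMinsky:complex2}, which asserts (in Masur--Minsky's language) the existence of a constant $M$ such that for a hierarchy $H$ between $\mu$ and $\nu$, if $Y$ is a proper essential subsurface with $\dist_{\CC(Y)}(\mu,\nu)>M$, then $Y$ is a \emph{component domain} of some geodesic in $H$; equivalently, $\partial Y$ appears along the main geodesic of $H$, and there is a geodesic in $H$ supported on $Y$. The first step is simply to unwind the definitions so that this conclusion is phrased in terms of the hierarchy \emph{path} $\fg$ rather than the hierarchy $H$: a hierarchy $H$ comes equipped with a resolution into a sequence of markings, and this sequence (after the standard quasi-geodesic reparametrization) \emph{is} the hierarchy path $\fg$. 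So ``$Y$ is a component domain appearing in $H$'' translates into ``some marking $\rho$ occurring along $\fg$ has $\partial Y\subseteq\base(\rho)$'': indeed, when the main geodesic of $H$ passes through a vertex whose associated subsurface is $Y$ (or, more precisely, when the resolution reaches the portion of $H$ where the geodesic on domain $Y$ is active), the base of the corresponding marking must contain a curve bounding $Y$, because the marking's base is the ``slice'' of $H$ and must be compatible with every geodesic of $H$ that is currently active, including the one on $\partial Y$ and the one on its complement.

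Second, I would extract the statement about the shadowed tight geodesic $\ft_\fg$. Since $\fg$ is $\CC(S)$-monotonic with shadow $\ft_\fg$, the vertex $v(\rho)\in\pi_S(\base(\rho))=\base(\rho)$ produced by the monotonicity map is a vertex of the main geodesic $\ft_\fg$; by the previous paragraph we may take $\rho$ so that $\base(\rho)\supseteq\partial Y$, and then $v(\rho)$ can be chosen to be a component of $\partial Y$, so $v(\rho)\in\base(\rho)$ and $Y\subseteq S\setminus v(\rho)$. (If $\ft_\fg$ is given only as a coarse path, one uses Remark \ref{r2} and the bounded diameter of projections to replace $v(\rho)$ by an honest vertex of $\ft_\fg$ within bounded distance, absorbing the bound into $M$.) The constant $M=M(S)$ is exactly the one furnished by Lemma 6.2, possibly enlarged by a uniform additive amount coming from the passage between subsurface projections of markings and Hausdorff distances (Remark \ref{r2}) and from the quasi-geodesic constants of hierarchy paths.

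The main obstacle is purely expository rather than mathematical: Masur--Minsky's Lemma 6.2 is stated for abstract hierarchies and component domains, whereas here everything must be phrased for the concrete hierarchy path $\fg$ in $\MM(S)$ and its shadow $\ft_\fg$. So the real content of the argument is the careful bookkeeping that (i) a resolution of a hierarchy is a hierarchy path, (ii) ``domain $Y$ supports a geodesic in $H$'' forces $\partial Y$ into the base of the corresponding marking in the resolution, and (iii) that marking's base curve can be taken to be the vertex of the shadowed geodesic guaranteed by $\CC(S)$-monotonicity. No step requires more than the already-cited structure of hierarchies together with Lemma \ref{r1} and Remark \ref{r2} to control the uniform constants.
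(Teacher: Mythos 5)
Your overall strategy is exactly the paper's: Lemma \ref{MM2:LLL} is stated there as an immediate consequence of Lemma 6.2 of \cite{MasurMinsky:complex2} (the Large Link Lemma), and your unwinding — large projection forces $Y$ to be a component domain supporting a geodesic of the hierarchy, a resolution of the hierarchy is the hierarchy path, and the slice marking active while that geodesic is traversed has $\partial Y$ in its base — is the intended bookkeeping.

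One step in your second paragraph is wrong as written: you claim the vertex $v(\rho)$ of the shadowed geodesic $\ft_\fg$ ``can be chosen to be a component of $\partial Y$.'' A boundary component of $Y$ is in general \emph{not} a vertex of the main tight geodesic of the hierarchy, and replacing it by a nearby vertex of $\ft_\fg$ via Remark \ref{r2} does not help, since a vertex at bounded $\CC(S)$-distance from $\partial Y$ need not be disjoint from $Y$. The correct source of $v$ is the nesting structure of component domains: $Y$ sits in a chain $Y\subseteq W_1\subsetneq\cdots\subsetneq S$ of component domains of geodesics in the hierarchy, and the penultimate domain $W$ in this chain is a component domain of the pair $(S,v)$ for an actual vertex $v$ of the main geodesic $\ft_\fg$; then $Y\subseteq W\subseteq S\setminus v$, and $v$ lies in the base of the corresponding slice marking $\rho$ together with $\partial Y$. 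With that substitution the argument is complete and matches the paper's (uncited) intent.
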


\begin{defn}\label{klarged}
Given a constant $K\ge M(S)$, where $M(S)$ is the constant from Lemma
\ref{MM2:LLL}, and a pair of
markings $\mu, \nu$, the subsurfaces $Y\subseteq S$ for which
$\dist_{\CC(Y)}(\mu, \nu)>K$ are called
the \emph{$K$-large domains} for the pair $(\mu, \nu)$.
We say that a
hierarchy path {\em contains a domain $Y\subseteq S$} if $Y$ is a
$M(S)$--large domain between some pair of points on the hierarchy path.
Note that for every such domain, the hierarchy contains
a marking whose base contains $\partial Y$.
\end{defn}

The following useful lemma is one of the basic ingredients in the
structure of hierarchy paths; it is an immediate consequence of
\cite[Theorem~4.7]{MasurMinsky:complex2} and the fact that a chain of
nested subsurfaces has length at most $\xi(S)$.

\begin{lemma}\label{atmost2xi} Let $\mu,\nu\in \MM(S)$ and let $\xi$
be the complexity of
$S$. Then for every $M(S)$-large domain $Y$ in a hierarchy path
$[\mu,\nu]$ there exist at most $2\xi$
domains that are $M(S)$-large and that contain $Y$, in that path.
\end{lemma}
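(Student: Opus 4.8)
The plan is to deduce Lemma \ref{atmost2xi} from the structure theory of hierarchies, specifically from the fact that nesting of large domains is the only way domains can accumulate, combined with a bounded-length bound on nesting chains. First I would recall the key input: Theorem~4.7 of \cite{MasurMinsky:complex2} says (roughly) that if $Y$ and $Z$ are both $M(S)$-large domains appearing in a hierarchy $H$ connecting $\mu$ to $\nu$, then $Y$ and $Z$ are not \emph{transverse} — that is, either they are disjoint, or one is nested in the other, or their boundaries intersect in a controlled way so that the associated geodesics in $\CC(Y)$ and $\CC(Z)$ are ordered by the hierarchy. More precisely, the relevant consequence is: among the large domains of a fixed hierarchy path, any two are either disjoint (in which case the projection of one to the other is of bounded diameter, hence they cannot both be $M(S)$-large witnesses for the same pair unless $K$ is chosen appropriately) or nested.

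Now fix a hierarchy path $[\mu,\nu]$ and an $M(S)$-large domain $Y$ in it, and consider the collection $\mathcal{D}$ of all $M(S)$-large domains $Z$ in that path with $Y\subseteq Z$. The claim is $\card \mathcal{D}\le 2\xi$. The key step is to observe that $\mathcal{D}$ cannot contain two domains $Z_1, Z_2$ which are \emph{disjoint} from one another: indeed both contain $Y$, so in particular their interiors intersect (they share $Y$), forcing them to not be disjoint. Similarly $\mathcal{D}$ cannot contain two domains whose boundaries genuinely cross in an essential way, because the structure theorem (\cite[Theorem~4.7]{MasurMinsky:complex2}) forbids transverse large domains in a single hierarchy. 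Hence any two elements of $\mathcal{D}$ are $\subseteq$-comparable, i.e. $\mathcal{D}$ is totally ordered by inclusion — it forms a \emph{chain of nested subsurfaces}. Since a chain of properly nested essential subsurfaces of $S$ has length at most $\xi(S)$ (each strict inclusion drops the complexity by at least $1$, and complexities are nonnegative for subsurfaces supporting a curve complex, with annuli at complexity $-1$), we would get $\card\mathcal{D}\le \xi(S)+1$. The stated bound $2\xi$ is weaker, so it certainly holds; I would simply cite the nesting-length bound and note the factor $2$ provides slack (e.g. to absorb the bookkeeping of whether one counts $Y$ itself, or to account for the two boundary components a subsurface can contribute in the refinement of the complexity count for bordered surfaces).

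The main obstacle will be correctly extracting from \cite[Theorem~4.7]{MasurMinsky:complex2} the precise statement that large domains in a single hierarchy are pairwise non-transverse, and being careful about the constant: the lemma implicitly requires that one works with domains that are $M(S)$-large for \emph{some} pair of points on the hierarchy, and the non-transversality conclusion of the structure theorem has its own threshold constant, so one must ensure $M(S)$ is taken large enough (at least the maximum of the several constants involved) for the argument to apply uniformly. Once that is pinned down, the reduction to ``a nested chain has length $\le\xi(S)$, which is $\le 2\xi(S)$'' is immediate, which is presumably why the authors phrase it as an ``immediate consequence.'' I would therefore spend the bulk of the write-up stating the non-transversality consequence cleanly and the rest is the short combinatorial count.
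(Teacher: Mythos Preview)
Your overall strategy---reduce to a bound on the length of chains of nested subsurfaces via \cite[Theorem~4.7]{MasurMinsky:complex2}---is the same as the paper's. But there is a genuine gap in your execution. You assert that Theorem~4.7 ``forbids transverse large domains in a single hierarchy,'' and hence that $\mathcal{D}$ is totally ordered by inclusion. That assertion is false: hierarchies routinely contain overlapping large domains---the entire time-ordering machinery of \cite{MasurMinsky:complex2} exists precisely to handle this. Concretely, one can have $Z_1\pitchfork Z_2$ with $Y\subseteq Z_1\cap Z_2$ and all three appearing as $M(S)$-large domains for the same pair $(\mu,\nu)$; Behrstock's projection estimates (Theorem~\ref{projest}) give a time-order between $Z_1$ and $Z_2$, not an exclusion.

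What Theorem~4.7 actually gives is that the geodesics of $H$ whose domain contains $Y$ are organized into a \emph{forward} subordinacy sequence and a \emph{backward} subordinacy sequence, each running from $g_Y$ up to the main geodesic. Along each sequence the domains are strictly nested, so each has length at most $\xi(S)$; their union therefore has size at most $2\xi(S)$. This is where the factor of $2$ comes from---it is not slack, and your single-chain argument does not establish the bound. Once you replace the incorrect ``no transverse large domains'' claim with the forward/backward decomposition, the proof goes through exactly as the paper indicates.
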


A pair of subsurfaces $Y$,$Z$ are said to \emph{overlap} if $Y\cap
Z\neq\emptyset$ and neither of the two
subsurfaces is a subsurface of the other; when $Y$ and $Z$ overlap we
write $Y\pitchfork Z$, when they do not overlap
we write $Y\notpitchfork Z$. The following useful
theorem was proven by Behrstock in \cite{Behrstock:asymptotic}.

\begin{theorem}{\bf (Projection estimates \cite{Behrstock:asymptotic}).}\label{projest}
  There exists a constant $D$ depending only on the topological type
of a surface $S$
   such that for any two overlapping
     subsurfaces $Y$ and $Z$ in $S$,
     with  $\xi(Y)\neq 0 \neq \xi(Z)$, and for any $\mu\in\MM(S)$:
     $$\min\{ \dist_{\CC(Y)}(\partial Z, \mu), \dist_{\CC(Z)}(\partial Y,
     \mu)\}
     \leq D \, .$$
\end{theorem}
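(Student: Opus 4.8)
The plan is to reduce the estimate to an inequality for a single curve drawn from the marking $\mu$, and then to exploit the way the overlap $Y\cap Z$ sits inside both $Y$ and $Z$. Since $Y\pitchfork Z$, the curve system $\partial Z$ cannot be homotoped off $Y$ (otherwise $Y$ would lie on one side of $\partial Z$, forcing $Y\subseteq Z$ or $Y\cap Z=\emptyset$), so $\pcy(\partial Z)\neq\emptyset$, and symmetrically $\pi_{\CC(Z)}(\partial Y)\neq\emptyset$; moreover $Y\cap Z$ has a component $W$ that is essential in $S$. Because $\base(\mu)$ is a pants decomposition it fills $S$, hence meets $W$ essentially, so some base curve $\gamma\in\base(\mu)$ has $\pcy(\gamma)\neq\emptyset\neq\pi_{\CC(Z)}(\gamma)$. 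As $\pcy(\gamma)\subseteq\pcy(\mu)$, $\pi_{\CC(Z)}(\gamma)\subseteq\pi_{\CC(Z)}(\mu)$ and projections of markings have diameter at most $3$ (Remark~\ref{r2}), it suffices to bound $\min\{\dcy(\gamma,\partial Z),\dist_{\CC(Z)}(\gamma,\partial Y)\}$ by a constant depending only on $\xi(S)$; the cases in which $Y$ or $Z$ is an annulus (permitted, since then $\xi=-1\neq0$) would be handled in the same way, using core or transversal curves in place of $\gamma$.

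If $\gamma$ is isotopic into $Y$, the argument is direct. Assume $\dcy(\gamma,\partial Z)\ge3$; then $\gamma$ and $\pcy(\partial Z)$ fill $Y$, so every essential curve of $Y$ disjoint from $\pcy(\partial Z)$ meets $\gamma$ — in particular every essential curve of $Y\cap Z$ does. Since $\gamma\subseteq Y$ and $\gamma$ crosses $\partial Z$, every arc of $\gamma\cap Z$ lies in $Y\cap Z$, so each curve of $\pi_{\CC(Z)}(\gamma)$ lies in a regular neighbourhood of $(Y\cap Z)\cup\partial Z$ inside $Z$, hence is disjoint from a representative of $\pi_{\CC(Z)}(\partial Y)$; thus $\dist_{\CC(Z)}(\gamma,\partial Y)\le2$ and the minimum is at most $3$. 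The case $\gamma\subseteq Z$ is symmetric, so from now on $\gamma$ crosses both $\partial Y$ and $\partial Z$.

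In this remaining case I would argue by contradiction: suppose both distances exceed a large constant $D=D(\xi(S))$. Then $\gamma\cap Y$ fills $Y$ relative to $\partial Z$ and $\gamma\cap Z$ fills $Z$ relative to $\partial Y$; restricting the first statement to $W$ shows $\gamma\cap W$ fills $W$. One then analyses the arcs of $\gamma\cap Z$ according to whether they are disjoint from, or cross, the multicurve $\partial Y\cap Z$: arcs disjoint from it resolve to curves lying on one side of $\pi_{\CC(Z)}(\partial Y)$, hence close to it in $\CC(Z)$, and for arcs that cross it one controls the resolved curve using the Masur--Minsky estimates on subsurface projections and on resolutions of arc systems (\cite[\S2]{MasurMinsky:complex2}) together with the filling properties just recorded; the desired conclusion is that $\pi_{\CC(Z)}(\gamma)$ must come within a universal distance of $\pi_{\CC(Z)}(\partial Y)$, contradicting $\dist_{\CC(Z)}(\gamma,\partial Y)>D$ once $D$ is large enough.

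The step I expect to be the main obstacle is precisely this last one: bounding the distance from the curves resolved out of arcs of $\gamma\cap Z$ that straddle $\partial Y$ inside $Z$ to $\pi_{\CC(Z)}(\partial Y)$, which forces one to work carefully with the definition of the subsurface projection. An alternative route that bypasses this arc bookkeeping uses the hierarchy machinery: if both projection distances were large, then connecting $\mu$ by a hierarchy to a marking whose base contains $\partial Z$ and applying Lemma~\ref{MM2:LLL} with the subsurface $Y$ would force that hierarchy to contain a marking whose base contains $\partial Y$; running the symmetric argument and tracking the order in which the domains $\partial Y$ and $\partial Z$ must appear along hierarchies then yields constraints incompatible with $Y\pitchfork Z$.
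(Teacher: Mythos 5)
The first thing to note is that the paper does not prove this statement: it is quoted from \cite{Behrstock:asymptotic} (Theorem~4.3 there), so there is no internal argument to compare yours against. Behrstock's original proof is a non-constructive limiting argument (and gives no explicit constant); the elementary surgery argument you are attempting is essentially the later effective proof due to Leininger. Measured against that, your proposal has a genuine gap exactly where you flag one, and that gap is the whole theorem. After your reductions, the only case with content is the one where the chosen curve $\gamma$ crosses both $\partial Y$ and $\partial Z$, and there you write that the arcs of $\gamma\cap Z$ crossing $\partial Y$ are ``controlled using the Masur--Minsky estimates on subsurface projections and on resolutions of arc systems.'' No lemma of \cite{MasurMinsky:complex2} does this for you: the assertion to be proved is precisely that largeness of $\dist_{\CC(Y)}(\gamma,\partial Z)$ forces such an arc to resolve to a curve near $\pi_{\CC(Z)}(\partial Y)$, and this needs a specific mechanism. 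In the known elementary proof the mechanism is, roughly: if $\dist_{\CC(Y)}(\gamma,\partial Z)$ is large, then every arc of $\gamma\cap Y$ must cross $\partial Z$ (an arc of $\gamma\cap Y$ disjoint from $\partial Z$ would give $\dist_{\CC(Y)}(\gamma,\partial Z)\le 4$ by the disjointness lemma), and iterating this forces every excursion of $\gamma$ into $Y$ to stay so close to $\partial Z\cap Y$ that the resolved curves in $\CC(Z)$ cannot leave a neighbourhood of $\partial Y\cup\partial Z$. Nothing in your write-up supplies this step, and the fallback hierarchy argument in your final sentence is likewise only a gesture: you do not say which marking you connect $\mu$ to, why $Y$ and $Z$ would both be domains of one hierarchy, or which ordering constraint is violated.

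Two smaller points. The reduction to a single curve is not justified as stated: $Y\cap Z$ need not have an essential non-annular component (two overlapping annuli meet in a square, and even for non-annular $Y,Z$ the essential pieces of $Y\cap Z$ can all be annuli), so a single base curve of $\mu$ meeting both $Y$ and $Z$ need not exist. The reduction is repairable --- if every base curve meeting $Z$ misses $Y$, then any such curve is disjoint from $\partial Y$ and $\dist_{\CC(Z)}(\mu,\partial Y)$ is bounded at once --- but that is a different argument from the one you give, and the annular cases you set aside are exactly where the definition of the projection via covers requires care. Finally, in the case $\gamma\subseteq Y$ your filling discussion is a detour: $\gamma\subseteq Y$ already gives $i(\gamma,\partial Y)=0$, so $\dist_{\CC(Z)}(\gamma,\partial Y)$ is bounded with no hypothesis on $\dist_{\CC(Y)}(\gamma,\partial Z)$ at all.
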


\begin{cvn}\label{mlargerd}
{In what follows we assume that the constant $M=M(S)$ from Lemma
\ref{MM2:LLL} is larger than the
constant $D$ from Theorem \ref{projest}.}
\end{cvn}

\begin{notation} Let $a>1$, $b,x,y$ be
positive real numbers. We write $ x\leq_{a,b} y$ if $$ x\le ay+b.$$

We write $x \approx_{a,b} y$ if and only if $ x\leq_{a,b} y$ and
$y\leq_{a,b} x$.
\end{notation}

\begin{notation} Let $K, N>0$ be real numbers. We define $\Tsh K{N}$
to be $N$ if $N\geq K$ and $0$ otherwise.
\end{notation}

The following result is fundamental in studying the metric geometry of
the marking complex.  It provides a way to compute distance in the
marking complex from the distances in the curve complexes of the large
domains.

\begin{theorem}[Masur--Minsky;
\cite{MasurMinsky:complex2}]\label{distanceformula}
    If $\mu,\nu\in \MM(S)$,  then there exists a constant $K(S)$,
    depending only on $S$, such that for each
    $K>K(S)$ there exists $a\geq 1$ and $b\geq 0$ for which:
    \begin{equation}\label{fdistformula}
     \dist_{\MM(S)}(\mu,\nu) \approx_{a,b} \sum_{Y\subseteq S}
    \Tsh K{\dist_{\CC(Y)}(\pi_{Y}(\mu),\pi_{Y}(\nu))}.
\end{equation}
\end{theorem}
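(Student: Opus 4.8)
The statement is the Masur--Minsky distance formula. The plan is to establish the two coarse inequalities hidden in ``$\approx_{a,b}$'' separately, with the hard direction being a controlled construction of an efficient path. Throughout, fix $K>K(S)$ large; we may and do assume $K>M(S)$ and $K>D$, the constant of Theorem \ref{projest} (Convention \ref{mlargerd}).

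First I would prove that $\sum_{Y\subseteq S}\Tsh K{\dcy(\mu,\nu)}\leq_{a,b}\dmm(\mu,\nu)$. Let $m_0=\mu,m_1,\dots,m_L=\nu$ be a geodesic in $\MM(S)$, so $L=\dmm(\mu,\nu)$. By the Lipschitz property of subsurface projections, markings differing by a twist or flip move have $\CC(Y)$--projections at uniformly bounded distance on every $Y$ on which both project nontrivially; hence for each $K$--large domain $Y$ the total $\CC(Y)$--progress along the path is at least $\dcy(\mu,\nu)$ minus a constant. The point is to bound how many domains contribute. For a $K$--large domain $Y$ isolate its \emph{active interval} $I_Y\subseteq\{0,\dots,L\}$, consisting of those $i$ for which $\pi_{\CC(Y)}(m_i)$ is farther than $K/3$ from both $\pi_{\CC(Y)}(\mu)$ and $\pi_{\CC(Y)}(\nu)$; essentially all of the $\CC(Y)$--progress happens during $I_Y$, so $\dcy(\mu,\nu)\lesssim |I_Y|$. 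Next I would show the $I_Y$ are almost disjoint. If $Y\pitchfork Z$ and $i\in I_Y\cap I_Z$, then $\pi_{\CC(Y)}(m_i)$ is far from $\{\pi_{\CC(Y)}(\mu),\pi_{\CC(Y)}(\nu)\}$ and likewise in $\CC(Z)$; applying Theorem \ref{projest} to the pair $Y,Z$ with $\mu$ and with $\nu$ forces $\partial Z$ to be $\CC(Y)$--close to $\{\mu,\nu\}$ (or symmetrically $\partial Y$ to be $\CC(Z)$--close to $\{\mu,\nu\}$), which, combined with the previous estimate and $m_i\in I_Y$, contradicts $Y$ being $K$--large once $K$ is chosen large enough. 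For $Y,Z$ that are nested or disjoint, Lemma \ref{atmost2xi} (and its disjoint-domains analogue) bounds by $O(\xi(S))$ the number whose active intervals can contain a common index. Hence each $i$ lies in at most $O(\xi(S))$ active intervals, so $\sum_{Y}|I_Y|\leq O(\xi(S))\,L$, and therefore $\sum_{Y}\Tsh K{\dcy(\mu,\nu)}\lesssim\sum_{Y\ K\text{-large}}|I_Y|\lesssim L$.

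For the reverse inequality $\dmm(\mu,\nu)\leq_{a,b}\sum_{Y\subseteq S}\Tsh K{\dcy(\mu,\nu)}$ I would use hierarchies. By the Masur--Minsky hierarchy machinery there is a hierarchy path $\fg$ from $\mu$ to $\nu$; its resolution is a path of markings whose length is coarsely $\sum_{Y}|\ft_Y|$, the sum over the component domains $Y$ of the hierarchy of the lengths of the tight geodesics $\ft_Y$ that $\fg$ shadows in $\CC(Y)$. Since each $\ft_Y$ runs from a bounded neighbourhood of $\pi_{\CC(Y)}(\mu)$ to $\pi_{\CC(Y)}(\nu)$, one has $|\ft_Y|\approx\dcy(\mu,\nu)$; and any $Y$ that is not a component domain of the hierarchy satisfies $\dcy(\mu,\nu)\le M(S)<K$ by Lemma \ref{MM2:LLL}, so contributes $0$ to the right-hand side. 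Thus $\dmm(\mu,\nu)\le|\fg|\lesssim\sum_{Y}\Tsh K{\dcy(\mu,\nu)}$. Choosing $a,b$ to absorb the (surface-dependent) quasi-geodesic constants of hierarchy paths together with the additive constants above, and combining with the first inequality, yields \eqref{fdistformula}. I expect the main obstacle to be exactly this hierarchy input for the second inequality: the existence of hierarchy paths, the fact that their resolutions have length comparable to $\sum_Y|\ft_Y|$, and the identification of which domains appear all rest on the intricate induction-on-complexity structure theory of hierarchies from \cite{MasurMinsky:complex2}; by contrast the first inequality is essentially elementary once one has Theorem \ref{projest} and Lemma \ref{atmost2xi}, the only delicate point being the calibration of the thresholds $K$, $K/3$, $M(S)$, $D$ so that the active intervals are genuinely almost disjoint.
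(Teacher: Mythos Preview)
The paper does not prove this theorem; it is quoted from \cite{MasurMinsky:complex2} as background (the sentence ``The following result is fundamental\dots'' introduces it, with no proof following). So there is no proof in the paper to compare your proposal against.

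That said, a comment on your sketch. Your second inequality (via hierarchy paths) is the right idea and is essentially how Masur--Minsky prove it. Your first inequality, however, has a genuine gap in the non-overlapping case. Your argument for overlapping $Y,Z$ is fine (modulo a harmless $Y\leftrightarrow Z$ swap in the last line: it is $Z$, not $Y$, whose $K$--largeness gets contradicted). But for nested or disjoint $Y,Z$ you invoke Lemma~\ref{atmost2xi} ``and its disjoint-domains analogue'' to bound how many active intervals can share an index. Lemma~\ref{atmost2xi} is a statement about domains \emph{appearing in a fixed hierarchy path}, not about $K$--large domains for an arbitrary pair along an arbitrary $\MM(S)$--geodesic; you have no hierarchy in hand at this stage, so the appeal is unjustified. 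And there is no obvious a~priori bound on the number of pairwise non-overlapping $K$--large domains whose active intervals contain a given index $i$: families of pairwise disjoint-or-nested subsurfaces can be large, and nothing in your setup forces $\partial Y\subseteq\base(m_i)$. In Masur--Minsky's original argument both inequalities are obtained from the hierarchy machinery (the resolution of a hierarchy is a quasi-geodesic whose length is coarsely $\sum_Y|\ft_Y|$, which gives the upper bound as well as the lower bound); an ``elementary'' proof of the upper bound along an arbitrary $\MM(S)$--geodesic, as you attempt, would require a different mechanism to control the non-overlapping domains.
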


We now define an important collection of subsets of the marking
complex.  In what follows by \emph{topological type of a multicurve}
$\Gamma$ we mean the topological type of the pair $(S, \Gamma)$.

\begin{notation}
Let $\Delta$ be a simplex in $\CC(S)$. We define
$\QQ(\Delta)$
to be the set of elements of $\MM(S)$ whose bases contain $\Delta$.
\end{notation}

\begin{remark}\label{rmk:qqproduct}
As noted in \cite{BehrstockMinsky:rankconj} a consequence of the
distance formula is that the space
$\QQ(\Delta)$ is quasi-isometric
to a coset of a stabilizer in $\MCG (S)$ of a multicurve with the same
topological type as $\Delta$. To see this, fix a collection
$\Gamma_1,...,\Gamma_n$ of
multicurves where each topological type
of multicurve is represented
exactly once in this list.  Given any multicurve $\Delta$, fix an
element $f\in\MCG$ for which $f(\Gamma_i)=\Delta$, for the appropriate
$1\leq i\leq n$.  Now, up to a bounded Hausdorff distance, we have an
identification of $\QQ(\Delta)$ with $f \stab(\Gamma_i)$ given
by the natural quasi-isometry between $\MM(S)$ and $\MCG(S)$.
\end{remark}

\subsection{Marking projections}

\subsubsection{Projection on the marking complex of a
subsurface.}\label{projmy}\quad Given any subsurface $Z\subset S$, we
define a projection $\pi_{\MM(Z)}\co\MM(S)\to 2^{\MM(Z)}$, which sends
elements of $\MM(S)$ to subsets of $\MM(Z)$.  Given any $\mu\in\MM(S)$
we build a marking on $Z$ in the following way.  Choose an element
$\gamma_{1}\in\pi_{Z}(\mu)$, and then recursively choose $\gamma_{n}$
from $\pi_{Z\setminus \cup_{i<n}\gamma_{i}}(\mu)$, for each $n\leq
\xi(Z)$.  Now take these $\gamma_{i}$ to be the base curves of a
marking on $Z$.  For each $\gamma_{i}$ we define its transversal
$t(\gamma_{i})$ to be an element of $\pi_{\gamma_{i}}(\mu)$.  This
process yields a marking, see \cite{Behrstock:asymptotic} for
details.

Arbitrary choices were made in this construction, but it is proven in
\cite{Behrstock:asymptotic} that there is a uniform constant depending
only on $\xi(S)$, so that given any $Z\subset S$ and any $\mu$ any two
choices in building $\pi_{\MM(Z)}(\mu)$ lead to elements of $\MM(Z)$
whose distance is bounded by this uniform constant. Thus, in the
sequel the
choices made in the construction will be irrelevant.

\begin{remark}\label{rnested}
Given two nested subsurfaces $Y\subset Z\subset S$ the projection of
an arbitrary marking $\mu$ onto $C(Y)$ is at uniformly bounded
distance from the projection of $\pi_{\MM (Z)}(\mu )$ onto $C(Y)$.
This follows from the fact that in the choice of $\pi_{\MM (Z)}(\mu)$
one can start with a base curve in $S$ which intersects $Y$ and hence
also determines up to diameter 3 the projection of $\mu$ to $C(Y)$.

A similar argument implies that $\pi_{\MM (Y)}(\mu )$ is at uniformly
bounded distance from $\pi_{\MM (Y)}\left( \pi_{\MM (Z)}(\mu )
\right)$.
\end{remark}

An easy consequence of the distance formula in Theorem
\ref{distanceformula} is the following.

\begin{corollary}\label{distsubsurf}
There exist $A\geq 1$ and $B\geq 0$ depending only on $S$ such that
for any subsurface $Z\subset S$ and
any two markings $\mu ,\nu \in \MM (S)$ the following holds:
$$
\dist_{\MM (Z)} \left( \pi_{\MM(Z)} (\mu ) \, ,\, \pi_{\MM(Z)} (\nu )
\right) \leq_{A,B} \dist_{\MM
(S)} (\mu , \nu )\, .
$$
\end{corollary}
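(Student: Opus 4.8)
The plan is to apply the Masur--Minsky distance formula (Theorem \ref{distanceformula}) on both $S$ and $Z$, and to compare the two sums term by term. Fix a threshold $K > \max\{K(S), K(Z)\}$ large enough that the distance formula holds with multiplicative constant $a \geq 1$ and additive constant $b \geq 0$ on both surfaces simultaneously (one may take the worse of the two pairs of constants; note $Z$ ranges over only finitely many topological types of subsurfaces of $S$, so a single choice of constants works uniformly). Then
\[
\dist_{\MM(Z)}\bigl(\pi_{\MM(Z)}(\mu), \pi_{\MM(Z)}(\nu)\bigr) \;\approx_{a,b}\; \sum_{W \subseteq Z} \Tsh{K}{\dist_{\CC(W)}\bigl(\pi_W(\pi_{\MM(Z)}(\mu)), \pi_W(\pi_{\MM(Z)}(\nu))\bigr)}.
\]

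The key step is that every subsurface $W \subseteq Z$ is also a subsurface of $S$, and for such $W$ the projection of $\pi_{\MM(Z)}(\mu)$ to $\CC(W)$ agrees, up to a uniformly bounded error $C = C(S)$, with the projection $\pi_W(\mu)$ of $\mu$ itself. For $W$ a proper subsurface of $Z$ (or $W = Z$ when $Z$ is not an annulus) this is exactly the content of the first part of Remark \ref{rnested}; for $W$ an annulus about a base curve of the constructed marking $\pi_{\MM(Z)}(\mu)$, the transversal was chosen in $\pi_W(\mu)$, so the error is at most $\diam_{\CC(W)}(\pi_W(\mu)) \leq 3$ by Lemma \ref{r1}. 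Hence each term $\Tsh{K}{\dist_{\CC(W)}(\pi_{\MM(Z)}(\mu), \pi_{\MM(Z)}(\nu))}$ differs from $\Tsh{K}{\dist_{\CC(W)}(\mu,\nu)}$ by at most $2C$ (after possibly increasing $K$ to absorb the threshold subtlety, or observing that the cut-off function $\Tsh{K}{\cdot}$ changes by a controlled amount under a bounded perturbation of its argument). Since the number of $W \subseteq Z$ contributing a nonzero term is bounded (again by the distance formula applied to $\mu,\nu$ on $S$, only finitely many $K$-large domains occur, with the count controlled by $\dist_{\MM(S)}(\mu,\nu)$), summing these term-wise comparisons gives
\[
\sum_{W \subseteq Z} \Tsh{K}{\dist_{\CC(W)}\bigl(\pi_{\MM(Z)}(\mu), \pi_{\MM(Z)}(\nu)\bigr)} \;\leq_{a',b'}\; \sum_{W \subseteq S} \Tsh{K}{\dist_{\CC(W)}(\mu,\nu)} \;\approx_{a,b}\; \dist_{\MM(S)}(\mu,\nu),
\]
and composing the quasi-equalities yields the desired inequality $\dist_{\MM(Z)}(\pi_{\MM(Z)}(\mu), \pi_{\MM(Z)}(\nu)) \leq_{A,B} \dist_{\MM(S)}(\mu,\nu)$ with $A,B$ depending only on $S$.

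The main obstacle is the bookkeeping around the threshold function $\Tsh{K}{\cdot}$: a bounded additive perturbation of the argument can push it across the cut-off $K$, so one cannot naively say the terms differ by a bounded additive constant. The standard fix, which I would use, is to run the comparison with two different thresholds — estimate the left-hand sum using threshold $K$ and the right-hand sum using threshold $K - 2C$ — and observe that increasing the threshold only decreases each term while changing the quasi-isometry constants in the distance formula in a controlled way; alternatively, one absorbs the discrepancy into the multiplicative constant, using that any $W$ with $\dist_{\CC(W)}(\pi_{\MM(Z)}(\mu), \pi_{\MM(Z)}(\nu)) \geq K$ automatically has $\dist_{\CC(W)}(\mu,\nu) \geq K - 2C \geq K/2$ for $K$ large. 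Everything else — the identification of subsurfaces of $Z$ with subsurfaces of $S$, the bounded-error comparison of projections via Remark \ref{rnested}, and the finiteness of the number of large domains — is routine given the results already established.
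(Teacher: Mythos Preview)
Your proposal is correct and is exactly the argument the paper has in mind when it calls this an ``easy consequence'' of Theorem~\ref{distanceformula}: apply the distance formula on $Z$, use Remark~\ref{rnested} to replace $\dist_{\CC(W)}(\pi_{\MM(Z)}(\mu),\pi_{\MM(Z)}(\nu))$ by $\dist_{\CC(W)}(\mu,\nu)$ up to bounded error for each $W\subseteq Z$, and then bound the resulting sum by the full sum over $W\subseteq S$. Your discussion of the threshold bookkeeping (via two thresholds $K$ and $K-2C$, or equivalently absorbing the discrepancy multiplicatively) is the standard and correct way to handle that issue; the term-counting aside is unnecessary once you use the two-threshold trick, but it does no harm.
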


\subsubsection{Projection on a set $\QQ
(\Delta)$.}\label{sect:proj}\quad Given a marking $\mu$ and a
multicurve $\Delta$, the projection $\pi_{\MM(S\setminus\Delta)}(\mu)$
can be defined as in Section \ref{projmy}.  This allows one to construct a
point $\mu'\in \QQ(\Delta)$ which, up to a uniformly bounded error,
is closest to $\mu$.  See
\cite{BehrstockMinsky:rankconj} for details.  The marking $\mu'$ is
obtained by taking
the union of the (possibly partial collection of) base curves $\Delta$
with transversal curves given by $\pi_{\Delta}(\mu)$ together with
the base curves and transversals given by
$\pi_{\MM(S\setminus\Delta)}(\mu)$.  Note that the construction of
$\mu'$ requires, for each subsurface $W$ determined by the multicurve
$\Delta$, the construction of a projection $\pi_{\MM(W)}(\mu )$.  As
explained in Section \ref{projmy} each $\pi_{\MM(W)}(\mu )$ is
determined up to uniformly bounded distance in $\MM (W)$, thus $\mu'$
is well defined up to a uniformly bounded ambiguity depending only on
the topological type of $S$.

\section{Tree-graded metric spaces}\label{stg}

\subsection{Preliminaries}\label{section:treegraded}

A subset $A$ in a geodesic metric
  space $X$ is called \emph{geodesic} if every two points in $A$
  can be joined by a geodesic contained in $A$.

\begin{defn}(Dru\c tu-Sapir \cite{DrutuSapir:TreeGraded})\label{deftgr}
Let $\free$ be a complete geodesic metric space and let $\pp$ be a
collection of closed geodesic proper subsets, called {\it{pieces}}, covering $\free$.
We say that the space $\free$ is {\em tree-graded with
respect to }$\pp$ if the following two properties are satisfied:
\begin{enumerate}

\item[($T_1$)] Every two different pieces have at most one point in
common.

\item[($T_2$)] Every simple non-trivial geodesic triangle in $\free$
is contained
in one piece.
\end{enumerate}

When there is no risk of confusion as to the set $\pp$, we simply
say that $\free$ is \emph{tree-graded}.
\end{defn}

Note that one can drop the requirement that pieces cover $X$ because
one can always add to the collection of pieces $\pp$ all the 1-element
subsets of $X$.  In some important cases, as for asymptotic cones of
metrically relatively hyperbolic spaces \cite{Drutu:RelHyp}, the
pieces of the natural tree-graded structure do not cover
$X$.

We discuss in what follows some of the properties of tree-graded
spaces that we shall need further on.

\begin{proposition}[\cite{DrutuSapir:TreeGraded}, Proposition
2.17]\label{pt2}
    Property ($T_2$) can be replaced by the following property:

\begin{quotation}
($T_2'$)\quad for every topological arc $\cf:[0,d]\to \free$ and
$t\in [0,d]$, let $\cf[t-a,t+b]$ be a maximal sub-arc of $\cf$
containing $\cf (t)$ and contained in one piece. Then every other
topological arc with the same endpoints as $\cf$ must contain the
points $\cf (t-a)$ and $\cf (t+b)$.
\end{quotation}
\end{proposition}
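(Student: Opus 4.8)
The plan is to show the two implications $(T_2)\Rightarrow(T_2')$ and $(T_2')\Rightarrow(T_2)$ separately, working always inside the fixed tree-graded structure $(\free,\pp)$ and using property $(T_1)$ freely. First I would record two elementary consequences of $(T_1)$ and $(T_2)$ that make the bookkeeping clean: (a) for any topological arc $\cf$ and any point $\cf(t)$ on it, the union of all sub-arcs of $\cf$ through $\cf(t)$ that lie in a single piece is again a sub-arc lying in a single piece (because two pieces meeting $\cf$ in sub-arcs sharing more than one point must coincide by $(T_1)$), so a well-defined \emph{maximal} such sub-arc $\cf[t-a,t+b]$ exists; and (b) any two points lying in a common piece $P$ are joined by a geodesic contained in $P$, and in fact \emph{every} arc between them that is not contained in $P$ must leave $P$ — this is where $(T_2)$, via the standard geodesic-triangle argument, will be invoked.

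For $(T_2)\Rightarrow(T_2')$: fix a topological arc $\cf\colon[0,d]\to\free$, a parameter $t$, and the maximal sub-arc $\cf[t-a,t+b]\subset P$ for a piece $P$. Let $\cf'$ be another topological arc with the same endpoints $\cf(0),\cf(d)$, and suppose for contradiction that $\cf'$ omits, say, the point $x:=\cf(t-a)$. Then $\cf|_{[0,t-a]}$ followed by a reversal and $\cf'$ produces (after reducing to a simple arc) a simple arc from $x$ to $\cf(d)$ avoiding $x$... — more precisely I would argue that the two arcs $\cf$ and $\cf'$ from $\cf(0)$ to $\cf(d)$, neither passing through $x$ for $\cf'$ but $\cf$ passing through $x$, can be spliced to yield a simple loop through $x$; by the standard fact (derived from $(T_2)$ as in \cite[Lemma 2.30 etc.]{DrutuSapir:TreeGraded}) that every simple loop lies in a single piece, this loop lies in one piece $P'$. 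Since $x=\cf(t-a)$ lies on this loop and the loop contains points of $\cf$ on both sides of $x$ arbitrarily close to $x$... one deduces $P'=P$ by $(T_1)$ (the loop and $P$ share a non-degenerate sub-arc of $\cf$), and then the portion of $\cf$ just beyond $\cf(t-a)$ away from $[t-a,t+b]$ also lies in $P$, contradicting the maximality of $\cf[t-a,t+b]$. The same argument applied at $\cf(t+b)$ handles the other endpoint. Thus $\cf'$ must contain both $\cf(t-a)$ and $\cf(t+b)$, which is $(T_2')$.

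For $(T_2')\Rightarrow(T_2)$: let $\Delta$ be a simple non-trivial geodesic triangle with vertices $x,y,z$ and sides $[x,y],[y,z],[z,x]$. I would apply $(T_2')$ to the arc $\cf=[x,y]$ and various points on it. Take any interior point $p$ of $[x,y]$ and let $[x,y]\cap P$ (maximal sub-arc through $p$ in a piece $P$) have endpoints $u,v$; the concatenation $[x,y]\setminus(u,v)$ rerouted through $[x,z]\cup[z,y]$ is another arc with endpoints $x,y$, so by $(T_2')$ it must pass through $u$ and $v$ — but this concatenation meets $[x,y]$ only at $x$ and $y$ since $\Delta$ is simple, forcing $\{u,v\}\subseteq\{x,y\}$, i.e. the maximal single-piece sub-arc of $[x,y]$ through \emph{any} interior point is all of $[x,y]$. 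Hence $[x,y]\subset P$ for one piece $P$. Running the same argument on $[y,z]$ shows $[y,z]\subset P'$ for a piece $P'$; since $[x,y]$ and $[y,z]$ share the sub-arc near $y$ (both are non-trivial geodesics ending at $y$, so they agree on an initial segment, or at least share $y$ — here I must be slightly careful: if they only share the point $y$ I instead note that a neighborhood of $y$ along $[x,y]\cup[y,z]$ is an arc lying in $P\cup P'$ and apply $(T_1)$), $(T_1)$ gives $P=P'$; similarly $[z,x]\subset P$, so $\Delta\subset P$, which is $(T_2)$.

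The step I expect to be the main obstacle is the splicing argument in $(T_2)\Rightarrow(T_2')$: turning "two topological arcs with common endpoints, one containing $x$ and one not" into an honest \emph{simple} loop through $x$ to which the single-piece-loop lemma applies. Topological arcs can intersect each other in complicated closed sets, so extracting a simple sub-loop through the prescribed point $x$ requires a careful point-set argument (looking at the first and last points, in the parametrization of $\cf'$, at which $\cf'$ meets $\cf$ on either side of $x$, and patching). Everything else is a routine application of $(T_1)$ together with the already-established fact that simple loops — equivalently simple geodesic triangles — lie in single pieces.
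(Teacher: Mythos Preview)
The paper does not give its own proof of this proposition; it is quoted verbatim from \cite{DrutuSapir:TreeGraded} and used as a black box. So there is no paper-proof to compare your plan against. Let me instead comment on the plan itself.

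Your direction $(T_2)\Rightarrow(T_2')$ is essentially right, and the splicing you flag as the main obstacle is standard: with $x=\cf(t-a)\notin\cf'$, set $s=\sup\{r\le t-a:\cf(r)\in\cf'\}$ and $s'=\inf\{r\ge t-a:\cf(r)\in\cf'\}$; then $\cf[s,s']$ together with the sub-arc of $\cf'$ between $\cf(s)$ and $\cf(s')$ is a genuine simple loop through $x$, to which the ``simple loops lie in one piece'' consequence of $(T_2)$ applies. The contradiction with maximality then follows since the loop contains a nontrivial sub-arc $\cf[s,s']$ of $\cf$ through $\cf(t-a)$.

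Your direction $(T_2')\Rightarrow(T_2)$ has a real gap. After you obtain $[x,y]\subset P$, $[y,z]\subset P'$, $[z,x]\subset P''$, neither of your proposed arguments forces $P=P'$: two geodesics meeting at a vertex need \emph{not} share any initial segment (that is a tree phenomenon, not a general geodesic-space one), and an arc lying in $P\cup P'$ is perfectly compatible with $P\neq P'$ under $(T_1)$, since $(T_1)$ only forbids $P\cap P'$ from having more than one point. The clean fix is to apply $(T_2')$ not to a single side but to the concatenation $\cf=[x,y]\cup[y,z]$, viewed as a topological arc from $x$ to $z$; the alternative arc is the third side $[x,z]$, which meets $\cf$ only in $\{x,z\}$ since the triangle is simple. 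Hence for any interior point $p$ of $[x,y]$ the endpoints of the maximal single-piece sub-arc of $\cf$ through $p$ lie in $\{x,z\}$, forcing that sub-arc to be all of $\cf$. This gives $[x,y]\cup[y,z]\subset P$ in one stroke; the analogous argument gives $[y,z]\cup[z,x]\subset P'$, and now $P$ and $P'$ share the nontrivial side $[y,z]$, so $(T_1)$ yields $P=P'$ and $\Delta\subset P$.
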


\begin{cvn}
In what follows when speaking about cut-points we always mean
global cut-points.
\end{cvn}

Any complete geodesic metric space with a global
cut-point provides an example of a tree-graded metric space, as the following result points out.

\begin{lemma}[\cite{DrutuSapir:TreeGraded}, Lemma 2.30]\label{cutting}
Let $X$ be a complete geodesic metric space containing at least
two points and let $\calc$ be a non-empty set of cut-points
in~$X$.

The set $\calp$ of all maximal path connected subsets that are
either singletons or such that none of their cut-points belongs to
$\calc$ is a set of pieces for a tree-graded structure on $X$.

Moreover the intersection of any two distinct pieces from $\calp$
is either empty or a point from $\calc$.
\end{lemma}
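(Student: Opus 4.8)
The plan is to verify the two axioms of Definition~\ref{deftgr} for the family $\calp$ directly, and then to record the bookkeeping that its members are closed, geodesic, proper subsets of $X$ covering it. Recall that a point $c$ of a space $A$ is a cut-point of $A$ when $A\setminus\{c\}$ is disconnected; call a path-connected subset $A\subseteq X$ \emph{admissible} if either $A$ is a singleton or no cut-point of the subspace $A$ belongs to $\calc$, so that $\calp$ is exactly the family of maximal admissible sets. The point-set facts I will use are: a union of connected (resp.\ path-connected) subsets with a common point is connected (resp.\ path-connected); removing a point from a circle leaves it connected; and a path-connected non-singleton set that is not admissible must contain a cut-point lying in $\calc$ (immediate from the definition). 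First I would check that the union $U$ of a chain of admissible sets is admissible: it is path-connected as a nested union of path-connected sets, and if some $c\in\calc$ were a cut-point of $U$, say $U\setminus\{c\}=V\sqcup W$ with $V,W$ nonempty and relatively open, then a member $P_m$ of the chain large enough to contain $c$ and a point of each of $V,W$ would have $P_m\setminus\{c\}$ disconnected by $(P_m\cap V)\sqcup(P_m\cap W)$, contradicting admissibility of $P_m$. By Zorn's Lemma every admissible set --- in particular every singleton --- is then contained in a maximal one, so $\calp$ covers $X$; and $X$ itself is not admissible (it contains a point of $\calc$, which by hypothesis is a cut-point of $X$), so every member of $\calp$ is a proper subset of $X$.

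For $(T_1)$, let $P\ne P'$ be distinct members of $\calp$ with $P\cap P'\ne\emptyset$. By maximality neither contains the other, so $P\cup P'$ --- which is path-connected and not a singleton --- properly contains $P$, and were it admissible this would contradict maximality of $P$; hence it has a cut-point $c\in\calc$. First I would show $c\in P\cap P'$: since $(P\cup P')\setminus\{c\}$ is disconnected, $c\in P\cup P'$, so if $c\notin P'$ then $c\in P$ and $\emptyset\ne P\cap P'\subseteq P\setminus\{c\}$; as $P$ is an admissible non-singleton, $c$ is not a cut-point of $P$, so $P\setminus\{c\}$ is connected, and it shares a point with the connected set $P'$, whence $(P\cup P')\setminus\{c\}=(P\setminus\{c\})\cup P'$ is connected --- a contradiction; the case $c\notin P$ is symmetric. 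Next, if some $z\in P\cap P'$ were distinct from $c$, then $P\setminus\{c\}$ and $P'\setminus\{c\}$ are connected (again $c$ is a cut-point of neither admissible non-singleton) and share $z$, so $(P\cup P')\setminus\{c\}$ is connected --- again impossible. Thus $P\cap P'=\{c\}$ with $c\in\calc$, which is precisely $(T_1)$ together with the last assertion of the lemma. I expect this localization of $c$ to be the step demanding the most care, along with keeping the disconnectedness notion of cut-point (which governs admissibility) separate from path-connectedness (with respect to which pieces are maximal).

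For $(T_2)$, let $\Delta$ be a simple non-trivial geodesic triangle. Its three sides are geodesic arcs meeting pairwise only at the common vertices, so traversing them in cyclic order gives a continuous bijection from $S^1$ onto $\Delta$, which is a homeomorphism ($S^1$ is compact and $X$ is Hausdorff). Thus $\Delta$ is a topological circle, and removing any single point from it leaves it connected, so $\Delta$ has no cut-points whatsoever. In particular $\Delta$ is a non-singleton admissible set, hence by the Zorn argument above it is contained in a maximal admissible set, i.e.\ in a single member of $\calp$; this is $(T_2)$.

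It remains to see that each $P\in\calp$ is geodesic and closed (trivial when $P$ is a singleton). Geodesicity: given $a,b\in P$, the set $P\cup[a,b]$ is path-connected, and a short case analysis on where a cut-point $c\in\calc$ of $P\cup[a,b]$ might lie --- interior of $[a,b]$, an endpoint, a point of $P$, or outside $P\cup[a,b]$ --- shows, using $a,b\in P$ and (when $c\in P$) that $c$ is not a cut-point of the admissible non-singleton $P$, that $(P\cup[a,b])\setminus\{c\}$ is connected; so $P\cup[a,b]$ is admissible and maximality gives $[a,b]\subseteq P$. Closedness: if $q\in\overline{P}\setminus P$, choose $p_n\in P$ with $p_n\to q$ and geodesics $\gamma_n=[p_n,q]$, and set $P^{*}=P\cup\bigcup_n\gamma_n$; this is path-connected (the $\gamma_n$ all share the endpoint $q$, and $\gamma_n$ meets $P$ at $p_n$) and contains $q$. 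A case analysis as before --- using that any $c\ne q$ lies on only finitely many $\gamma_n$ (their lengths tend to $0$), and, when $c\in P\cap\calc$, that $c$ is not a cut-point of $P$ --- shows that no point of $\calc$ is a cut-point of $P^{*}$, the shared endpoint $q$ preventing a single deletion from severing the $\gamma_n$ from $P$. So $P^{*}$ is admissible and strictly larger than $P$, contradicting maximality; hence $\overline{P}=P$.
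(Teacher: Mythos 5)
Your proof is correct, and I find no gaps: the Zorn's lemma argument (unions of chains of admissible sets are admissible), the localization of the cut-point of $P\cup P'$ to the single point of $P\cap P'$ for $(T_1)$, the observation that a simple non-trivial geodesic triangle is a topological circle and hence has no cut-points for $(T_2)$, and the connectedness case analyses establishing that pieces are geodesic and closed are all sound. Note that the paper itself does not prove this lemma but imports it from Dru\c{t}u--Sapir, where the argument likewise proceeds by Zorn's lemma and direct verification of the axioms, so your route matches the source in all essentials.
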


\begin{lemma}[\cite{DrutuSapir:TreeGraded}, Section 2.1]\label{ptx}
Let $x$ be an arbitrary point in $\free$ and let $T_x$ be the set
of points $y\in \free$ which can be joined to $x$ by a topological
arc intersecting every piece in at most one point.

The subset $T_x$ is a real tree and a closed subset of $\free$,
and every topological arc joining two points in $T_x$ is
    contained in $T_x$. Moreover, for every $y\in T_x$, $T_y=T_x$.
\end{lemma}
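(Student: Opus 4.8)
The plan is to first extract the one structural fact that makes everything else routine: \emph{in a tree-graded space, any topological arc that meets every piece in at most one point is a geodesic, and in fact the unique topological arc between its endpoints}. Since adjoining all singletons to $\pp$ changes neither the tree-graded structure (as observed right after Definition~\ref{deftgr}) nor the set $T_x$, I may assume every point lies in a piece. Then, given such an arc $\cf$, for each parameter $t$ the maximal sub-arc of $\cf$ through $\cf(t)$ lying in a single piece is the singleton $\{\cf(t)\}$; so property ($T_2'$) of Proposition~\ref{pt2} forces every topological arc with the same endpoints as $\cf$ to pass through every point of $\cf$, i.e.\ to contain $\cf$. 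As no topological arc properly contains another with the same endpoints, $\cf$ is the unique topological arc between its endpoints, and since a geodesic between those endpoints exists and is a topological arc, $\cf$ is that geodesic. In particular, for $y\in T_x$ the arc witnessing $y\in T_x$ is unique; write it $[x,y]$, a geodesic.

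Next I would describe $T_x$ as a ``star'' around $x$ and build the geodesics inside it. Every sub-arc of $[x,y]$ again meets each piece at most once, so $[x,y]\subseteq T_x$. For $p,q\in T_x$, uniqueness of such arcs shows that $[x,p]$ and $[x,q]$ share a common closed initial sub-arc $[x,m]$, beyond which they meet only at $m$, so $[m,p]\cup[m,q]$ is a topological arc from $p$ to $q$. The point requiring work is that this arc still meets every piece at most once: if a piece $P$ contained two of its points, they would lie one on $[m,p]\subseteq[x,p]$ and one on $[m,q]\subseteq[x,q]$, both different from $m$; joining them by a geodesic inside $P$ (pieces are geodesic) yields a geodesic triangle with apex $m$, and using ($T_1$) and ($T_2$) — after checking this triangle is simple, or else reading off the contradiction directly from the fact that $[x,p]$ and $[x,q]$ each meet $P$ only once — one concludes $m\in P$, contradicting that $[x,p]\cap P$ is a single point. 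Hence, by the key lemma, $[m,p]\cup[m,q]$ is the unique topological arc $[p,q]$, it is a geodesic, and it lies in $T_x$. Thus $T_x$ is a geodesic space in which any two points are joined by a unique topological arc, and that arc is a geodesic; this is precisely a real tree, and it also immediately gives that every topological arc between two points of $T_x$ is contained in $T_x$.

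The identity $T_y=T_x$ for $y\in T_x$ follows at once: for $z\in T_x$ the arc $[y,z]$ just constructed lies in $T_x$ and meets each piece at most once, so $z\in T_y$; since likewise $x\in T_y$, the same inclusion applied to the pair $(x,y)$ in reverse gives $T_y\subseteq T_x$. For closedness, take $p_n\in T_x$ with $p_n\to p$ in $\free$. The tripod description of $[p_n,p_m]$ gives $d(p_n,p_m)=d(x,p_n)+d(x,p_m)-2\,d(x,c_{nm})$, where $c_{nm}$ is the point where $[x,p_n]$ and $[x,p_m]$ separate; hence $d(x,c_{nm})\to d(x,p)=:\ell$, so for each $s<\ell$ the point of $[x,p_n]$ at distance $s$ from $x$ is eventually independent of $n$. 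These values define an isometric embedding of $[0,\ell)$ into $\free$ extending to a geodesic $\bar{\g}$ from $x$ to $p$; one checks $\bar{\g}$ meets every piece at most once — two such points both $\neq p$ already lie on some $[x,p_n]$, contradicting $p_n\in T_x$, and the case of a point equal to $p$ reduces to this using that pieces are closed and meet geodesics in sub-segments. Therefore $p\in T_x$.

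The main obstacle is the highlighted step in the ``star'' construction: showing that the concatenation $[m,p]\cup[m,q]$ still meets every piece at most once. This is where the genuine geometry of tree-graded spaces enters — the argument must either exhibit a truly simple geodesic triangle (so that ($T_2$) applies) or carefully extract the contradiction by hand — and essentially the same subtlety reappears at the endpoint $p$ in the closedness argument.
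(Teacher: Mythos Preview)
The paper does not prove this lemma; it is imported verbatim from \cite{DrutuSapir:TreeGraded}, Section~2.1, so there is no in-paper argument to compare against. Your proposal is correct and follows the natural route one would expect from the tree-graded toolkit: the uniqueness of transversal arcs via ($T_2'$), the tripod construction of $[p,q]$ inside $T_x$, and the limiting-geodesic argument for closedness.

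The step you single out as the main obstacle---showing $[m,p]\cup[m,q]$ meets each piece at most once---does go through as you indicate. Once one notes that $[a,m]\subset[x,p]$ meets $P$ only in $a$ and $[m,b]\subset[x,q]$ meets $P$ only in $b$, the triangle with vertices $a,m,b$ and third side a geodesic $[a,b]\subset P$ is genuinely simple and non-trivial, so ($T_2$) places it in a single piece, which ($T_1$) forces to be $P$; then $m\in P$ contradicts $|[x,p]\cap P|\le 1$. Your endpoint case in the closedness argument is also fine: convexity of pieces (each geodesic with endpoints in a piece stays in it, cf.\ the remark in the proof of Lemma~\ref{midpiece}) gives that $P\cap\bar\g$ is a sub-segment, so if it contains $p$ and some earlier point it already contains two points of $\g([0,\ell))$, which lie on a common $[x,p_n]$.
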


\begin{defn}\label{ttrees}
A subset $T_x$ as in Lemma \ref{ptx} is called a \emph{transversal
tree} in $\free$.

A geodesic segment, ray or line contained in a transversal tree is
called a \emph{transversal geodesic}.
\end{defn}

Throughout the rest of the section, $(\free, \pp)$ is a
tree-graded space.

The following statement is an immediate consequence of
\cite[Corollary 2.11]{DrutuSapir:TreeGraded}.

\begin{lemma}\label{ab}
Let $A$ and $B$ be two pieces in $\pp$. There exists a unique pair
of points $a\in A$ and $b\in B$ such that any topological arc
joining $A$ and $B$ contains $a$ and $b$. In particular $\dist
(A,B)=\dist (a,b)$.
\end{lemma}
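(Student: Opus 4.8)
The plan is to deduce this from \cite[Corollary 2.11]{DrutuSapir:TreeGraded}, which I will assume states (in the tree-graded setting) that for any piece $A$ and any point $x\notin A$ there is a unique point $\pi_A(x)\in A$ such that every topological arc from $x$ to $A$ passes through $\pi_A(x)$; equivalently, the ``projection'' to a piece is well-defined and every arc hitting the piece factors through the projection point. First I would dispose of the degenerate case $A\cap B\neq\emptyset$: by ($T_1$) the intersection is a single point $p$, and I claim $a=b=p$ works, since any topological arc from $A$ to $B$ — being an arc whose endpoints lie in $A$ and $B$ respectively — can be truncated to an arc from (a point of) $A$ to $p$ and must therefore contain $p$ by the cited corollary applied with $x$ the far endpoint; uniqueness is clear as $\{p\}$ is the only common point. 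So assume $A\cap B=\emptyset$.

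For the main case, fix any point $b_0\in B$; since $b_0\notin A$, Corollary 2.11 gives a point $a:=\pi_A(b_0)\in A$ through which every arc from $b_0$ to $A$ passes. Symmetrically, set $b:=\pi_B(a)\in B$, the point through which every arc from $a$ to $B$ passes. Now let $\cf$ be an arbitrary topological arc joining $A$ to $B$, say with endpoints $a'\in A$, $b'\in B$. I would argue as follows: $\cf$ is in particular an arc from $a'$ to $B$; but prepending to $\cf$ a sub-arc inside $A$ from $a$ to $a'$ (pieces are geodesic, hence arcwise connected) produces an arc from $a$ to $B$, which must contain $b$; since $b\in B$ and $\cf$ already reaches $B$ at its endpoint, and since this concatenated arc meets $B$, the point $b$ lies on it — and here I must check $b$ actually lies on the $\cf$-part, not on the added segment inside $A$, which holds because $b\in B$ and $A\cap B=\emptyset$. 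Hence $b\in\cf$. The symmetric argument, concatenating an arc inside $B$ from $b$ to $b'$ with $\cf$ and invoking that every arc from $b$ to $A$ passes through $a:=\pi_A(b_0)$, requires first checking $\pi_A(b)=\pi_A(b_0)=a$; this follows because the concatenation of an arc from $b$ to $b_0$ inside $B$ with any arc from $b_0$ to $A$ is an arc from $b$ to $A$, so $\pi_A(b)$ lies on it, and a short argument using ($T_1$)/($T_2'$) pins it to $a$. Then the same truncation argument gives $a\in\cf$.

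Uniqueness of the pair $(a,b)$: if $(a_1,b_1)$ is another such pair then, taking $\cf$ to be (the concatenation of) a geodesic in $A$ from $a$ to $a_1$, a geodesic from $a_1$ to $b_1$, and a geodesic in $B$ from $b_1$ to $b$ — which is an arc joining $A$ and $B$, after passing to a topological arc inside it — both $(a,b)$ and $(a_1,b_1)$ must lie on it; playing this against an arc that witnesses $a\in A$ being the projection point forces $a=a_1$, and symmetrically $b=b_1$. Finally, for the distance statement: any arc from $a$ to $b$ is an arc joining $A$ and $B$, so by what was just shown it contains both $a$ and $b$ (trivially), and conversely $\dist(A,B)=\inf\{\dist(x,y):x\in A,y\in B\}$ is realized only along arcs through $a$ and $b$, so $\dist(A,B)\ge\dist(a,b)$; since $a\in A,b\in B$ give the reverse inequality, equality holds. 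The main obstacle I anticipate is the bookkeeping in the concatenation arguments — ensuring each concatenated path can be reduced to a genuine topological \emph{arc} to which Corollary 2.11 applies, and verifying that the projection points $a,b$ chosen from a single base point $b_0$ are genuinely symmetric, i.e. $\pi_B(\pi_A(b_0))$ and $\pi_A(\pi_B(a))$ close up consistently; this is where ($T_2'$) from Proposition \ref{pt2} does the real work.
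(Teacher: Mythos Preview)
Your proposal is correct and takes essentially the same approach as the paper: the paper gives no argument at all beyond declaring the lemma an ``immediate consequence of \cite[Corollary 2.11]{DrutuSapir:TreeGraded}'', and your plan is precisely to unpack that deduction via the projection-to-a-piece map. The bookkeeping worries you flag (extracting genuine arcs from concatenated paths, and checking that $\pi_A$ is constant on $B$) are all resolvable exactly along the lines you sketch --- for the latter, once you choose the geodesic $\gamma_2$ from $b_0$ to $a=\pi_A(b_0)$ realizing $\dist(b_0,A)$, you get $\gamma_2\cap A=\{a\}$ by minimality and convexity of pieces, and then the extracted arc from any $b\in B$ to $a$ lies in $\gamma_1\cup\gamma_2$ with $\gamma_1\subset B$, forcing $\pi_A(b)\in\gamma_2\cap A=\{a\}$.
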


For every tree-graded space there can be defined a canonical
$\R$--tree quotient
\cite{DrutuSapir:splitting}.

\Notat\quad Let  $x,y$ be two arbitrary points in $\free$. We define
$\widetilde\dist(x,y)$ to be
$\dist(x,y)$ minus the sum of lengths of non-trivial sub-arcs which
appear as intersections of one
(any) geodesic $[x,y]$ with pieces.

The function $\widetilde\dist(x,y)$ is well defined (independent of
the choice of a geodesic $[x,y]$),
symmetric, and it satisfies the triangle inequality \cite{DrutuSapir:splitting}.

The relation $\approx$ defined by
\begin{equation}\label{approx}
    x\approx y \hbox{ if and only if
} \widetilde\dist(x,y)=0\, ,
\end{equation} is
a closed equivalence relation.

\begin{lemma}(\cite{DrutuSapir:splitting})\label{quotientT}
\begin{enumerate}
    \item The quotient $T=\free/{\approx}$ is an $\R$--tree with
respect to the metric induced by
 $\widetilde\dist$.

    \item Every geodesic in $\free$ projects onto a geodesic in $T$.
Conversely, for every non-trivial geodesic $\g$ in $T$ there exists a
non-trivial geodesic $\pgot$
 in $\free$ such that its projection on $T$ is $\g$.
    \item If $x\ne y$ are in the same transversal tree of $\free$
then $\dist(x,y)={\widetilde\dist}(x,y)$. In
particular, $x\not\approx y$. Thus every transversal tree projects
into $T$ isometrically.
\end{enumerate}
\end{lemma}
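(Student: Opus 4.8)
The plan is to prove the three parts of Lemma~\ref{quotientT} in sequence, relying on the properties of the function $\widetilde\dist$ recorded just above (well-definedness, symmetry, triangle inequality) together with the structural results on tree-graded spaces, especially Proposition~\ref{pt2} (property $(T_2')$), Lemma~\ref{ab}, and Lemma~\ref{ptx}.

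\textbf{Part (3) first.} I would prove (3) before (1) and (2), since it is the cleanest and it isolates the key phenomenon. Suppose $x\neq y$ lie in the same transversal tree $T_z$. By Lemma~\ref{ptx}, the topological arc joining $x$ to $y$ inside $T_z$ (which exists and is contained in $T_z$) meets every piece in at most one point; in particular any geodesic $[x,y]$ in $\free$ lying in $T_z$ has trivial intersection with each piece, so the correction term defining $\widetilde\dist(x,y)$ vanishes and $\widetilde\dist(x,y)=\dist(x,y)>0$. The one point needing care is that \emph{some} geodesic $[x,y]$ does lie in $T_z$ — but this follows because transversal trees are real trees (Lemma~\ref{ptx}) and hence geodesic, and the geodesic between two points of $T_z$ is unique and stays in $T_z$; since $\widetilde\dist$ is independent of the choice of geodesic, we may compute with this one. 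Thus $x\not\approx y$, and the same computation shows the projection $T_z\to T$ is distance-preserving, hence an isometric embedding onto its image.

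\textbf{Part (1).} That $\widetilde\dist$ descends to a genuine metric on $T=\free/{\approx}$ is immediate: it is symmetric, satisfies the triangle inequality, and $\widetilde\dist(x,y)=0$ exactly when $x\approx y$, by definition of $\approx$ in~\eqref{approx}. The substantive claim is that $T$ is an $\R$-tree, i.e. a geodesic metric space in which every geodesic triangle is a tripod (equivalently, $0$-hyperbolic and geodesic). For geodesicity I would show that if $\pi\co\free\to T$ is the projection, then for $\bar x,\bar y\in T$ with lifts $x,y$, the image $\pi([x,y])$ of any geodesic $[x,y]$ in $\free$ is a geodesic in $T$ — this is precisely the first half of Part (2), so I would interleave the arguments. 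Concretely, parametrize $[x,y]$ by arc length and observe that along $[x,y]$ the map $t\mapsto\widetilde\dist(x,\cf(t))$ is $1$-Lipschitz and, using that the intersections of $[x,y]$ with distinct pieces are disjoint sub-arcs and that $(T_2')$ forces these to be "the same" pieces seen from either end, is in fact the arc-length parametrization of $\pi([x,y])$ after collapsing the piece-subarcs; so $\pi([x,y])$ is a path of length $\widetilde\dist(x,y)$ joining $\bar x$ to $\bar y$, hence a geodesic. For $0$-hyperbolicity / the tripod condition, take three points and three geodesics in $\free$ forming a triangle; by $(T_2)$ every simple geodesic triangle lies in one piece, which projects to a point, so after projecting, any "detour" of the triangle collapses, and what survives is a tripod. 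I would make this precise by the standard argument: given $\bar x,\bar y,\bar z\in T$, it suffices to locate a median point, which is the image of the point where the piece-decompositions of $[x,y]$ and $[x,z]$ first diverge.

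\textbf{Part (2), converse direction.} Given a non-trivial geodesic $\g$ in $T$, I need a non-trivial geodesic $\pgot$ in $\free$ projecting onto $\g$. The idea: pick lifts $p,q\in\free$ of the endpoints of $\g$, take any geodesic $[p,q]$ in $\free$, and project; by the first half of Part (2) this projects to \emph{a} geodesic of $T$ joining $\bar p=\g(0)$ to $\bar q$, and since geodesics between two points of an $\R$-tree are unique, $\pi([p,q])=\g$. Non-triviality of $\pgot=[p,q]$ follows since $\g$ non-trivial forces $\bar p\neq\bar q$, hence $p\not\approx q$, hence $\widetilde\dist(p,q)>0$, hence $p\neq q$.

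\textbf{Main obstacle.} The only genuinely delicate point is the claim that projecting a geodesic of $\free$ yields a geodesic of $T$ — equivalently, that the "collapse the piece-subarcs" description of $\pi$ restricted to $[x,y]$ is internally consistent when viewed from both endpoints. This is exactly where Proposition~\ref{pt2} is needed: $(T_2')$ guarantees that the maximal sub-arcs of $[x,y]$ contained in a single piece are intrinsic to the pair $(x,y)$ and appear in any arc with the same endpoints, which is what lets one check that the length of $\pi([x,y])$ really equals $\widetilde\dist(x,y)$ rather than being merely $\le$ it, and that no two such sub-arcs can "merge" after projection (using $(T_1)$: distinct pieces meet in at most a point). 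Everything else is bookkeeping with the triangle inequality and the definitions.
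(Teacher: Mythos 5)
A preliminary remark: this paper does not prove Lemma \ref{quotientT} at all --- it quotes it from \cite{DrutuSapir:splitting} (Section 2.3 there) --- so the comparison is with that source. Your overall route is the same as the one taken there: part (3) via Lemma \ref{ptx}; geodesicity of $T$ via the additivity of $\widetilde\dist$ along a geodesic of $\free$ (which is correct: for $p\in[x,y]$ the maximal piece-subarcs of $[x,p]$ and of $[p,y]$ are exactly the traces of those of $[x,y]$, so $\pi([x,y])$ is a path of length $\widetilde\dist(x,y)$ between points at that distance); and the converse half of (2) by lifting endpoints and invoking uniqueness of geodesics in an $\R$--tree. Those parts are sound.

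The genuine gap is the $0$-hyperbolicity of $T$. Property $(T_2)$ constrains only \emph{simple} geodesic triangles, and a geodesic triangle in $\free$ need not be simple, so ``any detour collapses'' is not yet an argument; and your candidate median, ``the image of the point where the piece-decompositions of $[x,y]$ and $[x,z]$ first diverge,'' presupposes that these two geodesics share a coherent initial segment of their piece-decompositions, which is essentially the tripod property you are trying to prove. What \cite{DrutuSapir:splitting} actually does is work with the cut-point sets: $\widetilde\dist(x,y)$ is identified with the length measure of $\Cutp\{x,y\}$ (independent of the arc), one checks the separation inclusion $\Cutp\{x,z\}\subseteq\Cutp\{x,y\}\cup\Cutp\{y,z\}$ and that $\Cutp\{x,y\}\cap\Cutp\{x,z\}$ is an initial segment of both, and the four-point inequality for $\widetilde\dist$ follows by an inclusion--exclusion count. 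Without some version of this machinery part (1) is not established, and with it fails the uniqueness-of-geodesics step you use for the converse in (2). (Also, in (3) the point needing care is not the existence of a geodesic inside $T_z$ --- Lemma \ref{ptx} says \emph{every} arc between points of $T_z$ lies in $T_z$ --- but rather that such an arc meets every piece in at most one point; this holds because two distinct points $a,b$ of a piece satisfy $\Cutp\{a,b\}=\{a,b\}$, so no arc joining them can meet every piece in at most one point.)
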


Following \cite[Definitions 2.6 and 2.9]{DrutuSapir:splitting}, given
a topological arc $\g$ in $\free$, we define the {\textit{set of
cut-points on}} $\g$, which we denote by $\Cutp(\g)$, as the  subset of $\g$ which is complementary to  the union of all the interiors of
sub-arcs appearing as intersections of $\g$ with pieces. Given two
points $x,y$ in $\free$, we define the {\textit{set of cut-points
separating $x$ and $y$}}, which we denote by $\Cutp\{x,y\}$, as the
set of cut-points of some (any) topological arc joining $x$ and $y$.
Note that if $g$ is an isometry of $\free$ which permutes pieces in
a given tree-grading, then  $g(\Cutp\{x,y\})= \Cutp\{gx,gy\}$

\subsection{Isometries of tree-graded spaces}\label{section:treegradedisoms}

For all the results on tree-graded metric spaces that we use in what follows we refer to
\cite{DrutuSapir:TreeGraded}, mainly to Section 2 in that paper.

\begin{lemma}\label{midpiece}
Let $x,y$ be two distinct points, and assume that $\Cutp \{ x, y\}$ does not contain a
   point at equal distance from $x$ and $y$. Let $a$ be the farthest from $x$ point in $\Cutp \{ x, y\}$ with $\dist (x,a ) \leq \frac{\dist (x,y)}{2}$, and let $b$ be the farthest from $y$ point in $\Cutp \{ x, y\}$ with $\dist (y,b ) \leq \frac{\dist (x,y)}{2}$.  Then there exists a unique piece $P$ containing $\{a,b\}$, and $P$ contains all points at equal distance from $x$ and $y$.
\end{lemma}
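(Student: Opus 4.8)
The plan is to locate the piece $P$ by means of a single geodesic from $x$ to $y$, and then to show that a metric midpoint of $\{x,y\}$ is forced into $P$ because all such geodesics share the same maximal ``in-a-piece'' sub-arc straddling the midpoint.

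\emph{Locating $P$.} Write $L=\dist(x,y)$ and fix a geodesic $\g=[x,y]$, parametrised by arclength from $x$; the only point of $\g$ at equal distance from $x$ and $y$ is the midpoint $m=\g(L/2)$, since a point of $\g$ at parameter $s$ is at distance $s$ from $x$ and $L-s$ from $y$. As $\Cutp\{x,y\}$ equals $\Cutp(\g)$, and hence is contained in $\g$, the hypothesis forces $m\notin\Cutp(\g)$; that is, $m$ lies in the interior of a maximal sub-arc $[p,q]$ of $\g$ contained in a single piece $P$. Put $\alpha=\dist(x,p)$ and $\beta=\dist(q,y)$; since $m$ is interior to $[p,q]$ and occurs on $\g$ at parameter $L/2$ we get $\alpha<L/2<L-\beta$, and in particular $\beta<L/2$. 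The endpoints $p,q$ lie in $\Cutp(\g)=\Cutp\{x,y\}$, while the open sub-arc $(p,q)$ contains no element of $\Cutp\{x,y\}$; since $\Cutp\{x,y\}\subset\g$, this shows that $p$ is the point of $\Cutp\{x,y\}$ farthest from $x$ with $\dist(x,\cdot)\le L/2$, and symmetrically $q$ is the one farthest from $y$ with $\dist(y,\cdot)\le L/2$. Thus $a=p$ and $b=q$, so $\{a,b\}\subset P$. Finally $\dist(a,b)=\dist(a,m)+\dist(m,b)=L-\alpha-\beta>0$, so $a\ne b$; since two different pieces have at most one common point by ($T_1$), $P$ is the unique piece containing $\{a,b\}$.

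\emph{Midpoints lie in $P$.} Let $z$ be a point at equal distance from $x$ and $y$; then $\dist(x,z)=\dist(z,y)\ge L/2$, and I treat the essential case $\dist(x,z)=\dist(z,y)=L/2$. Here $\dist(x,z)+\dist(z,y)=L=\dist(x,y)$, so $z$ lies on some geodesic $\g'$ from $x$ to $y$. Applying Proposition \ref{pt2} to the arc $\g$ at the point $m$, whose maximal in-a-piece sub-arc is $[p,q]$, every topological arc from $x$ to $y$, in particular $\g'$, must pass through both $p$ and $q$; since $\g'$ is a geodesic with $\dist(x,p)=\alpha$ and $\dist(x,q)=L-\beta$, the points $p$ and $q$ occur on $\g'$ at parameters $\alpha$ and $L-\beta$. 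As $\alpha<L/2<L-\beta$, the parameter-$L/2$ point $z$ of $\g'$ lies on the sub-arc of $\g'$ joining $p$ to $q$. This sub-arc is a geodesic between the points $p,q\in P$, and a piece of a tree-graded space is strongly convex: if a geodesic joining two points of $P$ left $P$, its maximal in-a-piece sub-arc through a point outside $P$ would lie in a piece $P'\ne P$ and, by Proposition \ref{pt2}, its two distinct endpoints would lie on the geodesic $[p,q]\subset P$, hence in $P\cap P'$, contradicting ($T_1$). Therefore the sub-arc of $\g'$ from $p$ to $q$ lies in $P$, and in particular $z\in P$.

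I expect the second step to be where the real work lies, and within it two inputs must be assembled carefully: first, that a metric midpoint of $x$ and $y$ lies on some geodesic $[x,y]$ and that all such geodesics carry the \emph{same} maximal in-a-piece sub-arc around the midpoint --- this is exactly the point at which the hypothesis on $\Cutp\{x,y\}$ enters, through the arc reformulation of ($T_2$) provided by Proposition \ref{pt2}; and second, strong convexity of pieces, again a consequence of Proposition \ref{pt2} as indicated. Granting these, the inequality $\alpha<L/2<L-\beta$ does the rest. By contrast, the location and uniqueness of $P$ in the first step is routine bookkeeping with the definition of $\Cutp$ and property ($T_1$), and the analysis of an equidistant point farther than $L/2$ from $x$ (should it be needed) would instead go through the piece-projection $\pi_P$ together with the extremality of $a$ and $b$, which is the most delicate point of all.
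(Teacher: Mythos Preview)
Your proof is correct and follows essentially the same route as the paper: locate the piece $P$ as the one containing $a$ and $b$ (using $a\ne b$ and $(T_1)$ for uniqueness), then for any midpoint $z$ build a geodesic $[x,z]\cup[z,y]$, use $(T_2')$ to place $a$ and $b$ on it, and invoke convexity of pieces to get $z\in P$. The paper argues $\Cutp\{a,b\}=\{a,b\}$ directly from the definition of $a,b$, whereas you first fix a geodesic and identify $a,b$ as the endpoints of the maximal in-piece arc through its midpoint; these are equivalent bookkeeping.

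One clarification: your closing paragraph worries about equidistant points at distance strictly greater than $L/2$, calling this ``the most delicate point of all.'' There is nothing to prove there. The paper's own proof treats only the case $\dist(x,m)=\dist(y,m)=L/2$, and this is the intended reading of ``points at equal distance from $x$ and $y$'' --- the general equidistant locus need not lie in $P$ (take two planes glued at a point $g$ with $x,y$ in one plane and $g$ on the perpendicular bisector of $[x,y]$ but off the segment; then every point of the other plane is equidistant from $x,y$ but not in $P$). So your ``essential case'' is the only case, and the proof is complete as written.
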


\proof Since $\Cutp \{ x, y\}$ does not contain a
   point at equal distance from $x$ and $y$ it follows that $a\neq b$. The choice of $a,b$ implies that $\Cutp \{a,b\}=\{a,b\}$, whence $\{a,b\}$ is contained in a piece $P$, and $P$ is the unique piece with this property, by property $(T_1)$ of a tree-graded space.

Let $m\in \free$ be such that $\dist (x,m)=\dist (y,m)= \frac{\dist
(x,y)}{2}$.  Then any union of geodesics $[x,m]\cup [m,y]$ is a
geodesic.  By property $(T_2')$ of a tree-graded space, $a\in [x,m],
b\in [m,y]$.  The sub-geodesic $[a,m]\cup [m,b]$ has endpoints in the
piece $P$ and therefore is entirely
contained in $P$, since pieces are convex, i.e., each geodesic with endpoints in a piece is entirely contained in that piece
\cite[Lemma~2.6]{DrutuSapir:TreeGraded}.\endproof

\begin{defn}
Let $x,y$ be two distinct points. If $\Cutp \{ x, y\}$ contains a point at equal distance from $x$ and
$y$ then we call that point the \emph{middle cut-point} of $x,y$.

If $\Cutp \{ x, y\}$ does not contain such a point then we call the piece defined in Lemma \ref{midpiece}
 the \emph{middle cut-piece} of $x,y$.

If $x=y$ then we say that $x,y$ have the middle cut-point $x$.

Let $P,Q$ be two distinct pieces, and let $x\in P$ and $y\in Q$ be the
unique pair of points minimizing the distance Lemma~\ref{ab}.
The \emph{middle cut-point} (or \emph{middle cut-piece}) of $P,Q$ is
the middle cut-point (respectively, the middle cut-piece) of $x,y$.

If $P=Q$ then we say that $P,Q$ have the middle cut-piece $P$.
\end{defn}

\begin{lemma}\label{isompiece}
Let $g$ be an isometry permuting pieces of a tree-graded space $\free$, such that the cyclic group $\la g\ra$ has bounded orbits.
\begin{enumerate}
  \item\label{fixpoint} If $x$ is a point such that $gx\neq x$ then $g$ fixes the middle cut-point or the middle cut-piece of $x,gx$.
  \item\label{fixpiece} If $P$ is a piece such that $gP\neq P$ then $g$ fixes the middle cut-point or the middle cut-piece of $P,gP$.
\end{enumerate}

\end{lemma}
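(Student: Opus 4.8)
The plan is to prove both statements simultaneously, reducing (2) to (1). For part (1), suppose $gx \neq x$. We consider the orbit $\{g^n x : n \in \Z\}$, which is bounded by hypothesis. Let $m$ be a point at equal distance $\frac{1}{2}\dist(x, gx)$ from $x$ and $gx$ — such a point exists since $\free$ is geodesic — so that $[x,m]\cup[m,gx]$ is a geodesic. There are two cases, according to whether $x, gx$ have a middle cut-point or a middle cut-piece.

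First I would treat the middle cut-point case. Let $c$ be the middle cut-point of $x, gx$; then $\dist(x,c) = \dist(gx, c)$ and $c \in \Cutp\{x, gx\}$. Applying $g$, the point $gc$ is the middle cut-point of $gx, g^2x$, so $\dist(gx, gc) = \dist(g^2 x, gc)$. The key observation is that $c$ separates $x$ from $gx$, so $gc$ separates $gx$ from $g^2x$, and iterating, the points $c, gc, g^2c, \dots$ must all lie on a single transversal-type configuration; if $gc \neq c$ then the sequence of cut-points marches off monotonically (each $g^k c$ is strictly farther along), forcing $\dist(x, g^k x) \to \infty$ (or at least the diameter of the orbit to be infinite), contradicting boundedness of $\la g\ra$-orbits. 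More precisely, I would argue via $\widetilde{\dist}$ and Lemma \ref{quotientT}: the images of $x$ and the $g^k c$ in the $\R$-tree $T = \free/{\approx}$ satisfy that $g$ acts on $T$ as an isometry, and an isometry of an $\R$-tree with a bounded orbit fixes a point; combined with the fact that $c$ projects to the midpoint of $[\bar x, \overline{gx}]$ in $T$ and $g$ must fix that midpoint, we get $gc = c$ (using that $c$, being a cut-point, is not identified with anything else — part (3) of Lemma \ref{quotientT} gives that cut-points inject into $T$).

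For the middle cut-piece case, let $P$ be the middle cut-piece of $x, gx$ as in Lemma \ref{midpiece}, with $a, b \in P$ the distinguished points ($a$ the farthest cut-point from $x$ with $\dist(x,a) \le \frac{1}{2}\dist(x,gx)$, and $b$ symmetric). Then $gP$ is the middle cut-piece of $gx, g^2x$, with distinguished points $ga, gb$. In $T$, both $P$ and $gP$ collapse to points, and these are the midpoints of $[\bar x, \overline{gx}]$ and $[\overline{gx}, \overline{g^2 x}]$ respectively. Since $g$ acts on the $\R$-tree $T$ with bounded orbits it fixes a point of $T$, hence fixes the midpoint of any segment invariant-up-to-$g$ in the appropriate sense; a short argument using that $\overline{g^k x}$ lie in a bounded subtree shows the image of $P$ is fixed by $g$, i.e.\ $\bar a = \overline{ga}$ in $T$. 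Now $a$ and $ga$ are both cut-points (they lie in $\Cutp\{x,gx\}$ and $\Cutp\{gx, g^2x\}$ respectively), so by Lemma \ref{quotientT}(3) they inject into $T$, giving $ga = a$; since $a \in P \cap gP$ and $a$ is a cut-point, property $(T_1)$ forces $gP = P$. Finally, part (2) follows from part (1): if $gP \neq P$, let $x \in P$, $y \in gP$ be the closest pair from Lemma \ref{ab}; then $gx \in gP$ and $gx \neq x$ (else $x \in P \cap gP$ would be a common point, and by minimality and Lemma \ref{ab} this forces $P = gP$), so applying (1) to $x$ and noting that the middle cut-point/cut-piece of $P, gP$ is by definition that of $x, gx$, we are done.

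The main obstacle I anticipate is making rigorous the claim that "bounded orbits force $g$ to fix the relevant midpoint" — the cleanest route is to push everything into the quotient $\R$-tree $T$ via Lemma \ref{quotientT}, use the standard fact that an isometry of a complete $\R$-tree with a bounded orbit has a fixed point (and more specifically fixes the midpoint of $[\bar p, g\bar p]$ for any $p$, or lies in a well-understood elliptic situation), and then lift the fixed point back using that cut-points embed isometrically into $T$. One must be careful that $\widetilde\dist$ is only a pseudometric before quotienting and that the correspondence between midpoints in $\free$ and midpoints in $T$ respects the $g$-action; this bookkeeping, rather than any deep idea, is where the work lies.
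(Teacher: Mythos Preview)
Your approach via the quotient tree $T=\free/\!\approx$ is appealing, but as written it has two genuine gaps that are not merely bookkeeping.

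First, the repeated claim that ``cut-points inject into $T$ by Lemma \ref{quotientT}(3)'' is false. That lemma says \emph{transversal trees} embed isometrically in $T$; it says nothing about cut-points. In fact, if $a,b\in\Cutp\{x,gx\}$ are the entrance and exit points of a non-trivial intersection of $[x,gx]$ with a piece, then $\widetilde\dist(a,b)=0$ and $\bar a=\bar b$ in $T$. So distinct cut-points along $[x,gx]$ can and do collapse in $T$. Consequently, even if you know $g$ fixes $\bar c$ in $T$, you cannot conclude $gc=c$ in $\free$. Worse, the middle cut-point $c$ is the $\dist$-midpoint of $[x,gx]$, whereas the point of $T$ that $g$ must fix (being elliptic on an $\R$-tree) is the $\widetilde\dist$-midpoint of $[\bar x,\overline{gx}]$; these need not correspond, since the piece-intersections on the two halves of $[x,gx]$ can have different total lengths. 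The same problem breaks your middle cut-piece argument: you write ``$a$ and $ga$ are cut-points, so by Lemma \ref{quotientT}(3) they inject into $T$, giving $ga=a$,'' but again this injection fails. The paper instead argues directly in $\free$: assuming the middle cut-point (or cut-piece) is not fixed, it analyses how $[x,gx]$ and $[gx,g^2x]$ meet near $gx$ and builds, by induction, arbitrarily long geodesics through the iterates $g^k x$, contradicting bounded orbits. Your one-line ``the $g^k c$ march off monotonically'' is the right intuition, but the several cases (whether $[x,e]\sqcup[e,g^2x]$ is a topological arc, whether $e$ is a cut-point of it, etc.) are where the content lies.

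Second, your reduction of (2) to (1) is incomplete. The middle cut-point/cut-piece of $P,gP$ is \emph{by definition} that of the minimizing pair $(x,y)$ with $x\in P$, $y\in gP$; you apply (1) to $x$ and get the middle of $x,gx$, but you have not shown $gx=y$. (Having $gx\in gP$ is not enough.) The paper first proves $gx=y$: if not, then concatenations $[x,y]\sqcup[y,gx]\sqcup[gx,gy]\sqcup\cdots$ are geodesics of length growing linearly in the number of iterates, again contradicting bounded orbits. Only then does (1) apply.
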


\proof (\ref{fixpoint})\quad  Let $e$ be the farthest from $gx$ point in $\Cutp \{ x,gx\}\cap \Cutp
\{ gx,g^2x \}$ and let $d=\dist (x,gx)>0$.

\medskip

\textbf{(a)} \quad Assume that $x,gx$ have a middle cut-point $m \in \Cutp \{ x,gx \}$. If the
intersection $\Cutp \{ x,gx\} \cap \Cutp \{ gx,g^2x \}$ contains $m$ then $gm=m$ because $g$ takes $m$ is the (unique) point from $\Cutp \{gx, g^2x\}$ at distance $\dist(x,gx)/2$ from $gx$. We argue by
contradiction and assume that $gm \neq m$, whence $\Cutp \{ x,gx \}\cap \Cutp \{ gx,g^2x\}$ does not
contain $m$. Then $\dist (gx , e ) = \frac{d}{2} - \epsilon$ for some $\epsilon >0$.

Assume that $\pgot=[x,e]\sqcup [e,g^2x]$ is a topological arc. If $e\in \Cutp \pgot $ then $e\in \Cutp
\{ x,g^2x \}$. It follows that $\dist (x,g^2 x )=\dist(x,e)+\dist(e,g^2x)= 2
(\frac{d}{2}+\epsilon)= d+2 \epsilon$. An induction argument will then give that $\Cutp \{x,g^nx\}$ contains $e,ge,..., g^{n-2}e$, hence that $\dist (x,g^n x )= d+2\epsilon(n-1)$. This contradicts the hypothesis that the orbits of $\la g\ra $ are bounded.

If $e\not\in \Cutp \pgot $ then there exists $a\in \Cutp \{x,e\}$ and $b\in \Cutp \{e,g^2x\}$ such that
$a,e$ are the endpoints of a non-trivial intersection of any geodesic $[x,gx]$ with a piece $P$, and $e,b$ are the
endpoints of a non-trivial intersection of any geodesic $[gx, g^2 x]$ with the same piece $P$. Note that $a\neq b$ otherwise the choice of $e$ would be contradicted. Also, since $m\in \Cutp \{x, e \}$ and $m\neq e$ it follows that $m \in \Cutp \{x, a \}$. Similarly, $gm \in \Cutp \{ g^2 x , b \}$. It follows that $\dist (x,a)$ and $\dist (g^2 x, b)$ are at least $\frac{d}{2}$. Since
$[x,a]\sqcup [a,b]\sqcup [b,g^2x]$ is a geodesic (by \cite[Lemma 2.28]{DrutuSapir:TreeGraded}) it follows that $\dist(x,g^2x)\geq \dist (x,a) + \dist (a,b) + \dist (b,g^2 x)\geq d+\dist (a,b)$. An
induction argument gives that $[x,a]\sqcup \bigsqcup_{k=0}^{n-2} \left([g^k a , g^k b]\sqcup [g^k b,
g^{k+1}a]\right) \sqcup [g^{n-1}a, g^n x]$ is a geodesic. This implies that $\dist(x,g^nx)\geq
d+(n-1) \dist (a,b)$, contradicting the hypothesis that $\la g\ra $ has bounded orbits. Note that the argument in this paragraph applies as soon as we find the points $x,e,p$ as above.

Assume that $\pgot=[x,e]\sqcup [e,g^2x]$ is not a topological arc. Then $[x,e]\cap [e,g^2x]$ contains a
point $y\neq e$. According to the choice of $e$, $y$ is either not in $\Cutp \{ x, e \}$ or not in $\Cutp \{ e,g^2x \}$, and $\Cutp \{ e,y \}$ must be $\{ e,y \}$. It follows that $y,e$ are
 in the same piece $P$. If we consider the endpoints of the (non-trivial) intersections of any geodesics $[x,e]$ and $[e,g^2x]$
with the piece $P$, points $a,e$ and $e,b$ respectively, then we are in the setup described in the previous case and we can apply the same argument.

\medskip

\textbf{(b)} \quad Assume that $x,gx$ have a middle cut-piece $Q$. Then there exist two points $i,o$ in
$\Cutp \{ x,gx \}$, the entrance and respectively the exit point of any geodesic $[x,gx]$ in the piece
$Q$, such that the midpoint of $[x,gx]$ is in the interior of sub-geodesic $[i,o]$  of $[x,gx]$ (for
any choice of the geodesic $[x,gx]$).

If $e\in \Cutp \{ x,i\}$ then $g$ stabilizes $\Cutp \{e, g e\}$ and, since  $g$ is an isometry, $g$
fixes the middle cut-piece $Q$ of $e,ge$. Assume on the contrary that $gQ \neq Q$. Then $e\in \Cutp \{
o, gx\}$. If $\pgot=[x,e]\sqcup [e,g^2x]$ is a topological arc and $e\in \Cutp \pgot $ then as in (a)
we may conclude that $\la g\ra $ has an unbounded orbit. If either $e\not\in \Cutp \pgot $ or
$\pgot=[x,e]\sqcup [e,g^2x]$ is not a topological arc then as in (a) we may conclude that there exist
$a\in \Cutp \{ x,e \}$ and $b\in \Cutp \{ e, g^2 x \}$ such that $a,b,e$ are pairwise distinct and
contained in the same piece $P$.

Assume that $e=o$. Then $a=i$ and $P=Q$. By hypothesis $P\neq gQ$. It follows that any geodesic $[b,g^2x]$ intersects $gQ$, whence $ga , ge \in \Cutp \{b,g^2x\}$. Since $[x,a] \sqcup [a,b ] \sqcup [b,g^2 x]$ is a geodesic (by \cite[Lemma 2.28]{DrutuSapir:TreeGraded}) we have that $\dist (x, g^2 x) = \dist (x,a) + \dist (a,b) + \dist (b, g^2 x) \geq \dist (x,a) + \dist (a,b) + \dist ( ga , g^2 x) = d + \dist (a,b)$. An inductive argument gives that for any $n\geq 1$, the union of geodesics $[x,a]\sqcup \bigsqcup_{k=0}^{n-2} \left([g^k a , g^k b]\sqcup [g^k b,
g^{k+1}a]\right) \sqcup [g^{n-1}a, g^n x]$ is a geodesic, hence $\dist(x,g^nx)\geq
d+(n-1) \dist (a,b)$, contradicting the hypothesis that $\la g\ra $ has bounded orbits.

Assume that $e\neq o$. If $e=gi$, hence $b=go$ and $P=gQ$ then an argument as before gives that for every $n\geq 1$,  $\dist(x,g^nx)\geq
d+(n-1) \dist (a,b)$, a contradiction.

If $e\not\in \{ gi, o \}$ then $i,o\in \Cutp \{x,a\}$ and $gi,go\in \Cutp \{b,g^2 x\}$, whence both $\dist (x,a)$ and $\dist (b, g^2 x)$ are larger than $\frac{d}{2}$. It follows that the union $[x,a]\sqcup \bigsqcup_{k=0}^{n-2} \left([g^k a , g^k b]\sqcup [g^k b,
g^{k+1}a]\right) \sqcup [g^{n-1}a, g^n x]$ is a geodesic, therefore $\dist(x,g^nx)$ is at least $
d+(n-1) \dist (a,b)$, contradiction.

\medskip

(\ref{fixpiece}) Let $x\in P$ and $y\in gP$ be the pair of points realizing the distance which exist by Lemma \ref{ab}. Assume that $gx\neq y$. Then for every geodesics $[x,y]$ and $[y,gx]$ it holds that
$[x,y]\sqcup [y,gx]$ is a geodesic. Similarly, $[x,y]\sqcup [y,gx]\sqcup [gx,gy]\sqcup [gy,g^2x]$  is a
geodesic. An easy induction argument gives that $\bigsqcup_{k=0}^{n-1} [g^k x,g^k y]$ is a geodesic. In particular $\dist (x, g^n x)\geq n \dist (y, gx)$ contradicting the hypothesis that $\la g\ra$ has bounded orbits.

It follows that $y=gx$. If $gx=x$ then we are done. If $gx \neq x$ then we apply~(\ref{fixpoint}).\endproof

\begin{lemma}\label{gpisompiece}
Let $g_1,..., g_n$ be isometries of a tree-graded space permuting pieces and generating a group with bounded orbits. Then $g_1,..., g_n$ have a common fixed point or (set-wise) fixed piece.
\end{lemma}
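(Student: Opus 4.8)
The plan is to argue by induction on $n$. For $n=1$ the statement is immediate from Lemma \ref{isompiece}: if $g_1$ has no fixed point, pick any $x$; then $g_1x\neq x$, and by part (\ref{fixpoint}) of that lemma $g_1$ fixes the middle cut-point of $x,g_1x$ --- a fixed point, contradicting our assumption --- or the middle cut-piece of $x,g_1x$, which is then the desired set-wise fixed piece. It is convenient to strengthen the assertion to ``$\la g_1,\dots,g_n\ra$ fixes a point or fixes a \emph{unique} piece'', so that the induction hypothesis produces a canonical invariant object; this strengthening is free, since by the argument of Lemma \ref{ab} a group permuting pieces and having bounded orbits that fixes two distinct pieces $P,P'$ also fixes the unique closest-point pair $a\in P$, $b\in P'$ from Lemma \ref{ab}, hence has a fixed point.

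For the inductive step set $H=\la g_1,\dots,g_{n-1}\ra\le G=\la g_1,\dots,g_n\ra$. Being a subgroup of $G$, $H$ has bounded orbits, so by induction $H$ fixes a point $p$ or a unique piece $Q$. If $g_n$ fixes the same object we are done; otherwise I would imitate the proof of Lemma \ref{isompiece}. Let $Z$ be the middle cut-point or middle cut-piece of the pair $(p,g_np)$ (resp.\ of $(Q,g_nQ)$), which $g_n$ fixes by Lemma \ref{isompiece}, and then track how words in the generators of $H$ move $Z$ together with the geodesics $[p,wp]$ for $w\in G$. Using $(T_1)$, property $(T_2')$ from Proposition \ref{pt2}, convexity of pieces, and the equivariance $g(\Cutp\{x,y\})=\Cutp\{gx,gy\}$, one should be able to show that if $Z$ fails to be $G$-fixed then geodesics can be spliced into an infinite geodesic of the shape $[p,a]\sqcup\bigsqcup_{k\ge0}\bigl([w^ka,w^kb]\sqcup[w^kb,w^{k+1}a]\bigr)$ for a suitable $w\in G$, exactly as in Lemma \ref{isompiece}, contradicting boundedness of $G$-orbits.

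The step I expect to be the main obstacle is precisely this combining argument: in Lemma \ref{isompiece} there is a single isometry, whereas here the two moving parts $H$ and $g_n$ neither commute nor normalize one another, so the escaping configuration must be built from alternating words in $H$ and $g_n$, and one must verify that the spliced path truly runs off to infinity rather than folding back on itself.

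A route that sidesteps this combinatorics is to pass to the canonical $\R$-tree quotient $T=\free/{\approx}$ of Lemma \ref{quotientT}: $G$ acts on $T$ by isometries (it permutes pieces) and with bounded orbits (since $\widetilde\dist\le\dist$), so $G$ fixes the circumcenter of a bounded orbit in the complete $\R$-tree $T$. Its preimage $F\subset\free$ is a $G$-invariant closed geodesic tree-graded subspace all of whose transversal trees are singletons, which reduces us to the case where every transversal tree of $\free$ is trivial. In that case one runs a circumcenter argument inside $\free$ directly: for a bounded orbit $O$ the circumradius is attained and the set of circumcenters is either a single point or contained in a single piece (the point being that a cut-point separating two near-optimal centers is a strictly better center), and $G$ permutes this set, which yields a $G$-fixed point or a $G$-fixed piece. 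The delicate points on this alternative route are the completeness of $T$ and the attainment of circumradii in a complete tree-graded space.
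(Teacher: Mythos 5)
Your induction is set up correctly, and your observation that two distinct fixed pieces force a fixed point (via the closest-point pair of Lemma \ref{ab}) is sound, but the inductive step itself is not carried out: you say the ``combining argument'' for words alternating between $H=\la g_1,\dots,g_{n-1}\ra$ and $g_n$ is the main obstacle and leave it open. The paper closes exactly this gap with a short trick that avoids any word combinatorics. Suppose $H$ fixes the point $x$ and $g_nx\neq x$, and let $Z$ be the middle cut-point (or middle cut-piece) of the pair $(x,g_nx)$, fixed by $g_n$ by Lemma \ref{isompiece}. For each $i<n$ consider the single isometry $h=g_ng_i$: it lies in the group, so $\la h\ra$ has bounded orbits, it permutes pieces, and since $g_ix=x$ it sends $x$ to $hx=g_nx$. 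Applying Lemma \ref{isompiece} to $h$ shows that $h$ fixes the middle cut-point/cut-piece of $(x,hx)=(x,g_nx)$, i.e.\ the \emph{same} object $Z$. Hence $g_iZ=g_n^{-1}(hZ)=g_n^{-1}Z=Z$, so all of $g_1,\dots,g_n$ fix $Z$. The case where $H$ fixes a piece $P$ with $g_nP\neq P$ is identical, using part (\ref{fixpiece}) of Lemma \ref{isompiece} applied to $g_ng_i$, which sends $P$ to $g_nP$. No splicing of geodesics over alternating words is needed; the only dynamical input is Lemma \ref{isompiece} applied to individual elements of the group.

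Your alternative route through the quotient tree $T=\free/{\approx}$ and a circumcenter argument is genuinely different in spirit, but as you yourself note it rests on two unproved points: completeness of $T$ (not asserted in Lemma \ref{quotientT}; without it a bounded action on an $\R$-tree need not have a fixed point) and attainment/localization of circumcenters in a tree-graded space (a minimizing sequence of centers has small $\widetilde{\dist}$-diameter by your cut-point argument, but nothing controls the component of the distance inside pieces, so it need not converge). As written, neither route constitutes a complete proof; the missing idea is the application of Lemma \ref{isompiece} to the products $g_ng_i$.
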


\proof We argue by induction on $n$. For $n=1$ it follows from Lemma \ref{isompiece}. Assume the conclusion holds for $n$ and take $g_1,..., g_{n+1}$ isometries generating a group with bounded orbits and permuting pieces. By the induction hypothesis  $g_1,..., g_n$ fix either a point $x$ or a piece $P$.

Assume they fix a point $x$, and that $g_{n+1}x \neq x$. Assume that $x, g_{n+1}x$ have a middle cut-point $m$. Then $g_{n+1}m=m$ by Lemma \ref{isompiece}, (\ref{fixpoint}). For every $i\in \{1,...,n\}$,
$g_{n+1} g_i x = g_{n+1}x$ therefore $g_{n+1} g_im=m$, hence $g_i m = m$. If $x, g_{n+1}x$ have a
middle cut-piece $Q$ then it is shown similarly that $g_1,..., g_{n+1}$ fix $Q$ set-wise. In the case
when $g_1,..., g_n$ fix a piece $P$, Lemma \ref{isompiece} also allows to prove that  $g_1,...,
g_{n+1}$ fix the middle cut-point or middle cut-piece of $P , g_{n+1} P$.\endproof


\section{Asymptotic cones of mapping class groups}\label{sacmcg}

\subsection{Distance formula in asymptotic cones of mapping class groups}\label{sec:conedistformula}
Fix an arbitrary asymptotic cone ${\mathcal{AM}}(S) =\mathrm{Con}^\omega
(\MM(S) ; (x_n), (d_n))$ of $\MM(S)$.

We fix a point $\nu_0$ in $\MM (S)$ and
define the map $\MCG (S) \to \MM (S)\, ,\, g \mapsto g\nu_0$, which
according to Theorem~\ref{MM:marking} is a quasi-isometry.
There exists a
sequence $g_0=(g_n^0)$ in $\MCG (S)$ such that $x_n = g_n^0 \nu_0$,
which we shall later use to discuss the ultrapower of the mapping
class group.

\medskip

\Notat \quad By Remark \ref{grascone}, the group $g_0^\omega (\Pi_1
\MCG (S)/\omega) (g_0^\omega)\iv$ acts transitively by isometries on
the asymptotic cone $\coneMx$.  We denote this group by $\GM$.

\me

\begin{defn}\label{def:hier}
A path in $\AM$ obtained by taking
an ultralimit of hierarchy paths, is, by a slight abuse of notation,
also called a \emph{hierarchy path}.
\end{defn}

\medskip

It was proved in \cite{Behrstock:asymptotic} that
the asymptotic cone $\AM$
has cut-points and is thus a tree-graded space.
Since $\AM$ is tree-graded, one
can define the collection of pieces
in the tree-graded structure of $\AM$ as the collection of maximal
subsets in $\AM$ without cut-points.
That set of pieces can be described as follows
\cite[$\S$~7]{BehrstockKleinerMinskyMosher}, where the equivalence to the
third item below is an implicit consequence of the proof in
\cite{BehrstockKleinerMinskyMosher} of the equivalence of the first
two items.

\begin{theorem}[Behrstock-Kleiner-Minsky-Mosher
    \cite{BehrstockKleinerMinskyMosher}]\label{classifypieces}
    Fix a pair of points $\seq \mu, \seq \nu\in\AM(S)$. If $\xi(S)\geq
    2$, then the following are equivalent.
    \begin{enumerate}
    \item No point of $\AM(S)$ separates $\seq \mu$ from $\seq \nu$.

    \item There exist points $\seq \mu',\seq \nu'$ arbitrarily close
      to $\seq\mu,\seq\nu$, resp., for which there exists representative
      sequences
      $(\mu'_{n}),(\nu'_{n})$ satisfying
      $$\ulim d_{\CC(S)}(\mu'_{n},\nu'_{n})<\infty.$$

    \item For every hierarchy path $\seq H =\lim_\omega h_{n}$
    connecting $\seq\mu$ and $\seq\nu$ there exists points $\seq
    \mu',\seq \nu'$ on $\seq H$ which are arbitrarily close to
    $\seq\mu,\seq\nu$, resp., and for which there exists
    representative sequences
    $(\mu'_{n}),(\nu'_{n})$ with $\mu'_{n},
    \nu'_{n}$ on $h_n$ satisfying $$\ulim
    d_{\CC(S)}(\mu'_{n},\nu'_{n})<\infty.$$
    \end{enumerate}
\end{theorem}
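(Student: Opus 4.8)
The plan is to prove the three characterizations of "$\seq\mu,\seq\nu$ lie in a common piece" by establishing the chain of implications $(2)\Rightarrow(1)$, $(1)\Rightarrow(3)$, and $(3)\Rightarrow(2)$, leaning heavily on the distance formula (Theorem~\ref{distanceformula}), the behaviour of hierarchy paths and their shadows in curve complexes (Lemma~\ref{MM2:LLL}), and the tree-graded structure of $\AM(S)$ already established in \cite{Behrstock:asymptotic}. The guiding dictionary is: a point of $\AM(S)$ separates $\seq\mu$ from $\seq\nu$ exactly when along a hierarchy path between them one "passes through" a marking whose base contains a fixed multicurve that becomes arbitrarily large in $\CC(S)$ at scale $(d_n)$; conversely, if the $\CC(S)$-distance stays bounded ($\omega$-a.s.\ and at scale $d_n$, so that $\ulim d_{\CC(S)}(\mu_n',\nu_n')/d_n = 0$), then no such separating marking appears and the two points sit in a single piece. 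So the real content is translating "no separating point" into a statement about $\pi_{\CC(S)}$.

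First I would prove $(2)\Rightarrow(1)$. Suppose $\seq\mu',\seq\nu'$ are close to $\seq\mu,\seq\nu$ with $\ulim d_{\CC(S)}(\mu_n',\nu_n')<\infty$; it suffices (since being in the same piece is a closed condition and pieces are path-connected, by Lemma~\ref{cutting}) to show no point separates $\seq\mu'$ from $\seq\nu'$. Fix a hierarchy path $h_n$ from $\mu_n'$ to $\nu_n'$; since $\seq H=\ulim h_n$ is a limit of quasi-geodesics it is a bi-Lipschitz-embedded path, and any point separating $\seq\mu'$ from $\seq\nu'$ lies on $\seq H$. Now if $\seq w=\ulim w_n$ is a point on $\seq H$, I would use the distance formula applied to the pairs $(\mu_n', w_n)$ and $(w_n,\nu_n')$ together with $\ulim d_{\CC(S)}(\mu_n',\nu_n')<\infty$: the $\CC(S)$-term is negligible, so all the "large domains" witnessing distance on either side are proper subsurfaces of $S$, which by the projection estimates (Theorem~\ref{projest}) are nested or disjoint rather than overlapping. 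This should let me build, for points slightly before and slightly after $\seq w$ on $\seq H$, a path in $\AM(S)$ avoiding $\seq w$ — concretely by "going around" inside the ultralimit of $\QQ(\partial Y)$-type subsets — so $\seq w$ does not separate. The technical heart here is the combinatorial bookkeeping of which subsurfaces are large; I expect to invoke Theorem~\ref{classifypieces}'s own source \cite{BehrstockKleinerMinskyMosher} rather than redo it, so in practice $(2)\Leftrightarrow(1)$ is essentially quoted.

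Next, $(1)\Rightarrow(3)$: assuming no point of $\AM(S)$ separates $\seq\mu$ from $\seq\nu$, I want, for a \emph{given} hierarchy path $\seq H=\ulim h_n$, points $\seq\mu',\seq\nu'$ on $\seq H$ close to the endpoints with representatives on $h_n$ whose $\CC(S)$-distance stays $\omega$-bounded. Argue by contrapositive: if for some $\epsilon>0$, every $\seq\mu',\seq\nu'$ on $\seq H$ within $\epsilon$ of $\seq\mu,\seq\nu$ have representatives (necessarily) with $\ulim d_{\CC(S)}(\mu_n',\nu_n')=\infty$, then in particular the sub-path of $h_n$ between the $\epsilon$-balls has $\CC(S)$-diameter growing faster than any bounded quantity; combined with monotonicity of the shadow $\ft_{h_n}$ in $\CC(S)$, there is a vertex $v_n\in\ft_{h_n}$ "in the middle", and by Lemma~\ref{MM2:LLL} a marking $\rho_n$ on $h_n$ with $v_n\in\base(\rho_n)$, sitting at definite scaled distance from both endpoints. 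The limit $\ulim\rho_n$ is then a point of $\seq H$, and one shows it separates $\seq\mu$ from $\seq\nu$ because crossing from one side to the other forces $d_{\CC(S)}$ to pass through the value $d_{\CC(S)}(\cdot,v_n)$, which is unbounded at scale $d_n$ — this is exactly where cut-points in tree-graded $\AM(S)$ come from. That contradicts $(1)$.

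Finally $(3)\Rightarrow(2)$ is immediate: the points $\seq\mu',\seq\nu'$ supplied by $(3)$ lie on a specific hierarchy path and in particular are points of $\AM(S)$ close to $\seq\mu,\seq\nu$ with representatives satisfying $\ulim d_{\CC(S)}(\mu_n',\nu_n')<\infty$, which is the assertion of $(2)$ (dropping the extra "on $\seq H$" information). \textbf{The main obstacle} I anticipate is the non-separation direction $(1)\Rightarrow(3)$ (equivalently the contrapositive, producing a genuine cut-point from unbounded $\CC(S)$-progress): one must be careful that the middle marking $\ulim\rho_n$ is at positive scaled distance from \emph{both} $\seq\mu$ and $\seq\nu$ (not collapsing to an endpoint), which requires choosing $v_n$ genuinely in the interior of the shadow geodesic and using that the shadow is a \emph{tight} geodesic with the coarse monotonicity from the definition of $\CC(U)$-monotonicity; and that the separation claim genuinely uses the structure of $\AM(S)$ as tree-graded — any path from one side to the other must re-enter the same piece and hence pass through $\ulim\rho_n$. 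Since all of this is proved in \cite{BehrstockKleinerMinskyMosher}, my write-up will cite \cite[$\S7$]{BehrstockKleinerMinskyMosher} for $(1)\Leftrightarrow(2)$ and \cite{Behrstock:asymptotic} for tree-gradedness, and supply only the short arguments $(3)\Rightarrow(2)$ and the hierarchy-path bookkeeping needed to upgrade $(1)\Leftrightarrow(2)$ to include $(3)$.
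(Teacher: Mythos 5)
The paper gives no proof of this theorem: it is quoted from \cite[$\S 7$]{BehrstockKleinerMinskyMosher}, with the remark that the equivalence with item (3) is an implicit consequence of the proof there of the equivalence of (1) and (2), and your write-up ultimately does the same, deferring the substantive implications to that reference. Your sketch of the surrounding bookkeeping (the trivial $(3)\Rightarrow(2)$ and the hierarchy-path upgrade) is consistent with what is needed, so this matches the paper's treatment.
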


Thus for every two points $\mu , \nu$ in the same piece of $\AM$ there
exists a sequence of pairs $\seq\muk ,\seq\nuk$ such that:

\noindent$\bullet$ $\epsilon_k = \max \left( \dam \left( \seq\mu ,
\seq\muk \right)\, ,\, \dam \left( \seq\nu , \seq\nuk \right) \right)$
goes to zero;

\noindent$\bullet$ $D^{(k)}=\ulim \dcs(\muk_{n},\nuk_{n})$ is finite
for every $k\in \N$.

\begin{remark}\label{trees}
The projection of the marking complex $\MM (S)$ onto the complex of curves $\CC (S)$ induces a Lipschitz map from $\AM (S)$ onto the asymptotic cone ${\mathcal{AC}}(S) =\mathrm{Con}^\omega
(\CC(S) ; (\pcs x_n), (d_n))$.  By Theorem \ref{classifypieces},
pieces in $\AM(S)$ project onto singletons in ${\mathcal{AC}}(S)$.
Therefore two points $\sm $ and $\sn$ in $\AM(S)$ for which
$\widetilde\dist(\sm,\sn)=0$ (i.e., satisfying
the relation $\sm \approx \sn$; see Lemma~\ref{quotientT}) project onto the same
point in ${\mathcal{AC}}(S)$.  Thus the projection $\AM (S) \to
{\mathcal{AC}}(S)$ induces a projection of $T_S = \AM (S) / \approx$
onto ${\mathcal{AC}}(S)$.  The latter projection is not a bijection.
This can be seen by taking for instance a sequence $(\gamma_n )$ of
geodesics in $\CC (S)$ with one endpoint in $\pcs (x_n)$ and of length
$\sqrt {d_n}$, and considering elements $g_n$ in $\MCG (S)$ obtained
by performing $\lfloor\sqrt {d_n}\rfloor$. Dehn twists around each curve in
$\gamma_n$ consecutively.  The projections of the limit points
$\seqrep x$ and $\lio{ g_n x_n }$ onto ${\mathcal{AC}}(S)$ coincide, while their projections onto $T_S$ are distinct. Indeed the limit of the sequence of paths $h_n$ joining $x_n$ and $g_n x_n$ obtained by consecutive applications to $x_n$ of the Dehn twists is a transversal path. Otherwise, if this path had a non-trivial intersection with a piece then according to Theorem \ref{classifypieces}, (3), there would exist points $\mu_n$ and $\nu_n$ on $h_n$ such that $\dist_{\MM (S)} (\mu_n,\nu_n) \geq \epsilon d_n$ for some $\epsilon >0$ and such that $\dist_{\CC (S)} (\mu_n,\nu_n)$ is bounded by a constant $D>0$ uniform in $n$. But the latter inequality would imply that $\dist_{\MM (S)} (\mu_n,\nu_n) \leq D \sqrt{d_n}\, ,$ contradicting the former inequality.
\end{remark}

\medskip

Let $\ultrau$ be the set of all subsurfaces of $S$ and let  $\Pi \ultrau
/\omega$ be its ultrapower. For
simplicity we denote by $\mathbf{S}$ the element in $\Pi \ultrau/\omega $
given by the constant sequence
$(S)$. We define, for every $\bu=(U_n)^\omega \in \upss$, its
complexity $\xi (\bu )$ to be
$\lim_\omega \xi (U_n)$.

We say that an element ${\mathbf{U}}=(U_n)^\omega$ in $\Pi \ultrau/\omega
$ is a \emph{subsurface}
of another element $\mathbf{Y}=(Y_n)^\omega$, and we denote it by
$\bu \subseteq \mathbf{Y}$ if \uass
$U_n \subseteq Y_n$. An element ${\mathbf{U}}=(U_n)^\omega$ in
$\Pi \ultrau/\omega
$ is said to be a \emph{strict subsurface}
of another element $\mathbf{Y}=(Y_n)^\omega$, denoted
$\bu \subsetneq \mathbf{Y}$, if \uass
$U_n \subsetneq Y_n$; equivalently $\bu \subsetneq \mathbf{Y}$ if and
only if $\bu \subseteq \mathbf{Y}$ and $\xi (\by ) -
\xi (\bu ) \geq 1$.

For every ${\mathbf{U}}=(U_n)^\omega$ in $\Pi \ultrau/\omega $ consider
the ultralimit of the marking
complexes of $U_n$ with their own metric $\lu = \lim_\omega (\MM
(U_n), (1), (d_n))$. Since there
exists a surface $U'$ such that \uass $U_n$ is homeomorphic to $U'$,
the ultralimit $\lu$ is isometric
to the asymptotic cone $\mathrm{Con}^\omega (\MM (U'), (d_n))$.
Consequently $\lu$ is a tree-graded
metric space.

\begin{notation} We denote by $T_{\mathbf{U}}$ the quotient tree $\lu
/\approx$, as constructed in Lemma
\ref{quotientT}. We denote by $\dlu$ the metric on $\lu$. We abuse
notation slightly, by writing
$\tdlu$ to denote both the pseudo-metric on
$\lu$ defined at the end of Section \ref{stg}, and the metric
this induces on $T_\bu$.
\end{notation}

\begin{notation}
We denote by $\QQ(\partial \bu )$ the ultralimit
$\lio{\QQ(\partial U_n)}$ in the asymptotic cone $\AM$, taken with
respect to the basepoint obtained by projecting the base points we
use for $\AM$ projected to $\QQ(\partial \bu )$.
\end{notation}

There exists a natural projection map $\plu$ from $\AM$ to $\lu$
sending any element $\seq \mu =\seqrep
\mu$ to the element of $\lu$ defined by the sequence of projections
of $\mu_n$ onto $\MM (U_n)$. This
projection induces a well-defined projection between asymptotic cones
with the same rescaling constants
by Corollary \ref{distsubsurf}.

\begin{notation}
    For simplicity, we write $\dlu (\seq \mu , \seq \nu)$
    and $\tdlu (\seq \mu , \seq \nu)$ to
    denote the distance, and the pseudo-distance in $\lu$ between the
    images under the projection maps, $\plu
    (\seq \mu )$ and $\plu (\seq \nu)$.

    We denote by $\dist_{C(\bu )} (\seq \mu , \seq \nu)$ the ultralimit
    $\lim_\omega \frac{1}{d_n}
    \dist_{C(U_n)} (\mu_n , \nu_n )$.
\end{notation}

The following is from \cite[Theorem~6.5 and Remark~6.3]{Behrstock:asymptotic}.

\begin{lemma}[\cite{Behrstock:asymptotic}]\label{lem:transv}
Given a point $\seq \mu$ in $\AM$, the transversal tree $T_{\seq \mu}$ as defined in Definition
\ref{ttrees} contains the set
$$
\left\{ \seq \nu \mid \dlu (\seq \mu , \seq \nu)=0\, ,\, \forall \bu
\subsetneq \mathbf{S} \right\}.$$
\end{lemma}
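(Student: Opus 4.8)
The plan is to characterize membership in the transversal tree $T_{\seq\mu}$ via Lemma \ref{ptx}: a point $\seq\nu$ lies in $T_{\seq\mu}$ precisely when it can be joined to $\seq\mu$ by a topological arc meeting each piece of $\AM(S)$ in at most one point. Equivalently, using Lemma \ref{quotientT}(3), it suffices to show that $\dist(\seq\mu,\seq\nu)=\tdist(\seq\mu,\seq\nu)$, i.e. that some (any) geodesic $[\seq\mu,\seq\nu]$ has trivial intersection with every piece. So the goal becomes: if $\dlu(\seq\mu,\seq\nu)=0$ for every proper $\bu\subsetneq\bs$, then no nondegenerate subsegment of a hierarchy path from $\seq\mu$ to $\seq\nu$ lies in a piece.

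First I would fix a hierarchy path $\seq H=\ulim h_n$ from $\seq\mu$ to $\seq\nu$ and argue by contradiction: suppose some nondegenerate subarc of $\seq H$ is contained in a piece $\mathcal P$. By Theorem \ref{classifypieces}(3), applied to the endpoints $\seq\mu',\seq\nu'$ of that subarc (which are separated by a definite distance in $\AM(S)$ since the subarc is nondegenerate), there are representative sequences $(\mu_n'),(\nu_n')$ on $h_n$ with $\ulim d_{\CC(S)}(\mu_n',\nu_n')<\infty$ while $\dist_{\MM(S)}(\mu_n',\nu_n')\ge\epsilon d_n$ for some $\epsilon>0$. Now invoke the distance formula (Theorem \ref{distanceformula}): $\dist_{\MM(S)}(\mu_n',\nu_n')\approx_{a,b}\sum_{Y\subseteq S}\Tsh K{\dist_{\CC(Y)}(\mu_n',\nu_n')}$. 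Since the $\CC(S)$-term is bounded ($\omega$-a.s.), the linear-in-$d_n$ lower bound must come from proper subsurfaces $Y_n\subsetneq S$. The key combinatorial input is that hierarchy paths are $\CC(U)$-monotonic with tight shadows, together with Lemma \ref{atmost2xi}: along $h_n$, at each point only boundedly many ($\le 2\xi(S)$) of the $M(S)$-large domains are simultaneously ``active'' (nested containing a given large domain), so one can extract, $\omega$-a.s., a single sequence of proper subsurfaces $\bu=(Y_n)$ carrying a definite proportion of the distance, i.e. with $\ulim\frac1{d_n}\dist_{\CC(Y_n)}(\mu_n',\nu_n')>0$. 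Because the projection $\MM(S)\to\MM(Y_n)$ is coarsely Lipschitz (Corollary \ref{distsubsurf}) and coarsely respects subsurface projections (Remark \ref{rnested}), this forces $\dlu(\seq\mu',\seq\nu')>0$, hence $\dlu(\seq\mu,\seq\nu)>0$ for this proper $\bu\subsetneq\bs$, contradicting the hypothesis.

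The main obstacle I anticipate is the extraction of a single coherent sequence of proper subsurfaces realizing a definite fraction of $\dist_{\MM(S)}(\mu_n',\nu_n')$ — that is, converting the ``sum over many subsurfaces'' in the distance formula into ``one subsurface'' after passing to the ultralimit. Bounding the number of simultaneously large nested domains (Lemma \ref{atmost2xi}) handles nesting, but one must also deal with large domains that overlap or are disjoint; here the projection estimates (Theorem \ref{projest}) and the monotonicity of hierarchy paths should let one partition the hierarchy into subpaths each ``dominated'' by a single large domain, after which a pigeonhole argument and an ultrafilter choice pick out the sequence $\bu$. A secondary subtlety is checking that $\dlu$ is genuinely insensitive to the bounded ambiguity in the definition of $\pi_{\MM(U_n)}$ (this is exactly Remark \ref{rnested} and the discussion in Section \ref{projmy}), so that the ultralimit distance $\dlu$ is well-defined and the contradiction is legitimate. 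Once these points are in place, the inclusion follows, and in fact the displayed set is contained in $T_{\seq\mu}$ via Lemma \ref{ptx} as described; Lemma \ref{quotientT}(3) then also shows the reverse-type compatibility (that $\tdist$ agrees with $\dist$ on this set), completing the argument.
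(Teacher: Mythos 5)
Your overall architecture is sound: reducing membership in $T_{\seq\mu}$ to the equality $\dist_\AM(\seq\mu,\seq\nu)=\tdist(\seq\mu,\seq\nu)$ via Lemmas \ref{ptx} and \ref{quotientT}, and then using Theorem \ref{classifypieces} to produce, from a nontrivial intersection of a geodesic with a piece, points $\seq\mu',\seq\nu'$ on the hierarchy path with $\dist_\AM(\seq\mu',\seq\nu')>0$ but $\ulim\dcs(\mu_n',\nu_n')<\infty$. (The paper does not prove this lemma itself — it quotes it from Behrstock — so there is no in-paper proof to compare against.) The genuine gap is in the extraction step. You claim to produce a single sequence of proper subsurfaces $(Y_n)$ with $\ulim\frac{1}{d_n}\dist_{\CC(Y_n)}(\mu_n',\nu_n')>0$; this is false in general. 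The $\MM(S)$-distance can be spread over $\sim d_n$ domains each contributing only the threshold $K$ to the distance formula (e.g.\ a composition of $K$-fold Dehn twists along a chain of $\sim d_n/K$ curves all supported in a fixed proper subsurface $W$: then $\dcs(\mu_n',\nu_n')=O(1)$ and $\dist_{\MM(S)}(\mu_n',\nu_n')\sim d_n$, yet no single curve complex sees more than $o(d_n)$ of it, although $\dist_{\MM(W)}$ does). For the same reason your proposed mechanism — partitioning the hierarchy into subpaths each dominated by one large domain and pigeonholing — cannot work: the number of large domains is unbounded, so pigeonhole yields only $O(1)$ per domain. What is true, and what you actually need, is concentration at the level of a sub-\emph{marking} complex. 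This is exactly Lemma \ref{distone}: every contributing domain misses one of the $s+1$ vertices $\alpha_i$ of the (boundedly long) tight $\CC(S)$-geodesic (Lemma \ref{MM2:LLL}), giving $\dist_{\MM(S)}(\mu_n',\nu_n')\le C\sum_i\dist_{\MM(S\setminus\alpha_i)}(\mu_n',\nu_n')+O(1)$ with only $s+1$ summands; pigeonhole over these and the ultrafilter then yield a proper $\bu=(U_n)^\omega$ with $\ulim\frac{1}{d_n}\dist_{\MM(U_n)}(\mu_n',\nu_n')>0$.

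A second, smaller gap is the final implication ``$\dlu(\seq\mu',\seq\nu')>0$, hence $\dlu(\seq\mu,\seq\nu)>0$.'' Corollary \ref{distsubsurf} and Remark \ref{rnested} bound projected distances from \emph{above} by ambient distances, which is the wrong direction here. The correct justification is the one in the proof of Lemma \ref{projtu}: since $\mu_n',\nu_n'$ lie on a hierarchy path from $\mu_n$ to $\nu_n$, the $\CC(Y)$-monotonicity of hierarchy paths gives $\dcy(\mu_n',\nu_n')\le\dcy(\mu_n,\nu_n)+O(1)$ for every $Y\subseteq U_n$, and the distance formula in $\MM(U_n)$ then yields $\dist_{\MM(U_n)}(\mu_n',\nu_n')\le C\,\dist_{\MM(U_n)}(\mu_n,\nu_n)+C$, whence $\dlu(\seq\mu,\seq\nu)>0$, contradicting the hypothesis. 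With these two repairs the argument goes through.
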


\begin{corollary}\label{Utransv}
For any two distinct points $\seq\mu$ and $\seq\nu$ in $\AM$ there
exists at least one subsurface $\bu$ in $\upss$ such that $\dlu
(\seq\mu , \seq\nu) >0$ and for every strict subsurface
$\mathbf{Y}\subsetneq \bu $, $\dist_{\mathcal{M}^\omega (\mathbf{Y})}
(\seq\mu , \seq\nu)=0$.  In particular $\plu ( \seq\mu )$ and $\plu
(\seq\nu)$ are in the same transversal tree.
\end{corollary}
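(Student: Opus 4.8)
The plan is to produce $\bu$ as a subsurface of \emph{minimal complexity} among those with $\dlu(\seq\mu,\seq\nu)>0$, and then to deduce the transversal--tree statement by applying Lemma \ref{lem:transv} inside the cone $\lu$ rather than inside $\AM$.

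First I would consider the set
$$
\mathcal{S} = \{\, \mathbf{V}\in\upss \;:\; \dist_{\mathcal{M}^\omega(\mathbf{V})}(\seq\mu,\seq\nu) > 0 \,\}.
$$
This set is non-empty, since it contains $(S)^\omega$: the space $\mathcal{M}^\omega\mathbf{S}$ is canonically identified with $\AM$ (the base point attached to $\mathbf{S}$ being the projection to $\MM(S)$ of the base point of $\AM$, which is that base point itself), the projection $\pi_{\mathcal{M}^\omega\mathbf{S}}$ is the identity on $\AM$, and $\seq\mu\neq\seq\nu$ forces $\dist_{\mathcal{M}^\omega(\mathbf{S})}(\seq\mu,\seq\nu)=\dam(\seq\mu,\seq\nu)>0$. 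Next I would record that the complexity $\mathbf{V}\mapsto\xi(\mathbf{V})=\lim_\omega\xi(V_n)$ takes values in a finite set of integers (a connected essential subsurface of $S$ has complexity between $-1$ and $\xi(S)$, and an $\omega$--limit of a sequence with finitely many values equals one of those values), so $\{\, \xi(\mathbf{V}) : \mathbf{V}\in\mathcal{S}\,\}$ has a minimum; fix $\bu\in\mathcal{S}$ realizing it.

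The first required property, $\dlu(\seq\mu,\seq\nu)>0$, holds by the choice of $\bu$. For the second, let $\mathbf{Y}\subsetneq\bu$ be any strict subsurface. Then $\mathbf{Y}\in\upss$, and by the very definition of strict inclusion recalled above $\xi(\mathbf{Y})\leq\xi(\bu)-1<\xi(\bu)$. Were $\dist_{\mathcal{M}^\omega(\mathbf{Y})}(\seq\mu,\seq\nu)$ positive, then $\mathbf{Y}$ would lie in $\mathcal{S}$ with complexity strictly smaller than $\xi(\bu)$, contradicting minimality; hence $\dist_{\mathcal{M}^\omega(\mathbf{Y})}(\seq\mu,\seq\nu)=0$, which is the second property.

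For the ``in particular'' clause I would invoke Lemma \ref{lem:transv}, applied not to $\AM$ but to $\lu$. Since $\omega$--almost surely the $U_n$ are all homeomorphic to a fixed surface $U'$, the cone $\lu$ is isometric to an asymptotic cone $\mathrm{Con}^\omega(\MM(U'),(d_n))$, and (using that there are only finitely many topological types of pairs of surfaces) the strict subsurfaces $\mathbf{Y}\subsetneq\bu$ are, $\omega$--almost surely, exactly the subsurfaces of $U'$ that play in this cone the role that strict subsurfaces of $\mathbf{S}$ play in Lemma \ref{lem:transv}. The point requiring care is that $\dist_{\mathcal{M}^\omega(\mathbf{Y})}(\seq\mu,\seq\nu)$, which is computed by projecting $\seq\mu,\seq\nu$ directly from $\AM$, coincides with the distance computed by first projecting to $\lu$ and then from $\lu$ to $\mathcal{M}^\omega(\mathbf{Y})$; this is exactly Remark \ref{rnested}, which says that for nested $\mathbf{Y}\subsetneq\bu$ the two ways of projecting a marking differ by a uniformly bounded amount and therefore agree in the asymptotic cone. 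Granting this, the preceding paragraph yields $\dist_{\mathcal{M}^\omega(\mathbf{Y})}(\plu(\seq\mu),\plu(\seq\nu))=0$ for every strict subsurface $\mathbf{Y}$ of $\bu$, so Lemma \ref{lem:transv} places $\plu(\seq\nu)$ in the transversal tree of $\lu$ through $\plu(\seq\mu)$; in particular $\plu(\seq\mu)$ and $\plu(\seq\nu)$ lie in a common transversal tree. I expect the only genuine obstacle to be this bookkeeping step---transferring Lemma \ref{lem:transv} and the definition of $\dist_{\mathcal{M}^\omega(\mathbf{Y})}$ from the ambient cone $\AM$ to the subcone $\lu$---the minimality argument itself being routine.
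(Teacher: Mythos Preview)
Your proof is correct and follows essentially the same approach as the paper: the paper phrases the existence of $\bu$ via the descending chain condition on nested subsurfaces (chains have length at most $\xi(S)$), while you equivalently pick $\bu$ of minimal complexity in $\mathcal{S}$, and both then appeal to Lemma~\ref{lem:transv}. Your treatment of the ``in particular'' clause is in fact more careful than the paper's one-line reference, correctly noting (via Remark~\ref{rnested}) that Lemma~\ref{lem:transv} must be applied inside $\lu$ rather than $\AM$.
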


\proof Indeed, every chain of nested subsurfaces of $S$ contains at most $\xi$ (the complexity of $S$) elements. It implies that the same is true for chains of subsurfaces $\bu$ in $\upss$. In particular, $\upss$ with the inclusion order satisfies the descending chain condition. It remains to apply Lemma \ref{lem:transv}.\endproof

\begin{lemma}\label{cstd}
There exists a constant $t$ depending only on $\xi(S)$ such that
for every $\seq\mu$, $\seq\nu$ in $\AM$ and $\bu \in \upss$ the
following inequality holds
$$
\dist_{C(\bu)} (\seq\mu ,\seq\nu)\leq t\,  \tdlu (\seq\mu ,\seq\nu)\, .
$$
\end{lemma}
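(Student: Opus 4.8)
The plan is to reduce the inequality to the corresponding statement about the marking complexes $\MM(U_n)$ at finite scale, and then pass to the ultralimit. First I would recall the two ingredients that make this work. On one hand, by Lemma~\ref{MM2:LLL} (or rather the distance formula, Theorem~\ref{distanceformula}), for any subsurface $U_n$ the curve-complex distance $\dist_{\CC(U_n)}(\mu_n,\nu_n)$ is one of the terms appearing in the sum $\sum_{W\subseteq U_n}\Tsh{K}{\dist_{\CC(W)}(\mu_n,\nu_n)}$, so up to additive and multiplicative constants depending only on $\xi(S)$ we have $\dist_{\CC(U_n)}(\mu_n,\nu_n)\leq a\,\dist_{\MM(U_n)}(\pi_{\MM(U_n)}(\mu_n),\pi_{\MM(U_n)}(\nu_n))+b$. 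On the other hand, $\tdlu(\seq\mu,\seq\nu)$ is by definition the ultralimit of $\frac1{d_n}$ times $\dist_{\MM(U_n)}(\pi_{\MM(U_n)}(\mu_n),\pi_{\MM(U_n)}(\nu_n))$ with the lengths of the intersections with pieces subtracted off. So the crux is to bound $\dist_{\CC(U_n)}(\mu_n,\nu_n)$ not merely by the full marking distance, but by that distance \emph{minus the contribution of the pieces}, i.e. by the part of the distance that survives in $\td$.

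The key step is therefore: a sub-geodesic of a hierarchy path in $\MM(U_n)$ that limits into a piece of $\lu$ makes only a bounded contribution to $\dist_{\CC(U_n)}$. Concretely, fix a hierarchy path $h_n$ from $\pi_{\MM(U_n)}(\mu_n)$ to $\pi_{\MM(U_n)}(\nu_n)$; its ultralimit $\seq H$ is a geodesic in $\lu$ (hierarchy paths are quasi-geodesics with uniform constants, and $\plu$ of a hierarchy path is a hierarchy path in the sense of Definition~\ref{def:hier}). By the description of pieces, a maximal sub-arc of $\seq H$ inside a piece corresponds, via Theorem~\ref{classifypieces}(3), to sub-segments $[\mu'_n,\nu'_n]$ of $h_n$ along which $\frac1{d_n}\dist_{\CC(U_n)}(\mu'_n,\nu'_n)\to 0$. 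Summing over the (at most $\xi(S)$-many, after passing to the quotient tree) maximal such sub-arcs and using that $\seq H$ projects monotonically onto the shadowing tight geodesic $\ft_{U_n}$ in $\CC(U_n)$, I get that $\frac1{d_n}\dist_{\CC(U_n)}(\mu_n,\nu_n)$ is, in the ultralimit, at most $\xi(S)$ times $\frac1{d_n}$ of the length of $h_n$ lying \emph{outside} all pieces — which is exactly (a constant times) $\tdlu(\seq\mu,\seq\nu)$. Taking $t$ to absorb this $\xi(S)$ together with the distance-formula constants $a,b$ (the $b$ disappears after dividing by $d_n\to\infty$) gives the claim.

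The main obstacle I expect is the careful bookkeeping in the previous step: one must be sure that the portions of the hierarchy path contributing to $\dist_{\CC(U_n)}$ are genuinely captured by the part of $\dist_{\MM(U_n)}$ that is \emph{not} subtracted in the definition of $\td$. This requires knowing that the ultralimit of a hierarchy path really does realize $\dlu$, that its decomposition into the ``piece part'' and the ``transversal part'' matches the decomposition used to define $\tdlu$ (this is exactly the content of Lemma~\ref{quotientT}(2) together with the structure results quoted from \cite{Behrstock:asymptotic, BehrstockKleinerMinskyMosher}), and that the number of maximal sub-arcs inside pieces that one needs to treat is controlled — here the descending chain condition argument from the proof of Corollary~\ref{Utransv}, bounding chains of nested subsurfaces by $\xi(S)$, is what keeps $t$ from depending on $n$. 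Once these compatibility points are in place, the inequality follows by taking $\lim_\omega$ of the finite-scale estimate.
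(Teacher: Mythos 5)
Your overall strategy is the right one and matches the paper's: decompose a path from $\seq\mu$ to $\seq\nu$ along its intersections with pieces, use Theorem~\ref{classifypieces} to see that the piece parts contribute nothing to $\dist_{C(\bu)}$ after rescaling, and use the distance formula to convert the $\CC$-distance of what remains into marking distance. However, there is a genuine gap in the step where you sum over the piece sub-arcs. You assert that a geodesic (or limit hierarchy path) meets ``at most $\xi(S)$-many'' maximal sub-arcs inside pieces, attributing this to the descending chain condition on nested subsurfaces. That bound is false: the chain condition controls the length of a \emph{nested} chain of subsurfaces, not the number of distinct pieces a path crosses, and a geodesic in $\AM$ can have countably infinitely many non-trivial intersections with pieces. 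Since each such intersection contributes a bounded but non-zero amount to $\dist_{\CC(U_n)}$ at finite scale, summing over infinitely many of them is not controlled by your argument, and the factor $\xi(S)$ you propose for $t$ has no actual source (in the paper $t$ comes from the multiplicative constant in the distance formula, not from counting pieces).

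The paper's fix, which your write-up is missing, is an $\epsilon$-approximation: since $\sum_{k}\dist(\seq\alpha_k,\seq\beta_k)$ converges, choose a \emph{finite} subfamily $J$ of pieces capturing all but $\epsilon$ of the subtracted length, apply Theorem~\ref{classifypieces} only to those, and then bound the $\CC(\mathbf{S})$-distance of the finitely many complementary segments $[\seq\beta_j',\seq\alpha_{j+1}']$ (each of which may still meet infinitely many small pieces) by the distance formula. The key telescoping identity is that the sum of the marking lengths of the complementary segments equals $\dist_{\mathbf{S}}(\seq\mu,\seq\nu)-\sum_j\dist_{\mathbf{S}}(\seq\alpha_j',\seq\beta_j')\leq \widetilde{\dist}_{\mathbf{S}}(\seq\mu,\seq\nu)+2\epsilon$; letting $\epsilon\to 0$ gives the lemma. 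Your appeal to monotone projection onto the shadowing tight geodesic does not substitute for this identity, and without it the passage from ``length of $h_n$ outside pieces'' to $\tdlu(\seq\mu,\seq\nu)$ is not justified. One further small point: $\widetilde{\dist}$ is defined via geodesics, so if you work with a hierarchy path you should either reduce to a geodesic first or invoke property $(T_2')$ to compare the two decompositions.
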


\proof The inequality involves only the projections of $\seq\mu$,
$\seq\nu$ onto $\MM^\omega (\bu )$.
Also \uass $U_n$ is homeomorphic to a fixed surface $U$, hence
$\MM^\omega (\bu )$ is isometric to some
asymptotic cone of $\MM (U)$, and it suffices to prove the inequality
for $\bu$ the constant sequence
$\mathbf{S}$.

Let $([\seq\alpha_k,\seq\beta_k])_{k\in K}$ be the set of non-trivial
intersections of a geodesic
$[\seq\mu , \seq\nu ]$ with pieces in $\AM$. Then $\widetilde{\dist}
(\seq\mu , \seq\nu) = \dist
(\seq\mu , \seq\nu) - \sum_{k\in K} \dist
(\seq\alpha_k,\seq\beta_k)$. For any $\epsilon >0$ there
exists a finite subset $J$ in $K$ such that $\sum_{k\in K\setminus J}
\dist (\seq\alpha_k,\seq\beta_k) \leq \epsilon $.  According to
Theorem~\ref{classifypieces} for every $k\in K$ there exist
$\seq\alpha_k' = \lio{ \alpha_{k,n}'}$ and $\seq\beta_k' =
\lio{\beta_{k,n}'}$ for which
$[\seq\alpha_k',\seq\beta_k']\subset [\seq\alpha_k,\seq\beta_k]$ and
such that:
\begin{enumerate}
    \item
$\ulimn\dist_{C(S)}(\alpha_{k,n}',\beta_{k,n}')<\infty$
    \item $\sum_{k\in K} \dist (\seq\alpha_k,\seq\beta_k )- 2\epsilon
    \leq\sum_{k\in J} \dist (\seq\alpha_k',\seq\beta_k').$
\end{enumerate}

The second item above follows since Theorem~\ref{classifypieces}
yields that $\seq\alpha_k',\seq\beta_k'$ can be chosen so that
$\sum_{k\in J} \dist (\seq\alpha_k,\seq\beta_k )$ is arbitrarily close
to $\sum_{k\in J} \dist (\seq\alpha_k',\seq\beta_k')$, and since the
contributions from those entries indexed by $K-J$ are less than
$\epsilon$.

Assume that $J=\{ 1,2,...m\}$ and that the points
$\seq\alpha_1',\seq\beta_1',
\seq\alpha_2',\seq\beta_2',...., \seq\alpha_m',\seq\beta_m'$ appear
on the geodesic $[\seq\mu,
\seq\nu]$ in that order.

By the triangle inequality
\begin{equation}\label{csn}
\dist_{C(S)} (\mu_n , \nu _n) \leq \dist_{C(S)} (\mu_n ,
\alpha_{1,n}') + \dist_{C(S)} (\beta_{m,n}' ,
\nu_n) +
\end{equation}
$$
\sum_{j=1}^m \dist_{C(S)} (\alpha_{j,n}',\beta_{j,n}') +
\sum_{j=1}^{m-1} \dist_{C(S)} (\beta_{j,n}',
\alpha_{j+1,n}')\, .
$$

Above we noted $\ulimn\sum_{j=1}^m
\dist_{C(S)}(\alpha_{j,n}',\beta_{j,n}')<\infty$, thus if we rescale
the above inequality by $\frac{1}{d_{n}}$ and take the ultralimit, we
obtain:
\begin{equation}\label{elast}
    \dist_{C(\mathbf{S})} (\seq\mu , \seq\nu )
    \leq
    \dist_{C(\mathbf{S})} (\seq\mu , \seq\alpha_1') +
    \dist_{C(\mathbf{S})}(\seq \beta_{m}' , \seq \nu )
    + \sum_{j=1}^{m-1}
    \dist_{C(\mathbf{S})} (\seq\beta_{j}', \seq\alpha_{j+1}').
\end{equation}

The distance formula implies that up to some multiplicative constant
$t$, the right hand side of equation~(\ref{elast}) is at most
$\dist_{\mathbf{S}} (\seq\mu , \seq\alpha_1') + \sum_{j=1}^{m-1}
\dist_{\mathbf{S}} (\seq\beta_{j}', \seq\alpha_{j+1}') +
\dist_{\mathbf{S}} (\seq \beta_{m}' , \seq \nu )$, which is equal to
$\dist_{\mathbf{S}} (\seq\mu , \seq\nu ) - \sum_{j=1}^m
\dist_{\mathbf{S}} (\seq\alpha_{j}' , \seq\beta_{j}')$.  Since above
we noted that $\sum_{k\in K} \dist (\seq\alpha_k,\seq\beta_k )-
2\epsilon \leq\sum_{j\in J} \dist (\seq\alpha_k',\seq\beta_k')$, it
follows that $$\dist_{\mathbf{S}} (\seq\mu , \seq\nu ) - \sum_{j=1}^m
\dist_{\mathbf{S}} (\seq\alpha_{j}' , \seq\beta_{j}') \leq
\widetilde{\dist}_{\mathbf{S}} (\seq\mu , \seq\nu ) + 2\epsilon.$$

Thus, we have shown that for every $\epsilon >0$ we have
$\dist_{C(\mathbf{S})} (\seq\mu , \seq\nu ) \leq t
\widetilde{\dist}_{\mathbf{S}} (\seq\mu , \seq\nu ) + 2t\epsilon $.
This completes the proof.
\endproof

\begin{lemma}\label{distone} Let $\mu$ and $\nu$ be two markings in
$\MM(S)$ at
$\CC(S)$-distance $s$ and let $\alpha_1,..., \alpha_{s+1}$ be the $s+1$ consecutive vertices (curves) of a tight geodesic in $\CC(S)$ shadowed by a hierarchy path
$\pgot$ joining $\mu$ and $\nu$.  Then the $s+1$ proper
subsurfaces $S_1, ..., S_{s+1}$ of $S$ defined by $S_{i}=S\setminus \alpha_{i}$ satisfy the inequality
\begin{equation}\label{dist1}
\dist_{\MM(S)}(\mu,\nu)\le
C\sum_i\dist_{\MM(S_i)}(\mu,\nu)+Cs+D\end{equation} for some constants
$C, D$
depending only on $S$.
\end{lemma}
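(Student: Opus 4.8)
The plan is to read~(\ref{dist1}) off the Masur--Minsky distance formula (Theorem~\ref{distanceformula}), using Lemma~\ref{MM2:LLL} to place the large domains of the pair $(\mu,\nu)$ inside the subsurfaces $S_i$. First I would fix a uniform constant $c_0 = c_0(S)$ as in Remark~\ref{rnested} and a threshold $K$ depending only on $S$, chosen so that $K$ is admissible in the distance formula for $\MM(S)$, so that $K-c_0$ is admissible in the distance formula for each of the finitely many homeomorphism types occurring among the $S_i = S\setminus\alpha_i$, and so that $K\ge M(S)$, the constant of Lemma~\ref{MM2:LLL}. Applying the distance formula to $\mu,\nu$ and isolating the term indexed by $Y=S$ gives
$$\dist_{\MM(S)}(\mu,\nu)\ \leq_{a,b}\ \Tsh{K}{\dcs(\mu,\nu)}\ +\ \sum_{Y\subsetneq S}\Tsh{K}{\dcy(\mu,\nu)}.$$
The first summand is at most $\dcs(\mu,\nu)$, hence at most $s$ up to an additive constant depending only on $S$, which will feed into the $Cs+D$ terms; the remaining sum ranges only over the finitely many $K$-large proper domains of $(\mu,\nu)$.

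The key step is the following: if $Y\subsetneq S$ is $K$-large, so $\dcy(\mu,\nu)>K\ge M(S)$, then by Lemma~\ref{MM2:LLL} applied to the hierarchy path $\pgot$ some vertex $v$ of the tight geodesic shadowed by $\pgot$ satisfies $Y\subseteq S\setminus v$; since that geodesic has vertex set $\{\alpha_1,\dots,\alpha_{s+1}\}$, this means $Y\subseteq S\setminus\alpha_i=S_i$ for some $i$. As we only want an upper bound, we may over-count and write
$$\sum_{\substack{Y\subsetneq S\\ K\text{-large}}}\dcy(\mu,\nu)\ \leq\ \sum_{i=1}^{s+1}\ \sum_{\substack{Y\subseteq S_i\\ K\text{-large for }(\mu,\nu)}}\dcy(\mu,\nu).$$
For a fixed $i$, Remark~\ref{rnested} gives that $\dcy(\mu,\nu)$ and $\dist_{\CC(Y)}(\pi_{\MM(S_i)}(\mu),\pi_{\MM(S_i)}(\nu))$ differ by at most $c_0$ for every $Y\subseteq S_i$. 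Hence each summand $Y$ is a $(K-c_0)$-large domain for the pair $\pi_{\MM(S_i)}(\mu),\pi_{\MM(S_i)}(\nu)$, and because each such distance exceeds $K-c_0$ the additive $c_0$ is absorbed into a multiplicative factor $1+\tfrac{c_0}{K-c_0}$. So the inner sum is at most $\bigl(1+\tfrac{c_0}{K-c_0}\bigr)\sum_{Y\subseteq S_i}\Tsh{K-c_0}{\dist_{\CC(Y)}(\pi_{\MM(S_i)}(\mu),\pi_{\MM(S_i)}(\nu))}$, which the distance formula for $\MM(S_i)$ with threshold $K-c_0$ bounds by $a_i\,\dist_{\MM(S_i)}(\mu,\nu)+b_i$ (up to the same multiplicative factor) for constants depending only on the homeomorphism type of $S_i$, hence only on $S$. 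Collecting the three estimates and taking $C,D$ to be suitable maxima over the finitely many homeomorphism types of the $S_i$ and the resulting constants — the $s+1$ additive terms $b_i$ being absorbed into $Cs+D$ — yields~(\ref{dist1}).

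The main obstacle will be exactly this last bit of bookkeeping: a priori Remark~\ref{rnested} contributes one additive error of size $c_0$ for each of the possibly many large domains, and these cannot simply be summed. The remedy, exploited above, is that every term being summed is already $\ge K-c_0$, so the $c_0$ is controlled multiplicatively rather than additively; the only genuinely additive contributions are the single constants $b_i$ from the $s+1$ applications of the distance formula on the subsurfaces $S_i$, and absorbing these is precisely the role of the $Cs$ term. A minor technical point to check along the way is that everything goes through when $S_i=S\setminus\alpha_i$ is disconnected, where $\MM(S_i)$ and its distance formula are understood componentwise; this is routine.
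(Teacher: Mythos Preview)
Your argument is correct and follows essentially the same route as the paper's proof: isolate the $\CC(S)$ term (bounded by $s$), use Lemma~\ref{MM2:LLL} to place every large proper domain $Y$ inside some $S_i$, and then invoke the distance formula on each $\MM(S_i)$. The paper's version is terser and does not spell out the bookkeeping with Remark~\ref{rnested} or the absorption of the $s{+}1$ additive constants into $Cs+D$, but your more careful handling of these points is exactly what is needed to make the sketch rigorous.
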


\begin{proof} By Lemma~\ref{MM2:LLL},  if  $Y\subsetneq S$  is a proper subsurface
which yields a term in the distance formula (see
Theorem~\ref{distanceformula}) for
$\dist_{\MM(S)}(\mu,\nu)$, then there exists at least one (and at
most 3) $i\in\{1,...,s+1\}$
for which $Y\cap \alpha_i=\emptyset$. Hence, any such $Y$ occurs in
the distance formula for $\dist_{\MM(S_{i})}(\mu,\nu)$, where
$S_{i}=S\setminus \alpha_{i}$.
Every term which occurs in the distance formula for
$\dist_{\MM(S)}(\mu,\nu)$, except for the $\dist_{\CC(S)}(\mu,\nu)$
term, has a corresponding term (up to bounded multiplicative and
additive errors) in the distance formula for at least one of the
$\dist_{\MM(S_{i})}(\mu,\nu)$.
Since $\dist_{\CC(S)}(\mu,\nu)=s$, up to
the additive and multiplicative bounds occurring in the distance
formula this term in the distance formula for
$\dist_{\MM(S)}(\mu,\nu)$ is bounded above by $s$ up to a
bounded multiplicative and additive error.
This implies inequality~(\ref{dist1}).
\end{proof}

\begin{notation} For any subset $F\subset \upss$ we define the map
$\psi_F\co \AM \to \prod_{\bu \in F} T_\bu $, where for each
$\bu\in F$ the map $\psi_{\bu} \co \AM \to T_\bu$ is the
canonical projection of $\AM$ onto $T_\bu$.  In the
particular case when $F$ is finite equal to $\{ \bu_1,..., \bu_k \}$
we also use the notation $\psi_{\bu_1,.., \bu_k}$.
\end{notation}


\begin{lemma}\label{projtu}
Let $\fh\subset\AM$ denote the ultralimit of a sequence of
uniform quasi-geodesics in
$\MM$. Moreover, assume that the quasi-geodesics in the sequence are
$\CC(U)$--monotonic for every $U\subseteq S$, with constants that are
uniform over the sequence.

Any path $\fh\co [0,a] \to \AM$, as above, projects onto a geodesic
$\g\co [0,b] \to T_\bu$ such that $\fh (0)$ projects onto $\g(0)$ and,
assuming both $\fh$ and $\g$ are  parameterized by arc length,
the map $[0,a]\to [0,b]$ defined by the projection is
non-decreasing.
\end{lemma}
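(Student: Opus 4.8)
\textbf{Proof plan for Lemma \ref{projtu}.}

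The plan is to transfer the $\CC(U)$--monotonicity property of the approximating quasi-geodesics $h_n$ in $\MM(S)$ to a monotonicity property of the limit path $\fh$ with respect to the pseudo-distance $\tdlu$, and then to recognize $\tdlu$-monotone images as geodesics in the real tree $T_\bu$. First I would reduce, exactly as in the proof of Lemma \ref{cstd}, to the case where $\bu$ is the constant sequence $\mathbf{S}$: the projection $\plu$ only sees the $\MM^\omega(\bu)$--coordinate, and \uass\ $U_n$ is homeomorphic to a fixed surface, so $\MM^\omega(\bu)$ is isometric to an asymptotic cone of $\MM(U)$ with its own hierarchy structure; under this identification the image of $\fh$ is again a limit of uniform $\CC(W)$--monotonic quasi-geodesics for all $W$, by Remark \ref{rnested}. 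So it suffices to show that a limit $\fh\co[0,a]\to\AM$ of uniform quasi-geodesics $h_n$ that are $\CC(W)$--monotonic for all $W\subseteq S$ (uniformly) projects to a geodesic $\g\co[0,b]\to T_S$ with the stated monotonicity.

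The key step is to show that for $0\le s\le t\le u\le a$ one has the additivity
\[
\tdls(\fh(s),\fh(u)) = \tdls(\fh(s),\fh(t)) + \tdls(\fh(t),\fh(u)).
\]
Granting this, the map $[0,a]\to T_S$ is a (reparametrized, possibly constant-on-subintervals) geodesic: the induced function $t\mapsto \tdls(\fh(0),\fh(t))$ is non-decreasing and continuous (continuity because $\tdls\le\dist$ and $\fh$ is continuous), so composing $\fh$ with the quotient map and reparametrizing by this arc-length function yields a geodesic $\g$ with $\g(0)$ the image of $\fh(0)$ and with $[0,a]\to[0,b]$ non-decreasing, which is the conclusion. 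To prove the additivity I would argue that the ``back-tracking'' that $\tdls$ could detect is exactly the kind of behavior ruled out by $\CC(U)$--monotonicity. Concretely: by the distance formula and Lemma \ref{distone}, for markings on $h_n$ the quantity $\dist_{\MM(S)}$ decomposes (up to uniform multiplicative/additive error, which vanishes after rescaling) into contributions from the $K$-large domains $Y$, each contributing $\dist_{\CC(Y)}$; and $\tdls$ in the cone is, by Lemma \ref{cstd} together with the description of pieces in Theorem \ref{classifypieces}, controlled by the $\CC(\mathbf{Y})$-contributions over strict subsurfaces $\mathbf{Y}$. Monotonicity of $h_n$ in each $\CC(Y)$ means that the vertex $v(\rho)$ traces the shadow geodesic $\ft_Y$ coarsely monotonically, so for $s\le t\le u$ the $\CC(Y)$-distances add up coarsely: $\dist_{\CC(Y)}(h_n(s),h_n(u))\approx \dist_{\CC(Y)}(h_n(s),h_n(t))+\dist_{\CC(Y)}(h_n(t),h_n(u))$ for $\omega$-almost every $n$ and every relevant $Y$. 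Summing over large domains, rescaling by $1/d_n$, and taking the $\omega$-limit gives additivity of $\dist_{\mathbf{S}}$ along $\fh$; subtracting the piece-contributions (which by Theorem \ref{classifypieces}(3) correspond precisely to sub-hierarchy-paths with bounded $\CC(S)$-diameter, and these also behave additively since they occur in disjoint sub-intervals along the path) yields the additivity of $\tdls$.

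The main obstacle I anticipate is making the passage ``coarse additivity of $\sum_Y\dist_{\CC(Y)}$ along $h_n$ $\Rightarrow$ exact additivity of $\tdls$ along $\fh$'' fully rigorous: one must handle the fact that the set of large domains varies with $n$ and with the pair of points chosen, control the uniformity of the monotonicity constants so that the coarse errors are $o(d_n)$, and — most delicately — show that the pieces crossed by $\fh$ between $\fh(s)$ and $\fh(u)$ are the disjoint union of those crossed between $\fh(s),\fh(t)$ and between $\fh(t),\fh(u)$, i.e. that $\fh$ does not re-enter a piece it has left. This last point is where $\CC(U)$--monotonicity for \emph{all} $U$ (not just $U=S$) is essential: if $\fh$ left and re-entered a piece $P$, then by Theorem \ref{classifypieces} the entrance/exit would be detected by some proper subsurface $\mathbf{Y}$ in whose curve complex $h_n$ would have to back-track, contradicting uniform $\CC(Y)$--monotonicity. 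I would isolate this as the technical heart of the argument and prove it by a direct contradiction using Lemma \ref{MM2:LLL} and the projection estimates (Theorem \ref{projest}, Convention \ref{mlargerd}) to locate the offending subsurface.
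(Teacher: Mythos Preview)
Your plan has a genuine gap at the step where you claim ``Summing over large domains, rescaling by $1/d_n$, and taking the $\omega$-limit gives additivity of $\dist_{\mathbf{S}}$ along $\fh$.'' This would say that $\fh$ is a geodesic in $(\AM,\dist_\AM)$, which is false in general: ultralimits of hierarchy paths are only bi-Lipschitz arcs in the cone metric (indeed, Proposition~\ref{hiergeod} asserts they are geodesics only for $\td$, and this is proved \emph{using} the present lemma). The reason coarse $\CC(Y)$-additivity does not sum to additivity of $\dist_{\MM(S)}$ is the threshold $\Tsh{K}{\cdot}$ in Theorem~\ref{distanceformula}: a domain $Y$ with $\dist_{\CC(Y)}(h_n(s),h_n(u))\ge K$ may have both $\dist_{\CC(Y)}(h_n(s),h_n(t))<K$ and $\dist_{\CC(Y)}(h_n(t),h_n(u))<K$, so it contributes to the $(s,u)$ sum but to neither half. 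This is exactly the multiplicative constant in $\approx_{a,b}$, and it survives rescaling. Since your route to $\tdls$-additivity is ``$\dist_\AM$-additivity, then subtract piece-contributions,'' the argument breaks here; and there is no direct route to $\tdls$-additivity visible in your setup, because $\tdls$ is defined via the tree quotient and Lemma~\ref{cstd} gives only an inequality, not a formula in terms of subsurface projections.

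The paper sidesteps all quantitative tracking. It argues by contradiction in the tree $T_\bu$: if the projection were not monotone, there would be $\seq\nu,\seq\mu,\seq\rho$ in order on $\fh$ with $\psi_\bu(\seq\nu)=\psi_\bu(\seq\rho)\neq\psi_\bu(\seq\mu)$. The projection factors as $\AM\to\MM^\omega(\bu)\to T_\bu$; the first map sends $\fh$ to a continuous path (Corollary~\ref{distsubsurf}), and the cut-point structure of the tree-graded space $\MM^\omega(\bu)$ forces the path to pass through the cut-points separating $\plu(\seq\nu)$ from $\plu(\seq\mu)$ on both sides of $\seq\mu$. This upgrades the situation to points $\seq\nu',\seq\rho'$ on $\fh$ with \emph{equal} images in $\MM^\omega(\bu)$ itself, while $\plu(\seq\mu)$ remains at positive $\dist_{\MM(\bu)}$-distance from them. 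Now a single application of $\CC(Y)$-monotonicity for $Y\subseteq U_n$ plus the distance formula gives $\dist_{\MM(\bu)}(\seq\nu',\seq\rho')>0$, a contradiction. The idea you are missing is this factoring through $\MM^\omega(\bu)$ together with the cut-point argument there, which converts tree-level backtracking into an exact coincidence one level up that monotonicity immediately forbids.
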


\proof \quad Fix a path $\fh$ satisfying the hypothesis of the lemma.
It suffices to prove that for every $\seq x ,\seq y $ on
$\fh$ and every $\seq\mu$ on $\fh$ between $\seq x$
and $\seq y$, $\psi_\bu (\seq\mu)$ is on the geodesic joining
$\psi_\bu (\seq x)$ to $\psi_\bu (\seq y)$ in $T_\bu$.  If the contrary were to hold then there would exist $\seq\nu
,\seq\rho $ on $\fh$ with $\seq\mu$ between them satisfying $\psi_\bu
(\seq\nu )=\psi_\bu (\seq\rho )\neq \psi_\bu (\seq\mu)$.  Without loss of
generality we may assume that $\seq\nu ,\seq\rho$ are the endpoints of
$\fh$.  We denote by $\fh_1$ and $\fh_2$ the sub-arcs of $\fh$ of
endpoints $\seq\nu ,\seq\mu$ and respectively $\seq\mu ,\seq\rho$.

The projection $\pi_{\MM (\bu )}(\fh )$ is by Corollary \ref{distsubsurf} a continuous path joining
$\pi_{\MM (\bu )}(\seq\nu )$ to $\pi_{\MM (\bu )}(\seq\rho )$ and containing $\pi_{\MM (\bu
)}(\seq\mu)$.

According to \cite[Lemma 2.19]{DrutuSapir:splitting} a geodesic
$\overline{\g}_1$ joining $\pi_{\MM (\bu )}(\seq\nu )$ to $\pi_{\MM
(\bu )}(\seq\mu )$ projects onto the geodesic $[\psi_\bu (\seq\nu
),\psi_\bu (\seq\mu)]$ in $T_\bu$.  Moreover the set $\Cutp
\overline{\g}_1$ of cut-points of $\overline{\g}_1$ in the tree-graded
space $\MM (\bu )$ projects onto $[\psi_\bu (\seq\nu
),\psi_\bu (\seq\mu)]$.  By properties of tree-graded spaces
\cite{DrutuSapir:TreeGraded} the continuous path $\pi_{\MM (\bu
)}(\fh_1 )$ contains $\Cutp \overline{\g}_1$.

Likewise if $\overline{\g}_2$ is a geodesic joining $\pi_{\MM (\bu
)}(\seq\mu )$ to $\pi_{\MM (\bu )}(\seq\rho )$ then the set $\Cutp
\overline{\g}_2$ projects onto $[\psi_\bu (\seq\mu ),\psi_\bu
(\seq\rho )]$, which is the same as the geodesic $[\psi_\bu (\seq\mu
),\psi_\bu (\seq\rho )]$ reversed, and the path $\pi_{\MM (\bu
)}(\fh_2 )$ contains $\Cutp \overline{\g}_2$.  This implies that
$\Cutp \overline{\g}_1= \Cutp \overline{\g}_2$ and that there exists
$\seq\nu'$ on $\fh_1$ and $\seq\rho'$ on $\fh_2$ such that $\pi_{\MM
(\bu )}(\seq\nu')=\pi_{\MM (\bu )}(\seq\rho')$ and $\psi_\bu
(\seq\nu')=\psi_\bu (\seq\rho')\neq \psi_\bu (\seq\mu)$.  Without loss
of generality we assume that $\seq\nu'=\seq\nu$ and
$\seq\rho'=\seq\rho$.

Since $\tdlu (\seq\nu,\seq\mu)>0$ it follows that $\dist_{\MM (\bu )}
(\seq\nu,\seq\mu)>0$.
Since by construction $\mu_n$ is on a path joining $\nu_n$ and
$\rho_n$ satisfying the hypotheses of the lemma, we know that
up to a uniformly bounded additive error we have
$\dist_{\CC(Y)} (\nu_n,\mu_n) \leq \dist_{\CC(Y)} (\nu_n,\rho_n)$
for every $Y\subset U_{n}$. It then follows from the distance formula
that for some positive constant $C$ that
$$\frac{1}{C}\dist_{\MM(\bu)} (\seq\nu,\seq\mu) \leq
\dist_{\MM(\bu)} (\seq\nu,\seq\rho).$$
In particular,
this implies that $\dist_{\MM (\bu )} (\seq\nu,\seq\rho )>0$,
contradicting the fact that $\pi_{\MM
(\bu )}(\seq\nu)=\pi_{\MM (\bu )}(\seq\rho)$.\endproof

The following is an immediate consequence of Lemma~\ref{projtu} since
by construction hierarchy paths satisfy the hypothesis of the lemma.
\begin{corollary}
    Every hierarchy path in $\AM$ projects onto a geodesic in $T_\bu$ for
    every subsurface $\bu$ as in Lemma \ref{projtu}.
\end{corollary}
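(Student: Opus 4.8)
The plan is to check that an arbitrary hierarchy path in $\AM$ satisfies the hypotheses imposed on the path $\fh$ in Lemma~\ref{projtu}, and then simply invoke that lemma. Recall from Definition~\ref{def:hier} that a hierarchy path in $\AM$ is, by definition, an ultralimit $\fh=\ulim h_n$ of a sequence $(h_n)$ of hierarchy paths in $\MM$. First I would invoke the two basic features of hierarchy paths in the marking complex recalled in the subsection on hierarchies: each $h_n$ is a quasi-geodesic with constants depending only on $S$, and each $h_n$ is $\CC(U)$--monotonic for every $U\subseteq S$, again with constants depending only on $S$. In particular both families of constants are uniform over the sequence $(h_n)$, so $(h_n)$ is precisely a sequence of uniform quasi-geodesics in $\MM$ that are $\CC(U)$--monotonic for every $U\subseteq S$ with uniform constants, which is exactly the input required by Lemma~\ref{projtu}.

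Next I would pass to the ultralimit. Since the $h_n$ are uniform quasi-geodesics, $\fh$ is a bi-Lipschitz path in $\AM$, so it may be reparametrized by arc length as a map $\fh\co[0,a]\to\AM$; this is the object the lemma calls $\fh$. Applying Lemma~\ref{projtu} to $\fh$ and to the given subsurface $\bu$ then yields that the canonical projection $\psi_\bu$ sends $\fh$ onto a geodesic $\g\co[0,b]\to T_\bu$, with $\fh(0)$ mapping to $\g(0)$ and with the induced parameter map $[0,a]\to[0,b]$ non-decreasing; in particular $\psi_\bu(\fh)$ is a geodesic in $T_\bu$. Since $\bu\in\upss$ was an arbitrary subsurface of the type to which Lemma~\ref{projtu} applies, the corollary follows.

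There is no genuine obstacle here: the only point deserving attention is the uniformity over $n$ of the quasi-geodesic and $\CC(U)$--monotonicity constants for the paths $h_n$, since the statement of Lemma~\ref{projtu} builds this uniformity into its hypothesis. But this is exactly what the Masur--Minsky theory of hierarchies supplies, with the constants depending only on the complexity $\xi(S)$; hence the deduction is immediate, as already indicated in the text preceding the statement.
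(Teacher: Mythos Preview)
Your proposal is correct and takes essentially the same approach as the paper: the paper simply states that the corollary is an immediate consequence of Lemma~\ref{projtu} since hierarchy paths satisfy its hypotheses by construction, and you have spelled out precisely why those hypotheses (uniform quasi-geodesic constants and uniform $\CC(U)$--monotonicity over the sequence) are met.
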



\begin{notation}
    Let $F,G$ be two finite subsets in the asymptotic cone
    $\AM$, and let $K$ be a fixed
    constant larger than the constant $M(S)$ from Lemma~\ref{MM2:LLL}.

    We denote by $\yy (F, G)$ the set of elements ${\bu}=(U_n)^\omega$ in
    the ultrapower $\upss$ such that
    for any two points $\seq\mu =\lio{\mu_n} \in F$ and
    $\seq\nu=\lio{\nu_n} \in G$, the subsurfaces $U_n$
    are \uass $K$-large domains for the pair $(\mu_n, \nu_n)$, in the
    sense of Definition \ref{klarged}.

    If $F=\{\seq\mu \}$ and $G=\{ \seq\nu \}$ then we simplify the
    notation to $\yy (\seq\mu, \seq\nu)$.
\end{notation}

\begin{lemma}\label{restr}
Let $\seq\mu, \seq\nu$ be two points in $\AM$ and let $\bu
=(U_n)^\omega $ be an element in $\upss$.  If $\tdlu (\seq\mu, \seq\nu
) >0$, then $\lio{\dist_{C(U_n)} (\mu_n ,\nu_n )}=\infty $ (and thus
$\bu \in \yy (\seq\mu, \seq\nu)$).  In particular the following holds.
$$
\sum_{\bu \in \yy(\seq\mu, \seq\nu)}\tdlu (\seq\mu, \seq\nu ) = \sum_{\bu \in \upss }\tdlu (\seq\mu,
\seq\nu )
$$
\end{lemma}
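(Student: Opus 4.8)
The plan is to prove the contrapositive-style implication: if $\tdlu(\seq\mu,\seq\nu)>0$ then $\ulim \dist_{C(U_n)}(\mu_n,\nu_n)=\infty$, and then observe that this forces $\bu\in\yy(\seq\mu,\seq\nu)$, which immediately yields the displayed identity. First I would reduce to the case $\bu=\mathbf{S}$: since $\tdlu$ and $\dist_{C(\bu)}$ only see the projections $\pi_{\MM(U_n)}(\mu_n)$ and $\pi_{\MM(U_n)}(\nu_n)$, and \uass{} $U_n$ is homeomorphic to a fixed surface $U$, the space $\lu$ is isometric to an asymptotic cone of $\MM(U)$; so it is enough to show that for two sequences of markings $\mu_n,\nu_n\in\MM(U)$ with $\ulim\frac{1}{d_n}\dist_{\MM(U)}(\mu_n,\nu_n)<\infty$, if $\ulim\frac{1}{d_n}\dist_{C(U)}(\mu_n,\nu_n)<\infty$ then $\tdls(\seq\mu,\seq\nu)=0$.

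The heart of the argument is Lemma~\ref{distone}, applied inside $U$. Let $s_n=\dist_{C(U)}(\mu_n,\nu_n)$; by hypothesis $\ulim\frac{s_n}{d_n}=0$ (after rescaling). Pick a tight geodesic in $\CC(U)$ shadowed by a hierarchy path joining $\mu_n$ to $\nu_n$, with consecutive vertices $\alpha_1^n,\dots,\alpha_{s_n+1}^n$, and set $U_i^n=U\setminus\alpha_i^n$. Lemma~\ref{distone} gives
\[
\dist_{\MM(U)}(\mu_n,\nu_n)\le C\sum_{i=1}^{s_n+1}\dist_{\MM(U_i^n)}(\mu_n,\nu_n)+Cs_n+D.
\]
Dividing by $d_n$ and taking the ultralimit, the $Cs_n+D$ term vanishes, so $\dist_{\mathbf{S}}(\seq\mu,\seq\nu)\le C\ulim\frac{1}{d_n}\sum_i\dist_{\MM(U_i^n)}(\mu_n,\nu_n)$. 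Now I want to say that each proper subsurface $U_i^n$ projects into a single piece of $\lu$ together with the endpoints $\mu_n,\nu_n$, so its contribution is "absorbed" by the pieces along a geodesic $[\seq\mu,\seq\nu]$ and hence does not contribute to $\tdls$. More precisely, the markings $\mu_n,\nu_n$ together with their projections to $\QQ(\partial U_i^n)$ span, in the limit, a subset lying in a piece (this is exactly the $\QQ(\partial\bu)$–piece picture from Theorem~\ref{classifypieces} and Remark~\ref{rmk:qqproduct}); along a limit hierarchy path one sees the pieces $\QQ(\partial U_i^n)$ appear as the sub-arcs subtracted in the definition of $\tdls$. Summing over $i$ and using the distance formula one gets $\dist_{\mathbf{S}}(\seq\mu,\seq\nu)=\sum_k\dist(\seq\alpha_k,\seq\beta_k)$, i.e. $\tdls(\seq\mu,\seq\nu)=0$, contradicting $\tdlu(\seq\mu,\seq\nu)>0$.

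Granting the main claim, the displayed identity is formal: by Lemma~\ref{cstd}, $\dist_{C(\bu)}(\seq\mu,\seq\nu)\le t\,\tdlu(\seq\mu,\seq\nu)$, so whenever $\tdlu(\seq\mu,\seq\nu)=0$ we have $\dist_{C(\bu)}(\seq\mu,\seq\nu)=0$, which means $\ulim\frac{1}{d_n}\dist_{C(U_n)}(\mu_n,\nu_n)=0$; by the first part, $\tdlu(\seq\mu,\seq\nu)>0$ conversely forces $\ulim\dist_{C(U_n)}(\mu_n,\nu_n)=\infty$ and in particular $U_n$ is \uass{} a $K$-large domain for $(\mu_n,\nu_n)$, i.e. $\bu\in\yy(\seq\mu,\seq\nu)$. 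Thus the only $\bu$ that can contribute a nonzero term $\tdlu(\seq\mu,\seq\nu)$ to either side already lie in $\yy(\seq\mu,\seq\nu)$, so the two sums agree term by term.

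The main obstacle I expect is making rigorous the step that each term $\ulim\frac{1}{d_n}\dist_{\MM(U_i^n)}(\mu_n,\nu_n)$ is entirely subtracted in passing from $\dist_{\mathbf{S}}$ to $\tdls$: one must know that the limit segments coming from $U_i^n = U\setminus\alpha_i^n$ land in pieces of $\lu$ (not in the transversal tree), and that distinct $\alpha_i^n$ along the tight geodesic give pieces that sit along a single geodesic $[\seq\mu,\seq\nu]$ in the right linear order, so that the lengths add up correctly without overcounting. This requires combining the monotonicity of hierarchy paths along $\CC(U)$ (so the $\QQ(\partial U_i^n)$ are met in order) with Theorem~\ref{classifypieces} to identify each $\QQ(\partial U_i^n)$–limit as a piece, and with the convexity of pieces (Lemma~\ref{midpiece} style arguments) to conclude the geodesic $[\seq\mu,\seq\nu]$ threads exactly these pieces. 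Once that bookkeeping is set up, the inequality from Lemma~\ref{distone} closes the argument.
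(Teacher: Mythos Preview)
Your reduction to $\bu=\mathbf{S}$ is fine, but after that you are working much too hard and the ``bookkeeping'' obstacle you identify is real: with only the hypothesis $\ulim s_n<\infty$ you would need to control a (bounded but possibly large) number of proper subsurfaces $U_i^n$, argue that their limit contributions are exactly the piece-intersections along a geodesic $[\seq\mu,\seq\nu]$, and show there is no overcounting. None of this is carried out, and making it precise would essentially reprove the piece classification.

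The paper's proof bypasses all of this with a single appeal to Theorem~\ref{classifypieces}. Prove the contrapositive: suppose $\ulim\dist_{C(U_n)}(\mu_n,\nu_n)<\infty$. Then the projections $\plu(\seq\mu)$ and $\plu(\seq\nu)$ themselves witness condition~(2) of Theorem~\ref{classifypieces} (take $\seq\mu'=\plu(\seq\mu)$, $\seq\nu'=\plu(\seq\nu)$), so by the equivalence (1)$\Leftrightarrow$(2) they lie in the same piece of $\lu$. Two points in a single piece have $\widetilde{\dist}$-distance zero by definition of the quotient pseudometric, hence $\tdlu(\seq\mu,\seq\nu)=0$. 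That is the entire argument; Lemma~\ref{distone} and the hierarchy-path decomposition are not needed here.

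Your derivation of the displayed identity from the main claim is correct (and the detour through Lemma~\ref{cstd} is harmless but unnecessary: once you know $\tdlu>0\Rightarrow\bu\in\yy(\seq\mu,\seq\nu)$, the two sums agree term by term since every nonzero summand on the right already appears on the left).
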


\proof We establish the result by proving the contrapositive; thus we
assume that $\lio{\dist_{C(U_n)} (\mu_n ,\nu_n )}<\infty $.  Theorem
\ref{classifypieces} then implies that $\plu (\seq \mu )$ and $\plu
(\seq\nu)$ are in the same piece, hence $\tdlu (\seq\mu, \seq\nu)=0$.
\endproof

\medskip

We are now ready to prove a distance formula in the asymptotic cones.

\begin{theorem}[distance formula for asymptotic cones]\label{tilde}
    There is a constant $E$, depending only on the
    constant $K$ used to define $\yy
    (\seq\mu,\seq\nu)$, and on the complexity $\xi (S)$, such that
    for every $\seq\mu, \seq\nu$ in $\AM$
\begin{equation}\label{eqdist}
\frac{1}{E}\dist_\AM (\seq\mu,\seq\nu)\! \leq\! \sum_{\bu\in\yy (\seq\mu,
\seq\nu)}\! \tdlu(\seq\mu,\seq\nu) \! \leq \! \sum_{\bu\in\yy (\seq\mu,
\seq\nu)} \!\dlu(\seq\mu,\seq\nu)\! \leq \! E \dist_\AM (\seq\mu,\seq\nu).
\end{equation}

\end{theorem}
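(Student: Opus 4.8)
We outline how the three inequalities can be obtained. The middle one is immediate: $\tdlu(\seq\mu,\seq\nu)$ is by definition $\dlu(\seq\mu,\seq\nu)$ minus the (nonnegative) lengths of the intersections of a geodesic of $\lu$ with its pieces, so $\tdlu\le\dlu$ term by term. For the two remaining inequalities the common strategy is to reduce to the Masur--Minsky distance formula (Theorem~\ref{distanceformula}) in the marking complexes of $S$ and of its subsurfaces; the two arguments are however quite different in character. In both we fix once and for all a hierarchy path $\fh$ from $\seq\mu$ to $\seq\nu$, which exists and to which Lemma~\ref{projtu} applies.

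For the right-hand inequality $\sum_{\bu\in\yy(\seq\mu,\seq\nu)}\dlu(\seq\mu,\seq\nu)\le E\,\dist_\AM(\seq\mu,\seq\nu)$ I would proceed by a counting argument. Fix $\bu=(U_n)^\omega$; applying Theorem~\ref{distanceformula} in $\MM(U_n)$ and then Remark~\ref{rnested} to replace the projections of $\mu_n,\nu_n$ by $\mu_n,\nu_n$ themselves inside the curve-complex distances (adjusting threshold and constants), one gets $\dist_{\MM(U_n)}(\pi_{\MM(U_n)}\mu_n,\pi_{\MM(U_n)}\nu_n)\le a'\sum_{W\subseteq U_n}\Tsh{K'}{\dist_{\CC(W)}(\mu_n,\nu_n)}+b'$. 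For any finite family $\bu^{(1)},\dots,\bu^{(m)}$ in $\yy(\seq\mu,\seq\nu)$, each $U_n^{(l)}$ is $\omega$-almost surely a $K$-large (hence $M(S)$-large) domain for $(\mu_n,\nu_n)$, so by Lemma~\ref{MM2:LLL} it is a domain of any hierarchy path between $\mu_n$ and $\nu_n$; Lemma~\ref{atmost2xi} then bounds by $2\xi(S)$ the number of indices $l$ for which a given $M(S)$-large subsurface is contained in $U_n^{(l)}$. Hence $\sum_l\sum_{W\subseteq U_n^{(l)}}\Tsh{K'}{\dist_{\CC(W)}(\mu_n,\nu_n)}\le 2\xi(S)\sum_{W\subseteq S}\Tsh{K'}{\dist_{\CC(W)}(\mu_n,\nu_n)}$, which by the distance formula in $\MM(S)$ is comparable to $\dist_{\MM(S)}(\mu_n,\nu_n)$. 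Dividing by $d_n$, passing to the $\omega$-limit (the additive error, bounded in terms of the fixed $m$, disappears) and letting the finite family exhaust $\yy(\seq\mu,\seq\nu)$ yields the inequality.

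For the left-hand inequality $\dist_\AM(\seq\mu,\seq\nu)\le E\sum_{\bu\in\yy(\seq\mu,\seq\nu)}\tdlu(\seq\mu,\seq\nu)$ I would argue by induction on $\xi(S)$, using the tree-graded structure of $\AM$; by Lemma~\ref{restr} it is enough to bound $\dist_\AM(\seq\mu,\seq\nu)$ by $E\sum_{\bu\in\upss}\tdlu(\seq\mu,\seq\nu)$. Let $P_k$ be the pieces of $\AM$ that a geodesic $[\seq\mu,\seq\nu]$ crosses in a nondegenerate sub-segment, with entrance and exit points $\seq\alpha_k,\seq\beta_k$ (these depend only on the pair $\seq\mu,\seq\nu$, not on the geodesic). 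Straight from the definition of $\widetilde{\mathrm{dist}}$, $\dist_\AM(\seq\mu,\seq\nu)=\tdls(\seq\mu,\seq\nu)+\sum_k\dist_\AM(\seq\alpha_k,\seq\beta_k)$, and $\tdls(\seq\mu,\seq\nu)$ is already the $\bu=\bs$ term of the sum we are aiming for. In the base case $\xi(S)\le 1$ the cone $\AM$ is an $\R$-tree, so, every point being a cut-point, the pieces are singletons, the sum is empty, and the inequality holds with $E=1$. For $\xi(S)\ge 2$, each $P_k$ is, up to bounded Hausdorff distance and by Theorem~\ref{classifypieces} together with \cite{BehrstockKleinerMinskyMosher}, an ultralimit $\lio{\QQ(\Delta_{k,n})}$ for suitable multicurves $\Delta_{k,n}$, and by Remark~\ref{rmk:qqproduct} and the distance formula it is bi-Lipschitz to the $\ell^1$-product of the cones $\lv$ over the (finitely many) ultrapower elements $\bv\subsetneq\bs$ arising as components of the complements $S\setminus\Delta_{k,n}$, together with the annular cones about the curves of $\Delta_{k,n}$. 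All of these are strict subsurfaces of $\bs$, so the inductive hypothesis applies and gives $\dist_\AM(\seq\alpha_k,\seq\beta_k)\le C'E'\sum_{\by\subsetneq\bs}\tdly(\seq\alpha_k,\seq\beta_k)$ (the annular factors being handled trivially, since there $\widetilde{\mathrm{dist}}$ equals $\mathrm{dist}$). It remains to sum over $k$, and the crux is that for each fixed $\by\subsetneq\bs$ one has $\sum_k\tdly(\seq\alpha_k,\seq\beta_k)\le\tdly(\seq\mu,\seq\nu)$: indeed $\fh$ passes through the same points $\seq\alpha_k,\seq\beta_k$ and, by Lemma~\ref{projtu}, $\psi_\by$ maps $\fh$ monotonically onto the geodesic $[\psi_\by\seq\mu,\psi_\by\seq\nu]$ of $T_\by$, so the geodesic sub-arcs $[\psi_\by\seq\alpha_k,\psi_\by\seq\beta_k]$ of $T_\by$ have pairwise disjoint interiors, and their lengths---which are exactly $\tdly(\seq\alpha_k,\seq\beta_k)$---add up to at most $\tdly(\seq\mu,\seq\nu)$. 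Exchanging summation then gives $\dist_\AM(\seq\mu,\seq\nu)\le\tdls(\seq\mu,\seq\nu)+C'E'\sum_{\by\subsetneq\bs}\tdly(\seq\mu,\seq\nu)\le E\sum_{\bu\in\upss}\tdlu(\seq\mu,\seq\nu)$.

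The step I expect to be the main obstacle is the left-hand inequality, and within it the honesty of the bookkeeping: one needs the precise description of the pieces of $\AM$ as $\ell^1$-products of cones of lower-complexity marking complexes, so that the inductive hypothesis can be inserted, and one must rule out any ``escape of mass'' in the ultralimit---the scenario in which a geodesic accumulates positive length through infinitely many pieces whose individual contributions become negligible after rescaling. The monotonicity of $\psi_\by$ along hierarchy paths supplied by Lemma~\ref{projtu} is precisely what excludes this. If instead one prefers to run the induction through the complementary subsurfaces $S\setminus\alpha_i$ of a tight geodesic via Lemma~\ref{distone}, the same difficulty resurfaces in the sum $\sum_i\dist_{\MM(S\setminus\alpha_i)}(\mu_n,\nu_n)$, whose range of summation depends on $n$; there it is neutralized by Lemma~\ref{cstd}, with the length of the tight geodesic itself absorbed into the $\CC(S)$--term.
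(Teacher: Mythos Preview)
Your argument for the right-hand inequality is correct and matches the paper's: finite subsums, the distance formula in each $\MM(U_n)$, and the $2\xi(S)$ multiplicity bound from Lemma~\ref{atmost2xi}.

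The primary argument you give for the left-hand inequality has a genuine gap. Pieces of $\AM$ are \emph{not} ultralimits $\lio{\QQ(\Delta_{k,n})}$, nor at bounded Hausdorff distance from such sets. By Proposition~\ref{unique} a piece $P$ is the closure of an interior $U(g)=\{\seq h:\ulim\dcs(h_n,g_n)<\infty\}$, and this contains $\QQ(\seq\Delta)$ for \emph{every} multicurve $\seq\Delta=(\Delta_n)^\omega$ with $\ulim\dcs(\Delta_n,g_n)<\infty$; distinct such $\seq\Delta$ give non-isometric products sitting inside the same piece (already for $\xi(S)=2$ two disjoint curves $\gamma,\gamma'$ yield $\QQ(\seq\gamma)$ and $\QQ(\seq\gamma')$ in the same piece, neither containing the other). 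So there is no single $\ell^1$-product of lower-complexity cones to which the inductive hypothesis can be applied across an entire piece, and nothing in Theorem~\ref{classifypieces} or \cite{BehrstockKleinerMinskyMosher} supplies one.

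The paper instead follows the alternative you sketch at the end, via Lemma~\ref{distone}, but the mechanics differ from what you describe. One does not apply Lemma~\ref{distone} to $(\mu_n,\nu_n)$ directly---there $\dcs(\mu_n,\nu_n)$ is typically unbounded and the summation range would indeed depend on $n$. Rather, one first restricts to finitely many piece-intersections $[\seq\alpha_i,\seq\beta_i]$ carrying most of the length and, using clause~(3) of Theorem~\ref{classifypieces}, approximates each by $\seq\alpha_i',\seq\beta_i'$ on the hierarchy path with $\dcs(\alpha_{i,n}',\beta_{i,n}')<s$ for a \emph{fixed} $s$. Lemma~\ref{distone} then produces a fixed number $l=m(s+1)$ of proper subsurfaces $\by_j=(S\setminus v_j(n))^\omega$, independent of $n$, and the inductive hypothesis is applied to each $\by_j$ after taking the ultralimit of~\eqref{dist1}; the $Cs+D$ term vanishes upon rescaling because $s,m$ are fixed, so Lemma~\ref{cstd} is not needed here. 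Overcounting in the sum over $j$ is then controlled by Lemma~\ref{MM2:LLL} (each $\bu\in\yy(\seq\mu,\seq\nu)$ lies in at most three of the $\by_j$), and Lemma~\ref{projtu} is used exactly as you say to pass from $\tdlu(\seq\alpha_i',\seq\beta_i')$ back to $\tdlu(\seq\mu,\seq\nu)$.
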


\begin{proof} Let us prove by induction on the complexity of $S$ that
\begin{equation}\label{eq88} \sum_{\bu\in \yy (\seq\mu, \seq\nu)}
    \tdlu(\seq\mu,\seq\nu)>
    \frac1E\dist_\AM(\seq\mu,\seq\nu)\end{equation} for some $E>1$.
    Let $\seq\mu, \seq\nu$ be two distinct elements in $\AM$.  If
    $\MM(S)$ is hyperbolic, then $\AM$ is a tree; hence there are no
    non-trivial subsets without cut-points and thus in this case we
    have $\widetilde\dist_{\mathbf{S}}=\dist_{\mathbf{S}}$.  This
    gives the base for the induction.

We may assume that
$\tdls(\seq\mu,\seq\nu)<\frac13\dist_\AM(\seq\mu,\seq\nu)$.  Otherwise
we would have that $\tdls(\seq\mu,\seq\nu) >0$, which implies by Lemma
\ref{restr} that $\mathbf{S}\in \yy (\seq\mu, \seq\nu )$, and we would
be done by choosing $E=3$.

Since $\tdls(\seq\mu,\seq\nu)$ is obtained from
$\dist_\AM(\seq\mu,\seq\nu)$ by removing $\sum_{i\in I } \dist_\AM
(\seq\alpha_i, \seq\beta_i)$, where $[\seq\alpha_i, \seq\beta_i], i\in
I ,$ are all the non-trivial intersections of a geodesic $[\seq\mu ,
\seq\nu]$ with pieces, it follows that there exists $F\subset I$
finite such that $\sum_{i\in F } \dist_\AM (\seq\alpha_i,
\seq\beta_i)\geq \frac12 \dist_\AM(\seq\mu,\seq\nu)$.  For simplicity
assume that $F= \{ 1,2,...,m\}$ and that the intersections
$[\seq\alpha_i, \seq\beta_i]$ appear on $[\seq\mu , \seq\nu]$ in the
increasing order of their index.  According to Proposition~\ref{pt2},
$(T_2')$, the points $\seq\alpha_i, \seq\beta_i$ also appear on any
path joining $\seq\mu , \seq\nu$.  Therefore, without loss
of generality, for the rest of the proof we will assume that $[\seq\mu , \seq\nu]$
is a hierarchy path, and $[\seq\alpha_i, \seq\beta_i]$ are sub-paths
of it (this is a slight abuse of notation since hierarchy paths are
not geodesics).
By Theorem~\ref{classifypieces} for every $i\in F$ there exist $[\seq\alpha_i',
\seq\beta_i']\subset [\seq\alpha_i, \seq\beta_i] $
with the following properties:

\begin{itemize}
    \item there exists a number $s$ such that $\forall i=1,...,m$
    $$ \dist_{C(S)}(\alpha_{i,n}',\beta_{i,n}')<s\; \; \mbox{\uas}
     \, ;$$
    \item $\sum_{i=1}^m \dist_\AM (\seq\alpha_i',\seq\beta_i')>
\frac13\dist_\AM(\seq\mu, \seq\nu)$.
\end{itemize}

Let $l=m(s+1)$.  By Lemma \ref{distone} there exists a sequence of proper
subsurfaces $Y_1(n),...,Y_l(n)$ of the form $Y_j(n) = S \setminus v_j(n)$ with $v_j(n)$ a vertex (curve) on the tight geodesic in $\CC (S)$ shadowed by the hierarchy path $[\mu_n, \nu_n]$,
such that \uas:
\begin{equation}\label{intermed}
\sum_{i=1 }^m \dist_{\MM (S)} (\alpha_{i,n}', \beta_{i,n}')\leq C
\sum_{i=1}^m \sum_{j=1}^l \dist_{\MM
(Y_j(n))}(\alpha_{i,n}', \beta_{i,n}') + C sm +Dm\, .
\end{equation}

Let $\seq Y_{j}$ be the element in $\upss$ given by the sequence of subsurfaces $(Y_{j}(n))$.

Rescaling (\ref{intermed}) by $\frac{1}{d_n}$, passing to the
$\omega$-limit and applying Lemma \ref{D} we deduce that
 \begin{equation}\label{intermed2}
\frac13\dist_\AM(\seq\mu, \seq\nu)\leq C \sum_{i=1 }^m \sum_{j=1}^l
\dist_{\subseq Y_{j}}(\seq\alpha_{i}', \seq\beta_{i}')\, .
\end{equation}

As the complexity of $\seq Y_j$ is smaller than the complexity
 of $S$, according to the induction
 hypothesis the second term in (\ref{intermed2}) is at most
 $$
C E \sum_{i=1}^m \sum_{j=1}^l \sum_{\bu \in \mathcal{Y}
(\subseq\alpha_{i}', \subseq\beta_{i}' ), \bu
\subseteq \subseq Y_{j}} \tdlu (\seq\alpha_{i}', \seq\beta_{i}')\, .
 $$

Lemma \ref{projtu} implies that the non-zero terms in
the latter sum correspond to subsurfaces $\bu \in \mathcal{Y} (\seq\mu
, \seq\nu )$, and that the sum is at most
$$
C E \sum_{j=1}^l \sum_{\bu \in \mathcal{Y} (\seq\mu
, \seq\nu ), \bu
\subseteq \subseq Y_{j}} \tdlu (\seq\mu
, \seq\nu ) \, .
$$

According to Lemmas \ref{restr} and~\ref{MM2:LLL} for every $\bu = (U_n)^\omega \in \mathcal{Y} (\seq\mu
, \seq\nu )$ there exists at least one and at
most 3 vertices (curves) on the tight geodesic in $\CC (S)$
shadowed by the hierarchy path $[\mu_n, \nu_n]$ which
are disjoint from $U_{n}$ \uas.
In particular for every $\bu \in \mathcal{Y} (\seq\mu
, \seq\nu )$ there exist at most three $j\in \{ 1,2,...,l\}$ such that $\bu \subseteq Y_{j}\, $.

Therefore the previous sum is at most $3C E \sum_{\bu\in \yy
(\seq\mu, \seq\nu)} \tdlu(\seq\mu,\seq\nu)$.  We have thus obtained
that $\frac13\dist_\AM(\seq\mu, \seq\nu) \leq 3C E \sum_{\bu\in \yy
(\seq\mu, \seq\nu)} \tdlu(\seq\mu,\seq\nu)$.

The inequality
$$
\sum_{\bu\in\yy (\seq\mu, \seq\nu)} \tdlu(\seq\mu,\seq\nu) \leq
\sum_{\bu\in\yy (\seq\mu, \seq\nu)}
\dlu(\seq\mu,\seq\nu)
$$
immediately follows from the definition of $\tdist$.

In remains to prove the inequality:

\begin{equation}\label{eq11}
\sum_{\bu\in\yy (\seq\mu, \seq\nu)} \dlu(\seq\mu,\seq\nu)<
E\dist(\seq\mu,\seq\nu).
\end{equation}

It suffices to prove (\ref{eq11}) for every possible finite sub-sum of
the left hand side of (\ref{eq11}).  Note that this would imply also
that the set of $\bu\in\yy (\seq\mu, \seq\nu)$ with
$\dlu(\seq\mu,\seq\nu) >0$ is countable, since it implies that the set
of $\bu\in\yy (\seq\mu, \seq\nu)$ with $\dlu(\seq\mu,\seq\nu)
>\frac{1}{k}$ has cardinality at most $k E\dist(\seq\mu,\seq\nu)$.

Let $\bu_1,...,\bu_m$ be elements in $\yy (\seq\mu, \seq\nu)$
represented by sequences $(U_{i,n})$ of large domains of hierarchy
paths connecting $\mu_n$ and $\nu_n$, $i=1,...,m$.

By definition, the sum
\begin{equation}\label{eq14}
\dist_{\bu_1}(\seq\mu,\seq\nu)+... +\dist_{\bu_m}(\seq\mu,\seq\nu)
\end{equation}
is equal to
\begin{equation}\label{eq12}
\lio{\frac{\dist_{\MM (U_{1,n})}(\mu_n,\nu_n)}{d_n}} +...+\lio{
\frac{\dist_{\MM (U_{m,n})}(\mu_n,\nu_n)}{d_n}}
\end{equation}
$$
= \lim_\omega \frac{1}{d_n} \left[\dist_{\MM
(U_{1,n})}(\mu_n,\nu_n)+...+\dist_{\MM
(U_{m,n})}(\mu_n,\nu_n)\right]\, .
$$

According to the distance formula (Theorem \ref{distanceformula}), there exist constants $a,b$ depending
only on $\xi(S)$ so that the following holds:

\begin{equation}\label{eq15}
\dist_{\MM (U_{1,n})}(\mu_n,\nu_n)+...+\dist_{\MM
(U_{m,n})}(\mu_n,\nu_n)\le_{a,b}
\end{equation}
$$
\sum_{V\subseteq U_{1,n}}
\dist_{C(V)}(\mu_n,\nu_n)+...+\sum_{V\subseteq
U_{m,n}}\dist_{C(V)}(\mu_n,\nu_n).
$$

Since each $U_{i,n}$ is a large domain in the hierarchy connecting
$\mu_n$ and  $\nu_n$, and since for each fixed $n$ all of the
$U_{i,n}$ are different $\omega$-a.s.\ we can apply Lemma
\ref{atmost2xi}, and conclude that each summand
occurs in the right hand side of (\ref{eq15}) at most $2\xi$ times
(where $\xi$ is denoting $\xi(S)$). Hence we can bound the right
hand side of (\ref{eq15}) from above by
$$
2\xi \sum_{V\subseteq S}\dist_{C(V)}(\mu_n,\nu_n)\le_{a,b} 2\xi\,
\dist_{\MM(S)}(\mu_n,\nu_n).
$$
Therefore the right hand side in (\ref{eq12}) does not exceed
$$2\xi\, \lio{\frac{1}{d_n}(a\dist_{\MM(S)}(\mu_n,\nu_n)+b)}= 2a \xi\,
\dist_{\AM}(\seq\mu,\seq\nu)\, ,
$$
proving (\ref{eq11}).
\end{proof}

\medskip

\Notat \quad Let $\seq\mu^0$ be a fixed point in $\AM$ and for every
$\bu \in \upss$ let $\seq\mu^0_\bu$ be the image of $\seq\mu^0$ by
canonical projection on $T_\bu$.  In $\prod_{\bu \in \upss } T_\bu$ we
consider the subset $\calt_0'=\left\{ (x_\bu)\in \prod_{\bu \in \upss
} T_\bu \; ;\; x_\bu \neq \seq\mu^0_\bu \mbox{ for countably many }
\bu \in \upss \right\}$, and $\calt_0=\left\{ (x_\bu)\in \calt'_0 \;
;\; \sum_{\bu \in \upss }\tdlu \left(x_\bu, \seq\mu^0_\bu
\right)<\infty \right\}$. We will always consider $\calt_0$
endowed with the $\ell^1$ metric.

\medskip

The following is an immediate consequence of
Theorem~\ref{tilde} and Lemma~\ref{restr}.

\begin{corollary}\label{cor:prodtrees}
Consider the map $\psi\co\AM \to \prod_{\bu \in \upss } T_\bu $
whose components
are the canonical projections of $\AM$ onto $T_\bu $. This map is a
bi-Lipschitz homeomorphism onto its image in $\calt_0$.
\end{corollary}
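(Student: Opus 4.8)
The statement to prove is Corollary~\ref{cor:prodtrees}: the map $\psi\co\AM\to\prod_{\bu\in\upss}T_\bu$ given by the canonical projections is a bi-Lipschitz homeomorphism onto its image, which lies in $\calt_0$. The natural strategy is to read off all three claims (image lands in $\calt_0$; the map is bi-Lipschitz onto its image; hence a homeomorphism) directly from the distance formula of Theorem~\ref{tilde} together with Lemma~\ref{restr}. First I would fix the basepoint $\seq\mu^0$ used to define $\calt_0$ and an arbitrary $\seq\mu\in\AM$, and apply Theorem~\ref{tilde} to the pair $(\seq\mu^0,\seq\mu)$: the rightmost inequality gives $\sum_{\bu\in\yy(\seq\mu^0,\seq\mu)}\dlu(\seq\mu^0,\seq\mu)\le E\,\dist_\AM(\seq\mu^0,\seq\mu)<\infty$, and by Lemma~\ref{restr} the sum over $\yy(\seq\mu^0,\seq\mu)$ agrees with the sum over all of $\upss$ of the $\tdlu$'s (since $\tdlu(\seq\mu^0,\seq\mu)>0$ forces $\bu\in\yy(\seq\mu^0,\seq\mu)$). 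Thus $\sum_{\bu\in\upss}\tdlu(\seq\mu^0_\bu,\seq\mu_\bu)<\infty$; moreover the proof of Theorem~\ref{tilde} already records (in the remark after equation~(\ref{eq11})) that the set of $\bu$ with $\dlu(\seq\mu^0,\seq\mu)>0$ is countable, hence so is the set with $\tdlu>0$ and a fortiori the set with $\seq\mu_\bu\ne\seq\mu^0_\bu$. Together these say exactly that $\psi(\seq\mu)=(\seq\mu_\bu)_\bu\in\calt_0$.

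\emph{Bi-Lipschitz estimate.} For the metric comparison I would again invoke Theorem~\ref{tilde}, this time for an arbitrary pair $\seq\mu,\seq\nu\in\AM$. By definition of the $\ell^1$-metric on $\prod_{\bu}T_\bu$ (using $\tdlu$ as the metric on each $T_\bu$), we have $\dist_{\ell^1}(\psi(\seq\mu),\psi(\seq\nu))=\sum_{\bu\in\upss}\tdlu(\seq\mu,\seq\nu)$. Lemma~\ref{restr} identifies this with $\sum_{\bu\in\yy(\seq\mu,\seq\nu)}\tdlu(\seq\mu,\seq\nu)$, and the chain of inequalities in~(\ref{eqdist}) then gives
\[
\frac1E\,\dist_\AM(\seq\mu,\seq\nu)\ \le\ \dist_{\ell^1}(\psi(\seq\mu),\psi(\seq\nu))\ \le\ E\,\dist_\AM(\seq\mu,\seq\nu),
\]
which is precisely the bi-Lipschitz bound (with constant $E$). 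In particular $\psi$ is injective, and since a bi-Lipschitz injection is automatically a homeomorphism onto its image, the corollary follows. One routine point to verify along the way is that $\psi(\seq\mu)$ and $\psi(\seq\nu)$ being in $\calt_0$ makes their $\ell^1$-distance finite and equal to the stated sum — this is immediate from the triangle inequality for $\tdlu$ and the fact that $\calt_0$ is a genuine metric space under $\ell^1$.

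\emph{Where the work is.} All the genuine difficulty is already absorbed into Theorem~\ref{tilde} (the distance formula for asymptotic cones) and its supporting lemmas, so this corollary really is "an immediate consequence." The only thing requiring a small amount of care is bookkeeping about countability and finiteness: one must confirm that $\psi$ actually lands in the countably-supported, $\ell^1$-summable subset $\calt_0$ rather than merely in the full product, and for that the countability statement extracted from the proof of~(\ref{eq11}) is essential — without it the $\ell^1$-metric on the image would not even be defined. I do not anticipate any substantive obstacle beyond assembling these pieces in the right order; the main conceptual content lies entirely upstream in Theorem~\ref{tilde}.
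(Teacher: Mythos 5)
Your proof is correct and follows exactly the route the paper intends: the paper states this corollary as an immediate consequence of Theorem~\ref{tilde} and Lemma~\ref{restr}, and your argument simply fills in the bookkeeping (the bi-Lipschitz bound from~(\ref{eqdist}), the identification of the two sums via Lemma~\ref{restr}, and the countability/summability observation extracted from the proof of~(\ref{eq11}) to place the image in $\calt_0$).
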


\begin{proposition}\label{hiergeod}
    Let $\fh\subset\AM$ denote the ultralimit of a sequence of
    quasi-geodesics in
    $\MM$ each of which is $\CC(U)$--monotonic for every $U\subseteq
    S$ with
    the quasi-geodesics and monotonicity constants are all
    uniform over the sequence.
    Then $\psi(\fh)$ is a geodesic in $\calt_{0}$.

    In particular, for any hierarchy path $\fh\subset \AM$,
    its image under $\psi$ is a geodesic in $\calt_0$.
\end{proposition}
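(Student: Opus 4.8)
The plan is to show that $\psi$ restricted to $\fh$ is an isometric embedding onto a geodesic in $\calt_0$, where $\calt_0$ carries the $\ell^1$-metric; equivalently, that for the endpoints $\seq\mu = \fh(0)$ and $\seq\nu = \fh(a)$ and any intermediate point $\seq\rho$ on $\fh$, one has $\dist_{\calt_0}(\psi(\seq\mu),\psi(\seq\rho)) + \dist_{\calt_0}(\psi(\seq\rho),\psi(\seq\nu)) = \dist_{\calt_0}(\psi(\seq\mu),\psi(\seq\nu))$, and that $\psi\circ\fh$ is continuous. The key structural input is Lemma~\ref{projtu}: for every subsurface $\bu$ (of the type occurring, i.e. a limit of $K$-large domains of the hierarchy paths in the sequence), the path $\fh$ projects onto a geodesic $\g_\bu$ in $T_\bu$, with the projection $[0,a]\to[0,b_\bu]$ non-decreasing. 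Consequently, for any $\seq\rho$ on $\fh$ between $\seq\mu$ and $\seq\nu$, the point $\psi_\bu(\seq\rho)$ lies on the $T_\bu$-geodesic from $\psi_\bu(\seq\mu)$ to $\psi_\bu(\seq\nu)$, so $\tdlu(\seq\mu,\seq\rho) + \tdlu(\seq\rho,\seq\nu) = \tdlu(\seq\mu,\seq\nu)$ for every such $\bu$.

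First I would sum this additivity over all $\bu \in \upss$. By Lemma~\ref{restr}, the $\bu$ contributing a nonzero term to any of the three sums lie in $\yy(\seq\mu,\seq\nu)$, $\yy(\seq\mu,\seq\rho)$, or $\yy(\seq\rho,\seq\nu)$ respectively, and in all cases the sum over $\upss$ agrees with the sum over the relevant $\yy$-set; moreover Theorem~\ref{tilde} guarantees each sum is finite (comparable to $\dist_\AM$), so $\psi(\seq\mu),\psi(\seq\rho),\psi(\seq\nu)$ genuinely lie in $\calt_0$ and the rearrangement is legitimate. Thus
$$
\dist_{\calt_0}(\psi(\seq\mu),\psi(\seq\rho)) + \dist_{\calt_0}(\psi(\seq\rho),\psi(\seq\nu)) = \sum_{\bu\in\upss}\bigl(\tdlu(\seq\mu,\seq\rho)+\tdlu(\seq\rho,\seq\nu)\bigr) = \sum_{\bu\in\upss}\tdlu(\seq\mu,\seq\nu) = \dist_{\calt_0}(\psi(\seq\mu),\psi(\seq\nu)),
$$
which is precisely the geodesic condition once we know $\psi\circ\fh$ is continuous. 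Continuity follows from Corollary~\ref{distsubsurf} (each $\pi_{\MM(\bu)}$ is coarsely Lipschitz, hence the induced cone map is Lipschitz) together with the fact that $\psi$ is bi-Lipschitz onto its image in $\calt_0$ by Corollary~\ref{cor:prodtrees}; in particular $\psi\circ\fh$ is a continuous path, and the displayed additivity for all triples of points on it forces the arc-length reparametrization of $\psi\circ\fh$ to be a geodesic segment in $\calt_0$. The final sentence of the statement is then immediate, since hierarchy paths in $\AM$ are by construction ultralimits of hierarchy paths in $\MM$, which are $\CC(U)$-monotonic for every $U\subseteq S$ with uniform constants (as recalled in Section~\ref{shierar}), hence satisfy the hypothesis of the proposition.

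The main obstacle I anticipate is the bookkeeping around infinite sums and the passage between "$\tdlu$ is additive along $\fh$ for each fixed $\bu$" and "$\psi\circ\fh$ is globally geodesic": one must be careful that the set of $\bu$ with $\tdlu(\seq\mu,\seq\nu)>0$ is countable (so $\psi$ lands in $\calt_0'$) — this is exactly the countability remark proved inside Theorem~\ref{tilde} — and that one may interchange summation over $\bu$ with the additivity over the finitely many segments of a subdivision of $\fh$, which is fine because everything in sight is a sum of nonnegative terms bounded by $E\dist_\AM(\seq\mu,\seq\nu)<\infty$. A secondary point requiring care is that Lemma~\ref{projtu} is stated for a single path $\fh$ and a single $\bu$; one should note that the relevant $\bu$ are exactly those that arise as limits of large domains, so that no "hidden" coordinate fails the monotonicity conclusion, and coordinates $\bu$ with $\tdlu\equiv 0$ along $\fh$ contribute nothing. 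Once these are handled the argument is essentially a one-line consequence of the additivity of $\tdlu$ along geodesics in $\R$-trees.
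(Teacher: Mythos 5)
Your argument is correct and follows essentially the same route as the paper: coordinate-wise monotone geodesicity from Lemma~\ref{projtu}, followed by the elementary observation that a path whose projection to every factor of an $\ell^1$-product is a monotonically parameterized geodesic is itself a geodesic (the paper packages this second step as Lemma~\ref{l1geod}, which you have in effect reproved, with the appropriate care about countability and finiteness of the sums coming from Theorem~\ref{tilde}).
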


The first statement of this proposition is a direct consequence of the
following lemma, which is an easy exercise in elementary topology.
The second statement is a consequence of the first.

\begin{lemma}\label{l1geod}
Let $(X_i, \dist_i)_{i\in I}$ be a collection of metric spaces.  Fix
a point
$x=(x_i)\in\prod_{i\in I} X_i$, and consider the
subsets $$S_0'=\left\{ (y_i)\in \prod_{i\in I} X_i : y_i \neq x_i
\mbox{ for countably many } i\in I \right\}$$ and $S_0=\left\{
(y_i)\in S'_0 : \sum_{i\in I}\dist_i \left(y_i,x_i \right)<\infty
\right\}$ endowed with the $\ell^1$ distance $\dist =\sum_{i\in I}
\dist_i$.

Let $\fh \co [0,a] \to S_0$ be a non-degenerate parameterizations of a
topological arc. For each
$i\in I$ assume that $\fh$ projects onto a geodesic $\fh_{i}\co
[0,a_{i}]\to X_{i}$ such that $\fh(0)$ projects onto $\fh_{i}(0)$ and
the function
$\varphi_i=d_{i}(\fh_{i}(0),\fh_{i}(t))\co[0,a] \to [0,a_{i}]$ is a
non-decreasing function.
Then $\fh [0,a]$ is a geodesic in $(S_0, \dist)$.
\end{lemma}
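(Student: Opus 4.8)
The plan is to show that the arc $\fh[0,a]$ realizes the $\ell^1$-distance between its endpoints; since it is already a topological arc (hence has a well-defined length as a curve), this amounts to checking that its length equals $\dist(\fh(0),\fh(a))$. First I would observe that for any finite partition $0=t_0<t_1<\dots<t_k=a$ we have, by definition of the $\ell^1$-metric and the triangle inequality applied coordinate-wise,
\[
\sum_{j=1}^{k}\dist\bigl(\fh(t_{j-1}),\fh(t_j)\bigr)=\sum_{j=1}^{k}\sum_{i\in I}\dist_i\bigl(\fh_i(t_{j-1}),\fh_i(t_j)\bigr)=\sum_{i\in I}\sum_{j=1}^{k}\dist_i\bigl(\fh_i(t_{j-1}),\fh_i(t_j)\bigr).
\]
The key point is that each inner sum $\sum_{j}\dist_i(\fh_i(t_{j-1}),\fh_i(t_j))$ is actually \emph{equal} to $\dist_i(\fh_i(0),\fh_i(a))$, not merely bounded by it: since $\varphi_i(t)=\dist_i(\fh_i(0),\fh_i(t))$ is non-decreasing and $\fh_i$ is a geodesic, the point $\fh_i(t)$ lies on the geodesic segment $[\fh_i(0),\fh_i(a)]$ at distance $\varphi_i(t)$ from $\fh_i(0)$, so consecutive increments telescope. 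Hence the left-hand side above equals $\sum_{i\in I}\dist_i(\fh_i(0),\fh_i(a))=\dist(\fh(0),\fh(a))$ for \emph{every} partition.

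From this, I would conclude in two complementary directions. Taking the supremum over partitions shows that the length of the arc $\fh[0,a]$ equals $\dist(\fh(0),\fh(a))$; on the other hand, any arc joining two points has length at least the distance between them, and an arc whose length equals that distance is a geodesic (one reparametrizes by arc length and checks the isometric embedding property directly: for $s<t$, the arc-length from $\fh(0)$ to $\fh(s)$ plus that from $\fh(s)$ to $\fh(t)$ plus that from $\fh(t)$ to $\fh(a)$ equals the total length $\dist(\fh(0),\fh(a))$, which forces $\dist(\fh(s),\fh(t))$ to equal the arc-length between $\fh(s)$ and $\fh(t)$, since each of the three pieces is $\ge$ the corresponding distance and the three distances already sum to the whole). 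This gives that $\fh[0,a]$, suitably reparametrized, is a geodesic in $(S_0,\dist)$.

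The one point requiring a little care — and the place I expect the only genuine subtlety — is the interchange of the two summations $\sum_j\sum_i=\sum_i\sum_j$ and the passage to the supremum over partitions, since $I$ may be uncountable. This is harmless here because all terms are non-negative and, by the definition of $S_0$, only countably many coordinates $i$ contribute a nonzero term $\dist_i(\fh_i(0),\fh_i(a))$ (indeed $\sum_i\dist_i(\fh_i(0),\fh_i(a))<\infty$), so all the sums involved are really countable sums of non-negative reals and Tonelli-type rearrangement applies without any hypothesis beyond non-negativity. One should also note that $\fh_i(t)$ depends on $t$ only through $\varphi_i(t)$, so the values $\fh_i(t_j)$ are genuinely monotone points along a fixed geodesic segment; this is exactly what is being used when the increments are said to telescope. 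With these observations the proof is complete.
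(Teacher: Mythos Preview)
Your proof is correct and is exactly the kind of argument the paper has in mind: the authors do not give a proof at all, stating only that the lemma ``is an easy exercise in elementary topology.'' Your telescoping argument via the monotone functions $\varphi_i$ is the natural way to carry out that exercise.

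One small simplification: you can bypass the partition/length apparatus entirely. Since $\pi_i(\fh(t))=\fh_i(\varphi_i(t))$ lies on the geodesic $\fh_i$, for any $0\le s\le t\le a$ you get directly
\[
\dist(\fh(s),\fh(t))=\sum_{i\in I}\bigl(\varphi_i(t)-\varphi_i(s)\bigr)=\Phi(t)-\Phi(s),
\]
where $\Phi(t)=\sum_i\varphi_i(t)$ is a non-decreasing function with $\Phi(0)=0$ and $\Phi(a)=\dist(\fh(0),\fh(a))$. This already shows $\fh$ is a geodesic up to the reparametrization $\Phi$, with no need to take suprema over partitions. Your concern about interchanging $\sum_j$ and $\sum_i$ is harmless anyway since the outer sum over $j$ is finite.
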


\me

\Notat \quad We write $\td$ to denote the $\ell^1$ metric on $\calt_0$.
We abuse notation slightly by also using $\td$ to denote both
its restriction to $\psi (\AM)$ and for the
metric on $\AM$ which is the pull-back via $\psi$ of $\td$.  We have
that $\td (\seq\mu , \seq\nu )= \sum_{\bu \in \upss } \tdlu (\seq\mu ,
\seq\nu )$ for every $\seq\mu , \seq\nu \in \AM$, and that $\td$ is
bi-Lipschitz equivalent to $\dist_\AM$, according to
Theorem~\ref{tilde}.

\me

Note that the canonical map $\AM \to \prod_{\bu \in \upss } \CC (\bu
)$, whose components are the
canonical projections of $\AM$ onto $\CC (\bu )$, ultralimit of
complexes of curves, factors through
the above bi-Lipschitz embedding. These maps were studied in
\cite{Behrstock:thesis}, where among other things it was shown that
this canonical map is not a bi-Lipschitz embedding (see also Remark \ref{trees}).

\subsection{Dimension of asymptotic cones of mapping class groups}\label{sec:dimcone}

\begin{lemma}\label{point}
Let $\bu$ and $\bv$ be two elements in $\upss$ such that either $\bu
,\bv$ overlap or $\bu \subsetneq \bv$.  Then $\QQ(\partial \bu )$
projects onto $T_\bv$ in a unique point.
\end{lemma}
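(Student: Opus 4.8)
The plan is to reduce the statement to the following: any two points $\seq\mu,\seq\nu$ of $\QQ(\partial\bu)$ have the same image in $T_\bv$. Since $T_\bv=\lv/{\approx}$ and the quotient map $\lv\to T_\bv$ collapses each piece of $\lv$ to a point, it suffices to show that $\plv(\seq\mu)$ and $\plv(\seq\nu)$ lie in a common piece of $\lv$. Fix representatives $\seq\mu=\lio{\mu_n}$, $\seq\nu=\lio{\nu_n}$ with $\mu_n,\nu_n\in\QQ(\partial U_n)$ \uas, so that $\partial U_n\subseteq\base(\mu_n)$ and $\partial U_n\subseteq\base(\nu_n)$ \uas.

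The heart of the argument is the uniform estimate: there is a constant $D'=D'(\xi(S))$ such that \uas
$$\dist_{\CC(V_n)}\bigl(\pi_{\CC(V_n)}(\mu_n),\pi_{\CC(V_n)}(\nu_n)\bigr)\le D'.$$
To prove it, observe first that $\pi_{\CC(V_n)}(\partial U_n)\neq\emptyset$: if $\bu\pitchfork\bv$ and every component of $\partial U_n$ could be homotoped off $V_n$, then $V_n$ would be isotopic into a single complementary component of $\partial U_n$, forcing $V_n\subseteq U_n$ or $V_n\cap U_n=\emptyset$, each contradicting $U_n\pitchfork V_n$; and if $\bu\subsetneq\bv$ then the relative boundary of $U_n$ inside $V_n$ is a nonempty essential multicurve, which is a sub-multicurve of $\partial U_n$. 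Let $\Delta_n\subseteq\partial U_n$ be the nonempty sub-multicurve of components projecting nontrivially to $\CC(V_n)$, and let $\Delta_n^\mu\subseteq\base(\mu_n)$, $\Delta_n^\nu\subseteq\base(\nu_n)$ be the sub-multicurves consisting of the components that meet $V_n$ essentially; both contain $\Delta_n$, and (for $V_n$ non-annular) $\pi_{\CC(V_n)}(\mu_n)=\pi_{\CC(V_n)}(\Delta_n^\mu)$, $\pi_{\CC(V_n)}(\nu_n)=\pi_{\CC(V_n)}(\Delta_n^\nu)$. By Lemma \ref{r1} both $\pi_{\CC(V_n)}(\Delta_n^\mu)$ and $\pi_{\CC(V_n)}(\Delta_n^\nu)$ have diameter at most $3$ in $\CC(V_n)$, and they share the nonempty set $\pi_{\CC(V_n)}(\Delta_n)$; picking $p\in\pi_{\CC(V_n)}(\Delta_n)$ gives $\dist_{\CC(V_n)}(\mu_n,p)\le 3$ and $\dist_{\CC(V_n)}(\nu_n,p)\le 3$, so $\dist_{\CC(V_n)}(\mu_n,\nu_n)\le 6$. (If $V_n$ is an annulus, then \uas\ its core curve is not in $\base(\mu_n)$ — otherwise it would be disjoint from $\partial U_n$ and isotopic into $U_n$, contradicting overlap — so $\pi_{\CC(V_n)}(\mu_n)=\pi_{\CC(V_n)}(\base(\mu_n))$ and the same computation applies with the annular analogue of Lemma \ref{r1}; in that case $\MM(V_n)$ is quasi-isometric to $\CC(V_n)$, so the estimate already forces $\dist_{\lv}(\plv\seq\mu,\plv\seq\nu)=0$ and finishes the proof.) Finally, by Remark \ref{rnested} the same bound, up to a uniform additive error, holds for $\dist_{\CC(V_n)}(\pi_{\MM(V_n)}(\mu_n),\pi_{\MM(V_n)}(\nu_n))$.

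Now $\lv$ is \uas\ isometric to an asymptotic cone of $\MM(V')$ for a fixed surface $V'$ with $\xi(V')=\xi(\bv)$. When $\xi(\bv)\ge 2$, Theorem \ref{classifypieces} applies to $\lv$, and the bound $\ulim\dist_{\CC(V_n)}(\pi_{\MM(V_n)}(\mu_n),\pi_{\MM(V_n)}(\nu_n))<\infty$ is exactly condition (2) of that theorem, taken with $\plv(\seq\mu),\plv(\seq\nu)$ themselves as the required nearby points. Hence no point of $\lv$ separates $\plv(\seq\mu)$ from $\plv(\seq\nu)$, i.e.\ they lie in a common piece of $\lv$, and therefore $\psi_\bv(\seq\mu)=\psi_\bv(\seq\nu)$ in $T_\bv$. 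Since $\seq\mu,\seq\nu$ were arbitrary (and $\QQ(\partial\bu)$ is nonempty by construction), $\QQ(\partial\bu)$ projects to a single point of $T_\bv$. The remaining low-complexity cases $\xi(\bv)\le 1$ reduce, as in the annular case above, to the curve-complex estimate applied to a marking complex that is quasi-isometric to its curve complex, and are checked directly.

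The step I expect to be the main obstacle is the uniform curve-complex estimate — specifically, making rigorous that $\pi_{\CC(V_n)}(\partial U_n)\neq\emptyset$ and that the components of $\base(\mu_n)$ meeting $V_n$ account for the whole projection $\pi_{\CC(V_n)}(\mu_n)$: one has to separate cleanly the two geometric hypotheses (overlap versus strict containment) and dispose of the peripheral and annular edge cases of the subsurface projection, exactly the points where the naive application of Lemma \ref{r1} could break down.
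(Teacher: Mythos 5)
Your argument is correct in the main case and is essentially the paper's own proof written out in full: the paper disposes of the lemma in one line by noting that $\QQ(\partial \bu )$ projects into $\QQ(\plv (\partial \bu ))$, which is contained in a single piece of $\lv$ and hence collapses to a point of $T_\bv$; your uniform bound $\dist_{\CC(V_n)}(\mu_n,\nu_n)\le 6$ via Lemma \ref{r1}, combined with Theorem \ref{classifypieces}, is precisely the justification of that one line, and your verification that $\pi_{\CC(V_n)}(\partial U_n)\neq\emptyset$ under either hypothesis is the content the paper leaves implicit.

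One caveat on your closing sentence: the reduction of the case $\xi(\bv)=1$ "as in the annular case" does not work, because for a complexity-one surface $V$ the marking complex $\MM(V)$ is quasi-isometric to $\MCG(V)$ (virtually free), not to the Farey graph $\CC(V)$ --- annular coefficients survive in the distance formula, so bounded $\CC(V_n)$-distance does not bound $\dist_{\MM(V_n)}$, and since the cone of $\MM(V)$ is then an $\R$-tree whose pieces are singletons, the "same piece" conclusion is not available there. This edge case is equally elided by the paper's one-line proof, so it is not a defect of your approach relative to the paper's, but the sentence claiming $\MM(V_n)$ is quasi-isometric to $\CC(V_n)$ when $\xi(V_n)=1$ should be removed or replaced by a separate argument.
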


\proof Indeed, $\QQ(\partial \bu )$ projects into $\QQ(\plv (\partial
\bu))$, which is contained in one piece of $\lv$, hence it projects
onto one point in $T_\bv$.  \endproof

The following gives an asymptotic analogue of
\cite[Theorem~4.4]{Behrstock:asymptotic}.

\begin{theorem}\label{projection trichotomy}
Consider a pair $\bu , \bv$ in $\upss$.
\begin{enumerate}
    \item If $\bu \cap \bv =\emptyset$ then the image of $\psi_{\bu ,
    \bv }$ is $T_\bu \times T_\bv$.

    \medskip

    \item If $\bu$ and $\bv$ overlap then the image of $\psi_{\bu ,
    \bv }$ is $$ \left( T_\bu \times \{u \} \right) \cup \left(
    \{v\}\times T_\bv \right) \, ,$$ where $u$ is the point in $T_\bv$
    onto which projects $\QQ(\partial \bu )$ and $v$ is the point in
    $T_\bu$ onto which projects $\QQ(\partial \bv )$ (see Lemma
    \ref{point});

    \me

    \item If $\bu \subsetneq \bv$, $u\in T_\bv$ is the point onto
    which projects $\QQ(\partial \bu )$ and $T_\bv \setminus
    \{u\}=\bigsqcup_{i\in I} \calc_i$ is the decomposition into
    connected components then the image of $\psi_{\bu , \bv }$ is $$
(T_\bu \times \{u\})\cup \bigsqcup_{i\in I}(\{t_i\}\times \calc_i)\, ,
$$ where $t_i$ are points in $T_\bu$.
\end{enumerate}
\end{theorem}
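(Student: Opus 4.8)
The plan is to handle the three cases separately. In all of them the inclusion of the image of $\psi_{\bu , \bv }$ in $T_\bu\times T_\bv$ is automatic, so I would only need the reverse inclusion onto the sets in (1)--(3), together with, in cases (2) and (3), an \emph{a priori} bound forcing the image into the (smaller) stated set. Two ingredients will be used repeatedly. The first is that if $\dist_{\CC(U_n)}(\sigma_n,\tau_n)$ is bounded $\omega$-a.s., then $\plu(\ulim\sigma_n)$ and $\plu(\ulim\tau_n)$ lie in a common piece of the tree-graded space $\lu$, hence $\tdlu$ between them vanishes and $\psi_\bu(\ulim\sigma_n)=\psi_\bu(\ulim\tau_n)$; this follows from Theorem~\ref{classifypieces} applied inside $\lu$ (using the two points as their own nearby approximants and Remark~\ref{rnested}), and likewise for $\lv$. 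The second is the standard fact that every subsurface projection $\pi_{\MM(Y)}$ is coarsely onto, with constant depending only on $\xi(S)$: any marking on $Y$ is, up to bounded error, $\pi_{\MM(Y)}(\mu)$ for a marking $\mu$ on $S$ whose base contains that marking's base together with $\partial Y$ and which carries over the given transversals. Passing to ultralimits, $\plu$, $\plv$, and hence $\psi_\bu,\psi_\bv$, are onto.

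In case (1), $\bu\cap\bv=\emptyset$, the argument is short: given $(p,q)\in T_\bu\times T_\bv$ I would lift $p,q$ to $\lu,\lv$, represent the lifts by marking sequences $\rho_n$ on $U_n$ and $\sigma_n$ on $V_n$, and --- using $U_n\cap V_n=\emptyset$ $\omega$-a.s.\ so that $\base(\rho_n)\cup\partial U_n\cup\base(\sigma_n)\cup\partial V_n$ is a multicurve --- complete it to a pants decomposition of $S$ keeping the transversals of $\rho_n$ and of $\sigma_n$. The resulting markings $\mu_n$ restrict to $\rho_n$ on $U_n$ and to $\sigma_n$ on $V_n$ up to bounded error, so $\psi_{\bu , \bv }(\ulim\mu_n)=(p,q)$, and the image is all of $T_\bu\times T_\bv$.

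For case (2), $\bu$ and $\bv$ overlapping, the bound comes from the projection estimates. By Theorem~\ref{projest} and Convention~\ref{mlargerd}, for $\omega$-a.e.\ $n$ either $\dist_{\CC(U_n)}(\partial V_n,\mu_n)\le D$ or $\dist_{\CC(V_n)}(\partial U_n,\mu_n)\le D$; splitting $I$ according to which inequality holds, one alternative holds $\omega$-a.s. If the first does, then comparing with any $\rho\in\QQ(\partial\bv)$ (whose base contains $\partial V_n$, so whose $\CC(U_n)$-projection meets that of $\partial V_n$ and has diameter at most $3$) gives $\dist_{\CC(U_n)}(\mu_n,\rho_n)$ bounded $\omega$-a.s., hence $\psi_\bu(\ulim\mu_n)=\psi_\bu(\rho)=v$ by the first ingredient; the second alternative symmetrically yields $\psi_\bv(\ulim\mu_n)=u$. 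So the image sits in $(T_\bu\times\{u\})\cup(\{v\}\times T_\bv)$. Conversely, for $p\in T_\bu$ I would take a lift represented by markings on $U_n$ and extend them to markings $\mu_n$ on $S$ with $\partial U_n$ in the base, so that $\mu_n\in\QQ(\partial U_n)$ and $\psi_\bv(\ulim\mu_n)=u$ by Lemma~\ref{point}, while $\psi_\bu(\ulim\mu_n)=p$; symmetrically one fills $\{v\}\times T_\bv$.

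Case (3), $\bu\subsetneq\bv$, is where the real work lies, and I expect it to be the main obstacle. The crux is to show $\psi_\bu$ is constant on $\psi_\bv^{-1}(\calc_i)$ for each component $\calc_i$ of $T_\bv\setminus\{u\}$. Given $\seq\mu,\seq{\mu'}$ with $\psi_\bv$-images in a common $\calc_i$, I would take an ultralimit $\fh$ of hierarchy paths $h_n$ joining $\mu_n$ to $\mu'_n$; by Proposition~\ref{hiergeod}, $\psi_\bv(\fh)$ is a geodesic of the $\R$-tree $T_\bv$ between two points of $\calc_i$, so it stays inside $\calc_i$ and never passes through $u$. If $U_n$ were an $M(S)$-large domain of $h_n$ for $\omega$-a.e.\ $n$, then by Lemma~\ref{MM2:LLL} and Definition~\ref{klarged} each $h_n$ would contain a marking $\rho_n$ with $\partial U_n\subseteq\base(\rho_n)$; since $\rho_n$ lies on $h_n$ it has bounded rescaled distance to the basepoint, so $\ulim\rho_n$ is a point on $\fh$ lying in $\QQ(\partial\bu)$, whose image in $T_\bv$ is $u$ by Lemma~\ref{point} --- contradicting $\psi_\bv(\fh)\subseteq\calc_i$. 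Hence $U_n$ is not an $M(S)$-large domain of $h_n$ $\omega$-a.s., which (as $\mu_n,\mu'_n\in h_n$) forces $\dist_{\CC(U_n)}(\mu_n,\mu'_n)\le M(S)$ $\omega$-a.s., and the first ingredient gives $\psi_\bu(\seq\mu)=\psi_\bu(\seq{\mu'})=:t_i$. Since $\psi_\bv$ is onto, every $\calc_i$ is met and every $t_i$ defined; and $\psi_\bv^{-1}(u)$ has $\psi_\bu$-image all of $T_\bu$, by the extension construction of case (2) applied with $\partial U_n$ adjoined to the base. As the sets $T_\bu\times\{u\}$ and $\{t_i\}\times\calc_i$ are pairwise disjoint, this identifies the image as stated. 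The delicate points to watch are exactly those in this last case: checking that the ultralimit of a marking on $h_n$ containing $\partial U_n$ genuinely lies in $\QQ(\partial\bu)$ and maps to $u$ (so that Lemma~\ref{point} and Theorem~\ref{classifypieces} are invoked at the correct cones), and --- more routinely --- carrying all the marking-extension arguments through with constants uniform in $n$.
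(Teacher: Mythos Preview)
Your proof is correct and follows essentially the same approach as the paper's. Cases (1) and (2) match almost exactly: the paper also dispenses with (1) quickly and uses Theorem~\ref{projest} for the dichotomy in (2), though phrased contrapositively (if $\psi_\bu(\seq\mu)\neq v$ then $\dist_{\CC(U_n)}(\mu_n,\partial V_n)\to\infty$, forcing $\psi_\bv(\seq\mu)=u$) rather than your forward split on the ultrafilter.

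The one noteworthy difference is in case (3). The paper argues inside the tree-graded space $\lv$: for $\seq\mu,\seq\mu_i$ with $\psi_\bv$-images in the same $\calc_i$, any topological arc in $\lv$ between their $\plv$-projections must miss the piece containing $\QQ(\plv(\partial\bu))$, for otherwise property $(T_2')$ would force a geodesic in $\lv$ through that piece and hence a geodesic in $T_\bv$ through $u$; combining this with Lemma~\ref{MM2:LLL} (applied inside $V_n$) gives the bound on $\dist_{\CC(U_n)}$. Your route is a bit more direct: you stay in $\AM$, take a hierarchy path between $\seq\mu$ and $\seq{\mu'}$, and use Lemma~\ref{projtu} (this is really what you need, rather than Proposition~\ref{hiergeod}) to see its $\psi_\bv$-image is a geodesic of $T_\bv$ confined to $\calc_i$; then Lemma~\ref{MM2:LLL} in $S$ yields the contradiction. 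Both arguments hinge on the same idea --- a path between the two points cannot meet $\QQ(\partial\bu)$ --- and both invoke Lemma~\ref{MM2:LLL}; you just trade the tree-graded property $(T_2')$ for Lemma~\ref{projtu}.
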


\proof Case (1) is obvious.  We prove (2).  Let $\seq \mu$ be a point
in $\AM$ whose projection on $T_\bu$ is different from $v$.  Then
$\tdlu (\seq\mu , \partial \bv) >0$ which implies that $\lim_\omega
\dist_{C(U_n)}(\mu_n, \partial V_n) =+\infty$.  Theorem \ref{projest}
implies that \uass $\dist_{C(V_n)}(\mu_n, \partial U_n)\leq D$.  Hence
$\plv (\seq\mu)$ and  $\QQ(\plv (\partial
\bu ))$ are in the same piece of $\lv$, so $\seq\mu$ projects on $T_\bv$ in $u$.  The set of $\seq
\mu$ in $\AM$ projecting on $T_\bu$ in $v$ contains $\QQ(\partial \bv
)$, hence their projection on $T_\bv$ is surjective.

We now prove (3).  As before the set of $\seq \mu$ projecting on
$T_\bv$ in $u$ contains $\QQ(\partial \bu )$, hence it projects on
$T_\bu$ surjectively.

For every $i\in I$ we choose $\seq\mu_i\in \AM$ whose projection on
$T_\bv$ is in $\calc_i$.  Then every $\seq\mu$ with projection on
$T_\bv$ in $\calc_i$ has the property that any topological arc joining
$\plv (\seq\mu)$ to $\plv(\seq\mu_i)$ does not intersect the piece
containing $\QQ(\partial \bu )$.  Otherwise by property ($T_2'$) of
tree-graded spaces the geodesic joining $\plv (\seq\mu)$ to
$\plv(\seq\mu_i)$ in $\lv$ would intersect the same piece, and since
geodesics in $\lv$ project onto geodesics in $T_\bv$
\cite{DrutuSapir:splitting} the geodesic in $T_\bv$ joining the
projections of $\seq\mu$ and $\seq\mu_i$ would contain $u$.  This
would contradict the fact that both projections are in the same
connected component $\calc_i$.

Take $(\mu_n)$ representatives of $\seq\mu$ and $\left( \mu_n^i \right)$
representatives of $\seq\mu_i$.  The above and Lemma \ref{MM2:LLL}
imply that \uass $\dist_{C(U_n)} (\mu_n , \mu_n^i )\leq M$, hence the
projections of $\seq\mu$ and $\seq\mu_i$ onto $\lu $ are in the same
piece.  Therefore the projections of $\seq\mu$ and $\seq\mu_i$ onto
$T_\bu$ coincide.  Thus all elements in $\calc_i$ project in $T_\bu$
in the same point $t_i$ which is the projection of $\seq
\mu_i$.\endproof

\begin{remark}
Note that in cases (2) and (3) the image of $\psi_{\bu , \bv }$ has
dimension 1.  In case (3) this is due to the Hurewicz-Morita-Nagami
Theorem \cite[Theorem III.6]{Nagata:dimension}.
\end{remark}

We shall need the following classical Dimension Theory result:

\begin{theorem}[\cite{Engelking:dimension}]\label{dim}
Let $K$ be a compact metric space. If for every $\epsilon
>0$ there exists an $\epsilon$-map $f:K\to X$ (i.e. a continuous map with diameter of
$f^{-1}(x)$ at most $\epsilon$ for every $x\in X$) such that $f(K)$ is of dimension at most $n$, then
$K$ has dimension at most $n$.
\end{theorem}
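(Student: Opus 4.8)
The plan is to deduce this from the standard combinatorial description of the Lebesgue covering dimension: for a normal space $Z$ one has $\dim Z\le n$ precisely when every finite open cover of $Z$ admits a finite open refinement in which no point lies in more than $n+1$ of its members (see \cite{Engelking:dimension}). Since $K$ is compact metric, hence normal, it suffices to take an arbitrary finite open cover $\calu=\{U_1,\dots,U_k\}$ of $K$ and produce such a refinement.

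First I would fix a Lebesgue number $\lambda>0$ for $\calu$, so that every subset of $K$ of diameter less than $\lambda$ lies in some $U_j$, and then apply the hypothesis with $\epsilon=\lambda/2$ to obtain a continuous map $f\co K\to X$ with $\diam f^{-1}(x)\le\lambda/2<\lambda$ for every $x$ and with $\dim f(K)\le n$. Note that $f(K)$, being a continuous image of the compact space $K$, is again compact, so the dimension characterization above applies to it as well.

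The core step is to pull $\calu$ back through $f$. For each $x\in f(K)$ the fibre $f^{-1}(x)$ has diameter less than $\lambda$, hence is contained in some $U_{j(x)}$; since $K\setminus U_{j(x)}$ is compact and disjoint from $f^{-1}(x)$, its image $f(K\setminus U_{j(x)})$ is a compact, hence closed, subset of $f(K)$ not containing $x$, so $V_x:=f(K)\setminus f(K\setminus U_{j(x)})$ is an open neighbourhood of $x$ in $f(K)$ with $f^{-1}(V_x)\subseteq U_{j(x)}$. The collection $\{V_x\}_{x\in f(K)}$ is an open cover of $f(K)$; since $\dim f(K)\le n$ it has a finite open refinement $\{W_1,\dots,W_m\}$ in which no point of $f(K)$ lies in more than $n+1$ of the $W_i$. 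Choosing for each $i$ a point $x_i$ with $W_i\subseteq V_{x_i}$ gives $f^{-1}(W_i)\subseteq U_{j(x_i)}$.

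Finally, $\{f^{-1}(W_i)\}_{i=1}^m$ is an open cover of $K$ (the $W_i$ cover $f(K)$ and every point of $K$ maps into $f(K)$), it refines $\calu$, and $p\in f^{-1}(W_i)$ iff $f(p)\in W_i$ shows that each point of $K$ lies in at most $n+1$ of these preimages. Hence $\dim K\le n$. The step I expect to need the most care is the construction of the neighbourhoods $V_x$: this is exactly where compactness of $K$ (equivalently, the fact that $f$ is a closed map) is used, and it is the reason the statement genuinely requires $K$ to be compact.
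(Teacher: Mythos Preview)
Your proof is correct and is essentially the standard argument one finds in dimension theory texts. Note, however, that the paper does not actually prove this theorem: it is quoted as a classical result from \cite{Engelking:dimension} and used as a black box in the proof of Theorem~\ref{rank}, so there is no ``paper's own proof'' to compare against. Your write-up supplies exactly the kind of argument Engelking gives, using the Lebesgue number, the closedness of $f$ (via compactness of $K$) to build the neighbourhoods $V_x$ with controlled preimages, and then pulling back a refinement of order at most $n+1$ from $f(K)$.
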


\me

We give another proof of the following theorem:
\begin{theorem}[Dimension Theorem \cite{BehrstockMinsky:rankconj}]\label{rank}
Every locally compact subset of every asymptotic cone of the mapping
class group of a surface $S$ has dimension at most $\xi (S)$.
\end{theorem}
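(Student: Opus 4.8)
The plan is to apply Theorem~\ref{dim}: it suffices to show that for every compact subset $K\subseteq\AM$ and every $\epsilon>0$ there is an $\epsilon$-map from $K$ to a space of dimension at most $\xi(S)$, and since dimension is monotone under taking subsets, it is enough to treat the case of a compact $K$ (a locally compact subset is a countable union of compacts, and covering dimension of a space is the supremum of the dimensions of its compact subsets). First I would fix $K$ compact and $\epsilon>0$. By Corollary~\ref{cor:prodtrees} the embedding $\psi\co\AM\to\calt_0\subseteq\prod_{\bu\in\upss}T_\bu$ is bi-Lipschitz, and by Theorem~\ref{tilde} the $\ell^1$-sums $\sum_{\bu}\tdlu(\seq\mu,\seq\nu)$ are uniformly comparable to $\dist_\AM$. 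Hence, since $K$ is compact and therefore $\psi(K)$ is compact in the $\ell^1$-metric, for the given $\epsilon$ there is a \emph{finite} subset $F=\{\bu_1,\dots,\bu_N\}\subseteq\upss$ such that the ``tail'' contribution $\sum_{\bu\notin F}\tdlu(\seq\mu,\seq\nu)$ is less than $\epsilon/E$ for all $\seq\mu,\seq\nu\in K$; this uses that each coordinate contributes a nonnegative amount to a convergent sum and the partial sums converge uniformly on the compact set. Consequently the truncated map $\psi_F\co K\to\prod_{i=1}^N T_{\bu_i}$, endowed with the $\ell^1$-metric, is an $\epsilon$-map: two points identified by $\psi_F$ have all their $\tdlu$, $\bu\in F$, equal to zero, so $\dist_\AM(\seq\mu,\seq\nu)\leq E\sum_{\bu\notin F}\tdlu<\epsilon$.

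It now remains to bound the dimension of $\psi_F(K)$, and in fact of the whole image $\psi_F(\AM)\subseteq\prod_{i=1}^N T_{\bu_i}$, by $\xi(S)$. This is where Theorem~\ref{projection trichotomy} enters: for each pair $\bu_i,\bu_j$ in $F$, either the two subsurfaces are disjoint (and the image of $\psi_{\bu_i,\bu_j}$ is all of $T_{\bu_i}\times T_{\bu_j}$, dimension $2$) or they overlap or are nested (and by the trichotomy together with the Remark following it the image of $\psi_{\bu_i,\bu_j}$ has dimension $1$). The next step is to promote this pairwise information to a dimension bound for the $N$-fold product image. The key combinatorial point is that a collection of subsurfaces of $S$ that are pairwise disjoint has cardinality at most $\xi(S)$ (complexity is additive over disjoint unions and each essential subsurface has complexity at least one, except annuli which have $\xi=-1$ but still each consumes at least one curve of a pants decomposition — so $\xi(S)$ is the right bound on the size of a pairwise-disjoint family, counting annuli appropriately). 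Thus in any ``coordinate subset'' of $F$ on which the image genuinely spreads out in more than one dimension, only the pairwise-disjoint subfamilies matter, and those have size $\leq\xi(S)$. I would make this precise by an induction on $N=|F|$: writing $\psi_F=(\psi_{F\setminus\{\bu_N\}},\psi_{\bu_N})$ and viewing $\psi_F(\AM)$ as living over $\psi_{F\setminus\{\bu_N\}}(\AM)$ with fibers controlled by $T_{\bu_N}$, the fiber over a point is either a single point (when $\bu_N$ overlaps or is comparable to some $\bu_i$ already used, forcing its $T_{\bu_N}$-coordinate to be determined, by Theorem~\ref{projection trichotomy}(2),(3)) or a copy of (a subset of) $T_{\bu_N}$, which only happens when $\bu_N$ is disjoint from all the previously used subsurfaces; the Hurewicz–Morita–Nagami theorem (as cited in the Remark, \cite[Theorem III.6]{Nagata:dimension}) then gives $\dim\psi_F(\AM)\leq\dim\psi_{F\setminus\{\bu_N\}}(\AM)+1$ in the disjoint case and $\leq\dim\psi_{F\setminus\{\bu_N\}}(\AM)$ otherwise. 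Since the dimension only goes up when adding a subsurface disjoint from all predecessors, and such a chain of additions produces a pairwise-disjoint family, the total increase is at most $\xi(S)$, giving $\dim\psi_F(\AM)\leq\xi(S)$.

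Finally, with the $\epsilon$-map $\psi_F\co K\to\psi_F(K)$ and $\dim\psi_F(K)\leq\dim\psi_F(\AM)\leq\xi(S)$ in hand for every $\epsilon>0$, Theorem~\ref{dim} yields $\dim K\leq\xi(S)$, and taking the supremum over compact subsets of a locally compact subset finishes the proof. The main obstacle I anticipate is the inductive dimension estimate for $\psi_F(\AM)$: Theorem~\ref{projection trichotomy} is a statement about pairs $(\bu,\bv)$, and one must be careful that ``$\bu_N$ overlaps or is nested with some earlier $\bu_i$ $\Rightarrow$ its $T_{\bu_N}$-coordinate is a function of the earlier coordinates'' really does hold when there are \emph{several} earlier subsurfaces — this needs that the point of $T_{\bu_N}$ onto which $\QQ(\partial\bu_i)$ projects, or the relevant component structure, pins down $\psi_{\bu_N}(\seq\mu)$ given $\psi_{\bu_i}(\seq\mu)$. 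One also has to handle the annular/low-complexity cases in the ``pairwise-disjoint families have size $\leq\xi(S)$'' count and make sure the fibration-type inequality from Hurewicz–Morita–Nagami applies to the (not necessarily compact) images $\psi_F(\AM)$ by first intersecting with $\psi_F(K)$ or by using the version of that theorem valid for arbitrary separable metric spaces.
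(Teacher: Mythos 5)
Your reduction to compact subsets and your construction of the $\epsilon$-map $\psi_F\co K\to\prod_{\bu\in F}T_\bu$ via a finite coordinate set $F$ controlling the $\ell^1$-tail coincide with the paper's argument, and your final appeal to Theorem~\ref{dim} is the right one. The gap is in the inductive dimension bound for the finite projection. Your key claim --- that when $\bu_N$ overlaps or is nested with some earlier $\bu_i$ the $T_{\bu_N}$-coordinate is a function of the earlier coordinates, so that only fibers over coordinates disjoint from all predecessors can be one-dimensional --- is false. By Theorem~\ref{projection trichotomy}(2) the image of $\psi_{\bu_i,\bu_N}$ is $\left(T_{\bu_i}\times\{u\}\right)\cup\left(\{v\}\times T_{\bu_N}\right)$: over every point of $T_{\bu_i}$ other than $v$ the fiber is the single point $u$, but over $v$ the fiber is all of $T_{\bu_N}$. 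The same happens in the nested case over the point $u$. Consequently the Hurewicz--Morita--Nagami estimate gives $\dim\leq\dim+1$ for \emph{every} added coordinate, not just the disjoint ones, and your counting ``the dimension increases only along a pairwise-disjoint chain, hence by at most $\xi(S)$'' does not follow. (You flagged this as the anticipated obstacle; it is not a matter of care but the point where the fibration argument genuinely breaks.)

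What the paper does instead is decompose rather than fiber. In the overlap case it writes $\pi_J(K)=K_{\bu}\cup K_{\bv}$ according to which branch of the trichotomy a point lies in; each piece has one coordinate constant, so the induction applies to each with a \emph{different} coordinate deleted, and the (finite closed) sum theorem gives the bound without any $+1$. In the nested case the analogous decomposition $\left(T_\bu\times\{u\}\right)\cup\bigsqcup_{i\in I}\left(\{t_i\}\times\calc_i\right)$ has infinitely many pieces, so the sum theorem cannot be applied directly; the paper instead takes a finite $\delta$-net of the compact set $K$, retracts onto the finite subtree $\fct$ spanned by the finitely many components meeting the net, checks this retraction is a $2\delta$-map, and invokes Theorem~\ref{dim} a second time. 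This is also why the induction must be run on $\pi_J(K)$ for $K$ compact rather than on $\psi_F(\AM)$ as you propose: the finiteness of the net is what tames the infinite decomposition. To repair your proof you would need to replace the fiber-dimension step by this two-case decomposition argument.
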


\begin{proof}

Since every subset of an asymptotic cone is itself a metric space, it
is paracompact.  This implies that every locally compact subset of the
asymptotic cone is a free union of $\sigma$-compact subspaces
\cite[Theorem 7.3, p.  241]{Dugundji:topology}.  Thus, it suffices to
prove that every compact subset in $\AM$ has dimension at most $\xi
(S)$.  Let $K$ be such a compact subset.  For simplicity we see it as
a subset of $\psi(\AM)\subset \Pi_{\bv\in\upss}T_\bv$.

Fix $\epsilon>0$.  Let $N$ be a finite $\frac{\epsilon}{4}$--net for
$(K, \td )$, i.e. a finite subset such that $K=\bigcup_{a\in N}
B_{\td} \left(a,\frac{\epsilon}{4}\right)$.  There exists a finite
subset $J_{\epsilon}\subset \upss$ such that for every $a,b\in N$,
$\sum_{\bu \not \in J_\epsilon} \tdlu (a,b) < \frac{\epsilon }{2}$.
Then for every $x,y \in K$, $\sum_{\bu \not \in J_\epsilon} \tdlu
(x,y) < \epsilon $.  In particular this implies that the projection
$\pi_{J_{\epsilon}}\co \Pi_{\bv\in\upss}T_\bv \to \Pi_{\bv\in
J_{\epsilon}}T_\bv$ restricted to $K$ is an $\epsilon$-map.

We now prove that for every finite subset $J\subset \upss$ the
projection $\pi_{J} (K)$ has dimension at most $\xi (S)$, by induction
on the cardinality of $J$.  This will finish the proof, due to Theorem
\ref{dim}.

If the subsurfaces in $J$ are pairwise disjoint then the cardinality
of $J$ is at most $3g+p-3$ and thus the dimension bound follows.  So,
suppose we have a pair of subsurfaces $\bu,\bv$ in $J$ which are not
disjoint: then they are either nested or overlapping.  We deal with
the two cases separately.

    Suppose $\bu,\bv \in J$ overlap.  Then according to Theorem
    \ref{projection trichotomy}, $\psi_{\bu , \bv } (\AM )$ is $
    \left( T_\bu \times \{u \} \right) \cup \left( \{v\}\times T_\bv
    \right) \, ,$ hence we can write $K=K_\bu \cup K_\bv$, where
    $\pi_{\bu , \bv } (K_\bu ) \subset T_\bu \times \{u \}$ and
    $\pi_{\bu , \bv } (K_\bv ) \subset \{v\}\times T_\bv $.  Now
    $\pi_{J} (K_\bu )= \pi_{J\setminus \{\bu \}} (K_\bu )\times \{u \}
    \subset \pi_{J\setminus \{\bu \}} (K )\times \{u \}$, which is of
    dimension at most $\xi (S)$ by induction hypothesis.  Likewise
    $\pi_{J} (K_\bv )= \pi_{J\setminus \{\bv \}} (K_\bv )\times \{v \}
    \subset \pi_{J\setminus \{\bv \}} (K )\times \{v \}$ is of
    dimension at most $\xi (S)$.  It follows that $\pi_{J} (K )$ is of
    dimension at most $\xi (S)$.

     \me

    Assume that $\bu \subsetneq \bv$.  Let $u$ be the point in $T_\bv$
    onto which projects $\QQ(\partial \bu )$ and $T_\bv \setminus
    \{u\}=\bigsqcup_{i\in I} \calc_i$ the decomposition into connected
    components.  By Theorem~\ref{projection trichotomy}, $\psi_{\bu ,
    \bv } (\AM )$ is $ (T_\bu \times \{u\})\cup \bigsqcup_{i\in
    I}(\{t_i\}\times \calc_i)\, , $ where $t_i$ are points in $T_\bu$.
    We prove that $\pi_{J} (K )$ is of dimension at most $\xi (S)$ by
    means of Theorem \ref{dim}.  Let $\delta >0$.  We shall construct
    a $2\delta$-map on $\pi_{J} (K )$ with image of dimension at most
    $\xi (S)$.  Let $N$ be a finite $\delta$-net of $( K , \td )$.
    There exist $i_1,...,i_m$ in $I$ such that $\pi_{\bu , \bv }(N)$
    is contained in ${\mathfrak T}=(T_\bu \times \{u\})\cup
    \bigsqcup_{j=1}^m(\{t_{i_j}\}\times \calc_{i_j})$.  The set
    $(T_\bu \times \{u\})\cup \bigsqcup_{i\in I}(\{t_i\}\times
    \calc_i)$ endowed with the $\ell^1$-metric is a tree and $\fct$ is
    a subtree in it.  We consider the nearest point retraction
    map $$\rt: (T_\bu \times \{u\})\cup \bigsqcup_{i\in
    I}(\{t_i\}\times \calc_i)\to \fct $$ which is moreover a
    contraction.  This defines a contraction $$\rt_{J}: \psi_{J} (\AM
    ) \to \psi_{J\setminus \{\bu , \bv \}} (\AM )\times \fct \, ,\, \,
    \rt_{J}=\mathrm{id} \times \rt\, .  $$

     The set $\pi_{J} (K )$ splits as $K_\fct \sqcup K'$, where
     $K_\fct = \pi_{J} (K ) \cap \pi_{\bu ,\bv}^{-1}(\fct)$ and $K'$
     is its complementary set.  Every $x\in K'$ has $\pi_{\bu
     ,\bv}(x)$ in some $\{t_i\}\times \calc_i$ with $i\in
     I\setminus\{i_1,...,i_m\}$.  Since there exists $n\in N$ such
     that $x$ is at distance smaller than $\delta$ from $\pi_{J} (n)$,
     it follows that $\pi_{\bu ,\bv}(x)$ is at distance smaller than
     $\delta$ from $\fct$, hence at distance smaller that $\delta $
     from $(t_i, u )=\rt \left(\pi_{\bu ,\bv}(x)\right)$.  We conclude
     that $\rt (\pi_{\bu ,\bv} (K')) \subset \{t_i \mid i\in I\}
     \times \{u\}\cap \pi_\bu (K) \times \{u\}$, hence $\rt_{J}
     (K')\subset \pi_{J\setminus \{ \bv \}} (K)\times \{u\}$, which is
     of dimension at most $\xi (S)$ by the induction hypothesis.

     By definition $\rt_{J} (K_\fct) = K_\fct$.  The set $K_\fct$
     splits as $K_\bu \sqcup \bigsqcup_{j=1}^m K_j\, ,$ where $K_\bu =
     \pi_{J} (K ) \cap \pi_{\bu ,\bv}^{-1}(T_\bu \times \{u\})$ and
     $K_j = \pi_{J} (K ) \cap \pi_{\bu ,\bv}^{-1}(\{t_{i_j}\}\times
     \calc_{i_j})$.  Now $K_\bu \subset \pi_{J\setminus \{\bv \}} (K )
     \times \{u\}$, while $K_j \subset \pi_{J\setminus \{\bu \}} (K )
     \times \{ t_{i_j}\}$ for $j=1,..,m$, hence by the induction
     hypothesis they have dimension at most $\xi (S)$.  Consequently
     $K_\fct$ has dimension at most $\xi (S)$.

     We have obtained that the map $\rt_{J}$ restricted to $\pi_{J} (K
     )$ is a $2\delta$-map with image $K_\fct \cup \rt_{J} (K')$ of
     dimension at most $\xi (S)$.  It follows that $\pi_{J} (K )$ is
     of dimension at most $\xi (S)$.\end{proof}

\subsection{The median structure}\label{sec:median}

More can be said about the structure of $\AM$ endowed with $\td$. We recall that a \emph{median space}
is a metric space for which, given any triple of points, there exists a unique \emph{median point},
that is a point which is simultaneously between any two points in that triple. A point $x$ is said to
be \emph{between} two other points $a,b$ in a metric space $(X, \dist )$ if $\dist (a,x)+\dist
(x,b)=\dist (a,b)$. See \cite{CDH-T} for details.

\begin{theorem}\label{thmedian}
The asymptotic cone $\AM$ endowed with the metric $\td$ is a median
space.  Moreover hierarchy paths (i.e. ultralimits of hierarchy
paths) are geodesics in $(\AM , \td )$.
\end{theorem}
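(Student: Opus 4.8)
The plan is to deduce both assertions from the bi-Lipschitz embedding $\psi\co\AM\to\calt_0$ of Corollary~\ref{cor:prodtrees} together with Theorem~\ref{tilde}, which identifies the pull-back metric $\td$ with the $\ell^1$-metric $\sum_{\bu}\tdlu$. The key structural input is that each factor $T_\bu$ is an $\R$-tree, hence median, and that an $\ell^1$-product (in the sense of $\calt_0$) of median spaces is median. So the first step is to record the elementary fact that if $(X_i,\dist_i)_{i\in I}$ are median spaces then the subspace $S_0\subseteq\prod_i X_i$ of tuples that differ from a fixed basepoint in only countably many coordinates and have finite $\ell^1$-distance to it, equipped with $\dist=\sum_i\dist_i$, is again median: betweenness in $S_0$ holds coordinate-wise (since $\dist(a,x)+\dist(x,b)=\dist(a,b)$ forces $\dist_i(a_i,x_i)+\dist_i(x_i,b_i)=\dist_i(a_i,b_i)$ for every $i$), so the median of a triple is obtained by taking the median in each coordinate, and one checks this tuple still lies in $S_0$. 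Applying this with $X_\bu=T_\bu$ shows $\calt_0$ is median.

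The second step is to transport this to $\AM$: by Theorem~\ref{tilde} the map $\psi\co(\AM,\td)\to(\calt_0,\td)$ is an \emph{isometric} embedding onto its image (after replacing $\dist_\AM$ by its bi-Lipschitz equivalent $\td$), so it suffices to prove that $\psi(\AM)$ is a median subspace of $\calt_0$, i.e. that it is closed under taking medians of triples. This is the step I expect to be the main obstacle, because a general subspace of a median space need not be median (as the authors' own remark after Theorem~\ref{prodtrees} emphasizes). To handle it, given $\seq\mu,\seq\nu,\seq\rho\in\AM$ I would first produce a point of $\AM$ that is between $\seq\mu$ and $\seq\nu$ and whose $T_\bu$-coordinate is the tree-median of the three coordinates for every $\bu$; the natural candidate is the limit of a sequence of coarse medians of $\mu_n,\nu_n,\rho_n$ in $\MM(S)$ (the Masur--Minsky distance formula, Theorem~\ref{distanceformula}, together with the projection estimates, Theorem~\ref{projest}, furnishes such coarse medians — for each domain $V$ one picks a point on a $\CC(V)$-geodesic between the three projections realizing the pairwise $\CC(V)$-medians, and assembles a marking via the construction of Section~\ref{projmy}). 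One then checks, using Lemma~\ref{projtu}/Lemma~\ref{hiergeod} applied to hierarchy paths through this median point, that its projection to each $T_\bu$ is indeed the $\R$-tree median of $\psi_\bu(\seq\mu),\psi_\bu(\seq\nu),\psi_\bu(\seq\rho)$; since medians in $\calt_0$ are coordinate-wise and the coordinate-wise median is unique, this forces the point of $\AM$ we constructed to be \emph{the} median, and in particular to lie in $\psi(\AM)$. Uniqueness of the median in $\AM$ then follows from uniqueness in $\calt_0$ and injectivity of $\psi$.

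For the "moreover" clause: by Proposition~\ref{hiergeod}, for any hierarchy path $\fh\subset\AM$ its image $\psi(\fh)$ is a geodesic in $\calt_0$ with respect to $\td$; since $\psi$ is an isometry from $(\AM,\td)$ onto $(\psi(\AM),\td)$ and $\psi(\AM)$ is median (hence, in particular, geodesic subsets behave well), $\fh$ itself is a geodesic in $(\AM,\td)$. Concretely, the $\td$-length of $\fh$ between two of its points equals the $\ell^1$-length of $\psi(\fh)$ between their images, which by Proposition~\ref{hiergeod} equals $\td(\psi(\seq x),\psi(\seq y))=\td(\seq x,\seq y)$; so $\fh$ realizes the $\td$-distance between any two of its points and is a geodesic. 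I would present the argument in this order: (i) $\ell^1$-products of median spaces are median; (ii) construct coarse medians in $\MM(S)$ and take ultralimits to get a candidate median point in $\AM$; (iii) verify via Lemma~\ref{projtu} that its $T_\bu$-coordinates are the tree-medians, concluding $\psi(\AM)$ is median and hence so is $(\AM,\td)$; (iv) deduce the geodesic statement from Proposition~\ref{hiergeod}. The crux — and the place where the tree-graded geometry of the subsurface cones really enters — is step (iii), ensuring the limiting coarse median projects correctly onto every transversal tree $T_\bu$ rather than merely onto every $\CC(\bu)$.
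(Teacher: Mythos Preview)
Your overall architecture matches the paper's: reduce to showing $\psi(\AM)$ is closed under medians in the median space $\calt_0$, and deduce the geodesic statement from Proposition~\ref{hiergeod}. The ``moreover'' clause is handled exactly as the paper does it.

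The gap is in your step (ii). You propose to build, for each $n$, a coarse median $m_n$ of $\mu_n,\nu_n,\rho_n$ in $\MM(S)$ by ``picking a point on a $\CC(V)$-geodesic realizing the pairwise $\CC(V)$-medians and assembling a marking via Section~\ref{projmy}''. But Section~\ref{projmy} only gives projections \emph{from} $\MM(S)$ \emph{to} $\MM(Z)$; there is no construction in the paper that assembles a marking from a prescribed family of curve-complex data over \emph{all} subsurfaces simultaneously. The consistency conditions needed for such an assembly (that the prescribed $\CC(V)$-data be realizable by a single marking) are exactly what the later theory of coarse medians / hierarchical hyperbolicity provides, and that machinery is not available here. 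Without it your candidate $m_n$ is not defined, and step (iii) cannot get off the ground.

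The paper sidesteps this entirely. Rather than producing a single point of $\AM$ that hits the tree-median in every $T_\bu$ at once, it proves (Lemma~\ref{medgen}, built up from the disjoint case Lemma~\ref{meddisj} via Theorem~\ref{projection trichotomy}) that for every \emph{finite} $F\subset\upss$ there is a point $\seq\mu\in\AM$, between $\seq\nu$ and $\seq\rho$, whose $\psi_F$-image is the median of $\psi_F(\seq\nu),\psi_F(\seq\rho),\psi_F(\seq\sigma)$. The construction in Lemma~\ref{meddisj} does use an explicit assembly, but only inside a single $\QQ(\seq\Delta)$, where the product structure $\MM(\bu_1)\times\cdots\times\MM(\bu_k)$ makes it legitimate. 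Then, given a triple and $\epsilon>0$, one chooses $F$ so that the $\ell^1$-tail over $\upss\setminus F$ is small, and checks that the resulting $\psi(\seq\mu)$ is within $5\epsilon$ of the $\calt_0$-median; since $(\AM,\dist_\AM)$ is complete and $\td$ is bi-Lipschitz to $\dist_\AM$, $\psi(\AM)$ is closed in $\calt_0$ and therefore contains the median. So the paper's argument is an approximation-plus-completeness argument, not a direct ultralimit of coarse medians; the finite-$F$ reduction is precisely what lets one avoid the assembly problem you ran into.
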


The second statement follows from Proposition \ref{hiergeod}.
Note that the first statement is equivalent to that of $\psi (\AM)$
being a median subspace of the median space $(\calt_0 ,\td)$.  The
proof is done in several steps.

\begin{lemma}\label{lem:duproj}
Let $\seq\nu $ in $\AM$ and $\seq\Delta =(\Delta_n)^\omega$, where
$\Delta_n$ is a multicurve.  Let $\seq\nu' $ be the projection of
$\seq\nu $ on $\QQ(\seq\Delta)$.  Then for every subsurface $\bu$ such
that $\bu \notpitchfork \seq\Delta$ (i.e., $\bu$ does not overlap
$\Delta$) the distance $\tdlu (\seq\nu , \seq\nu')=0$.
\end{lemma}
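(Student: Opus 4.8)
The plan is to prove the stronger statement that $\dlu(\seq\nu,\seq\nu')=0$, which gives the lemma since $\tdlu\le\dlu$, and which moreover requires no analysis of the pieces of $\lu$. By the definition of $\dlu$ it is enough to show that, $\omega$-almost surely, the projected markings $\pi_{\MM(U_n)}(\nu_n)$ and $\pi_{\MM(U_n)}(\nu'_n)$ lie at distance at most $C$ in $\MM(U_n)$, for a constant $C$ depending only on $\xi(S)$; dividing by $d_n$ and taking the $\omega$-limit then forces $\dlu(\seq\nu,\seq\nu')=0$. Here $\nu_n$ and $\nu'_n$ denote representative sequences, with $\nu'_n$ the projection of $\nu_n$ onto $\QQ(\Delta_n)$ in the sense of Section~\ref{sect:proj}.

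First I would unwind the hypothesis $\bu\notpitchfork\seq\Delta$. Saying that $U_n$ does not overlap the multicurve $\Delta_n$ means that $\omega$-almost surely exactly one of the following holds: (a) every component of $\Delta_n$ can be realized disjointly from $U_n$, so that $U_n$ is an essential connected subsurface of a single complementary component $W_n$ of $\Delta_n$ (possibly $U_n=W_n$); or (b) $U_n$ is an annulus whose core curve is a component $\delta_n$ of $\Delta_n$. Next I would recall the explicit description of $\nu'_n$ from Section~\ref{sect:proj}: up to an ambiguity bounded only in terms of $\xi(S)$, $\base(\nu'_n)=\Delta_n\cup\bigcup_W\base\bigl(\pi_{\MM(W)}(\nu_n)\bigr)$, the transversal in $\nu'_n$ to each component $\delta$ of $\Delta_n$ is $\pi_{\CC(\delta)}(\nu_n)$, and the transversals to the remaining base curves are those of the markings $\pi_{\MM(W)}(\nu_n)$.

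In case (b), since $\delta_n\in\base(\nu'_n)$, the projection $\pi_{\MM(U_n)}(\nu'_n)$ — which for an annulus is $\pi_{\CC(U_n)}(\nu'_n)$ — is precisely the transversal to $\delta_n$ in $\nu'_n$, namely $\pi_{\CC(\delta_n)}(\nu_n)=\pi_{\MM(U_n)}(\nu_n)$, up to the uniform diameter bound of Remark~\ref{r2}. In case (a), the restriction of $\nu'_n$ to $W_n$ is $\pi_{\MM(W_n)}(\nu_n)$, and every base or transversal curve of $\nu'_n$ not supported in $W_n$ is disjoint from $U_n\subseteq W_n$; hence every subsurface projection entering the construction of $\pi_{\MM(U_n)}(\nu'_n)$ sees only $\nu'_n|_{W_n}$, so $\pi_{\MM(U_n)}(\nu'_n)$ agrees, up to bounded error, with $\pi_{\MM(U_n)}\bigl(\pi_{\MM(W_n)}(\nu_n)\bigr)$. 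By Remark~\ref{rnested} applied to the nesting $U_n\subseteq W_n\subseteq S$, the latter is within a uniform constant of $\pi_{\MM(U_n)}(\nu_n)$. Putting the two cases together yields the bound $C$, and hence $\dlu(\seq\nu,\seq\nu')=0$.

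I expect the substance of the argument to be purely organizational: one must check that each additive error above — the ambiguity in the construction of $\nu'_n$ and of the auxiliary projections $\pi_{\MM(W)}$, the constant in Remark~\ref{rnested}, and the diameter bound of Remark~\ref{r2} — depends only on $\xi(S)$ and not on $n$, $\Delta_n$ or $U_n$, so that it is killed in the $\omega$-limit; and that the dichotomy (a)/(b) extracted from $\bu\notpitchfork\seq\Delta$ really is exhaustive, with the annular $U_n$ and the components of $\Delta_n$ isotopic into $\partial U_n$ treated correctly. (When $\seq\Delta=\emptyset$ one has $\omega$-almost surely $\Delta_n=\emptyset$, $\nu'_n=\nu_n$, and there is nothing to prove.)
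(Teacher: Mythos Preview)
Your proof is correct, and it actually establishes the stronger conclusion $\dlu(\seq\nu,\seq\nu')=0$. The paper takes a slightly different route: it observes only that $\dist_{\CC(U_n)}(\nu_n,\nu'_n)=O(1)$ (rather than bounding the marking-complex distance), and then invokes Lemma~\ref{restr}, which says that $\tdlu(\seq\nu,\seq\nu')>0$ forces $\lim_\omega\dist_{\CC(U_n)}(\nu_n,\nu'_n)=\infty$, to reach a contradiction. Both arguments rest on the same underlying observation---that by construction $\nu'_n$ agrees with $\pi_{\MM(W_n)}(\nu_n)$ on the component $W_n$ containing $U_n$---but your version avoids the appeal to Lemma~\ref{restr} (hence to Theorem~\ref{classifypieces}) at the cost of a little more bookkeeping via Remark~\ref{rnested}, and in exchange you get the full vanishing of $\dlu$ rather than only of $\tdlu$. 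Your explicit treatment of the annular case~(b) is also cleaner than the paper's, which folds it silently into the phrase ``contained in a component of $S\setminus\seq\Delta$''. One small wording issue: the transversal in $\nu'_n$ to a curve $\delta\in\Delta_n$ need not be disjoint from $W_n$, so ``every base or transversal curve of $\nu'_n$ not supported in $W_n$ is disjoint from $U_n$'' is not literally true; but this is harmless, since transversals enter $\pi_{\CC(Y)}(\nu'_n)$ only when $Y$ is the annulus about the corresponding base curve, and in case~(a) any such base curve lies in $W_n$.
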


\proof The projection of
$\seq\nu $ on $\QQ(\seq\Delta)$ is defined as limit of projections described in Section \ref{sect:proj}. Since $\bu \notpitchfork \seq\Delta$ the subsurface
$\bu =(U_n)^\omega$ is contained in a component,
$\bv=(V_n)^\omega$, of $S\setminus \seq\Delta =(S\setminus \Delta_n)^\omega$.  The
marking $\nu_n'$, by construction, does not differ from the
intersection of $\nu_n$ with $V_n$, and since $U_n \subseteq V_n$ the
same is true for $U_n$, hence $\dist_{C(U_n)}(\nu_n , \nu_n')=O(1)$.
On the other hand, if $\tdlu (\seq\nu , \seq\nu') >0$ then by Lemma
\ref{restr} $\lim_\omega \dist_{C(U_n)} (\seq\nu_n ,
\seq\nu_n')=+\infty$, whence a contradiction.\endproof

\me

\begin{lemma}\label{lem:betw}
Let $\seq\nu =\seqrep \nu $ and $ \seq\rho =\seqrep \rho$ be two
points in $\AM$, let $\seq\Delta =(\Delta_n)^\omega$, where $\Delta_n$
is a multicurve, and let $\seq\nu' , \seq\rho' $ be the respective
projections of $\seq\nu , \seq\rho $ on $\QQ(\seq\Delta)$.  Assume there
exist $\bu_1 = (U_n^1)^\omega,...,\bu_k=(U_n^k)^\omega$ subsurfaces
such that $\Delta_n = \partial U_n^1 \cup ...\cup \partial U_n^k$, and
$\dist_{C(U_n^i)} (\nu_n , \rho_n ) >M$ \uass for every $i=1,...,k$,
where $M$ is the constant in Lemma \ref{MM2:LLL}.

Then for every $\fh_1$, $\fh_2$ and $\fh_3$ hierarchy paths, joining
$\seq\nu , \seq\nu'$ respectively, $\seq\nu' , \seq\rho' $ and
$\seq\rho' , \seq\rho$, the path $\fh_1\sqcup \fh_2 \sqcup \fh_3$ is a
geodesic in $(\AM , \td)$.
\end{lemma}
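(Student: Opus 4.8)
The plan is to show that the concatenation $\fh_1\sqcup\fh_2\sqcup\fh_3$ realizes the $\td$-distance between its endpoints $\seq\nu$ and $\seq\rho$; equivalently, since $\td=\sum_{\bu}\tdlu$, that for every subsurface $\bu\in\upss$ the three sub-paths contribute additively to $\tdlu(\seq\nu,\seq\rho)$. Because hierarchy paths project onto geodesics in $T_\bu$ (Lemma \ref{projtu} and its Corollary), each $\fh_j$ projects onto a geodesic segment in $T_\bu$, so by Lemma \ref{l1geod} it is enough to check, for each $\bu$, that the three geodesic segments $\psi_\bu(\fh_1),\psi_\bu(\fh_2),\psi_\bu(\fh_3)$ fit together into a single geodesic of $T_\bu$, i.e.\ that $\tdlu(\seq\nu,\seq\rho)=\tdlu(\seq\nu,\seq\nu')+\tdlu(\seq\nu',\seq\rho')+\tdlu(\seq\rho',\seq\rho)$. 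First I would split into cases according to the position of $\bu$ relative to $\seq\Delta$.

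\textbf{Case analysis by position of $\bu$.} First, if $\bu\notpitchfork\seq\Delta$, then by Lemma \ref{lem:duproj} applied to the pair $(\seq\nu,\seq\nu')$ we get $\tdlu(\seq\nu,\seq\nu')=0$, and applied to $(\seq\rho,\seq\rho')$ we get $\tdlu(\seq\rho,\seq\rho')=0$; the desired additivity reduces to the trivial identity $\tdlu(\seq\nu',\seq\rho')=\tdlu(\seq\nu',\seq\rho')$. Second, if $\bu$ overlaps $\seq\Delta$, then \uass $U_n$ overlaps $\Delta_n=\partial U_n^1\cup\cdots\cup\partial U_n^k$, so by Theorem \ref{projest} (the projection estimates), $\dist_{C(U_n)}(\partial U_n^i,\mu_n)\le D$ for some $i$ and any marking $\mu_n$; in particular both $\nu_n$ and $\rho_n$, having $C(U_n)$-distance at most $D+O(1)$ to $\QQ(\partial U_n^i)$ hence to $\QQ(\partial\Delta_n)$, lie in the same piece of $\lu$ as $\QQ(\plu(\seq\Delta))$. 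Since $\seq\nu',\seq\rho'$ are obtained from $\seq\nu,\seq\rho$ by attaching the curves of $\seq\Delta$, the points $\plu(\seq\nu),\plu(\seq\nu'),\plu(\seq\rho'),\plu(\seq\rho)$ all lie in one piece of $\lu$, so every $\tdlu$ among them vanishes and additivity holds trivially ($0=0$).

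\textbf{The remaining case $\bu\subseteq S\setminus\seq\Delta$.} Here $\bu$ is contained in some component $\bv$ of $S\setminus\seq\Delta$. By Lemma \ref{lem:duproj} (now with $\bu\notpitchfork\seq\Delta$ again, since $\bu\subseteq\bv$) we again get $\tdlu(\seq\nu,\seq\nu')=0$ and $\tdlu(\seq\rho,\seq\rho')=0$, so additivity reduces to $\tdlu(\seq\nu,\seq\rho)=\tdlu(\seq\nu',\seq\rho')$. This last equality is exactly the statement that the projection $\pi_{\MM(\bu)}$ sees $\nu,\nu'$ (resp.\ $\rho,\rho'$) as uniformly close, which is the content of the proof of Lemma \ref{lem:duproj}: $\dist_{C(V_n)}(\nu_n,\nu_n')=O(1)$ and $V_n\supseteq U_n$ forces $\dist_{C(U_n)}(\nu_n,\nu_n')=O(1)$, hence $\tdlu(\seq\nu,\seq\nu')=0$ and likewise $\tdlu(\seq\rho,\seq\rho')=0$; by the triangle inequality for $\tdlu$ (stated in Section \ref{stg}) this yields $\tdlu(\seq\nu,\seq\rho)=\tdlu(\seq\nu',\seq\rho')$. \emph{Only here} do I need to invoke the hypothesis $\dist_{C(U_n^i)}(\nu_n,\rho_n)>M$: it is the overlapping/nested cases that require $\seq\nu',\seq\rho'$ to collapse into a single piece, and conversely the hypothesis guarantees (via Lemma \ref{MM2:LLL}) that along the hierarchy paths $\fh_1,\fh_3$ the curves $\partial U_n^i$ are actually crossed, so that $\fh_1\sqcup\fh_2\sqcup\fh_3$ is genuinely non-backtracking and the pieces $\plu(\seq\nu)$ and $\plu(\seq\nu')$ agree rather than merely the distances.

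\textbf{Assembling.} Summing the case-by-case equalities over all $\bu\in\upss$ gives $\td(\seq\nu,\seq\rho)=\td(\seq\nu,\seq\nu')+\td(\seq\nu',\seq\rho')+\td(\seq\rho',\seq\rho)$; combined with the fact that each $\fh_j$ is already a geodesic of $(\AM,\td)$ (Theorem \ref{thmedian}, second statement, i.e.\ Proposition \ref{hiergeod}), Lemma \ref{l1geod} — applied with $x=\psi(\seq\nu)$ and the projections of the topological arc $\fh_1\sqcup\fh_2\sqcup\fh_3$ onto each $T_\bu$ — shows $\fh_1\sqcup\fh_2\sqcup\fh_3$ is a geodesic in $(\AM,\td)$. \textbf{The main obstacle} I anticipate is the overlapping case: one must argue carefully that attaching the multicurve $\seq\Delta$ to $\seq\nu$ does not move the $T_\bu$-projection when $\bu\pitchfork\seq\Delta$, which requires combining the projection estimates of Theorem \ref{projest} with the piece-description of Theorem \ref{classifypieces}, i.e.\ checking that $\plu(\seq\nu)$, $\QQ(\plu(\partial\bu))$ and $\plu(\seq\nu')$ all lie in a common piece of $\lu$ — this is where the geometry, rather than formal properties of tree-graded spaces, is really used.
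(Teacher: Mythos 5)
Your overall strategy — check, for each $\bu\in\upss$, that the three projected geodesics concatenate to a geodesic of $T_\bu$, then assemble via Lemma \ref{l1geod} — is the same as the paper's, and your case $\bu\notpitchfork\seq\Delta$ is handled correctly by Lemma \ref{lem:duproj}. The gap is in the overlapping case $\bu\pitchfork\seq\Delta$, which is precisely where the content of the lemma lies. You claim that all four points $\plu(\seq\nu),\plu(\seq\nu'),\plu(\seq\rho'),\plu(\seq\rho)$ lie in a single piece of $\lu$, so that every $\tdlu$ among them vanishes and the case is "trivially $0=0$". This is false in general: only $\tdlu(\seq\nu',\seq\rho')=0$ is automatic (both markings contain $\Delta_n$ in their bases, so their $C(U_n)$-projections are $O(1)$ apart). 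The quantities $\tdlu(\seq\nu,\seq\nu')$ and $\tdlu(\seq\rho,\seq\rho')$ can be strictly positive; for instance take $\nu_n$ obtained from a marking in $\QQ(\Delta_n)$ by applying a large power of a pseudo-Anosov supported on a subsurface crossing $\Delta_n$. Your appeal to Theorem \ref{projest} also misquotes it: it bounds the \emph{minimum} of $\dist_{C(U_n)}(\partial U_n^i,\mu_n)$ and $\dist_{C(U_n^i)}(\partial U_n,\mu_n)$, so you cannot conclude that the first is $\leq D$ for every marking; the correct use (as in Theorem \ref{projection trichotomy}, part (2)) is that if one of the two projections is unbounded then the other is bounded.

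What actually has to be proved in this case is the following. Since $\tdlu(\seq\nu',\seq\rho')=0$, the segment $\psi_\bu(\fh_2)$ is a single point $x$ (the common projection to $T_\bu$ of $\QQ(\seq\Delta)$ and of $\QQ(\partial\bu_i)$ for an $i$ with $\bu\pitchfork\partial\bu_i$), and $\psi_\bu(\fh_1)$, $\psi_\bu(\fh_3)$ are geodesics of $T_\bu$ ending and starting at $x$; one must show they meet only at $x$, i.e.\ rule out backtracking in the tree $T_\bu$. This is where the hypothesis $\dist_{C(U_n^i)}(\nu_n,\rho_n)>M$ enters (not in your case of $\bu$ disjoint from $\seq\Delta$, where Lemma \ref{lem:duproj} alone suffices): by Lemma \ref{MM2:LLL} every hierarchy path from $\seq\nu$ to $\seq\rho$ passes through $\QQ(\partial\bu_i)$, hence by Lemma \ref{projtu} the geodesic $[\psi_\bu(\seq\nu),\psi_\bu(\seq\rho)]$ in $T_\bu$ contains $x$; if $\psi_\bu(\fh_1)$ and $\psi_\bu(\fh_3)$ overlapped in a nondegenerate segment through $x$, that geodesic would avoid $x$, a contradiction. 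You allude to "non-backtracking" at the end of your argument but never carry out this step, and the incorrect assertion that all projections coincide is what lets you skip it.
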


\proof Let $\bv \in \upss$ be an arbitrary subsurface.  According to
Lemma \ref{projtu}, $\psi_\bv (\fh_i),\, i=1,2,3,$ is a geodesic in
$T_\bv$.  We shall prove that $\psi_\bv (\fh_1\sqcup \fh_2 \sqcup
\fh_3 )$ is a geodesic in $T_\bv$.

There are two cases: either $\bv \notpitchfork \seq\Delta$ (i.e., $\bv$ does not overlap
$\Delta$) or
$\bv \pitchfork \seq\Delta$ (i.e., $\bv$ overlaps $\Delta$).  In the first case, by
Lemma \ref{lem:duproj} the projections $\psi_\bv (\fh_1)$ and
$\psi_\bv (\fh_3)$ are singletons, and there is nothing to prove.

Assume now that $\bv \pitchfork \seq\Delta$.  Then $\bv
\pitchfork \partial \bu_i$ for some $i\in \{1,...,k\}$.

We have that \uass $$\dist_{C(U_n )} (\nu_n' , \rho_n' )\leq
\dist_{C(U_n )} (\nu_n' , \Delta_n )+\dist_{C(U_n )} (\rho_n' ,
\Delta_n )=O(1)\, .  $$

Lemma \ref{restr} then implies
that $\tdlu (\seq\nu' , \seq\rho')=0$.
Hence, $\psi_\bv (\fh_2)$ reduces to a singleton $x$ which is the
projection
onto $T_\bv$ of both $\QQ(\seq\Delta)$ and $\QQ(\partial \bu_i)$.  It
remains to prove that $\psi_\bv (\fh_1)$ and $\psi_\bv (\fh_3)$ have
in common only $x$.  Assume on the contrary that they are two geodesic
with a common non-trivial sub-geodesic containing $x$.  Then the
geodesic in $T_\bv$ joining $\psi_\bv (\seq\nu)$ and $\psi_\bv
(\seq\rho )$ does not contain $x$.  On the other hand, by hypothesis
and Lemma \ref{MM2:LLL} any hierarchy path joining $\seq\nu$ and
$\seq\rho$ contains a point in $\QQ(\partial \bu_i)$.  Lemma
\ref{projtu} implies that the geodesic in $T_\bv$ joining $\psi_\bv
(\seq\nu)$ and $\psi_\bv (\seq\rho )$ contains $x$, yielding a
contradiction.

 Thus $\psi_\bv (\fh_1)\cap \psi_\bv (\fh_3)=\{ x \}$ and $\psi_\bv
 (\fh_1\sqcup \fh_2 \sqcup \fh_3 )$ is a geodesic in $T_\bv$ also in
 this case.

We proved that $\psi_\bv (\fh_1\sqcup \fh_2 \sqcup \fh_3 )$ is a
geodesic in $T_\bv$ for every $\bv \in \upss $.  This implies that
$\fh_1 \cap \fh_2 = \{ \seq\nu' \}$ and $\fh_2 \cap \fh_3 = \{
\seq\rho' \}$, and that $\fh_1 \cap \fh_3 = \emptyset$ if $\fh_2$ is
non-trivial, while if $\fh_2$ reduces to a singleton $\seq\nu'$,
$\fh_1 \cap \fh_3 = \{ \seq\nu' \}$.  Indeed if for instance $\fh_1
\cap \fh_2$ contained a point $\seq\mu \neq \seq\nu'$ then $\td
(\seq\mu , \seq\nu') >0$ whence $\tdlv (\seq\mu , \seq\nu') >0$ for
some subsurface $\bv$.  It would follow that $\psi_\bv (\fh_1),
\psi_\bv (\fh_2)$ have in common a non-trivial sub-geodesic,
contradicting the proven statement.

Also, if $\fh_1 \cap \fh_3$ contains a point $\seq\mu\neq \seq\nu'$
then for some subsurface $\bv$ such that $\bv \cap \seq\Delta \neq
\emptyset$, $\tdlv (\seq\mu , \seq\nu') >0$.  Since $\tdlv (\seq\nu' ,
\seq\rho')=0$ it follows that $\tdlv (\seq\mu , \seq\rho') >0$ and
that $\psi_\bv (\fh_1\sqcup \fh_2 \sqcup \fh_3 )$ is not a geodesic in
$T_\bv$.  A similar contradiction occurs if $\seq\mu\neq \seq\rho'$.
Therefore if $\seq\mu$ is a point in $\fh_1 \cap \fh_3$, then we must
have $\seq\mu = \seq\nu' =\seq\rho'$, in particular $\fh_2$ reduces to
a point, which is the only point that $\fh_1$ and $\fh_3$ have in
common.

Thus in all cases $\fh_1\sqcup \fh_2 \sqcup \fh_3$ is a topological
arc.  Since $\fh_i$ for  $i=1,2,3,$ each satisfy the hypotheses of Lemma
\ref{hiergeod} and for every $\bv \in \upss $, $\psi_\bv (\fh_1\sqcup
\fh_2 \sqcup \fh_3 )$ is a geodesic in $T_\bv$, it follows that
$\fh_1\sqcup \fh_2 \sqcup \fh_3$ also satisfies the hypotheses of
Lemma \ref{hiergeod}.  We may therefore conclude that $\fh_1\sqcup \fh_2
\sqcup \fh_3$ is a geodesic in $(\AM , \td)$.\endproof

\me

\begin{defn}\label{def:between}
A point $\seq\mu$ in $\AM$ is \emph{between} the points $\seq\nu ,
\seq\rho$ in $\AM$ if for every $\bu \in \upss$ the projection
$\psi_\bu (\seq\mu)$ is in the geodesic joining $\psi_\bu (\seq\nu)$
and $\psi_\bu (\seq\rho)$ in $T_\bu$ (possibly identical to one of its
endpoints).
\end{defn}

\me

\begin{lemma}\label{meddisj}
For every triple of points $\seq\nu , \seq\rho , \seq \sigma$ in
$\AM$, every choice of a pair $\seq\nu , \seq\rho$ in the triple and
every finite subset $F$ in $\upss$ of pairwise disjoint subsurfaces
there exists a point $\seq\mu$ between $\seq\nu , \seq\rho$ such that
$\psi_F (\seq\mu )$ is the median point of $\psi_F(\seq\nu),
\psi_F(\seq\rho), \psi_F(\seq\sigma )$ in $\prod_{\bu \in F} T_\bu$.
\end{lemma}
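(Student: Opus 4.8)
The plan is to work in the $\ell^1$--product $\prod_{\bu\in F}T_\bu$, in which a point is between two others precisely when it is coordinatewise between; since each $T_\bu$ is an $\R$--tree, hence median, the median of $\psi_F(\seq\nu),\psi_F(\seq\rho),\psi_F(\seq\sigma)$ then exists and is the coordinatewise median. So it suffices to produce a single point $\seq\mu\in\AM$ with $\psi_{\bu}(\seq\mu)=m_\bu$ for every $\bu\in F$, where $m_\bu$ denotes the median of $\psi_\bu(\seq\nu),\psi_\bu(\seq\rho),\psi_\bu(\seq\sigma)$ in $T_\bu$, and with $\psi_\bv(\seq\mu)\in[\psi_\bv(\seq\nu),\psi_\bv(\seq\rho)]$ in $T_\bv$ for every $\bv\in\upss$, i.e.\ $\seq\mu$ between $\seq\nu$ and $\seq\rho$ in the sense of Definition~\ref{def:between}.

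Write $F=\{\bu_1,\dots,\bu_k\}$, $\bu_i=(U^i_n)^\omega$. First I would discard from $F$ every $\bu_i$ with $\lim_\omega\dist_{C(U^i_n)}(\nu_n,\rho_n)<\infty$: by Theorem~\ref{classifypieces} such a $\bu_i$ satisfies $\psi_{\bu_i}(\seq\nu)=\psi_{\bu_i}(\seq\rho)$, so $m_{\bu_i}=\psi_{\bu_i}(\seq\nu)$ and this coordinate is automatically correct for any $\seq\mu$ between $\seq\nu$ and $\seq\rho$. After renaming we may thus assume $\dist_{C(U^i_n)}(\nu_n,\rho_n)>M$ \uas\ for every $i$, with $M$ the constant of Lemma~\ref{MM2:LLL}. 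Put $\Delta_n=\partial U^1_n\cup\dots\cup\partial U^k_n$ (with $\partial$ of an annulus read as its core curve) and $\seq\Delta=(\Delta_n)^\omega$; since the $\bu_i$ are pairwise disjoint, each $U^i_n$ is, up to peripheral curves, a component of $S\setminus\Delta_n$ or a twisting direction of $\Delta_n$. For each $i$, pick a hierarchy path $\fg^i_n$ in $\MM(U^i_n)$ from $\pi_{\MM(U^i_n)}(\nu_n)$ to $\pi_{\MM(U^i_n)}(\rho_n)$; by Lemma~\ref{projtu} applied inside $\MM^\omega(\bu_i)$ its ultralimit projects monotonically onto the geodesic $[\psi_{\bu_i}(\seq\nu),\psi_{\bu_i}(\seq\rho)]$ of $T_{\bu_i}$, which contains $m_{\bu_i}$, so there is $\mu^i_n\in\fg^i_n$ with $\lim_\omega\mu^i_n$ mapping to $m_{\bu_i}$ in $T_{\bu_i}$. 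Now let $\mu_n$ be the marking in $\QQ(\Delta_n)$ assembled, as in Section~\ref{sect:proj}, from the transversals $\pi_{\Delta_n}(\nu_n)$ along $\Delta_n$, from $\mu^i_n$ on the piece $U^i_n$ (resp.\ as transversal of the core of $U^i_n$), and from $\pi_{\MM(W)}(\nu_n)$ on every remaining component $W$ of $S\setminus\Delta_n$; set $\seq\mu=\lim_\omega\mu_n$. By Remark~\ref{rnested}, $\pi_{\MM(U^i_n)}(\mu_n)$ is uniformly close to $\mu^i_n$, so $\psi_{\bu_i}(\seq\mu)=m_{\bu_i}$ for every $i$; the same holds for the discarded subsurfaces, which sit inside components $W$ on which $\mu_n$ agrees with $\nu_n$ and for which $m_{\bu_i}=\psi_{\bu_i}(\seq\nu)$.

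It remains to verify $\psi_\bv(\seq\mu)\in[\psi_\bv(\seq\nu),\psi_\bv(\seq\rho)]$ for every $\bv\in\upss$, by cases. If $\bv$ overlaps $\seq\Delta$, then $\bv$ overlaps some $\bu_i$; since $\seq\mu\in\QQ(\seq\Delta)\subseteq\QQ(\partial\bu_i)$, Lemma~\ref{point} makes $\psi_\bv(\seq\mu)$ equal to the single point $\psi_\bv(\QQ(\partial\bu_i))$, and since $\dist_{C(U^i_n)}(\nu_n,\rho_n)>M$ \uas, Lemma~\ref{MM2:LLL} forces a hierarchy path from $\seq\nu$ to $\seq\rho$ through $\QQ(\partial\bu_i)$, so by Lemma~\ref{projtu} that point lies on $[\psi_\bv(\seq\nu),\psi_\bv(\seq\rho)]$. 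If $\bv$ does not overlap $\seq\Delta$, let $\seq\nu',\seq\rho'$ be the projections of $\seq\nu,\seq\rho$ to $\QQ(\seq\Delta)$; by Lemma~\ref{lem:duproj}, $\psi_\bv(\seq\nu)=\psi_\bv(\seq\nu')$ and $\psi_\bv(\seq\rho)=\psi_\bv(\seq\rho')$. Here: if $\bv=\bu_i$ then $\psi_\bv(\seq\mu)=m_{\bu_i}$ is between by the median property; if $\bv\subsetneq\bu_i$ then $\psi_\bv(\seq\mu)$ is read off $\mu^i_n$, which lies on a $\CC(V_n)$--monotonic path from $\pi_{\MM(V_n)}(\nu_n)$ to $\pi_{\MM(V_n)}(\rho_n)$, so Lemma~\ref{projtu} again places it between; if $\bv$ is disjoint from all $\bu_i$ then $\mu_n$ agrees with $\nu_n$ along $\bv$, so $\psi_\bv(\seq\mu)=\psi_\bv(\seq\nu)$; and if $\bv$ contains some $\bu_i$ but overlaps no curve of $\seq\Delta$, then all of $\QQ(\seq\Delta)$ — hence $\seq\mu,\seq\nu',\seq\rho'$, and so by Lemma~\ref{lem:duproj} also $\seq\nu,\seq\rho$ — has the same image in $T_\bv$. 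In every case $\psi_\bv(\seq\mu)$ is between $\psi_\bv(\seq\nu)$ and $\psi_\bv(\seq\rho)$, which finishes the proof.

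I expect the main difficulty to be the combinatorial bookkeeping in the construction of $\mu_n$ — assembling it from prescribed data on the complementary pieces of $\Delta_n$ and verifying, via Remark~\ref{rnested} and the definition of $\QQ(\Delta_n)$, that every subsurface projection comes out as intended — together with the overlapping case of the last step, which is exactly where the reduction to $\dist_{C(U^i_n)}(\nu_n,\rho_n)>M$ is needed and which runs on the same mechanism (Lemma~\ref{MM2:LLL} plus Lemma~\ref{projtu}) already used in the proof of Lemma~\ref{lem:betw}.
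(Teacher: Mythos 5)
Your proof is correct and follows essentially the same route as the paper's: reduce to $\QQ(\seq\Delta)$ for $\seq\Delta=\bigcup_i\partial\bu_i$, pick the median-realizing points $\mu^i_n$ on hierarchy paths inside each $\MM(U^i_n)$, assemble them into a marking of $\QQ(\Delta_n)$, and verify betweenness by the trichotomy of $\bv$ against $\seq\Delta$ and the $\bu_i$ (Lemma~\ref{MM2:LLL} plus Lemma~\ref{projtu} for overlapping $\bv$, Lemma~\ref{lem:duproj} and Remark~\ref{rnested} otherwise). The only differences are organizational: the paper discards the coordinates with $\tdlu(\seq\nu,\seq\rho)=0$ one at a time by induction on $|F|$ and certifies betweenness by building an explicit concatenated path from $\seq\nu'$ to $\seq\rho'$ through $\seq\mu$ whose projection to every $T_\bv$ is geodesic, whereas you discard them all at once and check the coordinates of $\seq\mu$ directly.
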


 \proof Let $F= \{ \bu_1,...,\bu_k\}$,
where $\bu_i =\left( U^i_n \right)^\omega$.  We argue by induction on $k$.  If $k=1$ then the statement
follows immediately from Lemma \ref{projtu}.  We assume that the statement is true for all $i<k$, where
$k\geq 2$, and we prove it for $k$.

We consider the multicurve $\Delta_n= \partial U^1_n \cup \cdots\cup
\partial U^k_n$.  We denote the set $\{1,2,...,k\}$ by $I$.  If for
some $i\in I$, $\td_{\bu_i } (\seq\nu , \seq\rho )=0$ then the median
point of $\psi_{\bu_i } (\seq\nu)$, $\psi_{\bu_i } (\seq\rho )$,
$\psi_{\bu_i } (\seq\sigma )$ is $\psi_{\bu_i } (\seq\nu)= \psi_{\bu_i
} (\seq\rho )$.  By the induction hypothesis there exists $\seq\mu$
between $\seq\nu , \seq\rho$ such that $\psi_{F\setminus \{i\}}
(\seq\mu )$ is the median point of $\psi_{F\setminus \{i\}}(\seq\nu),
\psi_{F\setminus \{i\}}(\seq\rho), \psi_{F\setminus \{i\}}(\seq\sigma
)$.  Since $\psi_{\bu_i } (\seq\mu)=\psi_{\bu_i } (\seq\nu)=
\psi_{\bu_i } (\seq\rho )$ it follows that the desired statement holds
not just for $F\setminus \{i \}$, but for all of $F$ as well.

Assume now that for all $i\in I$, $\td_{\bu_i } (\seq\nu , \seq\rho )
> 0$.  Lemma \ref{restr} implies that $\lim_\omega \dist_{C(U_n^i)}
(\nu_n ,\rho_n )=\infty$.

Let $\seq\nu' , \seq\rho' , \seq \sigma'$ be the respective
projections of $\seq\nu , \seq\rho , \seq \sigma$ onto
$\QQ(\seq\Delta)$, where $\seq\Delta = (\Delta_n)^\omega$.  According to
Lemma \ref{lem:duproj}, $\td_{\bu_i}(\seq\nu , \seq\nu ')=
\td_{\bu_i}(\seq\rho , \seq\rho')=\td_{\bu_i}(\seq\sigma ,
\seq\sigma')=0$ for every $i\in I$, whence
$\psi_F(\seq\nu)=\psi_F(\seq\nu'), \psi_F(\seq\rho)=\psi_F(\seq\rho'),
\psi_F(\seq\sigma )=\psi_F(\seq\sigma')$.  This and Lemma
\ref{lem:betw} imply that it suffices to prove the statement for
$\seq\nu' , \seq\rho' , \seq \sigma'$.  Thus, without loss of
generality we may assume that $\seq\nu , \seq\rho , \seq \sigma$ are
in $\QQ(\seq\Delta)$.  Also without loss of generality we may assume
that $\{ U^1_n, U^2_n,...,U^k_n \}$ are all the connected components
of $S\setminus \Delta_n$ and all the annuli with core curves in
$\Delta_n$.  If not, we may add the missing subsurfaces.

For every $i\in I$ we consider the projections $\nu^{i}_n$, $\rho^{i}_n$ of $\nu_n$ and, respectively,
$\rho_n$ on $\MM (U^i_n)$. Let $\g^i_n$ be a hierarchy path in $\MM (U^i_n)$ joining $\nu^{i}_n$,
$\rho^{i}_n$ and let $\seq \g^i = \seqrep{\g^i}$ be the limit hierarchy path in $\MM (\bu_i)$.
According to Lemma \ref{projtu}, for every $i\in I$ there exists $\mu^{i}_n$ on $\g^i_n$ such that
$\seq{\mu^{i}}=\seqrep{\mu^{i}}$ projects on $T_{\bu_i}$ on the median point of the projections of
$\seq\nu , \seq\rho , \seq \sigma$.
Let $\pgot^i_n$ and $\q^i_n$ be the subpaths of $\g^i_n$
preceding and respectively succeeding $\mu^{i}_n$ on $\g^i_n$, and let $\seq \pgot^i =
\seqrep{\pgot^i}$ and $\seq \q^i = \seqrep{\q^i}$ be the limit hierarchy paths in $\MM (\bu_i)$.

Let $\widetilde{\pgot}^1_n$ be a path in $\MM (S)$ which starts at
$\nu_n$ and then continues on a path
obtained by markings whose restriction to $U_{1}^{n}$ are
given by $\pgot^1_n$ and in the complement
of $U_{1}^{n}$ are given by the restriction of $\nu_{n}$.
Continue this path by concatenating a path, $\widetilde{\pgot}^2_n$,
obtained by starting from the terminal point of
$\widetilde{\pgot}^1_n$ and then continuing by markings which
are all the same in the
complement of $U^{2}_{n}$ while their restriction to $U^{2}_{n}$ are
given by $\pgot^2_n$. Similarly, we obtain
$\pgot^j_n$ is from $\pgot^{j-1}_n$ for any $j\leq k$. Note that for
any $1\leq j\leq k$ and any $i\neq j$,
the path $\pgot^j_n$ restricted to $U^i_n$ is constant.
Note that the starting point of
$\widetilde{\pgot}^1_n \sqcup \cdots \sqcup \widetilde{\pgot}^k_n$ is
$\nu_n$ and the terminal point is the marking $\mu_n$ with the
property that it projects on $\MM(U^i_n)$ in $\mu^i_n$ for every $i\in
I$.  Now consider $\widetilde{\q}^1_n$ the path with
starting point $\mu_n$ obtained following $\q^1_n$ (and keeping the
projections onto $U^i_n$ with $i\in I\setminus \{ 1\}$ unchanged),
then $\widetilde{\q}^2_n$,...  $\widetilde{\q}^k_n$ constructed such
that the starting point of $\widetilde{\q}^j_n$ is the terminal point
of $\widetilde{\q}^{j-1}_n$, and $\widetilde{\q}^j_n$ is obtained
following $\q^j_n$ (and keeping the projections onto $U^i_n$ with
$i\in I\setminus \{ j\}$ unchanged).

Let $\seq{\widetilde{\pgot}^j}=\seqrep{\widetilde{\pgot}^j}$ and
$\seq{\widetilde{\q}^j}=\seqrep{\widetilde{\q}^j}$.  We prove that for
every subsurface $\bv=(V_n)^\omega$ the path
$\seq{\fh}=\seq{\widetilde{\pgot}^1} \sqcup \cdots \sqcup
\seq{\widetilde{\pgot}^k}\sqcup \seq{\widetilde{\q}^1} \sqcup \cdots
\sqcup \seq{\widetilde{\q}^k}$ projects onto a geodesic in $T_\bv$.
For any $i\neq
j$, we have that $\widetilde{\pgot}^i_n\cup \widetilde{\q}^i_n$ and
$\widetilde{\pgot}^j_n\cup \widetilde{\q}^j_n$
have disjoint support. Hence for each $i\in I$ we have that the
restriction to $U^{i}_{n}$ of the entire path is the same as the
restriction to $U^{i}_{n}$ of
$\widetilde{\pgot}^i_n\cup \widetilde{\q}^i_n$. Since the latter
is by construction the hierarchy path
$\g^i_n$, if $\bv\subset \bu_{i}$ for some $i\in I$, then it
follows from Lemma~\ref{hiergeod} that $\seq{\fh}$ projects to a
geodesic in $T_{\bv}$.
If $\bv$ is disjoint from
$\bu_i$ then all the markings composing $\fh_n'$ have the same
intersection with $V_n$, whence the diameter of $\fh_n$ with respect
to $\dist_{C(V_n)}$ must be uniformly bounded.  This and
Lemma~\ref{restr} implies that $\psi_\bv (\seq{\fh'} )$ is a singleton.
Lastly,
if $\bv$ contains or overlaps $\bu_i$, then since all the markings in
$\fh_n'$ contain $\partial U^i_n$ the diameter of $\fh_n$ with respect
to $\dist_{C(V_n)}$ must be uniformly bounded, leading again to the
conclusion that $\psi_\bv (\seq{\fh'} )$ is a singleton.  Thus, the
only case when $\psi_\bv (\seq{\fh'} )$ is not a singleton is when
$\bv \subseteq \bu_i$.

Now let $\bv $ denote an arbitrary subsurface.  If it is not contained in any
$\bu_i$ then $\psi_\bv (\seq{\fh} )$ is a singleton.  The other
situation is when $\bv$ is contained in some $\bu_i$, hence disjoint
from all $\bu_j$ with $j\neq i$.  Then $\psi_\bv (\seq{\fh} ) =
\psi_\bv (\seq{\widetilde{\pgot}^i} \sqcup \seq{\widetilde{\q}^i} ) =
\psi_\bv^i (\seq \pgot^i \sqcup \seq \q^i) = \psi_\bv^i (\seq \g^i)$,
which is a geodesic.  Note that in the last two inequalities the map
$\psi_\bv^i$ is the natural projection of $\MM (\bu_i )$ onto $T_\bv$
which exists when $\bv \subseteq \bu_i$.

Thus we have shown that $\seq{\fh}$ projects onto a geodesic in
$T_\bv$ for every $\bv$, whence $\seq\mu$ is between $\seq\nu ,
\seq\rho$.  By construction, for every $i\in I$, $\psi_{\bu_i}
(\seq\mu )$ is the median point of $\psi_{\bu_i}(\seq\nu),
\psi_{\bu_i}(\seq\rho), \psi_{\bu_i}(\seq\sigma )$, equivalently
$\psi_F (\seq\mu )$ is the median point of $\psi_F(\seq\nu)$, $\psi_F(\seq\rho)$, $\psi_F(\seq\sigma )$ in $\prod_{\bu \in F}
T_\bu$.
\endproof

We now generalize the last lemma by removing the hypothesis that the
subsurfaces are disjoint.

\begin{lemma}\label{medgen}
For every triple of points $\seq\nu , \seq\rho , \seq \sigma$ in
$\AM$, every choice of a pair $\seq\nu , \seq\rho$ in the triple and
every finite subset $F$ in $\upss$ there exists a point $\seq\mu$ in
$\AM$ between $\seq\nu , \seq\rho$ such that $\psi_F (\seq\mu )$ is
the median point of $\psi_F(\seq\nu), \psi_F(\seq\rho),
\psi_F(\seq\sigma )$ in $\prod_{\bu \in F} T_\bu$.
\end{lemma}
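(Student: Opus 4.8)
The plan is to argue by induction on $|F|$, the base case being $F$ pairwise disjoint, which is exactly Lemma~\ref{meddisj}. For the inductive step I would pick $\bu,\bv\in F$ that are not disjoint; by Theorem~\ref{projection trichotomy} they either overlap or are nested, and in either case the image $\psi_{\bu,\bv}(\AM)$ is a tree: the wedge of $T_\bu$ and $T_\bv$ at $(v,u)$ in the overlapping case, and $(T_\bu\times\{u\})\cup\bigsqcup_{i\in I}(\{t_i\}\times\calc_i)$ in the nested case $\bu\subsetneq\bv$, with $u\in T_\bv$ the image of $\QQ(\boundary\bu)$. Writing $m_\bu,m_\bv$ for the medians in $T_\bu,T_\bv$ of the projections of $\seq\nu,\seq\rho,\seq\sigma$, the structural point I would exploit is that whenever $\psi_\bv(\seq x)\neq u$ the value $\psi_\bu(\seq x)$ is determined by $\psi_\bv(\seq x)$ --- it is $v$ in the overlapping case, and $t_i$ when $\psi_\bv(\seq x)\in\calc_i$ in the nested case.

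This disposes of most configurations. If $m_\bv\neq u$, an elementary computation of medians in the tree $T_\bv$ forces at least two of $\psi_\bv(\seq\nu),\psi_\bv(\seq\rho),\psi_\bv(\seq\sigma)$ to lie strictly on the same side of $u$ as $m_\bv$, whence $m_\bu$ equals the attachment point for that side; applying the induction hypothesis to $F\setminus\{\bu\}$ produces $\seq\mu$ between $\seq\nu,\seq\rho$ with $\psi_{F\setminus\{\bu\}}(\seq\mu)$ the median, and since $\psi_\bv(\seq\mu)=m_\bv\neq u$ the forced value $\psi_\bu(\seq\mu)$ is exactly $m_\bu$. Symmetrically, if $m_\bv=u$ but the overlapping case occurs with $m_\bu\neq v$, the induction hypothesis for $F\setminus\{\bv\}$ yields $\seq\mu$ with $\psi_\bu(\seq\mu)=m_\bu\neq v$, which forces $\psi_\bv(\seq\mu)=u=m_\bv$. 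In all of these cases the property of being between $\seq\nu,\seq\rho$ is inherited directly from the point produced by the induction hypothesis.

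The one remaining configuration --- $m_\bv=u$ together with $m_\bu=v$ in the overlapping case, and the entire nested case --- is where I expect the real difficulty, since here neither coordinate constrains the other and one must produce a single point that is simultaneously at $u$ in $T_\bv$, at $m_\bu$ in $T_\bu$, correct on $F\setminus\{\bu,\bv\}$, and between $\seq\nu$ and $\seq\rho$. To handle it I would use that $u=\psi_\bv(\QQ(\boundary\bu))$: choosing $\bu$ minimal in $F$, project $\seq\nu,\seq\rho,\seq\sigma$ onto $\QQ(\boundary\bu)$; by Lemma~\ref{lem:duproj} this leaves $\psi_\bw$ unchanged for every $\bw\in F$ contained in a complementary component of $\boundary\bu$ (in particular for $\bu$ itself) and collapses the three $T_\bv$-projections onto $u$, and --- because $u$ already lies between $\psi_\bv(\seq\nu)$ and $\psi_\bv(\seq\rho)$ --- Lemma~\ref{lem:betw} guarantees that pre- and post-composing with hierarchy paths to these projections keeps the path a geodesic for $\td$, so the ``between'' requirement survives. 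One then finishes inside $\QQ(\boundary\bu)$, where $\bv$ has been neutralized, by the induction hypothesis applied to the collection obtained by replacing $\bv$ with the complementary pieces of $\boundary\bu$, in the spirit of the concatenation construction in Lemma~\ref{meddisj}, and invoking Lemma~\ref{projtu} and Lemma~\ref{hiergeod} to see that the resulting path projects to a geodesic in every $T_\bw$; the finitely many $\bw\in F$ that overlap $\boundary\bu$ or contain $\bu$ are fed back into the same induction. The bookkeeping required to reconcile the projection onto $\QQ(\boundary\bu)$ with the median requirements stated for the \emph{original} triple, and to keep the subsurfaces of $F$ not contained in a complementary component of $\boundary\bu$ under control, is the crux of the argument; everything else is an unwinding of Theorem~\ref{projection trichotomy}, Lemma~\ref{projtu}, and elementary tree geometry.
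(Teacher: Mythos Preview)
Your inductive setup and the handling of the cases where $(m_\bu,m_\bv)$ lies strictly away from the junction point are correct and match the paper. The divergence is entirely in the ``remaining configuration'', and there your route through $\QQ(\boundary\bu)$ has a genuine gap and is in any case far more involved than needed. The gap: your appeal to Lemma~\ref{lem:betw} with $\seq\Delta=\boundary\bu$ requires $\dist_{\CC(U_n)}(\nu_n,\rho_n)>M$ $\omega$-a.s., but knowing only $m_\bu=v$ (or $m_\bu=t_i$) gives no such bound --- all three $T_\bu$-projections could equal $v$ with $\dist_{\CC(U_n)}(\nu_n,\rho_n)$ bounded. This is patchable (treat $\tdlu(\seq\nu,\seq\rho)=0$ separately and invoke Lemma~\ref{restr} otherwise), but you did not do so, and even after patching the deferred ``bookkeeping'' --- controlling those $\bw\in F$ that overlap $\boundary\bu$, and explaining why the replacement of $\bv$ by complementary pieces does not enlarge the set you are inducting on --- is the entire content of the step, not a detail to be waved at.

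The paper bypasses all of this. In the remaining configuration apply the induction hypothesis \emph{twice}: to $F\setminus\{\bv\}$ to obtain $\seq\mu_1$ between $\seq\nu,\seq\rho$ with $\psi_\bu(\seq\mu_1)=m_\bu$, and to $F\setminus\{\bu\}$ to obtain $\seq\mu_2$ between $\seq\nu,\seq\rho$ with $\psi_\bv(\seq\mu_2)=m_\bv=u$. Any hierarchy path from $\seq\mu_1$ to $\seq\mu_2$ maps under $\psi_{\bu,\bv}$ to a path in the tree-shaped image joining a point with first coordinate $m_\bu$ to a point with second coordinate $u$, and so must cross the junction $(m_\bu,u)$; choose $\seq\mu$ on the hierarchy path at that crossing. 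By Lemma~\ref{projtu} $\seq\mu$ is between $\seq\mu_1$ and $\seq\mu_2$, hence between $\seq\nu$ and $\seq\rho$; and for every $\by\in F\setminus\{\bu,\bv\}$ one has $\psi_\by(\seq\mu_1)=\psi_\by(\seq\mu_2)$ equal to the median, so $\psi_\by(\seq\mu)$, lying on the $T_\by$-geodesic between them, equals the median as well. No projections onto $\QQ(\boundary\bu)$, no minimality hypothesis on $\bu$, no bookkeeping.
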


\proof We prove the statement by induction on the cardinality of $F$.
When $\card F=1$ it follows from Lemma~\ref{projtu}.  Assume that it
is true whenever $\card F < k$ and consider $F$ of cardinality $k\geq 2$.
If the subsurfaces in $F$ are pairwise disjoint then we can apply
Lemma~\ref{meddisj}, hence we may assume that there exists a pair of
subsurfaces $\bu , \bv$ in $F$ which either overlap or are nested.

First, assume that $\bu ,\bv$ overlap.
Then $\psi\buv$ is equal to $\left(T_\bu \times \{u \} \right)
\cup \left( \{v\}\times T_\bv \right)$, by
Theorem~\ref{projection trichotomy}.  We write $\seq\nu\buv$ to denote
the image $\psi\buv (\seq\nu)$ and let
$\seq\nu_\bu$ and $\seq\nu_\bv$ denote
its coordinates (i.e. $\psi_\bu (\seq\nu)$ and $\psi_\bv (\seq\nu)$).
We use similar notations for $\seq\rho , \seq \sigma$.  If the median
point of $\seq\nu\buv , \seq\rho\buv , \seq \sigma\buv$ is not $(v,u)$
then it is either some point $(x,u)$ with $x\in T_\bu \setminus \{
v\}$, or $(v,y)$ with $y\in T_\bv \setminus \{ u\}$.  In the first
case, by the induction hypothesis, there exists a point
$\seq\mu_{1}$ between $\seq\nu , \seq\rho$ such
that $\psi_{F\setminus \{\bv \}} (\seq\mu_{1})$ is the median point of
$\psi_{F\setminus \{\bv \}}(\seq\nu), \psi_{F\setminus \{\bv
\}}(\seq\rho), \psi_{F\setminus \{\bv \}}(\seq\sigma )$.
In particular,
$\psi_\bu (\seq\mu)=x$, hence $\psi\buv (\seq\mu)$ is a point in
$ \left( T_\bu \times \{u \} \right) \cup \left( \{v\}\times T_\bv
\right)$ having the first coordinate $x$.  Since there exists only
one such point, $(x,u)$, it follows that $\psi_\bv (\seq\mu )=u$.
Thus, for every $\mathbf{Y}\in F$, the point
$\psi_{\mathbf{Y}} (\seq\mu)$ is the median point in
$T_{\mathbf{Y}}$ of $\psi_{\mathbf{Y}}
(\seq\nu )$, $\psi_{\mathbf{Y}} (\seq\rho )$ and $\psi_{\mathbf{Y}}
(\seq\sigma )$.  This is equivalent to the fact that $\psi_{F}
(\seq\mu )$ is the median point in $\prod_{\mathbf{Y}} T_{\mathbf{Y}}
$ of $\psi_{F} (\seq\nu )$, $\psi_{F} (\seq\rho )$ and $\psi_{F}
(\seq\sigma )$.
A similar argument works when the median point of $\seq\nu\buv ,
\seq\rho\buv , \seq \sigma\buv$ is a point $(v,y)$ with $y\in T_\bv
\setminus \{ u\}$.

Hence, we may assume that the median point of
$\seq\nu\buv , \seq\rho\buv , \seq \sigma\buv$ is $(v,u)$.  Let
$\seq\mu_1$ be a point between $\seq\nu , \seq\rho$ such that
$\psi_{F\setminus \{\bv \}} (\seq\mu_1 )$ is the median point of
$\psi_{F\setminus \{\bv \}}(\seq\nu), \psi_{F\setminus \{\bv
\}}(\seq\rho), \psi_{F\setminus \{\bv \}}(\seq\sigma )$, and let
$\seq\mu_2$ be a point between $\seq\nu , \seq\rho$ such that
$\psi_{F\setminus \{\bu \}} (\seq\mu )$ is the median point of
$\psi_{F\setminus \{\bu \}}(\seq\nu), \psi_{F\setminus \{\bu
\}}(\seq\rho), \psi_{F\setminus \{\bu \}}(\seq\sigma )$.  In
particular $\psi\buv (\seq\mu_1)= (v,y)$ with $y\in T_\bv $ and
$\psi\buv (\seq\mu_2)=(x,u)$ with $x\in T_\bu$.  Any hierarchy path
joining $\seq\mu_1$ and $\seq\mu_2$ is mapped by $\psi\buv$ onto a
path joining $(v,y)$ and $(x,u)$ in $ \left( T_\bu \times \{u \}
\right) \cup \left( \{v\}\times T_\bv \right) \, $.  Therefore it
contains a point $\seq\mu$ such that $\psi\buv(\seq\mu)$ is $(v,u)$.
According to Lemma~\ref{projtu} $\seq\mu $ is between $\seq\mu_1$ and
$\seq\mu_2$, hence it is between $\seq\nu$ and $\seq\rho$, moreover
for every $\mathbf{Y}\in F\setminus \{ \bu,\bv\}$, $\psi_{\mathbf{Y}}
(\seq\mu )= \psi_{\mathbf{Y}} (\seq\mu_1 )=\psi_{\mathbf{Y}}
(\seq\mu_2 )$, and it is the median point in $T_{\mathbf{Y}}$ of
$\psi_{\mathbf{Y}} (\seq\nu )$, $\psi_{\mathbf{Y}} (\seq\rho )$ and
$\psi_{\mathbf{Y}} (\seq\sigma )$.  This and the fact that $\psi\buv
(\seq\mu )=(v,u)$ is the median point of $\seq\nu\buv$, $\seq\rho\buv$
and $\seq\sigma\buv$ finish the argument in this case.

We now consider the case that $\bu\subsetneq \bv$.
Let $u$ be the point in $T_\bv$
which is the projection of $\QQ(\partial \bu )$ and let
$T_\bv \setminus\{u\}=\bigsqcup_{i\in I} \calc_i$ be the
decomposition into connected
components.  By Theorem \ref{projection trichotomy}, the image of
$\psi_{\bu , \bv }$ is $(T_\bu \times \{u\})\cup \bigsqcup_{i\in
I}(\{t_i\}\times \calc_i)$ where $t_i$ are points in $T_\bu$.  If
the median point of $\seq\nu\buv$, $\seq\rho\buv$ and $\seq\sigma\buv$
is not in the set $\{(t_i,u)\mid i\in I \}$, then we are done as in the
previous case using the induction hypothesis as well as the fact that
for such points there are no other points having the same first
coordinate or the same second coordinate.

Thus, we may assume that the median point of $\seq\nu\buv$,
$\seq\rho\buv$ and $\seq\sigma\buv$ is $(t_i,u)$ for some $i\in I$.
Let $\seq\mu_1$ be a point between $\seq\nu , \seq\rho$ such that
$\psi_{F\setminus \{\bv \}} (\seq\mu_1 )$ is the median point of
$\psi_{F\setminus \{\bv \}}(\seq\nu), \psi_{F\setminus \{\bv
\}}(\seq\rho), \psi_{F\setminus \{\bv \}}(\seq\sigma )$, and let
$\seq\mu_2$ be a point between $\seq\nu , \seq\rho$ such that
$\psi_{F\setminus \{\bu \}} (\seq\mu )$ is the median point of
$\psi_{F\setminus \{\bu \}}(\seq\nu), \psi_{F\setminus \{\bu
\}}(\seq\rho), \psi_{F\setminus \{\bu \}}(\seq\sigma )$.  In
particular $\psi\buv (\seq\mu_1)= (t_i,y)$ with $y\in C_i $ and
$\psi\buv (\seq\mu_2)=(x,u)$ with $x\in T_\bu$.  Any hierarchy path
joining $\seq\mu_1$ and $\seq\mu_2$ is mapped by $\psi\buv$ onto a
path joining $(t_i,y)$ and $(x,u)$ in $ \left( T_\bu \times \{u \}
\right) \cup \bigsqcup_{i\in I}(\{t_i\}\times \calc_i)$.  It contains
a point $\seq\mu$ such that $\psi\buv(\seq\mu)$ is $(t_i,u)$.
By Lemma~\ref{projtu}, the point  $\seq\mu$ is between $\seq\mu_1$ and
$\seq\mu_2$, hence in particular it is between $\seq\nu$ and $\seq\rho$.
Moreover,
for every $\mathbf{Y}\in F\setminus \{ \bu,\bv\}$, $\psi_{\mathbf{Y}}
(\seq\mu )= \psi_{\mathbf{Y}} (\seq\mu_1 )=\psi_{\mathbf{Y}}
(\seq\mu_2 )$ and hence $\seq\mu$ is the median point in $T_{\mathbf{Y}}$ of
$\psi_{\mathbf{Y}} (\seq\nu )$, $\psi_{\mathbf{Y}} (\seq\rho )$ and
$\psi_{\mathbf{Y}} (\seq\sigma )$.  This and the fact that $\psi\buv
(\seq \mu )=(t_i,u)$ is the median point of $\seq\nu\buv$,
$\seq\rho\buv$ and $\seq\sigma\buv$ finish the argument.
\endproof

\noindent \emph{Proof of Theorem \ref{thmedian}.} \quad Consider an
arbitrary triple of points $\seq\nu , \seq\rho , \seq \sigma$ in
$\AM$.  For every $\epsilon >0$ there exists a finite subset $F$ in
$\upss$ such that $\sum_{\bu \in \upss \setminus F}\tdlu (\seq a ,
\seq b) <\epsilon$ for every $\seq a , \seq b$ in $\{ \seq\nu ,
\seq\rho , \seq \sigma \}$.  By Lemma \ref{medgen} there exists
$\seq\mu$ in $\AM$ between $\seq\nu , \seq\rho$ such that $\psi_F
(\seq\mu )$ is the median point of $\psi_F(\seq\nu), \psi_F(\seq\rho),
\psi_F(\seq\sigma )$ in $\prod_{\bu \in F} T_\bu$.  The latter implies
that for every $\seq a , \seq b$ in $\{ \seq\nu , \seq\rho , \seq
\sigma \}$,
$$
\sum_{\bu \in F}\tdlu (\seq a , \seq b) = \sum_{\bu \in F}\tdlu (\seq
a , \seq \mu) + \sum_{\bu \in F}\tdlu (\seq \mu , \seq b)\, .
$$

Also, since $\seq\mu$ is between $\seq\nu , \seq\rho$ it follows that
$$
\sum_{\bu \in \upss \setminus F}\tdlu (\seq\nu , \seq\mu)
<\epsilon\mbox{ and } \sum_{\bu \in \upss \setminus F}\tdlu (\seq \mu
, \seq \rho) <\epsilon\, , $$ whence
$$
\sum_{\bu \in \upss \setminus F}\tdlu (\seq\mu , \seq\sigma )
<2\epsilon\, .
$$

It follows that for every $\seq a , \seq b$ in $\{ \seq\nu , \seq\rho
, \seq \sigma \}$,
$$
\sum_{\bu \in \upss }\tdlu (\seq a , \seq \mu) + \sum_{\bu \in \upss
}\tdlu (\seq \mu , \seq b)\leq \sum_{\bu \in \upss }\tdlu (\seq a ,
\seq b) +3\epsilon\, .
$$

That is, $\td (\seq a , \seq \mu) + \td (\seq \mu , \seq b) \leq \td
(\seq a , \seq b) +3\epsilon\, .$ This and \cite[Section 2.3]{CDH-T}
imply that $\psi (\seq\mu )$ is at distance at most $5\epsilon$ from
the median point of $\psi(\seq\nu ), \psi (\seq\rho ), \psi
(\seq \sigma )$ in $\calt_0$.

We have thus proved that for every $\epsilon >0$ there exists a point
$\psi (\seq\mu )$ in $\psi (\AM)$ at distance at most $5\epsilon$ from
the median point of $\psi(\seq\nu ), \psi (\seq\rho ), \psi (\seq
\sigma )$ in $\calt_0$.  Now the asymptotic cone $\AM$ is a complete
metric space with the metric $\dist_\AM$, hence the bi-Lipschitz
equivalent metric space $\psi (\AM )$ with the metric $\td$ is also
complete.  Since it is a subspace in the complete metric space
$\calt_0$, it follows that $\psi (\AM )$ is a closed subset in
$\calt_0$.  We may then conclude that $\psi (\AM )$ contains the
unique median point of $\psi(\seq\nu ), \psi (\seq\rho ), \psi (\seq
\sigma )$ in $\calt_0$.  \hspace*{\fill}$\Box$


\section{Actions on asymptotic cones of mapping class groups and
splitting}\label{sec:actions}

\subsection{Pieces of the asymptotic cone}\label{sec:pieces}

\begin{lemma}\label{pinchedquad}
    Let $\seqrep{\mu},\seqrep{\mu'},\seqrep{\nu},\seqrep{\nu'}$ be
    sequences of points in $\MM(S)$ for which
    $\ulim \dist_{\MM (S)}(\mu_{n},\nu_{n})=\infty$.
    For every $M> 2K(S)$, where $K(S)$ is the constant in
    Theorem~\ref{distanceformula} there exists a positive constant
    $C=C(M)<1$ so that if
    $$\dist_{\MM(S)}(\mu_{n},\mu'_{n})+\dist_{\MM(S)}(\nu_{n},\nu'_{n})\leq C \dist_{\MM (S)}(\mu_{n},\nu_{n})\, ,
    $$ then
    there exists a sequence of subsurfaces $Y_{n}\subseteq S$
    such that for $\omega$-a.e.~$n$ both
    $\dist_{\CC(Y_n)}(\mu_{n},\nu_{n})>M$ and
    $\dist_{\CC(Y_n)}(\mu'_{n},\nu'_{n})>M$.
\end{lemma}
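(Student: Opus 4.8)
The plan is to prove the statement by contradiction, using the Masur--Minsky distance formula (Theorem~\ref{distanceformula}) twice --- once for the pair $(\mu_n,\nu_n)$ to see that the large distance $d_n:=\dist_{\MM(S)}(\mu_n,\nu_n)$ is carried by subsurfaces with large curve-complex projections, and once for the perturbation pairs $(\mu_n,\mu'_n)$, $(\nu_n,\nu'_n)$ to see that their marking distances control the total motion of all curve-complex projections. I would fix two thresholds at different scales: $K'':=M$ (a legitimate distance-formula cutoff since $M>2K(S)>K(S)$) and $K:=4M$, and let $a\ge 1$, $b\ge 0$ be constants valid in Theorem~\ref{distanceformula} for both. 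Let $\mathcal{L}_n$ be the set of subsurfaces $Y\subseteq S$ with $\dist_{\CC(Y)}(\mu_n,\nu_n)>K$; for each fixed $n$ this is a finite set. The target claim is that for $\omega$-a.e.\ $n$ there exists a subsurface $Y_n$ with both $\dist_{\CC(Y_n)}(\mu_n,\nu_n)>M$ and $\dist_{\CC(Y_n)}(\mu'_n,\nu'_n)>M$; choosing one such $Y_n$ for each good $n$ (and arbitrarily otherwise) then produces the required sequence.

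Assume the claim fails. Since every subset of the index set has $\omega$-measure $0$ or $1$, this means that for $\omega$-a.e.\ $n$ \emph{every} subsurface $Y$ with $\dist_{\CC(Y)}(\mu_n,\nu_n)>M$ has $\dist_{\CC(Y)}(\mu'_n,\nu'_n)\le M$; in particular this holds for every $Y\in\mathcal{L}_n$. Fix such an $n$. For $Y\in\mathcal{L}_n$, the triangle inequality in $\CC(Y)$ together with this hypothesis gives $\dist_{\CC(Y)}(\mu_n,\nu_n)\le \dist_{\CC(Y)}(\mu_n,\mu'_n)+M+\dist_{\CC(Y)}(\nu_n,\nu'_n)$. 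Writing $p_Y:=\dist_{\CC(Y)}(\mu_n,\mu'_n)+\dist_{\CC(Y)}(\nu_n,\nu'_n)$, this yields $p_Y>K-M=3M$ and $\dist_{\CC(Y)}(\mu_n,\nu_n)\le \tfrac{4}{3}p_Y$.

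Next I would sum over $\mathcal{L}_n$ and feed the estimates into the two distance formulas, with $P_n:=\sum_{Y\in\mathcal{L}_n}p_Y$. On one side, the distance formula for $d_n$ with threshold $K$ and the per-domain bound above give $d_n\le a\sum_{Y\in\mathcal{L}_n}\dist_{\CC(Y)}(\mu_n,\nu_n)+b\le \tfrac{4a}{3}P_n+b$. On the other side, using $\dist_{\CC(Y)}(\mu_n,\mu'_n)\le\Tsh{M}{\dist_{\CC(Y)}(\mu_n,\mu'_n)}+M$ (and similarly for $\nu$), summing over $\mathcal{L}_n$, enlarging to all $Y$ (all terms nonnegative), and applying the distance formula with threshold $K''=M$, one gets $P_n\le a\big(\dist_{\MM(S)}(\mu_n,\mu'_n)+\dist_{\MM(S)}(\nu_n,\nu'_n)\big)+2b+2M|\mathcal{L}_n|$. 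Since $p_Y>3M$ for each $Y\in\mathcal{L}_n$, the cutoff error satisfies $2M|\mathcal{L}_n|<\tfrac{2}{3}P_n$, which is absorbed into the left side and gives $P_n\le 3a\big(\dist_{\MM(S)}(\mu_n,\mu'_n)+\dist_{\MM(S)}(\nu_n,\nu'_n)\big)+6b$. Combining the two, $d_n\le 4a^2\big(\dist_{\MM(S)}(\mu_n,\mu'_n)+\dist_{\MM(S)}(\nu_n,\nu'_n)\big)+8ab+b$. Taking $C:=C(M):=\tfrac{1}{8a^2}<1$, the hypothesis $\dist_{\MM(S)}(\mu_n,\mu'_n)+\dist_{\MM(S)}(\nu_n,\nu'_n)\le C d_n$ forces $d_n\le 16ab+2b$ for $\omega$-a.e.\ $n$; but $\ulim d_n=\infty$, so these two $\omega$-measure-one sets of indices would have a common element, a contradiction.

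The step I expect to be the main obstacle is exactly the bookkeeping of the cutoffs $\Tsh{K}{\cdot}$ in the distance formula: a priori $|\mathcal{L}_n|$ grows with $d_n$, so the additive error "$+M$ per domain" incurred when replacing $\dist_{\CC(Y)}(\mu_n,\mu'_n)$ by $\Tsh{M}{\dist_{\CC(Y)}(\mu_n,\mu'_n)}$ could accumulate to a quantity comparable to $d_n$ and wreck the estimate. The device that avoids this is using the two thresholds at different scales, $K=4M$ against $K''=M$: the resulting lower bound $p_Y>3M$ forces the total cutoff error $2M|\mathcal{L}_n|$ to be a fixed fraction of $P_n$ rather than an independent term, so it can simply be absorbed. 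This is also the point where the hypothesis $M>2K(S)$ (rather than just $M>K(S)$) is used, ensuring enough room for $M$ and the derived scales to be legitimate distance-formula thresholds.
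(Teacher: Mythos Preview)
Your proof is correct and follows essentially the same approach as the paper's: assume no common $M$-large domain, use the triangle inequality on each large domain to transfer the curve-complex projection from $(\mu_n,\nu_n)$ to the perturbation pairs, and derive a contradiction from two applications of the distance formula. The only difference is bookkeeping --- the paper works with thresholds $M/2$ and $2M$ and sidesteps the $|\mathcal{L}_n|$ count by observing that for each $Y$ with $\dist_{\CC(Y)}(\mu_n,\nu_n)>2M$ at least one of $\dist_{\CC(Y)}(\mu_n,\mu'_n)$, $\dist_{\CC(Y)}(\nu_n,\nu'_n)$ already exceeds $M/2$, so the truncated sum directly captures $\geq\tfrac14\dist_{\CC(Y)}(\mu_n,\nu_n)$ with no per-domain additive error; your absorption of $2M|\mathcal{L}_n|$ via $p_Y>3M$ accomplishes the same thing.
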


\begin{proof} Assume that \uass the sets of subsurfaces
$$\mathcal{Y}_n = \{ Y_n \mid \dist_{C(Y_n)} (\mu_n, \nu_n )> 2M
\}\mbox{ and }\mathcal{Z}_n= \{ Z_n \mid \dist_{C(Z_n)} (\mu_n',
\nu_n' )> M \}$$ are disjoint.  Then for every $Y_n \in
\mathcal{Y}_n$, $\dist_{C(Y_n)} (\mu_n', \nu_n' )\leq M$, which by the
triangle inequality implies that $\dist_{C(Y_n)} (\mu_n, \mu_n' ) +
\dist_{C(Y_n)} (\nu_n, \nu_n' )\geq \dist_{C(Y_n)} (\mu_n, \nu_n )- M
\geq \frac{1}{2} \dist_{C(Y_n)} (\mu_n, \nu_n )> M $.  Hence either $\dist_{C(Y_n)} (\mu_n, \mu_n' )$ or $\dist_{C(Y_n)} (\nu_n,
\nu_n' )$ is larger than $M/2 > K(S)$.  Let $a,b$ be the constants
appearing in (\ref{fdistformula}) for $K=M/2$, and let $A,B$ be the
constants appearing in the same formula for $K'=2M$.  According to the
above we may then write
$$
\dist_{\MM(S)} (\mu_n, \mu_n' ) + \dist_{\MM(S)} (\nu_n, \nu_n' )
\geq_{a,b} \sum_{Y\in \mathcal{Y}_n }
    \Tsh K{\dist_{\CC(Y)}(\mu_n ,\mu_n' )} +
     $$
    $$\sum_{Y\in \mathcal{Y}_n }
    \Tsh K{\dist_{\CC(Y)}(\nu_n, \nu_n' )}
    \geq    \frac{1}{4} \sum_{Y\in \mathcal{Y}_n }
    \dist_{\CC(Y)}(\mu_n ,\nu_n )\geq_{A,B} \frac{1}{4}
    \dist_{\MM(S)}(\mu_n ,\nu_n )\, .
$$

The coefficient $\frac{1}{4}$ is accounted for by the case when one of
the two distances $\dist_{C(Y_n)} (\mu_n, \mu_n' )$ and
$\dist_{C(Y_n)} (\nu_n, \nu_n' )$ is larger than $K=M/2$ while the
other is not.

When $C$ is small enough we thus obtain a contradiction of the
hypothesis, hence \uass $\mathcal{Y}_n \cap \mathcal{Z}_n \neq \emptyset$
\end{proof}

\begin{defn} For any $g=(g_n)^\omega \in \UM$ let us denote by
$U(g)$ the set of points $\seq h\in\AM$ such that for some representative $(h_n)^\omega\in\UM$ of $\seq
h$,
$$\ulim\dist_{\CC(S)}(h_n, g_n) < \infty.$$ The set $U(g)$ is
called the {\em $g$-interior}. This set is non-empty since $g \in U(g)$.
\end{defn}

\begin{lemma}\label{interior} Let $P$ be a piece in $\AM=\coneM$.  Let
$\seq x, \seq y$ be distinct points in $P$.  Then there exists
$g=(g_n)^\omega \in \UM$ such that $U(g) \subseteq P$; moreover the
intersection of any hierarchy path $[\seq x, \seq y]$ with $U(g)$
contains $[\seq x, \seq y]\setminus \{\seq x, \seq y\}$.
\end{lemma}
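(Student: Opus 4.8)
The plan is to first record a soft fact valid for every $g$ --- that the $g$-interior $U(g)$ always lies in a single piece --- and then to manufacture the required $g$ from approximants on a hierarchy path joining $\seq x$ and $\seq y$, exploiting that hierarchy paths are $\CC(S)$-monotonic. For the soft fact: if $\seq h,\seq h'\in U(g)$, then with the representatives from the definition of $U(g)$ the triangle inequality in $\CC(S)$ yields $\ulim\dcs(h_n,h_n')<\infty$, so by the equivalence of conditions (1) and (2) in Theorem~\ref{classifypieces} no point of $\AM$ separates $\seq h$ from $\seq h'$, i.e.\ they lie in a common piece. Since $\seq g\in U(g)$ and, by Lemma~\ref{ab}, two points lying in distinct pieces that both contain $\seq g$ would be separated by $\seq g$ (the two pieces meeting only in $\seq g$, by $(T_1)$), all of $U(g)$ lies in one piece; and if that set has two distinct points in a piece $P$, that piece must be $P$, so $U(g)\subseteq P$. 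Thus it suffices to produce $g$ whose interior contains the interior of \emph{some} hierarchy path $[\seq x,\seq y]$ --- which lies in $P$, a point on a topological arc between two non-separated points being non-separated from each of them --- and then to verify that this same $U(g)$ contains the interior of \emph{every} hierarchy path $[\seq x,\seq y]$.

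For the construction, fix a hierarchy path $\fg=\ulim h_n$ from $\seq x$ to $\seq y$. As $\seq x,\seq y$ lie in the same piece they are not separated by any point, so Theorem~\ref{classifypieces}(3) supplies points $\seq x_{(k)}\to\seq x$ and $\seq y_{(k)}\to\seq y$ on $\fg$, with representatives $x_{(k),n},y_{(k),n}$ on $h_n$ and $\ulim\dcs(x_{(k),n},y_{(k),n})<\infty$; after passing to a subsequence in $k$ we may assume $\seq x_{(1)}$ lies between $\seq x_{(k)}$ and $\seq y_{(k)}$ on $\fg$ for every $k$. Using that $h_n$ is $\CC(S)$-monotonic, for any point $\seq z$ of $\fg$ lying between $\seq x_{(k)}$ and $\seq y_{(k)}$ the representative $z_n$ on $h_n$ shadows, \uass, a vertex of the tight geodesic in $\CC(S)$ lying between the vertices shadowed by $x_{(k),n}$ and $y_{(k),n}$, whence $\dcs(z_n,x_{(k),n})\le\dcs(x_{(k),n},y_{(k),n})+O(1)$ and so $\ulim\dcs(z_n,x_{(k),n})<\infty$. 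Applying this to $\seq z=\seq x_{(1)}$, then to a general $\seq z$, and using that every point in the interior of $\fg$ lies between some $\seq x_{(k)}$ and $\seq y_{(k)}$, we obtain $\ulim\dcs(z_n,x_{(1),n})<\infty$ for every such $\seq z$. Hence $g:=(x_{(1),n})^\omega\in\UM$ satisfies $\fg\setminus\{\seq x,\seq y\}\subseteq U(g)$, and since these are infinitely many points of $P$ the first step gives $U(g)\subseteq P$.

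It remains to upgrade this from $\fg$ to all hierarchy paths. Given another hierarchy path $\fg'=\ulim h_n'$ from $\seq x$ to $\seq y$ and an interior point $\seq z$ of it, the argument of the previous paragraph applied to $\fg'$ gives $\ulim\dcs(z_n,x_{(1),n}')<\infty$ for a point $x_{(1),n}'$ on $h_n'$ that is $\dist_\AM$-close to $\seq x$, so one is reduced to bounding $\ulim\dcs(x_{(1),n},x_{(1),n}')$; this is the crux, since $\dist_\AM$-proximity alone does not control $\CC(S)$-distances of representatives. I would settle it using the description of the pieces of $\AM$: by Proposition~\ref{unique}, $P$ is contained in an ultralimit of subsets of the form $\QQ(\Delta_n)$ with $\Delta_n$ a multicurve, and all markings in $\QQ(\Delta_n)$ share the curves of $\Delta_n$, hence have pairwise $\CC(S)$-distance bounded by a universal constant; therefore any two points of $P$ --- in particular $\seq x_{(1)}$ and $\seq x_{(1)}'$, both of which lie in $P$ --- have representatives at bounded $\CC(S)$-distance, which together with the above gives $\ulim\dcs(z_n,x_{(1),n})<\infty$, i.e.\ $\seq z\in U(g)$. (Alternatively this comparison can be made directly by feeding the quadrilateral of near-endpoint approximants of $\fg$ and $\fg'$ into Lemma~\ref{pinchedquad}.) Since $\seq z$ was an arbitrary interior point of an arbitrary hierarchy path $[\seq x,\seq y]$, the intersection of every such hierarchy path with $U(g)$ contains $[\seq x,\seq y]\setminus\{\seq x,\seq y\}$, which is what we wanted; the main obstacle throughout is precisely the $\CC(S)$-comparison of near-endpoint approximants lying on different hierarchy paths.
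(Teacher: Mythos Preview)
Your overall architecture matches the paper's closely: build $g$ from approximants on a hierarchy path using Theorem~\ref{classifypieces}(3), use $\CC(S)$--monotonicity to trap every interior point at bounded $\CC(S)$--distance from $g$, and then argue that the construction is independent of the hierarchy path chosen. The ``soft fact'' that $U(g)$ always sits in a single piece is a clean way to package the containment $U(g)\subseteq P$, and the paper effectively proves the same thing inline. Your choice $g=(x_{(1),n})^\omega$ versus the paper's diagonal midpoint $g_n\in[x_n(n),y_n(n)]$ is immaterial; both work.

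The genuine gap is in the last step. Your primary argument for comparing two hierarchy paths invokes Proposition~\ref{unique} to claim that $P$ is contained in an ultralimit of sets $\QQ(\Delta_n)$, so that any two representatives of points of $P$ are at uniformly bounded $\CC(S)$--distance. This fails on two counts. First, Proposition~\ref{unique} is proved \emph{from} Lemma~\ref{interior} (via Lemma~\ref{triangle}), so citing it here is circular. Second, Proposition~\ref{unique} does not assert that pieces are ultralimits of $\QQ(\Delta_n)$; it only says that each piece is the closure of a unique $U(g)$. The statement you attribute to it is neither proved in the paper nor true in the form you need: membership of $\seq x_{(1)}$ and $\seq x_{(1)}'$ in $P$ gives no a~priori control on the $\CC(S)$--distance of the \emph{particular} representatives $x_{(1),n},x_{(1),n}'$ you have in hand.

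Your parenthetical alternative is exactly the right repair and is what the paper does: apply Lemma~\ref{pinchedquad} to the quadruple $x_{(k),n},\,y_{(k),n},\,x'_{(k),n},\,y'_{(k),n}$ for $k$ large (so that the side ratios satisfy the hypothesis). This produces a common large domain $Y_n$ for the two sub--hierarchy paths; since each sub-path already has bounded $\CC(S)$--diameter and both contain markings with $\partial Y_n$ in their base, all points on both sub-paths, in particular $x_{(1),n}$ and $x_{(1),n}'$, are at uniformly bounded $\CC(S)$--distance. Promote this from a parenthetical to the main argument and drop the appeal to Proposition~\ref{unique}.
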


\proof Consider arbitrary representatives $(x_n)^\omega ,(y_n)^\omega
$ of $\seq x$ and respectively $\seq y$, and let $\seq{[x,y]}$ be the
limit of a sequence of hierarchy paths $[x_n, y_n]$.  Since $\seq x,
\seq y\in P$, there exist sequences of points $\seq{x}(k)=\lio{x_n(k)},
\seq{y}(k)=\lio{y_n(k)}$, $x_n(k), y_n(k)\in [x_n, y_n]$ and a sequence
of numbers $C(k)>0$ such that for $\omega$-almost every~$n$ we have:
$$\dcs(x_n(k),y_n(k))<C(k)$$ and
\begin{equation}\label{eqkk}\begin{array}{l}\dmm(x_n(k),
x_n)<\frac{d_n\dist_\AM(\seq x,\seq y)}{k},\\
\dmm(y_n(k),y_n)<\frac{d_n\dist_\AM(\seq x,\seq y)}{k}.\end{array}
\end{equation}

Let $[x_n(k), y_n(k)]$ be the subpath of $[x_n, y_n]$ connecting
$x_n(k)$ and $y_n(k)$.  Let $g_n$ be the midpoint of $[x_n(n),
y_n(n)]$.  Then $\seqrep g \in \seq{[x,y]}$.  Let $g=(g_n)^\omega\in
\UM$.  Let us prove that $U(g)$ is contained in $P$.

Since $\seq x,\seq y\in P$, it is enough to show that any point $\seq
z=\seqrep z $ from $U(g)$ is in the same piece with $\seq x$ and in
the same piece with $\seq y$ (because distinct pieces cannot have two
points in common).

By the definition of $U(g)$, we can assume that $\omega$-a.s. $\dist_{\CC(S)}(z_n,
g_n)\le C_1$ for some constant $C_1$. For every $k>0$,
$\dist_{\CC(S)}(x_n(k),y_n(k))\le C(k)$, so
$$\dist_{\CC(S)}(x_n(k),z_n), \dist_{\CC(S)}(y_n(k),z_n)\le C(k)+C_1$$
$\omega$-a.s. By (\ref{eqkk}) and Theorem~\ref{classifypieces}, $\seq
x=\lio{x_n}, \seq z=\lio{z_n}, \seq y=\lio{ y_n}$ are in the same piece.

Note that $\lio{x_n(k)}$ and $\lio{y_n(k)}$ are in $U(g)$ for every $k$.
Now let $(x_n')^\omega, (y_n')^\omega$ be other representatives of
$\seq x$, $\seq y$, and let $x_n'(k), y_n'(k)$ be chosen as above on a
sequence of hierarchy paths $[x_n', y_n']$.  Let $g'=(g_n')^\omega$,
where $g_n'$ is the point in the middle of the hierarchy path $[x_n',
y_n']$.  We show that $U(g')=U(g)$.  Indeed, the sequence of
quadruples $x_n(k), y_n(k), y_n'(k), x_n'(k)$ satisfies the conditions
of Lemma \ref{pinchedquad} for large enough $k$.  Therefore the
subpaths $[x_n(k), y_n(k)]$ and $[x_n'(k), y_n'(k)]$ share a large
domain $\omega$-a.s. Since the entrance points of these subpaths in
this domain are at a uniformly bounded $\CC(S)$-distance, the same
holds for $g_n$, $g_n'$.  Hence $U(g')=U(g)$.  This completes the
proof of the lemma.  \endproof

Lemma \ref{interior} shows that for every two points $\seq x, \seq y$
in a piece $P$ of $\AM$, there exists an interior $U(g)$ depending
only on these points and contained in $P$.  We shall denote $U(g)$ by
$U(\seq x, \seq y)$.

\begin{lemma} \label{triangle} Let $\seq x, \seq y, \seq z$ be three
different points in a piece $P\subseteq \AM$.  Then $$U(\seq x, \seq
y)=U(\seq y, \seq z)=U(\seq x, \seq z).$$
\end{lemma}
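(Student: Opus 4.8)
The plan is to first reduce to a single equality. The construction in Lemma~\ref{interior} is insensitive to the order of the two points (reversing a hierarchy path gives a hierarchy path with the same midpoint up to bounded error), so $U(\seq a,\seq b)=U(\seq b,\seq a)$ for any two points of a piece; applying the special case established below to the triples $(\seq x;\seq y,\seq z)$ and $(\seq y;\seq x,\seq z)$ then reduces the statement to $U(\seq x,\seq y)=U(\seq x,\seq z)$. By Lemma~\ref{interior} and the discussion following it, $U(\seq x,\seq y)=U(g)$ and $U(\seq x,\seq z)=U(g')$, where $g=(g_n)^\omega$ and $g'=(g'_n)^\omega$ are the $\omega$-classes of the midpoints $g_n$ of $[x_n(n),y_n(n)]$ and $g'_n$ of $[x'_n(n),z_n(n)]$, for any choice of representatives of $\seq x,\seq y,\seq z$, of hierarchy paths $[x_n,y_n]$ and $[x_n,z_n]$, and of points $x_n(k),y_n(k)\in[x_n,y_n]$ (nested in $k$) converging to $\seq x,\seq y$ with $\ulim\dcs(x_n(k),y_n(k))<\infty$ for each $k$, and similarly $x'_n(k),z_n(k)\in[x_n,z_n]$ converging to $\seq x,\seq z$ with $\ulim\dcs(x'_n(k),z_n(k))<\infty$; such points exist by Theorem~\ref{classifypieces} because $\seq x,\seq y$ and $\seq x,\seq z$ lie in the common piece $P$. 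From the definition of $U$ and the triangle inequality in $\CC(S)$ one gets $U(g)=U(g')$ as soon as $\ulim\dcs(g_n,g'_n)<\infty$, so this last inequality is all that remains.

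Before estimating $\dcs(g_n,g'_n)$ I would reduce to the case $\dist_\AM(\seq y,\seq z)<C_0\,\dist_\AM(\seq x,\seq y)$, where $C_0$ is, say, half the constant $C(M)$ of Lemma~\ref{pinchedquad}. Since a piece of $\AM$ has no cut-points, $P\setminus\{\seq x\}$ is path-connected; choosing a path $\gamma$ in $P\setminus\{\seq x\}$ from $\seq y$ to $\seq z$, using that $\gamma$ is compact and misses $\seq x$ (so $\delta_0:=\inf_{\seq w\in\gamma}\dist_\AM(\seq x,\seq w)>0$), and invoking uniform continuity of $\gamma$, I can interpolate finitely many points $\seq y=\seq w_0,\dots,\seq w_N=\seq z$ on $\gamma$ with $\dist_\AM(\seq w_i,\seq w_{i+1})<C_0\delta_0\le C_0\,\dist_\AM(\seq x,\seq w_i)$ for all $i$; chaining the equalities $U(\seq x,\seq w_i)=U(\seq x,\seq w_{i+1})$ supplied by the reduced case then gives $U(\seq x,\seq y)=U(\seq x,\seq z)$.

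For the reduced case I would follow the argument at the end of the proof of Lemma~\ref{interior}. Fixing $k$ large, the rescaled $\omega$-limits show that $\dmm(x_n(k),x'_n(k))+\dmm(y_n(k),z_n(k))$ is comparable to $\dist_\AM(\seq y,\seq z)$ up to an error tending to $0$ with $k$, while $\dmm(x_n(k),y_n(k))$ is comparable to $\dist_\AM(\seq x,\seq y)$, so by the reduction the hypotheses of Lemma~\ref{pinchedquad} hold $\omega$-almost surely for the quadruple $x_n(k),y_n(k),z_n(k),x'_n(k)$, producing subsurfaces $Y_n$ with $\dist_{\CC(Y_n)}(x_n(k),y_n(k))>M$ and $\dist_{\CC(Y_n)}(x'_n(k),z_n(k))>M$ $\omega$-almost surely. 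By Lemma~\ref{MM2:LLL} the hierarchy paths $[x_n(k),y_n(k)]$ and $[x'_n(k),z_n(k)]$ then contain markings $\rho_n,\rho'_n$ whose bases contain $\partial Y_n$, and any two such markings are at $\CC(S)$-distance at most six (Lemma~\ref{r1}, Remark~\ref{r2}), so $\dcs(\rho_n,\rho'_n)\le 6$. Finally, since hierarchy paths are $\CC(S)$-monotonic, the $\CC(S)$-diameter of $[x_n(k),y_n(k)]$ is at most $\dcs(x_n(k),y_n(k))+O(1)$, and (using the nesting, so that $[x_n(n),y_n(n)]\subseteq[x_n(k),y_n(k)]$ for $n\ge k$) the points $g_n$, $\rho_n$ and $x_n(n)$ lie within $\ulim\dcs(x_n(k),y_n(k))+O(1)$ of one another in $\CC(S)$ for $\omega$-almost every $n$; the symmetric bound holds on the $z$-side. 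Combining, $\ulim\dcs(g_n,g'_n)\le 3\ulim\dcs(x_n(k),y_n(k))+3\ulim\dcs(x'_n(k),z_n(k))+O(1)<\infty$, as required. The step I expect to be the main obstacle is exactly this reconciliation: the common large domain $Y_n$ is extracted from the approximations with a fixed index $k$, whereas $g_n,g'_n$ are built from the diagonal approximations indexed by $k=n$, and bridging the gap relies simultaneously on the $\CC(S)$-monotonicity of hierarchy paths and on the same-piece hypothesis, which forces the relevant sub-arcs to have $\CC(S)$-diameter bounded by a constant depending only on $k$.
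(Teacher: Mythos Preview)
Your argument works (with one easily repaired slip), but it is considerably more elaborate than the paper's. The paper dispenses with interpolation entirely: by the triangle inequality in each $\CC(Y)$, every $K$-large domain for $(x_n,y_n)$ is a $K/2$-large domain for $(x_n,z_n)$ or for $(y_n,z_n)$; hence $\omega$-a.s.\ the hierarchy path $[x_n,y_n]$ shares a large domain with one of the other two sides, and the shared-large-domain argument at the end of the proof of Lemma~\ref{interior} forces two of the three $U$-sets to coincide. Cycling the roles of $\seq x,\seq y,\seq z$ gives all three. No reduction to ``$\seq y,\seq z$ close'' and no appeal to the topology of $P\setminus\{\seq x\}$ is needed.

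Your detour via path-connectedness of $P\setminus\{\seq x\}$ is valid (pieces are geodesic, hence locally path-connected, so connectedness of the punctured piece upgrades to path-connectedness), and it buys you a direct application of Lemma~\ref{pinchedquad} to the quadruple of \emph{sub-path} endpoints, which makes the final $\CC(S)$-estimate completely explicit. The cost is the extra topological machinery and the chaining.

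The slip: your nesting inclusion is reversed. Since $\seq{x(k)}\to\seq x$ and $\seq{y(k)}\to\seq y$, the sub-paths $[x_n(k),y_n(k)]$ \emph{grow} with $k$, so for $n\ge k$ one has $[x_n(k),y_n(k)]\subseteq[x_n(n),y_n(n)]$, not the inclusion you wrote. Your conclusion that $g_n\in[x_n(k),y_n(k)]$ is nonetheless correct, for a different reason: $g_n$ is the midpoint of $[x_n(n),y_n(n)]$, while the two end-segments $[x_n(n),x_n(k)]$ and $[y_n(k),y_n(n)]$ each have rescaled length $O(1/k)<\tfrac12\dist_\AM(\seq x,\seq y)$ once $k$ is moderately large, so the midpoint lands in the inner sub-path $[x_n(k),y_n(k)]$ as required.
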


\proof Let $(x_n)^\omega, (y_n)^\omega, (z_n)^\omega$ be
representatives of $\seq x, \seq y, \seq z$.  Choose hierarchy paths
$[x_n, y_n], [y_n, z_n], [x_n, z_n]$.  By Theorem
\ref{distanceformula}, the hierarchy path $[x_n, y_n]$ shares a large
domain with either $[x_n, z_n]$ or $[y_n, z_n]$ for all $n$
$\omega$-a.s. By Lemma \ref{interior}, then $U(\seq x, \seq y)$
coincides either with $U(\seq x, \seq z)$ or with $U(\seq y, \seq z)$.
Repeating the argument with $[\seq x, \seq y]$ replaced either by
$[\seq y, \seq z]$ or by $[\seq x,\seq z]$, we conclude that all three
interiors coincide.  \endproof

\begin{proposition}\label{unique} Every piece $P$ of the asymptotic cone
$\AM$ contains a unique interior $U(g)$, and $P$ is the closure of
$U(g)$.
\end{proposition}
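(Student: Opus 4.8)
The plan is to use Lemma~\ref{interior} and Lemma~\ref{triangle} to single out one canonical interior attached to $P$, and then to strengthen Lemma~\ref{triangle} to the assertion that every interior contained in $P$ equals this one. We assume $P$ has more than one point (the substantive case, where $\xi(S)\ge 2$); being a geodesic subspace, $P$ then has infinitely many points. \textbf{Step 1.} For any two distinct points $\seq x,\seq y\in P$, Lemma~\ref{interior} produces an interior $U(\seq x,\seq y)\subseteq P$, and by Lemma~\ref{triangle} (applicable since $P$ has at least three points) this interior does not depend on the chosen pair; write it $U(g_P)$, with $g_P=(g_n)^\omega\in\UM$ a center, and note $g_P\in U(g_P)\subseteq P$.

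\textbf{Step 2: $P=\overline{U(g_P)}$.} Pieces are closed (Definition~\ref{deftgr}) and $U(g_P)\subseteq P$, so $\overline{U(g_P)}\subseteq P$. Conversely, let $\seq z\in P$, which we may take distinct from $g_P$. Applying Lemma~\ref{interior} to the pair $\seq z,g_P$ and using $U(\seq z,g_P)=U(g_P)$ from Step~1, every hierarchy path joining $\seq z$ and $g_P$ lies in $U(g_P)$ except possibly at its two endpoints; such a path is a continuous arc, hence carries points of $U(g_P)$ arbitrarily close to $\seq z$, so $\seq z\in\overline{U(g_P)}$. Therefore $P=\overline{U(g_P)}$.

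\textbf{Step 3: uniqueness.} Let $g=(c_n)^\omega\in\UM$ with $U(g)\subseteq P$ (we identify $g$ with the point it represents, which lies in $U(g)\subseteq P$); it remains to show $U(g)=U(g_P)$. The argument uses two facts. (i) An interior is determined by the coarse $\CC(S)$-position of its center: if $\ulim\dcs(c_n,c_n')<\infty$ then $U((c_n)^\omega)=U((c_n')^\omega)$, directly from the definition. (ii) A hierarchy path between two markings at bounded $\CC(S)$-distance remains at bounded $\CC(S)$-distance from its endpoints, since it shadows a tight $\CC(S)$-geodesic whose length is that distance up to an additive constant. Now $U(g)$ is non-degenerate --- besides $g$ it contains, for instance, limits of markings sharing the pants decomposition $\base(c_n)$ but receding from $c_n$ in $\MM(S)$ (using $\xi(S)\ge 1$) --- so we may pick $\seq x\in U(g)$ distinct from $g$, and, by the definition of $U(g)$, a representative $(x_n)$ of $\seq x$ with $\ulim\dcs(x_n,c_n)<\infty$. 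The construction in the proof of Lemma~\ref{interior}, applied to the pair $\seq x,g$ with the representatives $(x_n)$ and $(c_n)$, outputs a center $(g_n)^\omega$ that is the midpoint of a subpath of a hierarchy path from $x_n$ to $c_n$; by (ii), $\ulim\dcs(g_n,c_n)<\infty$. Hence, by (i), $U(\seq x,g)=U((g_n)^\omega)=U(g)$; but $\seq x$ and $g$ both lie in $P$, so Step~1 gives $U(\seq x,g)=U(g_P)$. Therefore $U(g)=U(g_P)$, which with Step~2 completes the proof.

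The step I expect to be the main obstacle is Step~3. Lemma~\ref{triangle} only identifies interiors already presented as $U(\seq x,\seq y)$ for two points of $P$, so to pin down an \emph{arbitrary} interior $U(g)\subseteq P$ one must first recognize it as $U(\seq x,g)$ using two of its own points; facts (i) and (ii) are precisely the bookkeeping needed for that. Everything else is a routine assembly of Lemmas~\ref{interior} and~\ref{triangle} with the closedness of pieces.
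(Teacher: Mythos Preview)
Your proof is correct and follows essentially the same route as the paper's: both arguments use Lemma~\ref{triangle} to identify all interiors sitting in $P$, together with the observation that any two points of a given $U(g)$ determine $U(g)$ back (your facts (i)--(ii)), and both derive the closure statement from Lemma~\ref{interior}. The paper's proof is more terse---it picks two points from each of two interiors and chains triples directly, leaving the identification $U(\seq x,\seq y)=U(g)$ for $\seq x,\seq y\in U(g)$ implicit---whereas you make this step explicit, which is a welcome clarification.
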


\proof Let $U(g)$ and $U(g')$ be two interiors inside $P$.  Let $\seq
x, \seq y$ be two distinct points in $U(g)$, $\seq z, \seq t$ be two
distinct points in $U(g')$.  If $\seq y\neq \seq z$ we apply Lemma
\ref{triangle} to the triples $(\seq x, \seq y, \seq z)$ and $(\seq y,
\seq z, \seq t)$, and conclude that $U(g)=U(g')$.  If $\seq y = \seq
z$ we apply Lemma \ref{triangle} to the triple $(\seq x, \seq y, \seq
t)$.  The fact that the closure of $U(g)$ is $P$ follows from Lemma
\ref{interior}.  \endproof

\subsection{Actions and splittings}\label{sec:actionssplittings}

We recall a theorem proved by V. Guirardel in
\cite{Guirardel:trees}, that we will use in the sequel.

\begin{defn} \label{defgui}
 The \emph{height} of an arc in an $\R$--tree with respect to the
 action of a group $G$ on it is the maximal length of a decreasing
 chain of sub-arcs with distinct stabilizers.  If the height of an arc
 is zero then it follows that all sub-arcs of it have the same
 stabilizer.  In this case the arc is called \textit{stable}.

  The tree $T$ is \textit{of finite height} with respect to the action
  of some group $G$ if any arc of it can be covered by finitely many
  arcs with finite height.  If the action is minimal and $G$ is
  finitely generated then this condition is equivalent to the fact
  that there exists a finite collection of arcs $\mathcal{I}$ of
  finite height such that any arc is covered by finitely many
  translates of arcs in $\mathcal{I}$ \cite{Guirardel:trees}.
\end{defn}

\begin{theorem}[{Guirardel} \cite{Guirardel:trees}]\label{gui}
Let $\Lambda$ be a finitely generated group and let $T$ be a real tree
on which $\Lambda$ acts minimally and with finite height.  Suppose
that the stabilizer of any non-stable arc in $T$ is finitely
generated.

Then one of the following three situations occurs:

\begin{itemize}
    \item[(1)] $\Lambda$ splits over the stabilizer of a non-stable
    arc or over the stabilizer of a tripod;

    \item[(2)] $\Lambda$ splits over a virtually cyclic extension of
    the stabilizer of a stable arc;

\item[(3)] $T$ is a line and $\Lambda$ has a subgroup of index at most
2 that is the extension of the kernel of that action by a finitely
generated free abelian group.
\end{itemize}
\end{theorem}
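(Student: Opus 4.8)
This is a deep structural result, and the plan is to follow Guirardel's strategy, whose engine is the Rips machine for non-free actions on $\R$-trees. First I would pass from the given action to a \emph{geometric} model: using resolutions in the sense of Levitt--Paulin together with the Rips machine, the action of $\Lambda$ on $T$ is approximated by, and --- thanks to the finite-height hypothesis --- essentially controlled by, an action coming from a finite foliated $2$-complex (a band complex, or system of isometries). The role of the finite-height assumption, combined with finite generation of $\Lambda$ and of the non-stable arc stabilizers, is precisely to guarantee that this geometric model is controlled: it makes the resolution terminate and keeps the pieces appearing below of a manageable type.

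Next I would invoke Levitt's decomposition of a geometric $\R$-tree as a graph of actions. Here $T$ is built from finitely many vertex actions glued along points, and each vertex action is of one of three kinds: (a) axial, i.e.\ a finitely generated group acting on a line by translations with dense orbits; (b) surface (Seifert) type, i.e.\ the action of a $2$-orbifold group dual to an arational measured foliation on a compact surface, in which every generic arc has the same stabilizer --- the ``fibre'' subgroup --- so that such arcs are stable; and (c) Levitt, or ``thin'', type. The crucial point is that a thin component always contains an infinite strictly descending chain of sub-arcs with distinct stabilizers, hence arcs of infinite height; since $T$ has finite height, type (c) is excluded. So $T$ is assembled from axial and surface pieces along a possibly trivial simplicial skeleton.

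The proof then finishes by case analysis on this decomposition. If the skeleton is trivial and $T$ is a single axial piece, then $T$ is a line, the kernel $N$ of the action is exactly the set of elliptic elements, and $\Lambda/N$ embeds in $\mathrm{Isom}(\R)$; an index at most $2$ subgroup therefore surjects onto a finitely generated subgroup of $(\R,+)$, i.e.\ a finitely generated free abelian group, which is alternative (3). If some vertex action is of surface type on an orbifold carrying an essential simple closed curve, cutting the band complex along that curve produces a splitting of $\Lambda$ whose edge group is a virtually cyclic extension of the fibre subgroup --- the stabilizer of a stable arc of that piece --- which is alternative (2). In the remaining case the simplicial skeleton is non-trivial with no surface vertex, and Bass--Serre theory yields a splitting of $\Lambda$ over an edge group; identifying edge groups with arc stabilizers, and using branch points of $T$ to produce tripod stabilizers, places this in alternative (1). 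The low-complexity orbifolds (annuli, M\"obius bands) must be absorbed into the axial case, and one must check that the three cases are exhaustive.

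The main obstacle is the first two steps: setting up the Rips machine for non-free, possibly non-geometric actions and proving the geometric decomposition, and --- most importantly --- showing that finite height is exactly the hypothesis that excludes the exotic Levitt components and keeps the generic arcs of the surface pieces of finite (indeed zero) height. Granting that structure theory, the trichotomy is then extracted by routine Bass--Serre theory together with the standard fact that cutting a measured-foliation piece along an essential curve splits the fundamental group over the expected subgroup; the only delicate bookkeeping is matching edge groups with stabilizers of stable arcs versus tripods, and handling the degenerate low-complexity pieces.
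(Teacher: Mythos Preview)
The paper does not give a proof of this theorem: it is quoted as a black-box result from Guirardel \cite{Guirardel:trees} and used later (together with Lemma \ref{stable}) in the proof of Corollary \ref{ii}. There is therefore nothing in this paper to compare your proposal against.

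That said, your outline is a reasonable high-level summary of Guirardel's own strategy: approximate by a geometric action, decompose the resulting tree as a graph of actions with axial, surface, and thin vertex pieces, use the height hypothesis to rule out the thin pieces, and then read off the trichotomy from the remaining structure via Bass--Serre theory. One point to be careful about: the exclusion of thin (Levitt) components is more delicate than ``thin components contain arcs of infinite height, hence are forbidden''; Guirardel's argument passes through a sequence of geometric approximations and uses the finite-height hypothesis together with finite generation of non-stable arc stabilizers to control how stabilizers behave along the approximating sequence, not just in a single tree. Your sketch glosses over this limiting process, which is where most of the technical work lies. If you intend to present this as a self-contained proof rather than a summary, that step would need substantial expansion.
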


  \medskip

In some cases stability and finite height follow from the algebraic
structure of stabilizers of arcs, as the next lemma shows.

\begin{lemma}(\cite{DrutuSapir:splitting})\label{stable} Let $G$ be a
finitely generated group acting on an $\R$--tree $T$ with finite of
size at most $D$ tripod stabilizers, and (finite of size at most
$D$)-by-abelian arc stabilizers, for some constant $D$.  Then
\begin{itemize}
    \item[(1)] an arc with stabilizer of size $>(D+1)!$ is stable;
    \item[(2)] every arc of $T$ is of finite height (and so the action
    is of finite height and stable).
\end{itemize}
\end{lemma}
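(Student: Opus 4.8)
The plan is to prove (1) by a commutator argument and then deduce (2) from it by elementary subgroup counting.

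For (1), I would argue by contradiction: suppose $I$ is an arc with $H:=\Stab(I)$ of size $>(D+1)!$ which is not stable. Since every sub-arc $I''\subseteq I$ has $\Stab(I'')\supseteq H$, non-stability supplies a sub-arc $I'\subsetneq I$ with $H_1:=\Stab(I')\supsetneq H$; pick $g\in H_1\setminus H$. Write $I=[a,b]$, $I'=[c,d]$. Then $g$ fixes $[c,d]$ pointwise but not all of $I$, and since $g$ fixing both $[a,c]$ and $[d,b]$ would force $g\in H$, after possibly reversing the orientation of $I$ I may assume $g$ moves a point of $[d,b]$. As $\Fix(g)$ is a closed convex subtree, $\Fix(g)\cap[d,b]=[d,s]$ with $d\le s<b$, and non-degeneracy of $I'$ gives $a<s<b$, so there are two directions $e^{-},e$ of $I$ at $s$ (towards $a$ and towards $b$). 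The first is fixed by $g$, the second is not, and $g(e)\notin\{e^{-},e\}$, so $e^{-},e,g(e)$ are three distinct directions at $s$. Taking $p,p^{-}$ at distance $r:=\min(b-s,s-a)>0$ from $s$ in directions $e,e^{-}$ and setting $p':=g(p)$ (which lies at distance $r$ from $s$ in direction $g(e)$), the convex hull of $p,p^{-},p'$ is a non-degenerate tripod $Y$ centred at $s$.

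The key step is then to show that $C_H(g)$, the centralizer of $g$ in $H$, both has index $\le D$ in $H$ and is contained in $\Stab(Y)$, which forces $|H|\le D^2\le(D+1)!$, a contradiction. For the index bound I would note that $H$ normalizes $H_1$ (any $h\in H$ fixes $I'$, so conjugation by $h$ preserves $\Stab(I')$), choose a normal subgroup $N_1\trianglelefteq H_1$ of order $\le D$ with $H_1/N_1$ abelian (the arc-stabilizer hypothesis applied to $H_1$), and observe that the map $\Phi\colon H\to N_1$, $\Phi(h)=g^{-1}hgh^{-1}=[g^{-1},h]$, is well defined with image of size $\le D$; since $\Phi(h_1)=\Phi(h_2)$ is equivalent to $h_1gh_1^{-1}=h_2gh_2^{-1}$, i.e. to $h_1^{-1}h_2\in C_H(g)$, the fibres of $\Phi$ are precisely the cosets of $C_H(g)$, giving $[H:C_H(g)]\le D$. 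For the containment I would use that any $h\in C_H(g)$ fixes $I$ pointwise, hence fixes $s,p,p^{-}$, and commutes with $g$, hence also fixes $p'=g(p)$; an isometry fixing the centre and the three tips of $Y$ fixes $Y$ pointwise, so $C_H(g)\subseteq\Stab(Y)$ and $|C_H(g)|\le D$.

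Finally, (2) follows quickly from (1): in a decreasing chain of sub-arcs $J_0\supsetneq J_1\supsetneq\cdots\supsetneq J_k$ with pairwise distinct stabilizers, each $\Stab(J_{i+1})$ strictly contains $\Stab(J_i)$, and for $i<k$ the arc $J_i$ is non-stable (it has the proper sub-arc $J_{i+1}$ of strictly larger stabilizer), so by (1) $|\Stab(J_i)|\le(D+1)!$; a strictly increasing chain of subgroups of order $\le(D+1)!$ has length $\le(D+1)!$, hence $k\le(D+1)!$ and every arc has height $\le(D+1)!+1<\infty$. Thus $T$ is of finite height and the action is stable. The one delicate point I anticipate is the tripod step of (1): one must manufacture a single honest \emph{non-degenerate} tripod fixed \emph{pointwise} by all of $C_H(g)$, rather than merely a common germ at $s$, which is exactly what fixing $I$ pointwise together with the relation $h\,g(p)=g\,h(p)$ provides; everything else is bookkeeping with the hypotheses.
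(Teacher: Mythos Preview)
The paper does not give a proof of this lemma; it is quoted verbatim from \cite{DrutuSapir:splitting} and used as a black box in the proof of Corollary~\ref{ii}. So there is no ``paper's own proof'' to compare against here.

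That said, your argument is correct and is essentially the one in the cited reference. The key idea --- produce a genuine tripod by pushing a point just past the boundary of $\Fix(g)$, then bound $|H|$ by combining the tripod-stabilizer bound on $C_H(g)$ with the index bound $[H:C_H(g)]\le D$ coming from the commutator map into the finite normal subgroup $N_1$ of the (finite-by-abelian) arc stabilizer $H_1$ --- is exactly the mechanism. Your verification that $e^-,e,g(e)$ are three distinct directions (using that $\Fix(g)$ is a closed subtree and that $g$ fixes $e^-$) is the only place one has to be slightly careful, and you handled it. Note that your argument actually yields the sharper bound $|H|\le D^2$ rather than $(D+1)!$; the weaker bound in the statement is harmless for the application. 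Part (2) follows from (1) exactly as you say: all but the last stabilizer in a maximal chain are bounded by $(D+1)!$, so the chain length is bounded.
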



We also recall the following two well known results due to Bestvina
(\cite{Bestvina:degener}, \cite{Bestvina:survoltrees}) and Paulin
\cite{Paulin:arbres}.

\begin{lemma}\label{Pau} Let $\Lambda$ and $G$ be two finitely
generated groups, let $A=A\iv$ be a finite set generating $\Lambda$
and let $\dist$ be a word metric on $G$.  Given
$\phi_n\co\Lambda\to G$ an infinite sequence of homomorphisms, one
can associate to it a sequence of positive integers defined by
\begin{equation}\label{dn}
d_n=\inf_{x\in G}\sup_{a\in A}\dist(\phi_n(a)x,x)\, .
\end{equation}

If $(\phi_n)$ are pairwise non-conjugate in $\Gamma$ then $\lim_{n\to
\infty}d_n=\infty $.
\end{lemma}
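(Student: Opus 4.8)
The plan is to argue by contradiction, converting a failure of $d_n\to\infty$ into two of the $\phi_n$ that are conjugate in $G$, which contradicts the hypothesis. This is the classical Bestvina--Paulin normalization argument, and it relies only on the properness of the word metric on the finitely generated group $G$.

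First I would record that, since $\dist$ is a word metric, it takes integer values, so the infimum in $(\ref{dn})$ is in fact attained: for each $n$ there is $x_n\in G$ with $\dist(\phi_n(a)x_n,x_n)=d_n$ for every $a\in A$. (If one prefers not to invoke integrality, choosing $x_n$ so that the supremum is at most $d_n+1$ works equally well and changes nothing below.) Now suppose $d_n\not\to\infty$; then there is a constant $D$ and an infinite set $N\subseteq\N$ with $d_n\le D$ for all $n\in N$. For $n\in N$ define $\psi_n\co\Lambda\to G$ by $\psi_n(\gamma)=x_n\iv\phi_n(\gamma)x_n$, a homomorphism conjugate to $\phi_n$ in $G$; for every generator $a\in A$ one has $\dist(\psi_n(a),1_G)=\dist(x_n\iv\phi_n(a)x_n,1_G)=\dist(\phi_n(a)x_n,x_n)\le D$, so $\psi_n(A)$ is contained in the ball $B_G(1_G,D)$.

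Next I would use that $G$ is finitely generated, so this ball is finite, say of cardinality $C$; hence the tuple $(\psi_n(a))_{a\in A}$ takes at most $C^{\card A}$ distinct values. Since $A$ generates $\Lambda$, a homomorphism out of $\Lambda$ is determined by its restriction to $A$, so $\{\psi_n:n\in N\}$ is a finite set. By the pigeonhole principle there exist $m\ne n$ in $N$ with $\psi_m=\psi_n$, whence $\phi_n=(x_n x_m\iv)\,\phi_m(\cdot)\,(x_n x_m\iv)\iv$, i.e. $\phi_m$ and $\phi_n$ are conjugate in $G$. This contradicts the hypothesis that the $\phi_n$ are pairwise non-conjugate, so $d_n\to\infty$.

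There is no genuine obstacle here; the only points needing minor care are that the infimum in $(\ref{dn})$ is realized up to a bounded additive error, that properness of the word metric on $G$ (finiteness of balls) is precisely what turns a uniform bound on $d_n$ into finitely many candidate homomorphisms, and that the direction of conjugation is tracked correctly.
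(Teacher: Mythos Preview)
Your argument is correct and is the standard Bestvina--Paulin normalization proof. The paper does not actually prove this lemma: it introduces it as one of ``two well known results due to Bestvina and Paulin'' and gives only references (\cite{Bestvina:degener}, \cite{Bestvina:survoltrees}, \cite{Paulin:arbres}), so there is no proof in the paper to compare against. One small wording slip: you wrote ``$\dist(\phi_n(a)x_n,x_n)=d_n$ for every $a\in A$,'' but what attainment of the infimum gives is $\sup_{a\in A}\dist(\phi_n(a)x_n,x_n)=d_n$, hence $\dist(\phi_n(a)x_n,x_n)\le d_n$ for each $a$; this is exactly what you use afterwards, so the argument is unaffected.
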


\begin{remark}\label{xa}
For every $n\in \N$, $d_n=\dist(\phi_n(a_n)x_n,x_n)$ for some $x_n\in
\Gamma$ and $a_n\in A$.
\end{remark}

Consider an arbitrary ultrafilter $\omega$.  According to Remark
\ref{xa}, there exists $a\in A$ and $x_n\in G$ such that
$d_n=\dist(\phi_n(a)x_n,x_n)$ $\omega$--a.s.

\begin{lemma} \label{Pau1}
Under the assumptions of Lemma \ref{Pau}, the group $\Lambda$ acts on
the asymptotic cone $\ck_\omega=\ko{G; (x_n), (d_n)}$ by isometries,
without a global fixed point, as follows:
\begin{equation}\label{action}
  g\cdot \lio{x_n}=\lio{\phi_n(g)x_n}\, .
\end{equation}

This defines a homomorphism $\phi_\omega$ from $\Lambda$ to the group
$x^\omega (\Pi_1 \Gamma/\omega) (x^\omega)\iv$ of isometries
of~$\ck_\omega$.
\end{lemma}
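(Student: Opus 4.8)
The plan is to verify the three assertions of the lemma --- that (\ref{action}) is a well-defined isometric action, that the resulting homomorphism takes values in $x^\omega(\Pi_1 G/\omega)(x^\omega)\iv$, and that there is no global fixed point --- each of which follows quickly from left-invariance of the word metric on $G$ together with the choice of $d_n$ as an infimal displacement; the only point carrying any real content is the last one.

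First I would fix representatives. Since $\dist$ is integer-valued, the infimum in (\ref{dn}) is attained, so I may take the observation point $x_n$ to be a point at which $x\mapsto\sup_{a\in A}\dist(\phi_n(a)x,x)$ is minimized; then $\dist(\phi_n(a)x_n,x_n)\le d_n$ for every $a\in A$ (this is the $x_n$, $a_n$ of Remark~\ref{xa}). For $g\in\Lambda$ of word length $\ell$ in the symmetric generating set $A$, writing $\phi_n(g)$ as the corresponding product of the $\phi_n(a)$ and telescoping, left-invariance gives $\dist(\phi_n(g)x_n,x_n)\le\ell\, d_n$, hence $\ulim\frac1{d_n}\dist(\phi_n(g)x_n,x_n)\le\ell<\infty$. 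For an arbitrary point $\lio{y_n}$ of $\ck_\omega$ the triangle inequality and left-invariance give $\frac1{d_n}\dist(\phi_n(g)y_n,x_n)\le\frac1{d_n}\dist(y_n,x_n)+\frac1{d_n}\dist(\phi_n(g)x_n,x_n)$, whose $\omega$-limit is finite; thus $g\cdot\lio{y_n}:=\lio{\phi_n(g)y_n}$ is a point of $\ck_\omega$, and it is independent of the representative $(y_n)$ because $\ulim\frac1{d_n}\dist(\phi_n(g)y_n,\phi_n(g)y_n')=\ulim\frac1{d_n}\dist(y_n,y_n')$.

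Next I would observe that left-invariance gives $\dist_{\ck_\omega}(g\cdot\lio{y_n},g\cdot\lio{z_n})=\ulim\frac1{d_n}\dist(\phi_n(g)y_n,\phi_n(g)z_n)=\dist_{\ck_\omega}(\lio{y_n},\lio{z_n})$, so each $\phi_\omega(g)$ is an isometry, while $\phi_n$ being a homomorphism yields $g\cdot(h\cdot\lio{y_n})=\lio{\phi_n(gh)y_n}=(gh)\cdot\lio{y_n}$ and that $1$ acts trivially; hence $\phi_\omega\co\Lambda\to\mathrm{Isom}(\ck_\omega)$ is a homomorphism. To place its image inside $x^\omega(\Pi_1 G/\omega)(x^\omega)\iv$, set $h_n=x_n\iv\phi_n(g)x_n$; then $\frac1{d_n}\dist(h_n,1)=\frac1{d_n}\dist(\phi_n(g)x_n,x_n)\le\ell$, so $(h_n)^\omega\in\Pi_1 G/\omega$, and by Remark~\ref{grascone}(2) the isometry $\lio{y_n}\mapsto\lio{x_n h_n x_n\iv y_n}=\lio{\phi_n(g)y_n}$ induced by $x^\omega(h_n)^\omega(x^\omega)\iv$ is precisely $\phi_\omega(g)$.

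Finally, suppose $\lio{y_n}\in\ck_\omega$ were fixed by all of $\Lambda$. Then $\ulim\frac1{d_n}\dist(\phi_n(a)y_n,y_n)=0$ for every $a$ in the finite set $A$, hence $\ulim\frac1{d_n}\max_{a\in A}\dist(\phi_n(a)y_n,y_n)=0$. But $\max_{a\in A}\dist(\phi_n(a)y_n,y_n)\ge d_n$ for every $n$, directly from the definition of $d_n$ in (\ref{dn}), so that $\omega$-limit is $\ge 1$, a contradiction. This last step is the heart of the matter: it is the geometric reformulation of the fact that $x_n$ is a point of (nearly) minimal displacement, which is exactly why normalizing by $d_n$ in (\ref{dn}) is the right choice; everything else is routine bookkeeping with left-invariant metrics and ultralimits, so I do not expect a serious obstacle here.
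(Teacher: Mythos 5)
Your proof is correct and complete: the well-definedness, isometry, and conjugation computations are the routine consequences of left-invariance, and the absence of a global fixed point follows exactly as you say from $\sup_{a\in A}\dist(\phi_n(a)y_n,y_n)\ge d_n$ together with finiteness of $A$. The paper itself gives no proof of this lemma (it is recalled as a classical result of Bestvina and Paulin), and your argument is precisely the standard one being invoked, so there is nothing to compare beyond noting that your write-up fills in the omitted details faithfully.
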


 Let $S$ be a surface of complexity $\xi(S)$.  When $\xi(S)\leq 1$ the
 mapping class group $\MCG (S)$ is hyperbolic and the well-known
 theory on homomorphisms into hyperbolic groups can be applied
(see for instance \cite{Bestvina:degener}, \cite{Paulin:arbres}, \cite{Bestvina:survoltrees} and references therein). Therefore we adopt the following convention for the rest of this
 section.

\begin{cvn}\label{cxi}
In what follows we assume that $\xi (S) \geq 2$.
\end{cvn}

\begin{proposition}\label{i}
Suppose that a finitely generated group $\Lambda =\la A\ra$ has
infinitely many homomorphisms $\phi_n$ into a mapping class group
$\MCG(S)$, which are pairwise non-conjugate in $\MCG(S)$.  Let

\begin{equation}\label{dnp}
d_n=\inf_{\mu \in \MM (S) }\sup_{a\in A}\dist(\phi_n(a)\mu ,\mu )\, ,
\end{equation}

and let $\mu_n$ be the point in $\MM (S)$ where the above infimum is
attained.

Then one of the following two situations occurs:
\begin{enumerate}
    \item\label{acs} either the sequence $(\phi_n)$ defines a
    non-trivial action of $\Lambda$ on an asymptotic cone of the
    complex of curves $\ko{\CC (S); (\gamma_n), (\ell_n)}$,
    \item\label{ams} or the action by isometries, without a global
    fixed point, of $\Lambda $ on the asymptotic cone $\ko{\MM (S) ;
    (\mu_n), (d_n)}$ defined as in Lemma \ref{Pau1} fixes a piece
    set-wise.
\end{enumerate}
\end{proposition}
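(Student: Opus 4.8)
The plan is to start from the action furnished by Lemma~\ref{Pau1}: with $d_n$ and $\mu_n$ as in the statement, $(\phi_n)$ defines an isometric action of $\Lambda$ on $\AM:=\ko{\MM(S);(\mu_n),(d_n)}$ with no global fixed point, via $g\cdot\lio{x_n}=\lio{\phi_n(g)x_n}$. Since $\AM$ is tree-graded with pieces the maximal path-connected cut-point-free subsets \cite{Behrstock:asymptotic} (cf.\ Lemma~\ref{cutting}), every isometry of $\AM$ sends cut-points to cut-points and hence permutes the pieces; in particular so does each element of $\Lambda$. I would then split according to the ``size'' of the induced action on $\CC(S)$.

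Put $\ell_n:=\inf_{\gamma\in\CC_0(S)}\sup_{a\in A}\dcs(\phi_n(a)\gamma,\gamma)$ and choose $\gamma_n$ realizing it up to an additive $1$. \emph{If $\lim_\omega\ell_n=\infty$}, then composing $(\phi_n)$ with $\MCG(S)\to\mathrm{Isom}(\CC(S))$ and running the Bestvina--Paulin construction of Lemma~\ref{Pau1} with $\MM(S),(\mu_n),(d_n)$ replaced by $\CC(S),(\gamma_n),(\ell_n)$ produces an isometric action of $\Lambda$ on the $\mathbb{R}$-tree $\ko{\CC(S);(\gamma_n),(\ell_n)}$ with no global fixed point; this is alternative~(\ref{acs}). (Here $\ell_n\to\infty$ plays the role that the ``pairwise non-conjugate'' hypothesis played in Lemma~\ref{Pau} for the $\MM(S)$-action.)

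\emph{If $\lim_\omega\ell_n<\infty$}, say $\sup_{a\in A}\dcs(\phi_n(a)\gamma_n,\gamma_n)\le L$ $\omega$-a.s., then by induction on word length $\dcs(\phi_n(g)\gamma_n,\gamma_n)\le L\,|g|_A$ $\omega$-a.s.\ for every fixed $g\in\Lambda$. I would use the following consequence of Theorem~\ref{classifypieces}: if $(\rho_n),(\rho'_n)$ are sequences in $\MM(S)$ with $\dcs(\rho_n,\rho'_n)$ bounded $\omega$-a.s.\ and $\lio{\rho_n},\lio{\rho'_n}$ lie in $\AM$, then $\lio{\dcs(\rho_n,\rho'_n)}<\infty$, so by Theorem~\ref{classifypieces}(2)$\Rightarrow$(1) no point of $\AM$ separates $\lio{\rho_n}$ from $\lio{\rho'_n}$, i.e.\ they lie in one piece. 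Applying this to $\QQ(\gamma_n)$ --- all of whose elements have $\CC(S)$-projection within uniformly bounded distance of $\gamma_n$ --- the set $\QQ((\gamma_n)^\omega)$ of limits $\lio{\rho_n}\in\AM$ with $\rho_n\in\QQ(\gamma_n)$ is contained in a single piece $P$. Since $\xi(S)\ge2$ and $\gamma_n$ is one curve, $\QQ(\gamma_n)$ is an infinite-diameter connected subset of $\MM(S)$ (Remark~\ref{rmk:qqproduct}), so $\QQ((\gamma_n)^\omega)$ has more than one point and $P$ is unique. Finally $g\cdot\QQ((\gamma_n)^\omega)=\QQ\big((\phi_n(g)\gamma_n)^\omega\big)$, which by the same consequence (now $\dcs(\gamma_n,\phi_n(g)\gamma_n)\le L|g|_A$ is bounded) also lies in $P$; since $g$ permutes pieces, $gP$ is a piece meeting $P$ in more than one point, so $gP=P$ by property $(T_1)$ (Definition~\ref{deftgr}). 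Hence $\Lambda$ fixes $P$ set-wise, which is alternative~(\ref{ams}).

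\emph{The main obstacle} is to verify that $\QQ((\gamma_n)^\omega)$ is non-empty, i.e.\ that $\mu_n$ lies at $\MM(S)$-distance $O(d_n)$ from $\QQ(\gamma_n)$. I expect to extract this from the distance formula (Theorem~\ref{distanceformula}) and the projection estimates (Theorem~\ref{projest}): as one slides $\mu_n$ along a geodesic towards its nearest-point projection $\nu_n\in\QQ(\gamma_n)$, the $\CC(S)$-term in the displacement of each generator decreases toward $\dcs(\phi_n(a)\gamma_n,\gamma_n)\le L$, the terms coming from domains that overlap $\gamma_n$ stay bounded by Theorem~\ref{projest}, and the terms from domains disjoint from or filling a component of $S\setminus\gamma_n$ are unchanged up to $O(1)$; hence if $\dist_{\MM(S)}(\mu_n,\QQ(\gamma_n))$ were $\gg d_n$ one could find a point of strictly smaller displacement than $\mu_n$, contradicting the definition of $d_n$. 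Turning this monotonicity heuristic into an honest bound on $\dist_{\MM(S)}(\mu_n,\QQ(\gamma_n))$ (and not merely on the displacement of $\nu_n$) is the technical crux; the remainder is routine bookkeeping with Theorem~\ref{classifypieces} and with the fact that isometries of $\AM$ permute pieces.
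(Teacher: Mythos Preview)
Your dichotomy on $\lim_\omega\ell_n$ is exactly the paper's, and your treatment of alternative~(\ref{acs}) is fine. The gap is precisely the one you flag: in the bounded-$\ell_n$ case you need $\QQ((\gamma_n)^\omega)$ to be visible in $\AM$, i.e.\ $\dist_{\MM(S)}(\mu_n,\QQ(\gamma_n))=O(d_n)$, and your monotonicity heuristic does not deliver this. Even if one grants that projecting $\mu_n$ to $\nu_n\in\QQ(\gamma_n)$ produces a point with displacement $\lesssim d_n$, all that follows is that $\nu_n$ is another (approximate) minimizer of displacement; it says nothing about $\dist_{\MM(S)}(\mu_n,\nu_n)$, since the set of approximate minimizers can have very large diameter. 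Your sentence ``if $\dist_{\MM(S)}(\mu_n,\QQ(\gamma_n))$ were $\gg d_n$ one could find a point of strictly smaller displacement than $\mu_n$'' is exactly the step that does not follow from the monotonicity. And there is no reason the particular curve $\gamma_n$ realizing $\ell_n$ should be $\dcs$-close to $\base(\mu_n)$; indeed the paper explicitly treats the case $\lim_\omega\dcs(\mu_n,\gamma_n)=\infty$ as the generic one.

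The paper sidesteps $\QQ(\gamma_n)$ altogether and works instead with the interiors $U(g)$ of pieces developed in Section~\ref{sec:pieces} (Proposition~\ref{unique}): a piece is determined by \emph{any} sequence $(\rho_n)$ in it with $\lim_\omega\dcs(\rho_n,\phi_n(b)\rho_n)<\infty$ for all $b\in A$, and such $(\rho_n)$ need not lie in any $\QQ(\gamma_n)$. The argument then becomes purely curve-complex hyperbolic geometry. One considers a hierarchy path $[\mu_n,\mu_n']$ toward the projection $\mu_n'\in\QQ(\gamma_n)$ (which shadows a $\CC(S)$-geodesic), computes for each $b\in A$ the Gromov product
\[
\tau_n(b)=\tfrac12\bigl(\dcs(\mu_n,\mu_n')+\dcs(\mu_n,\phi_n(b)\mu_n)-\dcs(\mu_n',\phi_n(b)\mu_n)\bigr),
\]
and takes the point $\rho_n$ on $[\mu_n,\mu_n']$ at $\CC(S)$-distance $\tau_n=\max_b\tau_n(b)$ from $\mu_n$. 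Thin-quadrilateral arguments (using $\dcs(\mu_n',\phi_n(b)\mu_n')\le M$) give $\dcs(\rho_n,\phi_n(b)\rho_n)=O(1)$ for every $b$. Crucially, for the generator $a$ realizing the maximum, $\rho_n$ is at $\CC(S)$-distance $O(1)$ from a point $\rho_n''$ on the hierarchy path $[\mu_n,\phi_n(a)\mu_n]$; hence $\rho_n''$ is automatically at $\MM(S)$-distance $\le d_n$ from $\mu_n$, so $\lio{\rho_n''}\in\AM$, and $\dcs(\rho_n'',\phi_n(b)\rho_n'')=O(1)$ for all $b$. Then $U((\rho_n'')^\omega)$ is the interior of a piece $P$, and $\phi_n(b)\cdot U((\rho_n'')^\omega)=U((\phi_n(b)\rho_n'')^\omega)=U((\rho_n'')^\omega)$ for every $b$, so $\Lambda$ fixes $P$. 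The point is that the Gromov-product construction manufactures a sequence \emph{guaranteed} to lie in the cone (it sits on a hierarchy path between $\mu_n$ and a generator-translate), rather than trying to drag $\QQ(\gamma_n)$ into the cone after the fact; this is what is missing from your outline.
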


\proof Let $\ell_n=\inf_{\gamma \in \CC(S)}\sup_{a\in A}\dcs
(\phi_n(a)\gamma ,\gamma )$.  As before, there exists $b_0\in A$ and
$\gamma_n\in \CC (S)$ such that $\ell_n=\dcs
(\phi_n(b_0)\gamma_n,\gamma_n)$ $\omega$--a.s.

If $\lim_\omega \ell_n = +\infty$ then the sequence $(\phi_n)$ defines
a non-trivial action of $\Lambda$ on $\ko{\CC (S); (\gamma_n),
(\ell_n)}$.

Assume now that there exists $M$ such that for every $b\in A$, $\dcs
(\phi_n(b)\gamma_n,\gamma_n)\leq M$ \uas.  This implies that for every
$g\in \Lambda$ there exists $M_g$ such that $\dcs
(\phi_n(g)\gamma_n,\gamma_n)\leq M_g$ \uas.

Consider $\mu_n'$ the projection of $\mu_n$ onto $\QQ(\gamma_n )$.  A
hierarchy path $[\mu_n ,\mu_n']$ shadows a tight geodesic $\g_n$
joining a curve in $\base (\mu_n )$ to a curve in $\base (\mu_n' )$,
the latter curve being at $C(S)$-distance 1 from $\gamma_n$.  If the
$\omega$-limit of the $C(S)$-distance from $\mu_n$ to $\mu_n'$ is
finite then in follows that for every $b\in A$, $\dcs
(\phi_n(b)\mu_n,\mu_n)=O(1)$ \uas.  Then the action of $\Lambda$ on
$\ko{\MM (S) ; (\mu_n), (d_n)}$ defined by the sequence $(\phi_n)$
preserves $U((\mu_n)^\omega)$, which is the interior of a piece, hence
it fixes a piece set-wise.  Therefore in what follows we assume that
the $\omega$-limit of the $C(S)$-distance from $\mu_n$ to $\mu_n'$ is
infinite.

Let $b$ be an arbitrary element in the set of generators $A$.
Consider a hierarchy path $[\mu_n ,\phi_n(b)\mu_n]$.  Consider the
Gromov product
$$
\tau_n(b) = \frac{1}{2} \left[\dcs (\mu_n,\mu_n') + \dcs (\mu_n,
\phi_n(b)\mu_n )-\dcs (\mu_n', \phi_n(b)\mu_n)\right]\, , $$ and
$\tau_n = \max_{b\in A}\tau_n(b)$.

The geometry of quadrangles in hyperbolic geodesic spaces combined
with the fact that $\dcs (\mu_n', \phi_n(b)\mu_n' )\leq M$ implies
that:
\begin{itemize}
    \item every element $\nu_n$ on $[\mu_n , \mu_n']$ which is at
    $C(S)$ distance at least $\tau_n(b)$ from $\mu_n$ is at
    $C(S)$-distance $O(1)$ from an element $\nu_n'$ on
    $[\phi_n(b)\mu_n , \phi_n(b)\mu_n']\, $; it follows that $\dcs
    (\nu_n', \phi_n(b)\mu_n')= \dcs (\nu_n, \mu_n')+O(1)$, therefore $\dcs
    (\nu_n', \phi_n(b)\nu_n)=O(1)$ and $\dcs (\nu_n,
    \phi_n(b)\nu_n)=O(1)$;

\medskip

 \item the element $\rho_n(b)$ which is at $C(S)$ distance $\tau_n(b)$
 from $\mu_n$ is at $C(S)$-distance $O(1)$ also from an element
 $\rho_n''(b)$ on $[\mu_n , \phi_n(b)\mu_n]$.
\end{itemize}

We have thus obtained that for every element $\nu_n$ on $[\mu_n ,
\mu_n']$ which is at $C(S)$ distance at least $\tau_n$ from $\mu_n$,
$\dcs (\nu_n, \phi_n(b)\nu_n)=O(1)$ for every $b\in B$ and
$\omega$-almost every $n$.  In particular this holds for the point
$\rho_n$ on $[\mu_n , \mu_n']$ which is at $C(S)$ distance $\tau_n$
from $\mu_n$.  Let $a\in A$ be such that $\tau_n(a)=\tau_n$ and
$\rho_n=\rho_n(a)$, and let $\rho_n''= \rho_n''(a)$ be the point on
$[\mu_n , \phi_n(a)\mu_n]$ at $C(S)$-distance $O(1)$ from $\rho_n(a)$.
It follows that $\dcs (\rho_n'', \phi_n(b)\rho_n'')=O(1)$ for every
$b\in B$ and $\omega$-almost every $n$.  Moreover, since $\rho''$ is a
point on $[\mu_n , \phi_n(a)\mu_n]$, its limit is a point in $\ko{\MM
(S) ; (\mu_n), (d_n)}$.  It follows that the action of $\Lambda$ on
$\ko{\MM (S) ; (\mu_n), (d_n)}$ defined by the sequence $(\phi_n)$
preserves $U((\rho_n'')^\omega)$, which is the interior of a piece,
hence it fixes a piece set-wise.  \endproof

\begin{lemma}\label{stab2cs}
Let $\seq \gamma$ and $\seq \gamma'$ be two distinct points in an
asymptotic cone of the complex of curves $\ko{\CC (S); (\gamma_n),
(d_n)}$.  The stabilizer $\stab (\seq \gamma , \seq \gamma')$ in
the ultrapower $\MCG (S)^\omega$ is the extension of a finite subgroup
of cardinality at most $N=N(S)$ by an abelian group.
\end{lemma}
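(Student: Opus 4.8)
The plan is as follows. Since $\CC(S)$ is $\delta$-hyperbolic (Masur--Minsky), the asymptotic cone $\mathcal{AC}(S):=\ko{\CC(S);(\gamma_n),(d_n)}$ is an $\R$-tree, and $\MCG(S)^\omega$ acts on it by isometries, $\ulim g_n$ sending $\seq\gamma$ to $\ulim (g_n\gamma_n)$. Put $G:=\stab(\seq\gamma,\seq\gamma')$. As $\mathcal{AC}(S)$ is a tree and fixed-point sets of tree isometries are convex, $G$ fixes the geodesic $[\seq\gamma,\seq\gamma']$ pointwise. The hypothesis $\seq\gamma\neq\seq\gamma'$ means there is $c>0$ with $\dcs(\gamma_n,\gamma_n')\geq c\,d_n$ \uass; in particular $\lim_\omega\dcs(\gamma_n,\gamma_n')=\infty$, and since any essential proper subsurface $W\subsetneq S$ contains only curves that are pairwise at $\CC(S)$-distance at most $2$, the curves $\gamma_n$ and $\gamma_n'$ jointly fill $S$ \uass. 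Finally, by Lemma~\ref{fs}, $G_p:=G\cap \MCG(S)_p^\omega$ is normal in $G$ of index at most $N(S)$ and consists of pure, hence torsion-free, elements; it therefore suffices to show that $G_p$ is abelian and central in $G$. Indeed, if $G_p\leq Z(G)$ has finite index, Schur's theorem forces $[G,G]$ to be finite, necessarily of cardinality at most $N(S)$ by Lemma~\ref{fs}(2) (being a finite subgroup of $\MCG(S)^\omega$), and then $1\to[G,G]\to G\to G/[G,G]\to 1$ is the desired extension; and if $G_p=\{1\}$ then $G$ is itself finite of order at most $N(S)$.

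The heart of the matter is the following rigidity statement, which I would prove using the projection estimates of Theorem~\ref{projest}, the bounded geodesic image theorem, and the fact that a pseudo-Anosov on a subsurface $W\subseteq S$ has $\CC(W)$-translation length bounded below by a positive constant depending only on $S$. Let $g=(g_n)^\omega\in G$ and suppose $g_n$ is pseudo-Anosov on a subsurface $V_n\subseteq S$ (possibly $V_n=S$) overlapped by $\gamma_n$. Applying the hypothesis to $g^m\in G$, with $m$ chosen so that $m$ times that translation length exceeds the bounded geodesic image constant, one gets that every $\CC(S)$-geodesic from $\gamma_n$ to $g_n^m\gamma_n$ passes within a uniformly bounded distance of $\partial V_n$; hence $\dcs(\gamma_n,g_n^m\gamma_n)\geq\dcs(\gamma_n,\partial V_n)-O(1)$, and since $\lim_\omega\frac1{d_n}\dcs(\gamma_n,g_n^m\gamma_n)=0$ this forces $\lim_\omega\frac1{d_n}\dcs(\gamma_n,\partial V_n)=0$. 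The same applies to each curve of the canonical reduction system of $g_n$ about which $g_n$ twists nontrivially, and with $\gamma_n'$ in place of $\gamma_n$. Combined with $\dcs(\gamma_n,\gamma_n')\geq c\,d_n$ and the triangle inequality, this shows that no pseudo-Anosov component and no essential twisting curve of $g_n$ can be overlapped by both $\gamma_n$ and $\gamma_n'$: each is ``attached to one end'' of the tight geodesic shadowed by a hierarchy path from $\gamma_n$ to $\gamma_n'$. Partitioning the boundedly many components and twisting curves of $g_n$ (and of $h_n$, for $h=(h_n)^\omega\in G$) according to which end they approach, a Gromov-product bookkeeping on that geodesic, in the spirit of the proofs of Lemma~\ref{MM2:LLL} and Theorem~\ref{projest}, shows that the support data of $g_n$ and of $h_n$ are \uass{} disjoint or nested, and likewise for $g_n$ and $h_ng_nh_n^{-1}$. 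Since disjointly or nestedly supported pure mapping classes commute, this gives $[g,h]=1$ and $h_ng_nh_n^{-1}=g_n$ \uass, so $G_p$ is abelian and central in $G$, completing the argument modulo the rigidity statement.

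\textbf{The main obstacle} is exactly this rigidity statement, and within it the fact that elements of $G$ fix $\gamma_n,\gamma_n'$ only up to sublinear error: one cannot simply replace $\gamma_n,\gamma_n'$ by honestly fixed curves — that would make the lemma immediate, since a torsion-free pure mapping class fixing a filling pair of curves is trivial — so the whole argument must be run at the level of coarse subsurface projections, uniformly over \oaen{}, and several degenerate configurations (pseudo-Anosov components that contain one of the curves rather than crossing it, twisting curves lying on the geodesic, components that collide in the limit) have to be handled separately. The remaining ingredients — the reduction to $G_p$ and the passage from ``central abelian of finite index'' to the finite-by-abelian conclusion via Schur's theorem and Lemma~\ref{fs} — are routine.
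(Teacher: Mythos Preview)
Your reduction via $G_p$ and Schur's theorem is a legitimate framework, and in fact the conclusion you aim for ($G_p$ abelian and central in $G$) turns out to be true. But your direct ``support--based'' argument for it cannot work as written. First, global pseudo-Anosov elements do occur in $G_p$: if $f\in\MCG(S)$ is pseudo-Anosov and $\gamma_n,\gamma_n'$ are chosen on its quasi-axis at $\CC(S)$-distance $d_n$, then the constant sequence $(f)^\omega$ lies in $\stab(\seq\gamma,\seq\gamma')$; for such elements there is no $\partial V_n$, and the bounded--geodesic--image step is vacuous. Second, the assertion that ``disjointly or nestedly supported pure mapping classes commute'' is false in the nested case: a pseudo-Anosov on $S$ and a partial pseudo-Anosov on a proper $W\subsetneq S$ have nested supports and almost never commute. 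Third, even granting that every component of $g_n$ and of $h_n$ has boundary within $o(d_n)$ of one endpoint, two such components living ``at the same end'' can overlap rather than be disjoint or nested, so the promised Gromov-product bookkeeping cannot yield the dichotomy you need.

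The paper's proof avoids analysing the Nielsen--Thurston structure of individual elements altogether. Pick interior points $x_n,y_n$ on a $\CC(S)$-geodesic $[\gamma_n,\gamma_n']$ at distance $\epsilon d_n$ from the endpoints. Hyperbolicity forces $g_n[\gamma_n,\gamma_n']$ to fellow-travel $[\gamma_n,\gamma_n']$ along this sub-arc, so one can define a signed translation length $\ell_x(\seq g)$; this is a quasi-morphism $G\to\Pi\R/\omega$, hence uniformly bounded on commutators. Thus every commutator $\seq c=(c_n)^\omega\in[G,G]$ moves $x_n$ and $y_n$ by $O(1)$ --- genuinely bounded, not merely $o(d_n)$. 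Bowditch's acylindricity of the $\MCG(S)$-action on $\CC(S)$ then bounds the number of such $c_n$, so the set of commutators of $G$ has cardinality at most some $N(S)$. Neumann's theorem on FC-groups finishes: the derived subgroup is finite of size $\le N(S)$. The step you are missing is precisely this quasi-morphism trick, which upgrades ``sublinear displacement'' to ``bounded displacement'' for commutators and lets acylindricity apply.
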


\proof Let $\q_n$ be a geodesic joining $\gamma_n$ and $\gamma_n'$ and let $x_n, y_n$ be points
at distance $\epsilon d_n$ from  $\gamma_n$ and $\gamma_n'$ respectively, where $\epsilon >0$ is small enough.

Let $\seq g=\ultra g$ be an element in $\stab (\seq\gamma , \seq\gamma' )$.
Then $$\delta_n(\seq g) =\max (\dist_{\CC (S)} (\gamma_n , g_n \gamma_n)\, ,\, \dist
(\gamma_n' , g_n \gamma_n'))$$ satisfies $\delta_n(\seq g) = o(d_n)$.

Since $\CC (S)$ is a Gromov hyperbolic space
it follows that the sub-geodesic of $\q_n$ with endpoints $x_n, y_n$
is contained in a finite radius tubular neighborhood
of $g_n \q_n$.  Since $x_n$ is \uass at distance
$O(1)$ from a point $x_n'$ on $g_n q_n$, define $\ell_x (g_n)$ as
$(-1)^\epsilon \dcs (x_n , g_n x_n) $, where $\epsilon =0$ if $x_n'$
is nearer to $g_n \mu_n$ than $g_n x_n $ and $\epsilon =1$ otherwise.

Let $\ell_x \co \stab (\mu , \nu ) \to \Pi \R /\omega $ defined by
$\ell_x (\seq g)= \left(\ell_x (g_n) \right)^\omega$.  It is easy to
see that $\ell_x$ is a quasi-morphism, that is
\begin{equation}\label{qm}
\left| \ell_x (\seq g\seq h)- \ell_x (\seq g)-\ell_x (\seq h) \right|
\leq _{\omega} O(1)\, .
\end{equation}

It follows that $\left| \ell_x \left( [\seq g,\seq h] \right) \right|
\leq _{\omega} O(1) \, .$

The above and a similar argument for $y_n$ imply that for every
commutator, $\seq c = \seqrep c$, in the stabilizer of $\seq \mu$ and
$\seq\nu$, $\dcs (x_n , c_n x_n)$ and $\dcs (y_n , c_n y_n)$ are at
most $O(1)$.  Lemma \ref{D} together with Bowditch's
acylindricity result \cite[Theorem 1.3]{Bowditch:tightgeod}
imply that the set
of commutators of $\stab (\seq\mu, \seq\nu)$ has uniformly bounded
cardinality, say, $N$.  Then any finitely generated subgroup $G$ of
$\stab (\seq\mu, \seq\nu)$ has conjugacy classes of cardinality at
most $N$, i.e. $G$ is an $FC$-group \cite{Neumann:finiteconj}.  By
\cite{Neumann:finiteconj}, the set of all torsion elements of $G$ is
finite, and the derived subgroup of $G$ is finite of cardinality $\le
N(S)$ (by Lemma \ref{fs}).

\endproof

\begin{lemma}\label{stab3}
Let $\sa, \sb , \sc$ be the vertices of a non-trivial tripod in an
asymptotic cone of the complex of curves $\ko{\CC (S); (\gamma_n),
(d_n)}$.  The stabilizer $\stab (\sa , \sb , \sc )$ in
the ultrapower $\MCG (S)^\omega$ is a finite subgroup
of cardinality at most $N=N(S)$.
\end{lemma}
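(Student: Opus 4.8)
The plan is to deduce this from Lemma~\ref{stab2cs} together with the tripod geometry in a Gromov hyperbolic space, mimicking the standard argument that in an $\R$--tree a tripod stabilizer is contained in the stabilizer of each of its three sub-arcs, but one arc reversed. Let $m$ be a median point of the tripod with vertices $\sa, \sb, \sc$, realized as a limit $m=\lio{m_n}$ of points $m_n$ in $\CC(S)$ with $\dist_{\CC(S)}(m_n,\cdot)=o(d_n)$ to suitable interior points of geodesics $[\alpha_n,\beta_n]$, $[\beta_n,\gamma_n]$, $[\alpha_n,\gamma_n]$; such $m_n$ exist up to $O(1)$ error by $\delta$--hyperbolicity of $\CC(S)$. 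An element $\seq g\in\stab(\sa,\sb,\sc)$ lies in each of $\stab(\sa,\sb)$, $\stab(\sb,\sc)$, $\stab(\sa,\sc)$, so by Lemma~\ref{stab2cs} it is in the finite-by-abelian stabilizer of each pair, and in particular $\delta_n(\seq g)=\max(\dist_{\CC(S)}(\alpha_n,g_n\alpha_n),\dist_{\CC(S)}(\beta_n,g_n\beta_n),\dist_{\CC(S)}(\gamma_n,g_n\gamma_n))=o(d_n)$.

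The key step is to upgrade the three quasi-morphisms from the proof of Lemma~\ref{stab2cs} into a statement about translation lengths along the three legs of the tripod. For each leg, say the one from $m$ towards $\sa$, fix an interior point $x_n$ on $[m_n,\alpha_n]$ at $\CC(S)$--distance $\epsilon d_n$ from $m_n$, and define $\ell_x$ as in Lemma~\ref{stab2cs}; since $\seq g$ stabilizes $\sa$ and $m$ (the latter because $m$ is the unique median of a tripod it fixes, so $g_nm_n$ is within $o(d_n)$ of $m_n$), the quasi-morphism $\ell_x$ vanishes on $\seq g$, i.e. the ``signed translation length'' of $\seq g$ along this leg is $0$. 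Repeating for the legs towards $\sb$ and $\sc$, and using that the three legs emanate from $m$ in three distinct directions, one concludes that $g_n$ moves points near $m_n$ by $O(1)$ in the $\CC(S)$--metric in three independent directions. This forces $\dist_{\CC(S)}(m_n,g_nm_n)=O(1)$ \emph{and}, by a standard $\delta$--hyperbolicity argument on the tripod, that $g_n$ coarsely fixes a definite-length sub-tripod of the tripod on $\{\alpha_n,\beta_n,\gamma_n\}$; in particular $g_n$ moves two points at a definite distance along two distinct legs by $O(1)$, so $g_n$ coarsely fixes a pair of points spanning a geodesic together with a third point off that geodesic, with all three at pairwise $\CC(S)$--distance $\asymp \epsilon d_n$.

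Finally, one applies Bowditch's acylindricity theorem \cite[Theorem 1.3]{Bowditch:tightgeod}: the set of $h\in\MCG(S)^\omega$ that coarsely fix such a configuration in $\CC(S)$ has uniformly bounded cardinality. More precisely, fix two points $u_n,v_n$ on the tripod at $\CC(S)$--distance $\asymp\epsilon d_n$ along two different legs from $m_n$; acylindricity says the number of $h\in\MCG(S)$ with $\dist_{\CC(S)}(u_n,hu_n)\le R$ and $\dist_{\CC(S)}(v_n,hv_n)\le R$ is bounded by a constant depending only on $R$ and $S$ once $\dist_{\CC(S)}(u_n,v_n)$ is large enough; by Lemma~\ref{D} this bound passes to the ultrapower, so $\stab(\sa,\sb,\sc)$ is finite of cardinality at most some $N'=N'(S)$, and by Lemma~\ref{fs}(2) this finite subgroup has order at most $N=N(S)$. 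The main obstacle is the second step: verifying carefully that a $\CC(S)$--isometry which coarsely fixes the three vertices \emph{and} has vanishing signed translation length along each leg must coarsely fix two points spanning a definite-length geodesic off which a third fixed point lies --- i.e. that the quasi-morphism vanishing conditions genuinely pin down a non-degenerate coarse fixed-point set rather than just a coarse fixed point. This requires combining the hyperbolicity of $\CC(S)$ (thin tripods) with the construction of $m_n$ at a uniformly bounded distance from each of the three geodesics simultaneously.
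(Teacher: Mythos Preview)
Your overall strategy matches the paper's: find two points at $\CC(S)$--distance $\asymp d_n$ that every stabilizer element moves by $O(1)$, then invoke Bowditch's acylindricity and Lemma~\ref{D}. The gap is in how you obtain the $O(1)$ displacement bound.

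You argue that since $\seq g$ fixes $\sa$ and the median $m$ in the cone, the quasi-morphism $\ell_x$ from the proof of Lemma~\ref{stab2cs} ``vanishes'' on $\seq g$. But fixing $\sa$ and $m$ in the cone gives only $\dist_{\CC(S)}(m_n,g_nm_n)=o(d_n)$ and $\dist_{\CC(S)}(\alpha_n,g_n\alpha_n)=o(d_n)$, and from this your argument yields only $\ell_x(\seq g)=o(d_n)$, not $O(1)$. The quasi-morphism property in Lemma~\ref{stab2cs} is used there to bound \emph{commutators}; it gives no bound on an individual element, so invoking it here is a red herring. With only $o(d_n)$ displacement of your chosen points $u_n,v_n$, acylindricity does not apply: Bowditch's bound depends on a fixed displacement threshold $R$, and yours tends to infinity.

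The paper bypasses quasi-morphisms entirely. It uses that in a $\delta$--hyperbolic space the set $C_a(\alpha_n,\beta_n,\gamma_n)$---the intersection of the $a$--neighborhoods of the three sides of the triangle---has diameter bounded by a constant $b=b(a,\delta)$. Because the tripod is non-degenerate, this coarse center lies at distance $\Theta(d_n)$ from every vertex; hence the sides $[g_n\alpha_n,g_n\beta_n]$ etc.\ fellow-travel the original sides within $O(\delta)$ there, giving $g_n C_a\subset C_A$ for some $A$ depending only on $\delta$. Thus $\dist_{\CC(S)}(\tau_n,g_n\tau_n)\le\operatorname{diam} C_A=O(1)$ for any $\tau_n\in C_a$: the $O(1)$ comes from the center-diameter bound, independent of the $o(d_n)$ information. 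A second $O(1)$--almost-fixed point $\eta_n$ at distance $\epsilon d_n$ along one leg is then produced by the same fellow-traveling. Your final sentence gestures at thin tripods, which is exactly the right idea, but the body of your argument never uses it; replace the quasi-morphism step by this direct bounded-center argument and the proof goes through.
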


\proof Since $\CC(S)$ is $\delta$-hyperbolic, for every $a>0$ there exists
$b>0$ such that for any triple of points $x,y,z\in\CC(S)$ the intersection of the three
$a$-tubular neighborhoods of geodesics $[x,y]$, $[y,z]$, and
$[z,a]$ is a set $C_{a}(x,y,z)$ of diameter at most $b$.

Let $\seq g=\ultra g$ be an element in $\stab (\sa , \sb , \sc )$.
Then $$\delta_n(\seq g) =\max (\dist_{\CC (S)} (\alpha_n , g_n
\alpha_n)\, ,\, \dist (\beta_n, g_n \beta_{n})\, ,\, \dist
(\gamma_n , g_n \gamma_n))$$ satisfies $\delta_n(\seq g) = o(d_n)$.
While, the distance between each pair of points among $\alpha_{n},\beta_{n},$ and $\gamma_{n}$ is at least $ \lambda d_{n}$ for some $\lambda >0$. It
follows that if $(x_n,y_n)$ is any of the pairs $(\alpha_{n},\beta_{n})$,
$(\alpha_{n},\gamma_{n})$, $(\gamma_{n},\beta_{n})$, then away
from a $o(d_n)$-neighborhood of the endpoints the two
geodesics $[x_n,y_n]$ and $[g_{n}x_n,g_{n}y_n]$ are uniformly Hausdorff close. This in particular implies that away
from a $o(d_n)$-neighborhood of the endpoints, the $a$-tubular neighborhood of $[g_{n}x_n,g_{n}y_n]$ is contained in the $A$-tubular neighborhood of $[x_n, y_n]$ for some $A>a$. Since $\sa , \sb ,\sc$ are the vertices of a non-trivial tripod, for any $a>0$, $C_{a}(\alpha_{n},\beta_{n}, \gamma_{n})$ is \uass disjoint of  $o(d_n)$-neighborhoods of $\alpha_{n},\beta_{n}, \gamma_{n}$. The same holds for  $\seq g \sa , \seq g \sb , \seq g \sc$. It follows that $C_{a}(g_{n}\alpha_{n},g_{n}\beta_{n}, g_{n}\gamma_{n})$ is contained in $C_{A}(\alpha_{n},\beta_{n}, \gamma_{n})$, hence it is at Hausdorff distance at most $B>0$ from $C_{a}(\alpha_{n},\beta_{n}, \gamma_{n})$.

Thus we may find a point $\tau_n \in [\alpha_{n},\beta_{n}]$ such that $\dist (\tau_n , g_n \tau_n ) =O(1)$, while the distance from $\tau_n$ to $\{ \alpha_{n},\beta_{n} \}$
 is at least $2\epsilon d_n$. Let $\eta_n\in [\tau_n , \alpha_n ]$ be a point at distance $\epsilon d_n$ from $\tau_n$. Then  $g_n\eta_n\in [g_n\tau_n , g_n\alpha_n ]$ is a point at distance $\epsilon d_n$ from $g\tau_n$. On the other hand, since
 $\eta_n$ is at distance at least $\epsilon d_n$ from $\alpha_n$ it follows that there exists $\eta_n'\in [g_n\tau_{n},g_n\alpha_{n}]$ at distance $O(1)$ from $\eta_n$. It follows that $\eta_n'$ is at distance $\epsilon d_n +O(1)$ from $g_n \tau_n$, hence $\eta_n'$ is at distance $O(1)$ from $g_n \eta_n$. Thus we obtained that $g_n \eta_n$ is at distance $O(1)$ from $\eta_n$. This, the fact that $g_n \tau_n$ is at distance $O(1)$ from $\tau_n$ as well, and the fact that $\dist (\tau_n , \eta_n) = \epsilon d_n$, together with Bowditch's
acylindricity result \cite[Theorem 1.3]{Bowditch:tightgeod} and   Lemma \ref{D}
imply that the stabilizer $\stab (\sa , \sb , \sc )$ has uniformly bounded
cardinality. \endproof

\begin{corollary}\label{ii}
Suppose that a finitely generated group $\Lambda =\la A\ra$ has
infinitely many injective homomorphisms $\phi_n$ into a mapping class
group $\MCG(S)$, which are pairwise non-conjugate in $\MCG(S)$.

Then one of the following two situations occurs:
\begin{enumerate}
    \item\label{acs2} $\Lambda$ is virtually abelian, or it splits
    over a virtually abelian subgroup; \item\label{ams2} the action by
    isometries, without a global fixed point, of $\Lambda $ on the
    asymptotic cone $\ko{\MCG (S) ; (\mu_n), (d_n)}$ defined as in
    Lemma \ref{Pau1} fixes a piece set-wise.
\end{enumerate}
\end{corollary}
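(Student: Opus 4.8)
The plan is to deduce the corollary from Proposition~\ref{i} and Guirardel's Theorem~\ref{gui}, using Lemmas~\ref{stab2cs} and~\ref{stab3} to supply the structural hypotheses of Lemma~\ref{stable}. Applying Proposition~\ref{i} to the sequence $(\phi_n)$, we land in one of two cases. If alternative~(\ref{ams}) of Proposition~\ref{i} holds, then the action of $\Lambda$ on $\ko{\MM(S);(\mu_n),(d_n)}$ fixes a piece set-wise; via the $\MCG(S)$-equivariant quasi-isometry between $\MM(S)$ and $\MCG(S)$ (Theorem~\ref{MM:marking}) this is precisely alternative~(\ref{ams2}) of the corollary and we are done. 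So assume alternative~(\ref{acs}): the sequence $(\phi_n)$ defines a non-trivial isometric action of $\Lambda$ on $T=\ko{\CC(S);(\gamma_n),(\ell_n)}$, given by $g\cdot\lio{c_n}=\lio{\phi_n(g)c_n}$; since $\CC(S)$ is $\delta$-hyperbolic, $T$ is an $\R$-tree. By the construction of the normalizing sequence $(\ell_n)$ (compare Lemma~\ref{Pau1}) the action has no global fixed point, so, $\Lambda$ being finitely generated, we may pass to a minimal invariant subtree and assume the action minimal.

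First I would verify that Lemma~\ref{stable} applies. Because each $\phi_n$ is injective, the map $\Lambda\to\Pi\MCG(S)/\omega$, $g\mapsto(\phi_n(g))^\omega$, is injective; and the $\Lambda$-action on $T$ is the restriction, along this embedding, of the natural action on $T$ of the subgroup of $\MCG(S)^\omega$ consisting of elements displacing $\lio{\gamma_n}$ by a finite amount. Hence the $\Lambda$-stabilizer of a non-degenerate arc of $T$ is a subgroup of the stabilizer in $\MCG(S)^\omega$ of its two (distinct) endpoints, which by Lemma~\ref{stab2cs} is an extension of a group of order at most $N(S)$ by an abelian group; likewise the $\Lambda$-stabilizer of a non-degenerate tripod in $T$ sits inside the corresponding tripod stabilizer in $\MCG(S)^\omega$, which by Lemma~\ref{stab3} has order at most $N(S)$. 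Taking $D=N(S)$, Lemma~\ref{stable} then gives that the action of $\Lambda$ on $T$ is of finite height and that every arc with stabilizer of order $>(D+1)!$ is stable; in particular non-stable arc stabilizers are finite, hence finitely generated.

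Next I would apply Theorem~\ref{gui} to the minimal action of $\Lambda$ on $T$ and go through its three outcomes. In outcome~(1), $\Lambda$ splits over the stabilizer of a non-stable arc or of a tripod, and by the previous paragraph both of these are finite, in particular virtually abelian. In outcome~(2), $\Lambda$ splits over a virtually cyclic extension of a stable arc stabilizer; the latter is finite-by-abelian by Lemma~\ref{stab2cs}, and a virtually cyclic extension of such a group, as it arises here inside $\MCG(S)^\omega$, is virtually abelian by the analysis of arc stabilizers in \cite{DrutuSapir:splitting}. In outcome~(3), $T$ is a line and (the action being non-trivial) $\Lambda$ has a subgroup of index at most $2$ which is an extension of the kernel of the action --- contained in the stabilizer of any two points of the line, hence finite-by-abelian by Lemma~\ref{stab2cs} --- by a non-trivial finitely generated free abelian group; projecting that free abelian quotient onto $\Z$ exhibits $\Lambda$ either as virtually abelian or as splitting over a virtually abelian subgroup, again as in \cite{DrutuSapir:splitting}. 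In each outcome we are in alternative~(\ref{acs2}) of the corollary, which completes the argument.

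The step I expect to be the main obstacle is precisely this last piece of bookkeeping: upgrading the purely structural descriptions furnished by Lemmas~\ref{stab2cs} and~\ref{stab3} (finite, finite-by-abelian, or an extension of a finite-by-abelian group by a free abelian group) to the genuine conclusion that $\Lambda$ is virtually abelian or splits over a \emph{virtually abelian} subgroup. One must be careful about finite generation of the edge and kernel groups and about the fact that the relevant virtually cyclic extensions of finite-by-abelian groups occurring inside $\MCG(S)^\omega$ really are virtually abelian --- this is exactly the bookkeeping performed in \cite{DrutuSapir:splitting}. A secondary technical point is the reduction to a minimal action on $T$ and the identification of $\Lambda$-stabilizers of points of $T$ with the stabilizers in $\MCG(S)^\omega$ to which Lemmas~\ref{stab2cs}--\ref{stab3} apply, which is where injectivity of the $\phi_n$ is used.
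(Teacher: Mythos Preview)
Your overall strategy---apply Proposition~\ref{i}, and in case~(\ref{acs}) feed the action on the $\R$-tree $T$ into Guirardel's Theorem~\ref{gui} via the stabilizer estimates of Lemmas~\ref{stab2cs} and~\ref{stab3}---is exactly what the paper does. Using Lemma~\ref{stable} to obtain finite height and finiteness of non-stable arc stabilizers is fine (the paper instead invokes the ascending chain condition on virtually abelian subgroups of $\MCG(S)$, but either route works).

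The place where your argument is genuinely incomplete is precisely the step you flagged: upgrading the subgroups produced by Theorem~\ref{gui} to \emph{virtually abelian} ones. Your appeals to \cite{DrutuSapir:splitting} do not do this; that paper treats relatively hyperbolic targets and does not tell you that a virtually cyclic extension of a finite-by-abelian group, or an (arc stabilizer)-by-(free abelian) group, is virtually abelian. The paper closes this gap with a single observation you did not make: since each $\phi_n$ is injective, $\Lambda$ embeds in $\MCG(S)$, so every subgroup of $\Lambda$ is (isomorphic to) a subgroup of $\MCG(S)$. Now all the edge/kernel groups arising in outcomes (1)--(3) of Theorem~\ref{gui} are visibly virtually solvable (finite, or finite-by-abelian, or a virtually cyclic or free-abelian extension of such), and by Birman--Lubotzky--McCarthy \cite{BirmanLubotzkyMcCarthy} every virtually solvable subgroup of $\MCG(S)$ is virtually abelian. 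The same reference gives that virtually abelian subgroups of $\MCG(S)$ are finitely generated and satisfy the ascending chain condition, which is the paper's alternative way to verify the hypotheses of Theorem~\ref{gui}. So the missing ingredient is \cite{BirmanLubotzkyMcCarthy}, not \cite{DrutuSapir:splitting}.
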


\proof It suffices to prove that case (\ref{acs}) from Proposition
\ref{i} implies (\ref{acs2}) from Corollary \ref{i}.  According to
case (\ref{acs}) from Proposition \ref{i} the group $\Lambda$ acts
non-trivially on a real tree, by Lemma~\ref{stab2cs} we know that
the stabilizers of non-trivial arcs are virtually abelian,
 and by Lemma~\ref{stab3} we know that
the stabilizers of non-trivial tripods are finite.

On the other hand, since $\Lambda$ injects into $\MCG (S)$, it
follows immediately from results of
Birman--Lubotzky--McCarthy \cite{BirmanLubotzkyMcCarthy}
that virtually abelian subgroups in
$\Lambda$ satisfy the ascending chain condition, and are always
finitely generated.  By Theorem \ref{gui} we thus have that
$\Lambda$ is either virtually abelian or it splits over
a virtually solvable subgroup. One of the main theorems of
\cite{BirmanLubotzkyMcCarthy} is that any virtual solvable subgroup of
the mapping class group is virtually abelian, finishing the argument.
\endproof

In fact the proof of Corollary \ref{ii} allows one to remove the injectivity assumption by replacing $\Lambda$ by its natural image in $\mcgb$ as follows:

\begin{corollary}\label{iit-1} Suppose that a finitely generated group $\Lambda =\la A\ra$ has
infinitely many homomorphisms $\phi_n$ into a mapping class
group $\MCG(S)$, which are pairwise non-conjugate in $\MCG(S)$. Let $N$ be the intersections of kernels of these homomorphisms

Then one of the following two situations occurs:
\begin{enumerate}
    \item\label{acs2.1} $\Lambda/N$ is virtually abelian, or it splits
    over a virtually abelian subgroup;

    \item\label{ams2.1} the action by
    isometries, without a global fixed point, of $\Lambda/N$ on the
    asymptotic cone $\ko{\MCG (S) ; (\mu_n), (d_n)}$ defined as in
    Lemma \ref{Pau1} fixes a piece set-wise.
\end{enumerate}

\end{corollary}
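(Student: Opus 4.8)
The plan is to run the argument of Corollary~\ref{ii} with $\Lambda$ replaced by its natural image $\bar\Lambda$ in $\mcgb$, that is, by $\phi_\omega(\Lambda)$ where $\phi_\omega\colon\Lambda\to\mcgb$ is the homomorphism of Lemma~\ref{Pau1}; here $N$ denotes $\ker\phi_\omega$, and since every element of $\bigcap_n\ker\phi_n$ acts trivially on any asymptotic cone built from the $\phi_n$, one has $\bigcap_n\ker\phi_n\subseteq N$, so $\bar\Lambda=\Lambda/N$ and every action considered below is well defined on $\Lambda/N$. By Convention~\ref{cxi} we may assume $\xi(S)\ge 2$. First I would apply Proposition~\ref{i} to $\Lambda$ and the family $(\phi_n)$; its two alternatives only involve the induced action of $\Lambda$ on a cone and hence pass to $\bar\Lambda$: either $(\phi_n)$ defines a non-trivial action of $\bar\Lambda$ on an asymptotic cone $T=\ko{\CC(S);(\gamma_n),(\ell_n)}$ of the complex of curves, which is an $\R$-tree, or the action of $\bar\Lambda$ on $\ko{\MCG(S);(\mu_n),(d_n)}$ fixes a piece set-wise. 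In the second case we are already in situation~(\ref{ams2.1}), so it remains to treat the first.

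In the first case $\bar\Lambda$ is, by construction, a subgroup of the ultrapower $\MCG(S)^\omega$ acting on the $\R$-tree $T$ without a global fixed point, and we pass to the minimal invariant subtree. By Lemmas~\ref{stab2cs} and~\ref{stab3}, the stabilizer in $\MCG(S)^\omega$ of any two distinct points of $T$ is an extension of a group of order at most $N(S)$ by an abelian group, and the stabilizer of any non-trivial tripod has order at most $N(S)$. Thus the action of $\bar\Lambda$ on $T$ has tripod stabilizers of bounded size and arc stabilizers that are finite-by-abelian with finite part of order at most $N(S)$, so Lemma~\ref{stable} shows that the action is of finite height and stable and that the stabilizer of every non-stable arc is finite, hence finitely generated. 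Theorem~\ref{gui} (Guirardel) then applies to the action of $\bar\Lambda$ on $T$ and yields one of three outcomes: in cases (1) and (2), $\bar\Lambda$ splits over a subgroup that is finite, respectively a virtually cyclic extension of a finite-by-abelian group, hence over a virtually solvable subgroup of $\MCG(S)^\omega$; in case (3), some subgroup of index at most $2$ in $\bar\Lambda$ is an extension of the pointwise stabilizer of $T$ (finite-by-abelian, by Lemma~\ref{stab2cs}) by a finitely generated free abelian group, so $\bar\Lambda$ is itself virtually solvable. Consequently, once one knows that virtually solvable subgroups of $\MCG(S)^\omega$ --- in particular the splitting subgroups just described --- are virtually abelian, it follows that $\bar\Lambda$ is virtually abelian or splits over a virtually abelian subgroup, which is conclusion~(\ref{acs2.1}).

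The main obstacle is exactly this last point: in Corollary~\ref{ii} the Birman--Lubotzky--McCarthy structure theory for virtually solvable subgroups is imported through the embedding $\Lambda\hookrightarrow\MCG(S)$, whereas here it must be re-established for subgroups of the ultrapower $\MCG(S)^\omega$, and this is genuinely more delicate (a finite-by-abelian subgroup of an ultraproduct of mapping class groups need not a priori be virtually abelian). The way around it is to use that the Birman--Lubotzky--McCarthy classification is uniform in $S$ --- a virtually solvable subgroup of $\MCG(S)$ has a free abelian subgroup whose index and rank are bounded in terms of $S$, and finite subgroups of $\MCG(S)^\omega$ have order at most $N(S)$ by Lemma~\ref{fs} --- so that, after reducing the relevant structural statements to finitely generated subgroups, it passes to $\MCG(S)^\omega$; here one also uses that $\MCG(S)^\omega$ has no infinite torsion subgroup (a torsion subgroup injects into the finite quotient $\MCG(S)^\omega/\MCG(S)_p^\omega$ of Lemma~\ref{fs}) and that $\Pi\Z/\omega$ is finite modulo any fixed positive integer, with Lemma~\ref{D} controlling cardinalities throughout. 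The ascending chain and finite generation statements invoked in Corollary~\ref{ii} are not literally available in the ultrapower, but only the qualitative conclusion ``virtually solvable implies virtually abelian'' is needed for the present weaker statement; with its uniform ultrapower version in hand, the proof of Corollary~\ref{ii} applies to $\bar\Lambda$, and combining the two cases proves the corollary.
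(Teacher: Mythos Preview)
Your approach is exactly what the paper indicates in its one-line justification (``the proof of Corollary~\ref{ii} allows one to remove the injectivity assumption by replacing $\Lambda$ by its natural image in $\mcgb$''), and you actually supply more detail than the paper does: your use of Lemma~\ref{stable} to obtain finite height directly from the structure of arc and tripod stabilizers, and your discussion of why the Birman--Lubotzky--McCarthy step (virtually solvable $\Rightarrow$ virtually abelian) survives passage to $\MCG(S)^\omega$, fill in points the paper leaves entirely implicit.

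One minor discrepancy worth flagging: the statement defines $N=\bigcap_n\ker\phi_n$, whereas both you and the paper's own justification work with the natural image in $\mcgb$, i.e.\ with $\ker\phi_\omega$, which may be strictly larger. Conclusion~(\ref{ams2.1}) pulls back trivially along the surjection $\Lambda/\bigcap_n\ker\phi_n\twoheadrightarrow\Lambda/\ker\phi_\omega$, but conclusion~(\ref{acs2.1}) need not; so the corollary should really be read with $N=\ker\phi_\omega$, exactly as you set things up.
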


Corollary \ref{iit-1} immediately implies the following statement because every quotient of a group with property (T) has property (T):

\begin{corollary}\label{iit}
Suppose that a finitely generated group $\Lambda =\la A\ra$ with
property~(T) has infinitely many injective homomorphisms $\phi_n$ into
a mapping class group $\MCG(S)$, which are pairwise non-conjugate in
$\MCG(S)$.

Then the action by isometries, without a global fixed point, of
$\Lambda $ on the asymptotic cone $\ko{\MCG (S) ; (\mu_n), (d_n)}$
defined as in Lemma \ref{Pau1} fixes a piece set-wise.
\end{corollary}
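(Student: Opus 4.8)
The plan is to obtain Corollary~\ref{iit} from Corollary~\ref{iit-1}, using only standard facts about Kazhdan's property~(T) together with results already established in this section. First I would make the trivial-kernel reduction: since the homomorphisms $\phi_n$ are injective, each $\ker\phi_n$ is trivial, so $N=\bigcap_n\ker\phi_n=\{1\}$ and $\Lambda/N=\Lambda$, which in particular has property~(T). Hence Corollary~\ref{iit-1} applies to the sequence $(\phi_n)$ and we land in one of its two cases; it then suffices to check that in case~(\ref{acs2.1}) the conclusion of case~(\ref{ams2.1}) holds as well.

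Next I would analyze case~(\ref{acs2.1}). A group with property~(T) has Serre's property~FA, and hence admits no nontrivial decomposition as an amalgamated product or HNN extension; in particular $\Lambda$ cannot split over a virtually abelian subgroup, so case~(\ref{acs2.1}) forces $\Lambda$ to be virtually abelian. But a virtually abelian group is amenable, and an amenable group with property~(T) is finite, so $\Lambda$ is finite. It then remains to observe that a finite $\Lambda$ also satisfies the desired conclusion: the action of $\Lambda$ on $\ko{\MCG(S);(\mu_n),(d_n)}$ furnished by Lemma~\ref{Pau1} is by isometries of the tree-graded space $\AM$ (every isometry of $\AM$ permutes its canonically defined pieces), it has no global fixed point by Lemma~\ref{Pau1}, and, $\Lambda$ being finite, all of its orbits are bounded; Lemma~\ref{gpisompiece} then produces a common fixed point or a common fixed piece, and since the former is excluded, $\Lambda$ fixes a piece set-wise.

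Combining the two alternatives of Corollary~\ref{iit-1}, in either case the action of $\Lambda$ on $\ko{\MCG(S);(\mu_n),(d_n)}$ defined as in Lemma~\ref{Pau1} is by isometries, has no global fixed point, and fixes a piece set-wise, which is exactly the statement of the corollary. I do not expect a genuine obstacle here: the substantive work is already contained in Corollary~\ref{iit-1}, and the only remaining ingredients are the two classical consequences of property~(T) recalled above and the bounded-orbits fixed-piece statement of Lemma~\ref{gpisompiece}, which serves merely to absorb the degenerate case of a finite~$\Lambda$.
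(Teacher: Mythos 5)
Your proof is correct and takes essentially the same route as the paper, which deduces the corollary from Corollary~\ref{iit-1} in a single line using that quotients of property~(T) groups have property~(T). You supply the details the paper leaves implicit --- property FA to exclude the splitting, amenability plus~(T) to reduce the virtually abelian alternative to a finite group, and Lemma~\ref{gpisompiece} to dispose of that degenerate case --- and each of these steps is valid.
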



\section{Subgroups with property (T)}\label{sect:T}

Property (T) can be characterized in terms of isometric actions on median spaces:

\begin{theorem}(\cite{CDH-T}, Theorem 1.2)\label{eqmw}
A locally compact, second countable group has property (T) if and only if
any continuous action by isometries of that group on any metric median space has bounded orbits.
\end{theorem}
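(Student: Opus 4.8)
The plan is to pass through the dictionary between median spaces, spaces with measured walls, and conditionally negative definite functions, and then invoke the Delorme--Guichardet characterization of property (T). Let $G$ be locally compact and second countable, hence $\sigma$-compact, so that $G$ has property (T) if and only if it has property FH (every continuous affine isometric action of $G$ on a real Hilbert space has a fixed point), equivalently every continuous conditionally negative definite function on $G$ is bounded.

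\textbf{(T) implies bounded orbits.} Suppose $G$ has property (T) and acts continuously by isometries on a metric median space $X$; fix a basepoint $x_0\in X$. The first step is to endow $X$ with its canonical $G$-invariant structure of a space with measured walls: walls are unordered pairs of complementary halfspaces of the median space, and one constructs a Borel measure $\mu$ on the set of halfspaces such that, for all $x,y$, the total $\mu$-measure of halfspaces separating $x$ from $y$ equals $\dist_X(x,y)$. Then $x\mapsto \mathbf{1}_{\{H\ni x\}}-\mathbf{1}_{\{H\ni x_0\}}$ is a $G$-equivariant isometric embedding of $X$ into $L^1$ of the halfspace space, intertwining the $G$-action with an affine isometric action whose linear part permutes halfspaces and preserves $\mu$. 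Since the $L^1$-distance is a conditionally negative definite kernel (Schoenberg: its square root embeds isometrically into a Hilbert space), $g\mapsto\dist_X(x_0,gx_0)$ is a continuous conditionally negative definite function on $G$, hence bounded by property FH. As the action is by isometries, every orbit is then bounded.

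\textbf{Bounded orbits implies (T).} I argue by contraposition. If $G$ does not have property (T), then it does not have FH, so there is a continuous affine isometric action of $G$ on a real Hilbert space $\mathcal{H}$ with unbounded orbits. The step here is to convert this into an isometric action on a metric median space with unbounded orbits. Equip $\mathcal{H}$ with the measured wall structure whose halfspaces are $\{f:\langle f,e\rangle>c\}$ for $e$ a unit vector and $c\in\R$, with measure the product of the rotation-invariant probability measure on the unit sphere with Lebesgue measure in $c$ (reducing to finite-dimensional subspaces as needed); by symmetry the resulting wall pseudometric is a fixed positive multiple of $\|\cdot\|$, and the affine isometric action of $G$ preserves this structure. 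Passing to the median space canonically associated with a space with measured walls — which contains $(\mathcal{H},c\|\cdot\|)$ isometrically and to which isometries of the wall space extend — produces a continuous isometric $G$-action on a metric median space with unbounded orbits. This contradicts the hypothesis, so $G$ has property (T).

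\textbf{Where the difficulty lies.} Everything except the two ``median space $\leftrightarrow$ measured wall space'' passages is routine given the standard structure theory of (T). The hard part is precisely these passages in the continuous, non-discrete, non-complete setting used here: constructing the canonical halfspace measure on an arbitrary metric median space and verifying its Borel measurability, its finiteness on intervals, and its $G$-equivariance; and, dually, checking that the median space attached to a space with measured walls carries the ambient metric correctly and that the given group action and its continuity survive the construction. These measure-theoretic verifications, carried out in \cite{CDH-T}, are the substance of the theorem.
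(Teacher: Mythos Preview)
The paper does not actually prove this theorem: it is quoted from \cite{CDH-T}, and the surrounding text only remarks on the history of the direct implication (property (T) $\Rightarrow$ bounded orbits), noting that for locally compact second countable groups it follows by combining \cite[Theorem 20, Chapter 5]{delaHarpeValette:proprieteT} with \cite[Theorem V.2.4]{Verheul:book}. So there is no in-paper proof to compare against; what one can compare is your outline with that remark and with the strategy of \cite{CDH-T}.

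For the direct implication, the paper's cited shortcut is more economical than your route: Verheul shows directly that the metric of a median space is a conditionally negative definite kernel, and then the Delorme--Guichardet theorem (as in de la Harpe--Valette) gives boundedness of $g\mapsto\dist(x_0,gx_0)$ under (T). Your argument reaches the same endpoint but detours through the measured-walls structure and an $L^1$-embedding before invoking conditional negative definiteness; that detour is not needed for this direction, though it is of course the same circle of ideas. Your sketch of the converse via an unbounded affine Hilbert action, a measured-walls structure on $\mathcal H$, and the associated median space is the right shape and is indeed the substance of \cite{CDH-T}; you are also right that the genuine work lies in the two ``median $\leftrightarrow$ measured walls'' passages (measurability, equivariance, continuity), which the present paper makes no attempt to reproduce.
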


In our argument, the direct implication of Theorem \ref{eqmw} plays the most important part. This implication for
discrete countable groups first appeared in \cite{Nica:preprint} (see also
\cite{NibloReeves:groupsactingCATO}, \cite{NibloRoller:cubesandT} for
implicit proofs, and \cite{Roller:median} for median algebras). The same
direct implication, for locally compact second countable groups, follows
directly by
combining \cite[Theorem 20, Chapter 5]{delaHarpeValette:proprieteT} with
\cite[Theorem V.2.4]{Verheul:book}.

\medskip

Our main result in this section is the following.

\begin{theorem}\label{thmT}
Let $\Lambda$ be a finitely generated group  and let $S$ be a surface.

If there exists an infinite collection $\Phi$ of homomorphisms $\phi\co \Lambda \to \MCG (S)$ pairwise non-conjugate in
$\MCG(S)$, then $\Lambda$ acts on an asymptotic cone of $\MCG(S)$
with unbounded orbits.
\end{theorem}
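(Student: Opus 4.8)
The plan is to prove Theorem~\ref{thmT} by strong induction on the complexity $\xi(S)$, using Corollary~\ref{iit-1} to reduce to the case where the induced action on $\AM(S) = \ko{\MCG(S);(\mu_n),(d_n)}$ fixes a piece $P$ set-wise (the cases where $\Lambda/N$ is virtually abelian or splits over a virtually abelian subgroup are degenerate: there the group already acts on an $\R$-tree with unbounded orbits, or has a quotient acting on a line with unbounded orbits, and one pulls back via $\fo$; when $\xi(S)\le 1$, $\MCG(S)$ is hyperbolic and this is the classical Bestvina--Paulin argument). So assume the action $\fo$ from Lemma~\ref{Pau1} fixes a piece $P$ set-wise, and suppose for contradiction that every orbit is bounded.

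First I would use Proposition~\ref{unique}: the piece $P$ is the closure of a unique interior $U(g)$, where $g = (g_n)^\omega \in \UM$, and by Lemma~\ref{interior} and Lemma~\ref{triangle} this interior depends only on $P$, not on the chosen pair of points. Since $\Lambda$ fixes $P$ set-wise and $U(g)$ is canonically attached to $P$, the group $\Lambda$ also preserves $U(g)$ set-wise. By Proposition~\ref{unique} and Remark~\ref{rmk:qqproduct}, the set $\QQ(\partial \bu)$ for $\bu = (U_n)^\omega$ the subsurface complementary to $\base(g_n)$ (equivalently $P$ corresponds to a limit $\lu$ of mapping class groups of subsurfaces, via Theorem~\ref{classifypieces} and the discussion after it) is acted on by $\Lambda$, and this limit $\lu$ is isometric to an asymptotic cone of $\MCG(Y)$ for a fixed proper subsurface $Y\subsetneq S$ with $\xi(Y)<\xi(S)$. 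The key point is that fixing the piece $P$ set-wise produces a sequence of homomorphisms $\Lambda \to \MCG(Y)^\omega$ (after conjugating by the sequence moving $\base(g_n)$ to a fixed multicurve, using Theorem~\ref{thpure} and Lemma~\ref{fs} to control the permutation of components and the torsion), which are again pairwise non-conjugate provided the original orbits are bounded — otherwise $\Lambda$ already has unbounded orbits on $\AM(S)$ and we are done. Then the inductive hypothesis applied to $\MCG(Y)$ yields unbounded orbits there, and I would lift this back: an unbounded orbit in the asymptotic cone of $\MCG(Y)$, combined with the bi-Lipschitz embedding of Theorem~\ref{prodtrees}, forces an unbounded orbit in $\AM(S)$ (the projection $\plu$ is $\Lambda$-equivariant and the $T_\bu$-component of the $\ell^1$-metric dominates the corresponding piece of the $\MCG(Y)$-cone metric, up to the constants of Theorem~\ref{tilde}), contradicting boundedness.

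The technical heart — and the main obstacle — is making the reduction to the subsurface precise: one must verify that the sequence of (conjugates of) homomorphisms $\Lambda \to \MCG(Y)$ obtained from the stabilized piece really is infinite and pairwise non-conjugate, rather than collapsing to finitely many conjugacy classes. This requires a careful bookkeeping argument showing that if those subsurface homomorphisms fell into finitely many conjugacy classes in $\MCG(Y)$, then (using Lemma~\ref{Pau}, its characterization of $d_n \to \infty$, and the distance formula Theorem~\ref{distanceformula} together with Lemma~\ref{cstd} and Lemma~\ref{restr}) the original $\phi_n$ would move points in $\MM(S)$ at distance $o(d_n)$ away from a fixed point, contradicting the defining property of $d_n$ in \eqref{dnp}; equivalently, the ``escaping direction'' witnessing $d_n\to\infty$ must be seen either in $\CC(S)$ — handled by Proposition~\ref{i}, case~(\ref{acs}) and then Corollary~\ref{ii} — or inside one of the proper subsurfaces, which is precisely the inductive situation. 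Here one invokes the analysis of sets of fixed points of pure elements promised in the introduction (Lemmas~\ref{fixpA}, \ref{fixred}, Proposition~\ref{nredgen}): a pure element of $\Lambda$ with bounded orbits has a fixed point or fixed multicurve, and two pure elements with no common fixed point generate a subgroup with unbounded orbits (the $\R$-tree argument of Lemma~\ref{gpisompiece}, transplanted to the median/tree-graded setting of $\AM(S)$ via Lemma~\ref{isompiece}). Assembling these, either some finite subset of $\Lambda$ already witnesses an unbounded orbit on $\AM(S)$, or all pure elements share a fixed point or fixed multicurve, reducing to a proper subsurface and closing the induction.
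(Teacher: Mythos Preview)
Your overall architecture (induction on $\xi(S)$, reduce to the case where a piece is fixed, then pass to a proper subsurface) matches the paper's, but the crucial reduction step has a genuine gap.

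The problem is your passage from ``$\Lambda$ fixes a piece $P$'' to ``$\Lambda$ maps into $\MCG(Y)$ for some proper $Y\subsetneq S$''. You write that $P$ corresponds to a limit $\lu$ and speak of ``the subsurface complementary to $\base(g_n)$'', but this conflates two different objects. A piece of $\AM(S)$ is the closure of an interior $U(g)$ defined by bounded $\CC(S)$-distance (Proposition~\ref{unique}); it is \emph{not} canonically $\QQ(\partial\bu)$ for some proper $\bu$, and there is no subsurface $Y$ attached to $P$ a priori. So your sentence ``fixing the piece $P$ set-wise produces a sequence of homomorphisms $\Lambda\to\MCG(Y)^\omega$'' is unjustified: you have no $Y$ until you find a fixed multicurve, and fixing a piece does not by itself produce one.

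The paper closes this gap via Proposition~\ref{nredgen}, which you mention only at the end and misplace logically. The actual argument is: prove by induction the stronger statement \emph{bounded orbits $\Rightarrow$ global fixed point} (this immediately contradicts Lemma~\ref{Pau1}). Under the assumption that $\Lambda$ fixes $P$ (obtained from Lemma~\ref{gpisompiece}, not Corollary~\ref{iit-1}), pass to the finite-index pure normal subgroup $\Lambda_p$ via Lemma~\ref{fs}. Proposition~\ref{nredgen} gives $\Lambda_p$ a fixed point $\seq\mu\in P$. If some $g\in\Lambda$ moves $\seq\mu$, then $\Lambda_p=g\Lambda_p g^{-1}$ also fixes $g\seq\mu$, and Lemma~\ref{fixedpair} plus Lemma~\ref{2ptsinpiece} force $\Lambda_p$ to fix a proper subsurface, hence a multicurve. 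Taking a maximal fixed multicurve $\seq\Delta$ and using normality of $\Lambda_p$, all of $\Lambda$ fixes $\seq\Delta$; only \emph{then} does $\phi_n(\Lambda)$ land $\omega$-a.s.\ in $\Stab(\Delta_n)$, and after conjugation in $\Stab(\Delta)$, where induction applies to each factor. Your ``technical heart'' paragraph correctly senses that the hard part is extracting the subsurface, but the mechanism is the fixed-point result for pure elements (Proposition~\ref{nredgen}), not a direct structural description of the piece.
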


\begin{corollary}\label{cor:thmT}
If $\Lambda$ is a finitely generated group with Kazhdan's property (T) and $S$ is a surface then the set of homomorphisms of $\Lambda$ into $\MCG(S)$ up to conjugation in $\MCG(S)$ is finite.
\end{corollary}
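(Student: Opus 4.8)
The plan is to derive the Corollary as a direct consequence of Theorem \ref{thmT} together with the median-space characterization of property (T) recorded in Theorem \ref{eqmw}, and the median structure on asymptotic cones of mapping class groups established in Theorem \ref{thmedian}. First I would argue by contradiction: suppose $\Lambda$ has property (T) but admits infinitely many homomorphisms into $\MCG(S)$ that are pairwise non-conjugate in $\MCG(S)$. Pick such an infinite collection $\Phi$; then the hypothesis of Theorem \ref{thmT} is satisfied, so $\Lambda$ acts on some asymptotic cone $\ko{\MCG(S);(\mu_n),(d_n)}$ with unbounded orbits. The action here is the one constructed in Lemma \ref{Pau1}, namely $g\cdot\lio{x_n}=\lio{\phi_n(g)x_n}$ for a suitable sequence $(\phi_n)$ extracted from $\Phi$, and it is an action by isometries for the cone metric $\dist_\AM$.

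The key point is that this same action is also by isometries for the bi-Lipschitz-equivalent metric $\td$, and $(\AM,\td)$ is a median space by Theorem \ref{thmedian}. I would check that $\Lambda$ does indeed act on $(\AM,\td)$ by isometries: the elements of $x^\omega(\Pi_1\MCG(S)/\omega)(x^\omega)^{-1}$ act on $\AM$ permuting the subsurfaces $\bu\in\upss$ (via the induced permutation of the sequences $(U_n)$) and carrying the canonical projections $\psi_\bu$ equivariantly to $\psi_{g\bu}$; hence they preserve the decomposition $\td(\seq\mu,\seq\nu)=\sum_{\bu\in\upss}\tdlu(\seq\mu,\seq\nu)$ and are $\td$-isometries. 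Since $\dist_\AM$ and $\td$ are bi-Lipschitz equivalent, an orbit is $\dist_\AM$-unbounded if and only if it is $\td$-unbounded; so the action of $\Lambda$ on the median space $(\AM,\td)$ has unbounded orbits. But $\Lambda$ is finitely generated, hence a countable discrete (in particular locally compact, second countable) group with property (T), so by the direct implication of Theorem \ref{eqmw} every isometric action of $\Lambda$ on a median space has bounded orbits. This contradiction shows that $\Phi$ cannot be infinite, i.e.\ there are only finitely many conjugacy classes of homomorphisms $\Lambda\to\MCG(S)$.

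The only mild subtlety I anticipate is the bookkeeping in the preceding paragraph: making precise that the isometries of the asymptotic cone coming from $x^\omega(\Pi_1\MCG(S)/\omega)(x^\omega)^{-1}$ genuinely permute the index set $\upss$ and intertwine the projection maps $\psi_\bu$, so that $\td$ is preserved. This is essentially formal given the equivariance of the subsurface projections $\pi_{\MM(Z)}$ (Section \ref{projmy}) and the construction of $T_\bu$ as $\lu/\approx$, but it must be stated. Everything else is an immediate combination of Theorem \ref{thmT}, Theorem \ref{thmedian}, and Theorem \ref{eqmw}; no further work is needed.
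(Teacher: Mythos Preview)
Your proposal is correct and follows essentially the same approach as the paper: combine Theorem \ref{thmT} (unbounded orbits on the asymptotic cone), Theorem \ref{thmedian} (the cone with $\td$ is median), and the direct implication of Theorem \ref{eqmw} (property (T) forces bounded orbits on median spaces) to reach a contradiction. The paper records the fact that the $\mcgb$-action is isometric for $\td$ just before Lemma \ref{fixedpair} without further comment, so your extra paragraph justifying this via the permutation of $\upss$ and equivariance of the projections $\psi_\bu$ is welcome but not strictly required.
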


\begin{remarks}
\begin{enumerate}
  \item Theorems \ref{thmT} and \ref{thmedian} imply the following: for a finitely generated group $\Lambda$ such that any action on a median space has bounded orbits, the space of homomorphisms from $\Lambda$ to $\MCG(S)$ is finite modulo conjugation in $\MCG(S)$. Due to Theorem \ref{eqmw}, the above hypothesis on $\Lambda$ is equivalent to property (T).
  \item Corollary \ref{cor:thmT} suggests that $\MCG(S)$ contains few subgroups with property~(T).
\end{enumerate}
\end{remarks}

In order to prove Theorem \ref{thmT} we need two easy lemmas and a proposition. Before formulating them we introduce some notation and terminology.

Since the group $\MCG(S)$ acts co-compactly on $\MM(S)$ there exists a compact subset $K$
of $\MM (S)$ containing the basepoint $\nu_0$ fixed in Section \ref{sec:conedistformula} and such that $\MCG(S) K= \MM
(S)$.

Consider an asymptotic cone $\AM=\coneMmu$. According to the above there exists $x_n\in \MCG(S)$ such that $x_n K$ contains $\mu_n^0$. We denote by $x^\omega$ the element $(x_n)^\omega$ in the ultrapower of $\MCG (S)$. According to the Remark \ref{grascone}, (2), the subgroup $x^\omega \left(\Pi_1 \MCG (S)/\omega \right) \left( x^\omega \right)\iv$ of the ultrapower of $\MCG (S)$ acts transitively by isometries on $\AM$.  The action is isometric both with
respect to the metric $\dist_\AM$ and with respect to the metric $\td$.

\begin{notation}\label{bdedpart}
We denote for simplicity the subgroup $x^\omega \left(\Pi_1 \MCG (S)/\omega \right) \left( x^\omega \right)\iv$ by $\MCG (S)_e^\omega$.
\end{notation}

We say that an element
$g=(g_n)^\omega$ in $\mcgb$ has a given property
(e.g. it is pseudo-Anosov, pure, reducible, etc.) if and only if \uass
$g_n$ has that property (it is pseudo-Anosov, resp.\ pure, resp.\ reducible, etc.).

 \begin{lemma}\label{fixedpair}
 Let $g$ be an element in $\mcgb$ fixing two distinct points $\seq\mu ,\, \seq\nu$ in $\AM$.
 Then for every subsurface $\bu \in \fu ( \seq\mu ,\, \seq\nu)$
 there exists $k\in \N , \, k\geq 1 ,$ such that $g^k \bu = \bu$.

In the particular case when $g$ is pure, $k$ may be taken equal to 1.
 \end{lemma}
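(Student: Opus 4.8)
The plan is to exploit the distance formula in the asymptotic cone (Theorem~\ref{tilde}) together with Lemma~\ref{restr}. Fix $\bu=(U_n)^\omega\in\fu(\seq\mu,\seq\nu)=\yy(\seq\mu,\seq\nu)$, so that \uass $U_n$ is a $K$-large domain for $(\mu_n,\nu_n)$, i.e.\ $\dist_{\CC(U_n)}(\mu_n,\nu_n)>K$. By Lemma~\ref{restr} this is the same, up to passing to subsurfaces with positive $\tdlu$-contribution, as saying $\ulim\frac1{d_n}\dist_{\CC(U_n)}(\mu_n,\nu_n)=0$ is \emph{false} only finitely often — more precisely the relevant point is just that $\tdlu(\seq\mu,\seq\nu)>0$ for $\bu$ in the sum. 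First I would record that since $g$ fixes $\seq\mu$ and $\seq\nu$, it preserves the (finite) quantity $\td(\seq\mu,\seq\nu)=\sum_{\bv\in\upss}\tdlv(\seq\mu,\seq\nu)$, and permutes the set $\mathcal{W}=\{\bv\in\upss:\tdlv(\seq\mu,\seq\nu)>0\}$ by $\bv\mapsto g\bv$ (here $g\bv=(g_nV_n)^\omega$), because $\psi_{g\bv}(g\seq\mu)=\psi_{\bv}(\seq\mu)$ transported by the isometry $g$ of $T_{\bv}\to T_{g\bv}$, so $\tdl{g\bv}(\seq\mu,\seq\nu)=\tdlv(\seq\mu,\seq\nu)$.

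Next I would argue that the orbit of $\bu$ under $\la g\ra$ inside $\mathcal{W}$ is finite. This follows because, by Theorem~\ref{tilde} (the right-hand inequality), $\sum_{\bv\in\mathcal{W}}\tdlv(\seq\mu,\seq\nu)\le E\dist_\AM(\seq\mu,\seq\nu)<\infty$, and each term $\tdlv(\seq\mu,\seq\nu)>0$ is bounded below on the orbit (all orbit terms equal $\tdlu(\seq\mu,\seq\nu)>0$ by the isometry argument above); an infinite orbit would force the sum to diverge. Hence there is a minimal $k\geq1$ with $g^k\bu=\bu$, proving the first assertion.

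For the case when $g$ is pure, I would show $k=1$. Write $g=(g_n)^\omega$ with $g_n$ pure \uass. The relation $g^k\bu=\bu$ means $g_n^k U_n=U_n$ \uass. I would use the structure of pure mapping classes: a pure $g_n$ component-wise fixes a canonical reduction multicurve $\Delta_n$ and acts as pseudo-Anosov or identity on each complementary piece, so the set of subsurfaces $W$ with $g_n^kW=W$ for some $k\geq1$ coincides with the set with $g_nW=W$ — indeed if a pure element's power fixes an (isotopy class of) essential subsurface, that subsurface is (isotopic to one) built from the canonical pieces and annuli of $\Delta_n$, each of which is already $g_n$-invariant, so $g_nU_n$ is isotopic to $U_n$. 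Taking $\omega$-limits, $g\bu=\bu$. The main obstacle, and the step needing the most care, is precisely this last point: making rigorous that a power of a pure mapping class fixing the isotopy class of $U_n$ forces $g_n$ itself to fix it; this requires invoking the uniqueness of the Nielsen--Thurston canonical reduction system and the fact that pure elements do not permute the complementary components — which is exactly built into the definition of ``pure'' in Section~\ref{sec:mcg} — together with an argument that any $g_n$-periodic essential subsurface must be a union of pieces cut out by the canonical reduction system (equivalently, its boundary is $g_n$-periodic in $\CC(S)$, and periodic multicurves of a pure element are pointwise fixed). Passing this property through the ultrapower is then routine via \L o\'s's theorem / the fact that ``\uass'' statements pass to $\omega$-limits.
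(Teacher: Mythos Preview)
Your proposal is correct and follows essentially the same argument as the paper: both use that $g$ fixing $\seq\mu,\seq\nu$ forces $\td_{g^i\bu}(\seq\mu,\seq\nu)=\tdlu(\seq\mu,\seq\nu)=d>0$ for all $i$, and then the finiteness of $\td(\seq\mu,\seq\nu)=\sum_{\bv}\tdlv(\seq\mu,\seq\nu)$ bounds the orbit size by $\lfloor\td(\seq\mu,\seq\nu)/d\rfloor+1$; for the pure case both reduce to the fact that a pure element and any nonzero power of it fix exactly the same subsurfaces. One small correction: $\fu(\seq\mu,\seq\nu)$ is by definition the set of $\bu$ with $\tdlu(\seq\mu,\seq\nu)>0$, which is only \emph{contained in} $\yy(\seq\mu,\seq\nu)$ (via Lemma~\ref{restr}), not equal to it --- but as you note, only the inequality $\tdlu>0$ is used, so this does not affect the argument.
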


\proof If  $\tdlu (\seq\mu ,\, \seq\nu)=d >0$ then for every $i\in \N , \, i\geq 1 ,$
$\td_{g^i \bu} (\seq\mu ,\, \seq\nu)=d$. Then there exists $k\geq 1$, $k$ smaller than
$\left[\frac{\td (\seq\mu ,\, \seq\nu)}{d} \right]+1$ such that $g^k \bu = \bu$.

The latter part of the statement follows from the fact that if $g$ is pure any power of it fixes exactly the same subsurfaces as $g$ itself.\endproof

\medskip

\begin{lemma}\label{2ptsinpiece}
 Let $\seq\mu , \seq\nu$ be two distinct points in the same piece. Then $\bs \not\in \fu (\seq\mu , \seq\nu)$.
\end{lemma}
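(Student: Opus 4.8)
The plan is to reduce the statement to the vanishing of the pseudo-distance $\widetilde{\dist}$ on $\AM$ between two distinct points of a common piece, and then to invoke convexity of pieces. Since $\bs=(S)^\omega$, the ultralimit $\mathcal M^\omega\bs$ is canonically identified with $\AM$ and the projection $\pi_{\mathcal M^\omega\bs}$ is the identity (the ambiguity in the definition of $\pi_{\MM(S)}$ being trivial in this case); consequently $\tdls(\seq\mu,\seq\nu)$ is simply $\widetilde{\dist}(\seq\mu,\seq\nu)$ computed in $\AM$, and, by the definition of $\fu$ (cf.\ the use of $\tdlu(\seq\mu,\seq\nu)>0$ in the proof of Lemma~\ref{fixedpair}), $\bs\in\fu(\seq\mu,\seq\nu)$ means precisely that $\tdls(\seq\mu,\seq\nu)>0$. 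Thus it suffices to show that two distinct points $\seq\mu,\seq\nu$ lying in a common piece $P$ of $\AM$ satisfy $\widetilde{\dist}(\seq\mu,\seq\nu)=0$, equivalently $\seq\mu\approx\seq\nu$.

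For this I would first observe that, since $\seq\mu\neq\seq\nu$, the common piece $P$ is not a singleton, hence it is one of the maximal subsets of $\AM$ without cut-points, i.e.\ a genuine piece of the tree-graded structure provided by Lemma~\ref{cutting}. Pieces of a tree-graded space are convex: every geodesic with both endpoints in a piece is entirely contained in it (\cite[Lemma~2.6]{DrutuSapir:TreeGraded}, already used in the proof of Lemma~\ref{midpiece}). Fixing a geodesic $[\seq\mu,\seq\nu]$ in $\AM$, we therefore have $[\seq\mu,\seq\nu]\subseteq P$, so $[\seq\mu,\seq\nu]$ is itself a non-trivial sub-arc arising as the intersection of that geodesic with a piece of $\AM$; moreover no other piece can meet $[\seq\mu,\seq\nu]$ in a non-trivial sub-arc, since distinct pieces intersect in at most one point. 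By the definition of $\widetilde{\dist}$ (given before Lemma~\ref{quotientT}), the quantity subtracted from $\dist_\AM(\seq\mu,\seq\nu)$ is then $\dist_\AM(\seq\mu,\seq\nu)$ itself, whence $\widetilde{\dist}(\seq\mu,\seq\nu)=0$, i.e.\ $\tdls(\seq\mu,\seq\nu)=0$ and $\bs\notin\fu(\seq\mu,\seq\nu)$.

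There is essentially no obstacle in this argument; the only inputs requiring a word of care are the identification of $\tdls$ with $\widetilde{\dist}$ on $\AM$ via the (trivial) projection $\pi_{\mathcal M^\omega\bs}$, and the observation that a piece containing two distinct points is non-singleton, hence convex. Note that the conclusion is also consistent with Lemma~\ref{restr}: were $\bs$ in $\fu(\seq\mu,\seq\nu)$, that lemma would give $\ulim\dist_{\CC(S)}(\mu_n,\nu_n)=\infty$, whereas membership of $\seq\mu,\seq\nu$ in a single piece produces, via Theorem~\ref{classifypieces}, points arbitrarily close to $\seq\mu,\seq\nu$ with $\omega$-bounded $\CC(S)$-distance; however, converting this consistency check into a self-contained proof still requires the convexity input above, so the direct argument via convexity of pieces is the one I would carry out.
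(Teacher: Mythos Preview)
Your argument is correct and takes a genuinely different route from the paper's. The paper argues by contradiction: assuming $\tdls(\seq\mu,\seq\nu)=4\epsilon>0$, it uses Proposition~\ref{unique} to replace $\seq\mu,\seq\nu$ by nearby points $\seq\mu',\seq\nu'$ in the interior $U(P)$, deduces $\tdls(\seq\mu',\seq\nu')>2\epsilon$, and then invokes Lemma~\ref{restr} to force $\lim_\omega\dist_{\CC(S)}(\mu_n',\nu_n')=\infty$, contradicting the definition of $U(P)$. You instead observe that $\tdls$ is nothing but the tree-graded pseudo-distance $\widetilde{\dist}$ on $\AM$ itself, and that any two points of a common piece satisfy $\widetilde{\dist}=0$ by convexity of pieces---a general fact about tree-graded spaces requiring no input specific to mapping class groups. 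Your approach is cleaner and bypasses both the interior construction and the curve-complex estimate; the paper's approach, while less direct, rehearses the link between $\tdls>0$ and unbounded $\CC(S)$-distance that is used repeatedly elsewhere in Section~\ref{sect:T}. Your closing remark is also apt: the ``consistency check'' via Theorem~\ref{classifypieces} alone does not close the argument, which is exactly why the paper passes through $U(P)$.
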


\proof Assume on the contrary that $\td_{\bs }(\seq\mu , \seq\nu )=4\epsilon >0$. By Proposition \ref{unique} there exist $\seq\mu , \seq\nu$ in $U(P)$ such that $\td (\seq\mu , \seq\mu') < \epsilon$ and $\td (\seq\nu , \seq\nu ') < \epsilon$. Then $\td_{\bs }(\seq\mu' , \seq\nu' )>2\epsilon >0$ whence $\lio{\dist_{C(S)} (\mu_n' , \nu_n' ) }=+\infty$, contradicting the fact that $\seq\mu , \seq\nu$ are in $U(P)$.\endproof

We now state the result that constitutes the main ingredient in the
proof of Theorem \ref{thmT}.  Its proof will be postponed until after
the proof of Theorem \ref{thmT} is completed.

\begin{proposition}\label{nredgen}
Let $g_1=(g^1_n)^\omega$,...,$g_m=(g_n^m)^\omega$ be pure elements in $\mcgb$, such that $\la g_1,...,g_m \ra$ is composed only of pure elements and its orbits in $\AM$ are bounded. Then $g_1,...,g_m$ fix a point in $\AM$.
\end{proposition}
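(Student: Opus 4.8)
The plan is to run the classical argument for a group of elliptic isometries of an $\R$-tree, with the $\R$-tree replaced by the median space $(\AM,\td)$ of Theorem~\ref{thmedian}. First I would record that every subgroup of $\la g_1,\dots,g_m\ra$ is again generated by pure elements, consists of pure elements, and has bounded orbits in $\AM$ (having bounded orbits for $\dist_\AM$ and for $\td$ is the same condition, since the two metrics are bi-Lipschitz equivalent by Theorem~\ref{tilde}). The proof then splits into three steps: (i) each $g_i$ has a nonempty fixed set in $\AM$; (ii) these fixed sets meet pairwise; (iii) a Helly-type property forces the whole family to have a common point.

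For (i): $\la g_i\ra$ is a cyclic group of pure isometries with bounded orbits, so $\Fix(g_i)\neq\emptyset$ by Lemmas~\ref{middlepA} and~\ref{middlered}. (Alternatively, Lemma~\ref{isompiece} gives that $g_i$ fixes the middle cut-point or the middle cut-piece of $x,g_ix$; in the cut-point case one is done, and in the cut-piece case one descends into the piece, which is controlled by a proper multicurve, and restricts, using that a restriction of a pure element is pure.) For (ii): for $i\neq j$ the group $\la g_i,g_j\ra$ is generated by two pure elements, is contained in $\la g_1,\dots,g_m\ra$, hence consists of pure elements and has bounded orbits; Lemma~\ref{2redgen} in its contrapositive form (two pure elements with no common fixed point generate a group with unbounded orbits) then yields $\Fix(g_i)\cap\Fix(g_j)\neq\emptyset$.

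For (iii): by Lemmas~\ref{fixpA} and~\ref{fixred} each $\Fix(g_i)$ is a closed convex subset of $(\AM,\td)$ — this is where the explicit description of the fixed sets of pure elements is used, together with the fact, visible from Theorem~\ref{projection trichotomy}, that on $\psi(\AM)\subset\prod_{\bu}T_{\bu}$ a pure $g_i$ never acts on a product factor $T_{\bu}\times T_{\bv}$ as a nontrivial symmetry with a diagonal fixed set: $g_i$ either preserves $\bu$ or sends it to an overlapping or nested subsurface, and overlapping or nested pairs do not span a genuine product. Since a finite family of pairwise-intersecting convex subsets of a median space has nonempty intersection (the Helly property of median algebras, a direct consequence of the existence and basic properties of median points), steps (i) and (ii) give $\bigcap_{i=1}^{m}\Fix(g_i)\neq\emptyset$, and any point of this intersection is fixed by all of $g_1,\dots,g_m$.

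The main obstacle is step (iii): establishing the convexity and the precise shape of $\Fix(g_i)$ in $(\AM,\td)$. Lemmas~\ref{fixedpair} and~\ref{2ptsinpiece} (a pure element fixing two distinct points fixes every $K$-large domain between them, and two points of one piece contribute nothing in $\bs$) are the local tools behind this, combined with the product decompositions of the pieces of $\AM$ and with Theorem~\ref{projection trichotomy}. There is also an alternative route that avoids the median Helly property altogether: apply Lemma~\ref{gpisompiece} to $\la g_1,\dots,g_m\ra$ directly to obtain either a common fixed point (and we are done) or a common set-wise fixed piece $P$; then $P$ is, up to bi-Lipschitz equivalence of the $\td$-metrics, a product of asymptotic cones of proper subsurfaces arising from a multicurve $\seq\Delta$ stabilized by the group; each $g_i$, being pure, fixes $\seq\Delta$ component-wise and hence preserves each factor; the induced actions are again by pure elements with bounded orbits and of strictly smaller complexity, so an induction on $\xi(S)$ — with base case $\xi(S)\le 1$, where $\AM$ is a complete $\R$-tree and a finitely generated group with bounded orbits has a circumcenter — produces a fixed point in each factor and hence in $P$.
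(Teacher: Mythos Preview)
Your main Helly route is correct and takes a cleaner, more abstract path than the paper. The paper first reduces via Lemma~\ref{gpisompiece} to the piece-fixing case, then inducts on the number of generators with a case split: if the $g_i$ have no common invariant multicurve, it takes three fixed points $\seq\alpha,\seq\beta,\seq\gamma$ coming from three $k$-element subgroups, forms their median $\seq\mu$, and checks directly, using Lemma~\ref{fixedpair}, that $\seq\mu$ is fixed by every generator (any $\bv\in\fu(\seq\mu,g_i\seq\mu)$ would lie in $\fu(\seq\alpha,\seq\beta)\cap\fu(\seq\alpha,\seq\gamma)\cap\fu(\seq\beta,\seq\gamma)$ and hence be fixed by all the $g_j$, contradicting the assumption); if the $g_i$ do fix a common multicurve it descends to the factors of $Q(\seq\Delta)$ and reapplies the first case. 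Your route replaces this hand computation by the $\td$-convexity of each $\Fix(g_i)$ together with the Helly property of median spaces, which avoids the case split and the reduction to a piece entirely.

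Your justification of convexity via Theorem~\ref{projection trichotomy} and ``no diagonal fixed set'' is not quite the right mechanism, though the conclusion holds. The clean argument is: by Lemma~\ref{fixred}, $\Fix(g)$ sits inside $Q(\seq\Delta_g)$ as the product $C_1\times\cdots\times C_k\times\MM(\bu_{k+1})\times\cdots\times\MM(\bu_m)$; the set $Q(\seq\Delta_g)$ is $\td$-convex by Lemma~\ref{uqd}(\ref{geodq}) and, by Lemma~\ref{uqd}(\ref{inq}), carries the $\ell^1$-product of the factor $\td$-metrics; and each pseudo-Anosov factor $C_i=\Fix(g|_{\bu_i})$ is $\td$-convex in $\MM(\bu_i)$ because for $\seq\mu,\seq\nu\in C_i$ Lemma~\ref{fixedpair} forces $\fu(\seq\mu,\seq\nu)\subseteq\{\bu_i\}$, so any $\seq\alpha$ in the $\td$-interval agrees with $\seq\mu$ in every $T_\bv$ with $\bv\subsetneq\bu_i$ and projects into the geodesic $[\psi_{\bu_i}(\seq\mu),\psi_{\bu_i}(\seq\nu)]$ in the tree $T_{\bu_i}$, which the isometry $g|_{\bu_i}$ must fix pointwise.

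Your alternative route has a genuine gap: a piece $P$ of $\AM$ is \emph{not} a product of asymptotic cones of proper subsurfaces. Pieces are described by Theorem~\ref{classifypieces} and Proposition~\ref{unique} as closures of interiors $U(g)$; each piece contains many sets $Q(\seq\Delta)$ but is not itself of that form, and a group fixing a piece set-wise need not fix any multicurve. This is precisely why the paper needs Proposition~\ref{nredgen} \emph{before} the complexity induction in the proof of Theorem~\ref{thmT}: only after a common fixed point for $\Lambda_p$ is in hand does Lemma~\ref{fixedpair} produce a common invariant proper subsurface, allowing the descent.
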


\noindent \textit{Proof of Theorem \ref{thmT}.} \quad  Assume there exists an infinite collection $\Phi=\{\phi_1,\phi_2,...\}$ of
pairwise non-conjugate homomorphisms $\phi_n\colon \Lambda\to\MCG(S)$. Lemma \ref{Pau} implies that
given a finite generating set $A$ of $\Lambda$, $\lim_{n\to \infty}d_n=\infty $, where

\begin{equation}\label{dnmcg}
d_n=\inf_{\mu \in \MM (S) }\sup_{a\in A}\dist(\phi_n(a)\mu ,\mu )\, .
\end{equation}

Since $\MM (S)$ is a simplicial complex there exists a vertex
$\mu_n^0\in \MM (S)$ such that $d_n=\sup_{a\in
A}\dist(\phi_n(a)\mu_n^0 ,\mu_n^0 )\, .$  Consider an arbitrary ultrafilter $\omega$ and the asymptotic cone $\AM=\coneMmu$. We use the notation that appears before Lemma \ref{fixedpair}.

The infinite sequence of homomorphisms
$(\phi_n)$ defines a homomorphism
$$
\fo \co \Lambda \to \mcgb \, ,\, \fo (g)= (\phi_n (g))^\omega\, .
$$

This homomorphism defines an isometric action of $\Lambda$ on $\AM$ without global fixed point.

\medskip

\Notat \quad When there is no possibility
of confusion, we write $g\seq\mu $ instead of $\fo (g)\seq\mu$, for $g\in \Lambda$ and $\seq\mu$
in $\AM$.

\medskip

We prove by induction on the complexity of $S$ that if $\Lambda$ has bounded orbits in $\AM$ then $\Lambda$ has a global fixed point. When $\xi (S) \leq 1$ the asymptotic cone $\AM$ is a complete real tree and the previous statement is known to hold. Assume that we proved the result for surfaces with complexity at most $k$, and assume that $\xi (S )=k+1$.

If $\Lambda$ does not fix set-wise a piece in the (most refined) tree-graded structure of $\AM$, then by Lemma \ref{gpisompiece} $\Lambda$ has a global fixed point. Thus we may assume that $\Lambda$ fixes set-wise a piece $P$ in
$\AM$. Then $\Lambda$ fixes the interior $U(P)$ of $P$ as well by Proposition~\ref{unique}.

The point $\seq{\mu}^0 = \seqrep{\mu^0}$ must be in the piece $P$. If not, the projection $\seq\nu$ of
$\seq{\mu}^0$ on $P$ would be moved by less that $1$ by all $a\in A$. Indeed, if $a\seq\nu \neq
\seq\nu$ then the concatenation of geodesics $[\seq{\mu}^0 , \seq\nu] \sqcup [\seq\nu , a\seq\nu]\sqcup
[a\seq\nu , a\seq{\mu}^0]$ is a geodesic according to \cite{DrutuSapir:TreeGraded}.

According to Lemma \ref{fs} there exists a normal subgroup $\Lambda_p$
in $\Lambda$ of index at most $N=N(S)$ such that for every $g\in
\Lambda_p$, $\fo (g)$ is pure and fixes set-wise the boundary
components of $S$.

By Proposition \ref{nredgen}, $\Lambda_p$ fixes a point $\seq\alpha$ in $\AM$. Since it also fixes set-wise the piece $P$, it fixes the unique projection of $\seq\alpha$ to $P$. Denote this projection by $\seq\mu$.

Assume that $\Lambda$ acts on $\AM$ without fixed point. It follows that there exists $g\in \Lambda$ such that
  $g\seq\mu\neq \seq\mu$. Then $\Lambda_p = g\Lambda_pg\iv$ also fixes $g\seq\mu$. Lemma \ref{fixedpair}
   implies that $\Lambda_p$ fixes a subsurface $\bu \in \fu (\seq \mu, g\seq\mu)$. Since $\seq \mu, g\seq\mu$ are in the piece $P$, it follows that $\bu$ is a proper subsurface of $\mathbf{S}$, by Lemma \ref{2ptsinpiece}. Thus $\Lambda_p$ must fix a multicurve $\partial \bu$.

  Let $\seq\Delta$ be a maximal multicurve fixed by $\Lambda_p$. Assume there exists $g\in \Lambda$ such that
  $g\seq\Delta\neq \seq\Delta$.  Then $\Lambda_p = g\Lambda_pg\iv$
  also fixes $g\seq\Delta$, contradicting the maximality of
  $\seq\Delta$.  We then conclude that all $\Lambda$ fixes
  $\seq\Delta$.  It follows that the image of $\fo$ is in $\Stab
  (\seq\Delta)$, hence \uass $\phi_n(\Lambda )\subset \Stab(\Delta_n
  )$.  Up to taking a subsequence and conjugating we may assume that
  $\phi_n(\Lambda )\subset \Stab (\Delta )$ for some fixed multicurve
  $\Delta$.  Let $U_1,...,U_m$ be the subsurfaces and annuli
  determined by $\Delta$.
  Hence, we can see $\phi_n$ as isomorphisms with target
  $\Stab(\Delta)$ which are pairwise non-conjugate by hypothesis
  and thus define an isometric action with bounded orbits on the asymptotic
  cone of $\Stab(\Delta)$. Recall that $\Stab(\Delta)$ is
  quasi-isometric to
  $\MCG (U_1)\times \cdots \times \MCG(U_m)$ by
  Remark~\ref{rmk:qqproduct}.
  Hence, the sequence $\phi_n$  defines an isometric action with bounded orbits on the spaces $\AM (U_j)$.
  By hypothesis at least one of these actions is
  without a global fixed point, while the inductive hypothesis implies
  that each of these actions has a global fixed point, a
  contradiction.  \hspace*{\fill}$\Box$

\me

To prove Proposition \ref{nredgen}, we first provide
a series of intermediate results.

\begin{lemma}\label{red}
Let  $g=(g_n)^\omega\in \mcgb$ be a reducible element in $\AM$, and let $\seq\Delta =(\Delta_n)^\omega$ be a multicurve
 such that if $U^1_n,...,U^m_n$ are the connected components  of
$S\setminus \Delta_n$ and the annuli with core curve in $\Delta_n$ then \uass $g_n$ is a
pseudo-Anosov on $U^1_n,...,U^k_n$ and the identity map on $U^{k+1}_n,..., U^m_n$, and $\Delta_n=
\partial U^1_n\cup...\cup \partial U^k_n$ (the latter condition may be achieved by deleting the boundary between two components
 onto which $g_n$ acts as identity).

 Then the limit set  $Q(\seq\Delta )$ appears in the asymptotic cone (i.e. the distance from the basepoint $\mu_n^0$ to $Q(\Delta_n)$ is $O(d_n)$), in particular $g$ fixes the piece containing  $Q(\seq\Delta )$.

 If $g$ fixes a piece $P$ then $U(P)$ contains  $Q(\seq\Delta )$.
\end{lemma}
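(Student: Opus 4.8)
The plan is to prove the quantitative statement $\dist_{\MM(S)}(\mu^0_n,\QQ(\Delta_n))=O(d_n)$ first, and to read off everything else from it. One may assume $k\geq 1$: otherwise $g_n$ is the identity for $\omega$-almost every $n$, $\seq\Delta$ is empty, $\QQ(\seq\Delta)=\AM$, and there is nothing to prove. When $k\geq 1$ each pseudo-Anosov piece $U^i_n$ ($i\leq k$) is a \emph{proper} essential subsurface of $S$ (a pseudo-Anosov on all of $S$ would not be reducible), so $\Delta_n\neq\emptyset$ and Lemma~\ref{MM2:LLL} applies with $Y=U^i_n$.

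Since $g\in\mcgb=x^\omega(\Pi_1\MCG(S)/\omega)(x^\omega)\iv$ and $\mu^0_n\in x_nK$ for the fixed compact set $K$ used in the construction of $\AM$, writing $g_n=x_nh_nx_n\iv$ with $|h_n|_{\MCG(S)}=O(d_n)$ gives $\dist_{\MM(S)}(\mu^0_n,g_n\mu^0_n)=O(d_n)$, hence $\dist_{\MM(S)}(\mu^0_n,g_n^{j}\mu^0_n)\leq j\,\dist_{\MM(S)}(\mu^0_n,g_n\mu^0_n)=O(d_n)$ for every fixed positive integer $j$. Let $\ell_0=\ell_0(S)>0$ be a lower bound, uniform over the finitely many topological types of subsurfaces of $S$, for the translation length of a pseudo-Anosov on a curve complex (this infimum is positive by acylindricity, \cite{Bowditch:tightgeod}), and fix an integer $j_0=j_0(S)$ with $j_0\ell_0>M(S)$, $M(S)$ the constant of Lemma~\ref{MM2:LLL}. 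For each $i\leq k$ the map $g_n^{j_0}$ preserves $U^i_n$ and restricts there to a pseudo-Anosov of translation length $\geq j_0\ell_0$, so $\dist_{\CC(U^i_n)}(\mu^0_n,g_n^{j_0}\mu^0_n)\geq j_0\ell_0>M(S)$. By Lemma~\ref{MM2:LLL} any hierarchy path $\fg_n$ from $\mu^0_n$ to $g_n^{j_0}\mu^0_n$ contains a marking $\rho^i_n$ with $\partial U^i_n\subseteq\base(\rho^i_n)$, that is $\rho^i_n\in\QQ(\partial U^i_n)$; since $\fg_n$ is a uniform quasi-geodesic, $\dist_{\MM(S)}(\mu^0_n,\QQ(\partial U^i_n))\leq\dist_{\MM(S)}(\mu^0_n,\rho^i_n)=O(d_n)$.

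To pass to $\QQ(\Delta_n)=\QQ\big(\bigcup_{i\leq k}\partial U^i_n\big)$, apply the distance formula (Theorem~\ref{distanceformula}) to $\mu^0_n$ and its nearest-point projection onto $\QQ(\Delta_n)$: the terms indexed by subsurfaces $Y$ not overlapping $\Delta_n$ vanish (there the projection agrees with $\mu^0_n$ up to bounded error, by Remark~\ref{rnested}), while a term with $Y\pitchfork\gamma$ for some $\gamma\in\partial U^i_n$ is, up to a uniform additive constant, at most $\dist_{\CC(Y)}(\mu^0_n,\partial U^i_n)$; summing these over such $Y$ and over $i$ bounds $\dist_{\MM(S)}(\mu^0_n,\QQ(\Delta_n))$, again by the distance formula, by $\sum_{i\leq k}\dist_{\MM(S)}(\mu^0_n,\QQ(\partial U^i_n))+O(1)=O(d_n)$, since $k\leq\xi(S)$. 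Thus $\QQ(\seq\Delta)=\lim_\omega\QQ(\Delta_n)$ is a non-empty subset of $\AM$. The bases of any two markings in $\QQ(\Delta_n)$ contain $\Delta_n$, so those markings lie at $\CC(S)$-distance $0$; by Theorem~\ref{classifypieces} no point of $\AM$ separates two points of $\QQ(\seq\Delta)$, whence $\QQ(\seq\Delta)$ lies in a single piece $P$, and it has at least two points — e.g. $\lim_\omega\mu'_n$ and $\lim_\omega g_n^{j_n}\mu'_n$, where $\mu'_n=\pi_{\QQ(\Delta_n)}(\mu^0_n)$ and $j_n$ is chosen with $\dist_{\MM(S)}(\mu'_n,g_n^{j_n}\mu'_n)\asymp d_n$ (possible since $g_n$ acts on $\CC(U^1_n)$ with positive translation length). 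As $g_n$ fixes $\Delta_n$ it preserves $\QQ(\Delta_n)$, so $g$ preserves $\QQ(\seq\Delta)$; since distinct pieces meet in at most one point, $g$ fixes $P$. This is the first assertion.

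For the last statement, suppose $g$ fixes a piece $P'$. Because $g$ also fixes $P$, because $\QQ(\seq\Delta)\subseteq P$ has at least two points, and because distinct pieces meet in at most one point, one checks $P'=P$: the pair $(a,b)$ of closest points of $P',P$ from Lemma~\ref{ab} is $g$-invariant, so $g$ fixes $b\in P$, which with the $g$-invariance of $\QQ(\seq\Delta)$ rules out $P'\neq P$. Finally $\QQ(\seq\Delta)\subseteq U(P)$: choose distinct $\seq x=\lim_\omega x_n$, $\seq y=\lim_\omega y_n$ in $\QQ(\seq\Delta)$ with $x_n,y_n\in\QQ(\Delta_n)$; Lemmas~\ref{interior} and~\ref{triangle} identify $U(P)$ with $U(g')$ for $g'_n$ the midpoint of a hierarchy path $[x_n,y_n]$, which remains at uniformly bounded $\CC(S)$-distance from $\Delta_n$, so any $\seq z=\lim_\omega z_n\in\QQ(\seq\Delta)$ with $z_n\in\QQ(\Delta_n)$ has $\dist_{\CC(S)}(z_n,g'_n)=O(1)$ and hence $\seq z\in U(g')=U(P)$. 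The main obstacle is the distance-formula bookkeeping of the third paragraph — controlling the passage from the pieces $\QQ(\partial U^i_n)$ to $\QQ(\Delta_n)$ — and, secondarily, making the identification $P'=P$ in the last paragraph fully rigorous.
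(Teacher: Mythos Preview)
Your argument for the first assertion (that $\dist_{\MM(S)}(\mu^0_n,\QQ(\Delta_n))=O(d_n)$) is correct and genuinely different from the paper's. The paper argues by contradiction: assuming $D_n=\dist(\mu_n,\QQ(\Delta_n))$ satisfies $D_n/d_n\to\infty$, it passes to the rescaled cone $\mathrm{Con}^\omega(\MM(S);(\mu_n),(D_n))$, where now $\seq\mu=g\seq\mu$ and the projection $\seq\nu$ onto $\QQ(\seq\Delta)$ lies at distance~$1$ and is also fixed by $g$; after passing to a power of $g$ so that the hypotheses of Lemma~\ref{lem:betw} hold, that lemma forces the concatenation $[\seq\mu,\seq\nu]\sqcup[\seq\nu,g\seq\nu]\sqcup[g\seq\nu,g\seq\mu]$ to be a geodesic of positive length with coinciding endpoints, a contradiction. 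Your direct estimate via Lemma~\ref{MM2:LLL} and the distance formula avoids this rescaling trick at the cost of the bookkeeping you flag; both routes are valid, the paper's being shorter and yours being more constructive.

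The genuine gap is in your last paragraph. You attempt to show that any piece $P'$ fixed by $g$ coincides with the piece $P$ containing $\QQ(\seq\Delta)$, and then deduce $\QQ(\seq\Delta)\subseteq U(P')$ from your (correct) computation $\QQ(\seq\Delta)\subseteq U(P)$. But the sentence ``$g$ fixes $b\in P$, which with the $g$-invariance of $\QQ(\seq\Delta)$ rules out $P'\neq P$'' is not a valid deduction: nothing prevents $g$ from fixing a point $b\in P$ (and $a\in P'$) while $P'\neq P$ --- you would need something like Lemma~\ref{fixedred}, which is proved \emph{after} the present lemma, to force $a\in\QQ(\seq\Delta)\subseteq P$. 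The paper sidesteps this entirely. It takes $\seq\mu\in U(P')$ directly, replaces $g$ by a power so that $\dist_{\CC(U^i_n)}(\mu_n,g_n\mu_n)>M$, and observes that the hierarchy path $[\mu_n,g_n\mu_n]$ therefore meets $\QQ(\partial U^i_n)$ at some $\rho_n$; since $\seq\mu,g\seq\mu\in U(P')$ forces $\dcs(\mu_n,g_n\mu_n)=O(1)$, every point of the hierarchy path --- in particular $\rho_n$ --- lies at bounded $\CC(S)$-distance from the reference of $U(P')$, whence $\seq\rho\in U(P')$. Any $z_n\in\QQ(\Delta_n)\subseteq\QQ(\partial U^i_n)$ then has $\dcs(z_n,\rho_n)\leq 1$, giving $\QQ(\seq\Delta)\subseteq U(P')$ without ever needing $P'=P$ (which of course follows \emph{a posteriori}). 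Replace your reduction with this direct argument.
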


\proof Consider a point $\seq\mu=\seqrep \mu $ in $\AM$. Let $D_n = \dist_{\MM (S)} (\mu_n , \Delta_n )$.
 Assume that $\lim_\omega \frac{D_n}{d_n} =+\infty $. Let $\nu_n$ be a projection of $\mu_n$ onto
 $Q(\Delta_n)$. Note that for every $i=1,2,...,k$, $\dist_{C(U^i_n)} (\mu_n, g_n\mu_n ) =
 \dist_{C(U^i_n)} (\nu_n, g_n \nu_n )+O(1)$. Therefore when replacing $g$ by some large enough power of it
  we may ensure that $\dist_{C(U^i_n)} (\mu_n, g_n \mu_n ) >M$, where $M$ is the constant from
  Lemma \ref{MM2:LLL}, while we still have that $\dist_{\MM (S)} (\mu_n , g_n\mu_n )\leq C d_n$.
  In the cone $\mathrm{Con}^\omega(\MM (S); (\mu_n), (D_n))$ we have that $\seq\mu=g\seq\mu$
  projects onto $Q(\seq\Delta )$ into $\seq\nu=g\seq\nu$, which is at distance $1$. This contradicts
  Lemma \ref{lem:betw}. It follows that $\lim_\omega \frac{D_n}{d_n} < +\infty $.

  Assume now that $g$ fixes a piece $P$ and assume that the point $\seq\mu$ considered above is in $U(P)$. Since the
  previous argument implies that a hierarchy path joining $\seq\mu$ and $g^k\seq\mu$ for some large
  enough $k$ intersects $Q(\partial \bu_i)$, where $\bu_i=(U^i_n)^\omega$ and $i=1,2,...,k$, and $Q(\seq\Delta )\subset Q(\partial
  \bu_i)$, it follows that $Q(\seq\Delta )\subset U(P)$.\endproof

\medskip

\Notat \quad Given two points  $\seq\mu ,\, \seq\nu$ in $\AM$ we denote by $\fu ( \seq\mu ,\, \seq\nu)$ the set of subsurfaces $\bu \subseteq \bs$  such that $\tdlu (\seq\mu ,\, \seq\nu) >0$. Note that $\fu ( \seq\mu ,\, \seq\nu)$ is non-empty if and only if  $\seq\mu\neq \seq\nu$.

\medskip

 \begin{lemma}\label{uqd}
 Let $\seq\Delta$ be a multicurve.
\begin{enumerate}
   \item\label{inq} If $\seq\mu , \seq\nu$ are two points in $Q(\seq\Delta)$ then any $\bu \in \fu (\seq\mu , \seq\nu)$ has the property that $\bu \notpitchfork \seq\Delta$.
   \item \label{orthq} If $\seq\mu$ is a point outside $Q(\seq\Delta)$ and $ \seq\mu'$ is the projection of $ \seq\mu$ onto $Q(\seq\Delta)$ then any $\bu \in \fu (\seq\mu , \seq\mu')$ has the property that $\bu \pitchfork \seq\Delta$.
   \item\label{outq}  Let $\seq\mu$ and $ \seq\mu'$ be as in (\ref{orthq}). For every $\seq\nu \in Q(\seq\Delta)$ we have that $\fu (\seq\mu , \seq\nu) = \fu (\seq\mu , \seq\mu') \sqcup \fu (\seq\mu' , \seq\nu)$.
   \item\label{geodq} Let $\seq\mu , \seq\nu$ be two points in $Q(\seq\Delta)$. Any geodesic in $(\AM , \td )$ joining $\seq\mu , \seq\nu$ is entirely contained in $Q(\seq\Delta)$.
 \end{enumerate}
 \end{lemma}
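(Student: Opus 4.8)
The plan is to prove the four assertions in order, deriving (\ref{inq}) and (\ref{orthq}) directly from Lemma~\ref{restr} and Lemma~\ref{lem:duproj}, and then obtaining (\ref{outq}) and (\ref{geodq}) as formal consequences of these together with the fact that $\td=\sum_{\bu\in\upss}\tdlu$ is the pull-back of an $\ell^1$-metric on a product of $\R$-trees. For (\ref{inq}), suppose $\bu=(U_n)^\omega\in\fu(\seq\mu,\seq\nu)$, so $\tdlu(\seq\mu,\seq\nu)>0$ and hence, by Lemma~\ref{restr}, $\ulim\dist_{C(U_n)}(\mu_n,\nu_n)=\infty$ for any choice of representatives. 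Since $\seq\mu,\seq\nu\in\QQ(\seq\Delta)$ we may choose representatives with $\mu_n,\nu_n\in\QQ(\Delta_n)$ \uas, so that $\Delta_n\subseteq\base(\mu_n)\cap\base(\nu_n)$. Were $\bu\pitchfork\seq\Delta$, then \uas $U_n$ would overlap some component $c_n$ of $\Delta_n$; in particular $U_n$ cannot be an annulus whose core is a base curve of $\mu_n$ or of $\nu_n$, because such a curve is disjoint from $c_n$ while $c_n$ crosses $U_n$. Hence $\pi_{C(U_n)}(\mu_n)=\pi_{C(U_n)}(\base(\mu_n))$ and likewise for $\nu_n$, and since $c_n\in\base(\mu_n)\cap\base(\nu_n)$, Lemma~\ref{r1} shows that each of these two sets lies within distance $3$ of the nonempty set $\pi_{C(U_n)}(c_n)$; so $\dist_{C(U_n)}(\mu_n,\nu_n)=O(1)$ \uas, a contradiction. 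Thus $\bu\notpitchfork\seq\Delta$.

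Part (\ref{orthq}) is exactly the contrapositive of Lemma~\ref{lem:duproj} applied to the pair $(\seq\mu,\seq\mu')$: if $\bu\notpitchfork\seq\Delta$ then $\tdlu(\seq\mu,\seq\mu')=0$, i.e.\ $\bu\notin\fu(\seq\mu,\seq\mu')$. For (\ref{outq}), disjointness of $\fu(\seq\mu,\seq\mu')$ and $\fu(\seq\mu',\seq\nu)$ is immediate: by (\ref{orthq}) the first set consists of subsurfaces overlapping $\seq\Delta$, while by (\ref{inq}) applied to $\seq\mu',\seq\nu\in\QQ(\seq\Delta)$ the second consists of subsurfaces not overlapping $\seq\Delta$. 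For the equality $\fu(\seq\mu,\seq\nu)=\fu(\seq\mu,\seq\mu')\sqcup\fu(\seq\mu',\seq\nu)$ I would check that for every subsurface $\bu$ one has $\tdlu(\seq\mu,\seq\nu)=\tdlu(\seq\mu,\seq\mu')+\tdlu(\seq\mu',\seq\nu)$ with at most one summand nonzero: if $\bu\notpitchfork\seq\Delta$ then Lemma~\ref{lem:duproj} gives $\psi_\bu(\seq\mu)=\psi_\bu(\seq\mu')$, so the first summand vanishes and both sides equal $\tdlu(\seq\mu',\seq\nu)$; if $\bu\pitchfork\seq\Delta$ then (\ref{inq}), applied to $\seq\mu',\seq\nu\in\QQ(\seq\Delta)$, gives $\psi_\bu(\seq\mu')=\psi_\bu(\seq\nu)$, so the second summand vanishes and both sides equal $\tdlu(\seq\mu,\seq\mu')$. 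Summing over $\bu$ also records $\td(\seq\mu,\seq\nu)=\td(\seq\mu,\seq\mu')+\td(\seq\mu',\seq\nu)$.

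For (\ref{geodq}), recall that by Theorem~\ref{thmedian} the map $\psi$ identifies $(\AM,\td)$ with a subspace of $(\calt_0,\td)$, where $\td=\sum_\bu\tdlu$ and each $T_\bu$ is an $\R$-tree. If $\seq\gamma$ lies on a $\td$-geodesic from $\seq\mu$ to $\seq\nu$, then $\td(\seq\mu,\seq\gamma)+\td(\seq\gamma,\seq\nu)=\td(\seq\mu,\seq\nu)$; expanding both sides as sums over $\bu$ and using the triangle inequality in each $T_\bu$ forces equality in every coordinate, so $\psi_\bu(\seq\gamma)$ lies on the geodesic of $T_\bu$ joining $\psi_\bu(\seq\mu)$ and $\psi_\bu(\seq\nu)$ for every $\bu$; that is, $\seq\gamma$ is between $\seq\mu$ and $\seq\nu$ in the sense of Definition~\ref{def:between}. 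Now assume $\seq\gamma\notin\QQ(\seq\Delta)$ and let $\seq\gamma'$ be its projection to $\QQ(\seq\Delta)$, so $\seq\gamma\neq\seq\gamma'$ and there is some $\bv\in\fu(\seq\gamma,\seq\gamma')$; by (\ref{orthq}), $\bv\pitchfork\seq\Delta$. Applying (\ref{inq}) to the pair $\seq\mu,\seq\nu\in\QQ(\seq\Delta)$ and to the pair $\seq\mu,\seq\gamma'\in\QQ(\seq\Delta)$ (with $\bv\pitchfork\seq\Delta$ in both cases) gives $\psi_\bv(\seq\mu)=\psi_\bv(\seq\nu)$ and $\psi_\bv(\seq\mu)=\psi_\bv(\seq\gamma')$; but then $\psi_\bv(\seq\gamma)$, lying on the degenerate geodesic from $\psi_\bv(\seq\mu)$ to $\psi_\bv(\seq\nu)$, equals $\psi_\bv(\seq\mu)=\psi_\bv(\seq\gamma')$, so $\tdlv(\seq\gamma,\seq\gamma')=0$, contradicting $\bv\in\fu(\seq\gamma,\seq\gamma')$. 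Hence every point of a $\td$-geodesic $[\seq\mu,\seq\nu]$ lies in $\QQ(\seq\Delta)$, and since the endpoints already do, so does the whole geodesic.

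The delicate step is the combinatorial bookkeeping in (\ref{inq}): one must be careful with the notion of overlap between a subsurface and a multicurve and, crucially, rule out the case in which $U_n$ is an annulus around one of the (finitely many) remaining base curves of $\mu_n$ or $\nu_n$ — this is exactly where the disjointness of the base curves of a marking enters, and it is what guarantees that the subsurface projections of $\mu_n$ and $\nu_n$ are computed from their bases, so that Lemma~\ref{r1} applies. Once (\ref{inq}) and Lemma~\ref{lem:duproj} are in hand, parts (\ref{orthq})–(\ref{geodq}) are essentially formal.
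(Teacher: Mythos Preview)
Your proof is correct and follows essentially the same route as the paper's. The paper is terser: in (\ref{inq}) it simply says that $\dist_{C(U_n)}(\mu_n,\nu_n)=O(1)$ because both bases contain $\Delta_n$, whereas you spell out the annulus case explicitly via disjointness of base curves; in (\ref{outq}) and (\ref{geodq}) the paper argues with the triangle inequality and the inclusions $\fu(\seq\mu,\seq\alpha)\cup\fu(\seq\alpha,\seq\nu)=\fu(\seq\mu,\seq\nu)$, while you unpack this into the coordinate-wise equality $\tdlu(\seq\mu,\seq\nu)=\tdlu(\seq\mu,\seq\mu')+\tdlu(\seq\mu',\seq\nu)$ and the ``betweenness'' in each $T_\bu$ --- the same content, written out one level further. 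One cosmetic point: the reference you need for (\ref{geodq}) is really Corollary~\ref{cor:prodtrees} and the definition of $\td$, not Theorem~\ref{thmedian} itself.
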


 \proof (\ref{inq}) Indeed if $\bu = (U_n)^\omega \in \fu (\seq\mu , \seq\nu)$ then $\lim_\omega (\dist_{C(U_n)} (\mu_n , \nu_n))=\infty$, according to Lemma \ref{restr}. On the other hand if $\bu \pitchfork \seq\Delta$ then $\dist_{C(U_n)} (\mu_n , \nu_n)=O(1)$, as the bases of both $\mu_n$ and $\nu_n$ contain $\Delta_n$.

 (\ref{orthq}) follows immediately from Lemma \ref{lem:duproj}.

 (\ref{outq}) According to (\ref{inq}) and (\ref{orthq}), $\fu
 (\seq\mu , \seq\mu') \cap \fu (\seq\mu' , \seq\nu)=\emptyset$. The
 triangle inequality implies that for every $\bu \in \fu (\seq\mu , \seq\mu')$ either $\tdlu (\seq\mu' , \seq\nu )>0$ or  $\tdlu (\seq\mu , \seq\nu )>0$. But since the former cannot occur it follows that $\bu \in \fu (\seq\mu , \seq\nu)$. Likewise we prove that $\fu (\seq\mu' , \seq\nu) \subset  \fu (\seq\mu , \seq\nu)$. The inclusion $\fu (\seq\mu , \seq\nu) \subset \fu (\seq\mu , \seq\mu') \sqcup \fu (\seq\mu' , \seq\nu)$ follows from the triangle inequality.

 (\ref{geodq}) follows from the fact that for any point $\seq\alpha$ on a $\td$-geodesic joining $\seq\mu , \seq\nu$, $\fu (\seq\mu , \seq\nu)= \fu (\seq\mu , \seq\alpha ) \cup \fu (\seq\alpha , \seq\nu)$, as well as from (\ref{inq}) and  (\ref{outq}). \endproof

 \begin{lemma}\label{fixedred}
 Let $g\in \mcgb$ and $\seq\Delta$ be as in Lemma \ref{red}.
 If $\seq\mu$ is fixed by $g$ then $\seq\mu \in Q(\seq\Delta)$.
 \end{lemma}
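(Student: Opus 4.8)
The plan is to argue by contradiction: suppose $g\seq\mu=\seq\mu$ but $\seq\mu\notin Q(\seq\Delta)$. The set $Q(\seq\Delta)$ is nonempty by Lemma~\ref{red}, closed as an ultralimit of subsets, and convex in $(\AM,\td)$ by Lemma~\ref{uqd}(\ref{geodq}); in the complete median space $(\AM,\td)$ it therefore admits a unique nearest-point projection which is equivariant under any isometry preserving $Q(\seq\Delta)$ (alternatively one may define the projection as the ultralimit of the combinatorial projections $\pi_{\QQ(\Delta_n)}$ of Section~\ref{sect:proj}, which are coarsely $\Stab(\Delta_n)$-equivariant). Since $g$ is pure with reduction system $\seq\Delta$ (the hypothesis of Lemma~\ref{red}), every $g_n$ fixes $\Delta_n$ componentwise, so $g_n\QQ(\Delta_n)=\QQ(\Delta_n)$ \uass and $g$ preserves $Q(\seq\Delta)$. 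Let $\seq\mu'$ be the projection of $\seq\mu$ onto $Q(\seq\Delta)$. Then $\seq\mu'\in Q(\seq\Delta)$, so $\seq\mu'\neq\seq\mu$, and by equivariance $g\seq\mu'=\seq\mu'$.

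Next I would exploit that $g$ has two distinct fixed points. Since $\seq\mu\neq\seq\mu'$, the set $\fu(\seq\mu,\seq\mu')$ is nonempty; pick $\bu=(U_n)^\omega$ in it. By Lemma~\ref{uqd}(\ref{orthq}) (equivalently Lemma~\ref{lem:duproj}) we get $\bu\pitchfork\seq\Delta$, i.e.\ $U_n$ overlaps $\Delta_n$ for $\omega$-almost every $n$. On the other hand, $g$ is pure and fixes the distinct points $\seq\mu,\seq\mu'$, so Lemma~\ref{fixedpair} gives $g\bu=\bu$; that is, $g_nU_n=U_n$ up to isotopy \uass.

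The final step, which I expect to be the main obstacle, is the purely topological assertion that a pure mapping class $g_n$ cannot fix an essential subsurface $U_n$ that overlaps its canonical reduction system $\Delta_n$; together with the previous paragraph this yields the sought contradiction. To prove it, note $g_n(\partial U_n)=\partial U_n$, so a suitable power $h_n=g_n^{M}$ fixes every component of $\partial U_n$, and $h_n$ is again pure with reduction system $\Delta_n$. Put $\partial U_n$ and $\Delta_n$ in minimal position; if $U_n\pitchfork\Delta_n$, some component $c$ of $\partial U_n$ crosses some curve $\delta$ of $\Delta_n$. As $\Delta_n$ is the union of the boundaries of the pseudo-Anosov pieces of $g_n$, either $g_n$ Dehn-twists about $\delta$, in which case $h_n$ restricts near $\delta$ to a nonzero power of $T_\delta$ and so cannot fix $c$ --- a contradiction --- or $\delta$ lies on the boundary of a component $V_n$ of $S\setminus\Delta_n$ on which $h_n$ is pseudo-Anosov. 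In the latter case, since $h_n$ preserves $V_n$ and fixes $c$, it preserves up to isotopy the nonempty essential system of arcs (and curves) cut off by $c$ inside $V_n$, contradicting the fact that a pseudo-Anosov map has no periodic essential arc or curve system. The delicate points are verifying that this system is genuinely essential (non-peripheral) and handling low-complexity pieces; these are standard in this context (cf.\ \cite{Ivanov:subgroups}, \cite{BirmanLubotzkyMcCarthy}), and this is where most of the care lies.
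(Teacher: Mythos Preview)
Your setup (projecting $\seq\mu$ to $\seq\mu'\in Q(\seq\Delta)$ and using equivariance to get $g\seq\mu'=\seq\mu'$) is fine, and invoking Lemma~\ref{fixedpair} to conclude $g\bu=\bu$ for any $\bu\in\fu(\seq\mu,\seq\mu')$ is correct. The gap is in the ``purely topological assertion'' you rely on at the end: it is simply false that a pure element cannot fix a subsurface overlapping its reduction multicurve. Take $g_n$ to be pseudo-Anosov on a subsurface $V_n$ and the identity on $S\setminus V_n$, so $\Delta_n=\partial V_n$. Any connected proper subsurface $U_n\supsetneq V_n$ with $\partial U_n\subset S\setminus \overline{V_n}$ is fixed by $g_n$ (which is supported in $V_n\subset U_n$), yet $U_n$ is not contained in a component of $S\setminus\Delta_n$ since $\partial V_n\subset\mathrm{int}(U_n)$; this is exactly what $\bu\pitchfork\seq\Delta$ means in Lemma~\ref{lem:duproj} and Lemma~\ref{uqd}(\ref{orthq}). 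Your case analysis presumes that $\partial U_n$ actually \emph{crosses} some $\delta\in\Delta_n$, which fails here.

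Your route can be repaired, but not for free: in the bad case $g|_{\bu}$ is again pure reducible on $\bu$ with reduction system $\Delta\cap\bu$, so by induction on complexity $\pi_{\MM(\bu)}(\seq\mu)$ would lie in $Q_{\bu}(\Delta\cap\bu)$; since $\pi_{\MM(\bu)}(\seq\mu')$ also lies there, both projections sit in a single piece of $\MM(\bu)$ and hence $\tdlu(\seq\mu,\seq\mu')=0$, contradicting $\bu\in\fu(\seq\mu,\seq\mu')$. The paper avoids all of this. It observes (as you do) that $g\seq\nu=\seq\nu$ for the projection $\seq\nu$, passes to a power of $g$ so that $\dist_{C(U^i_n)}(\mu_n,g_n\mu_n)>M$ on every pseudo-Anosov component, and then applies Lemma~\ref{lem:betw} to the pair $\seq\mu,\, g\seq\mu$: the concatenation $[\seq\mu,\seq\nu]\sqcup[\seq\nu,g\seq\nu]\sqcup[g\seq\nu,g\seq\mu]$ would have to be a $\td$-geodesic, but since $g\seq\mu=\seq\mu$ and $g\seq\nu=\seq\nu$ this is a path of length $2\,\td(\seq\mu,\seq\nu)>0$ from $\seq\mu$ back to itself --- an immediate contradiction.
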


 \proof Assume on the contrary that $\seq\mu \not\in Q(\seq\Delta)$, and let $\seq\nu$ be its projection onto
 $Q(\seq\Delta)$. Then $g\seq\nu$ is the projection of $g\seq\mu$ onto $Q(\seq\Delta)$.
 Corollary \ref{distsubsurf} implies that $g\seq\nu = \seq\nu$. By replacing $g$
  with some power of it we may assume that the hypotheses of Lemma \ref{lem:betw} hold.
 On the other hand, the conclusion of Lemma \ref{lem:betw} does not
 hold since the geodesic between $\mu$ and $\nu$ and the geodesic
 between $g\mu$ and $g\nu$ coincide. This contradiction proves the
 lemma.
 \endproof

 \begin{lemma}\label{fixpower}
 Let $g\in \mcgb$ be a pure element such that $\la g \ra$ has bounded orbits in $\AM$, and let $\seq\mu$ be a point such that $g\seq\mu \neq \seq\mu$.
 Then for every $k\in \Z \setminus \{ 0\}$,  $g^k\seq\mu \neq \seq\mu$.
 \end{lemma}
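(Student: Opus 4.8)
\medskip\noindent\textbf{Proof plan.}
The plan is to argue by induction on the complexity $\xi(S)$, proving the statement simultaneously for every asymptotic cone of $\MM(S)$ and every pure element of its isometry group; the case $\xi(S)\le 1$, in which $\AM$ is an $\R$-tree, is not special and is subsumed in the inductive step. Suppose, for contradiction, that $g^k\seq\mu=\seq\mu$ for some $k\in\Z\setminus\{0\}$. Replacing $k$ by $-k$ we may assume $k\ge 1$, and since $g\seq\mu\ne\seq\mu$ we have $k\ge 2$. Set $\seq\nu:=g\seq\mu$; then $\seq\nu\ne\seq\mu$ and $g^{k}\seq\nu=g^{k+1}\seq\mu=g(g^{k}\seq\mu)=g\seq\mu=\seq\nu$, so the pure element $g^{k}$ fixes the two distinct points $\seq\mu$ and $\seq\nu$.

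First I would apply Lemma~\ref{fixedpair} to $g^{k}$: every $\bu\in\fu(\seq\mu,\seq\nu)$ satisfies $g^{k}\bu=\bu$, and since a pure mapping class and all of its powers leave invariant exactly the same subsurfaces (as used in the proof of Lemma~\ref{fixedpair}), in fact $g\bu=\bu$. Hence, for each such $\bu=(U_n)^\omega$, the element $g$ restricts to a pure element $g|_\bu$ of the ultrapower of $\MCG(U_n)$, which induces an isometry of the tree-graded space $\lu$ for which $\plu$ is equivariant (subsurface projection commutes, up to uniformly bounded error, with a mapping class preserving the subsurface). By Corollary~\ref{distsubsurf} the orbits of $\langle g|_\bu\rangle$ on $\lu$ are bounded; moreover $\tdlu(\seq\mu,\seq\nu)>0$ forces $g|_\bu\cdot\plu(\seq\mu)=\plu(\seq\nu)\ne\plu(\seq\mu)$, while $(g|_\bu)^{k}\plu(\seq\mu)=\plu(g^{k}\seq\mu)=\plu(\seq\mu)$.

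The induction then closes as soon as $\fu(\seq\mu,\seq\nu)$ contains a \emph{proper} subsurface $\bu\subsetneq\bs$: the pure element $g|_\bu$ has bounded orbits on the asymptotic cone $\lu$ of $\MM(U_n)$ (with $\xi(U_n)<\xi(S)$) and does not fix $\plu(\seq\mu)$, so the inductive hypothesis gives $(g|_\bu)^{k}\plu(\seq\mu)\ne\plu(\seq\mu)$, a contradiction. If instead $\fu(\seq\mu,\seq\nu)=\{\bs\}$, then $g$ must be pseudo-Anosov on $S$ (for $\xi(S)\ge 1$; the annular case $\xi(S)=-1$, where $g$ is a nonzero Dehn twist power and $\CC(S)\cong\Z$, is entirely analogous and elementary). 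Indeed, if $g$ were reducible with nonempty canonical multicurve $\seq\Delta$, then $g^{k}$ has the same canonical multicurve, so Lemma~\ref{fixedred} places $\seq\mu$ and $\seq\nu=g\seq\mu$ in $Q(\seq\Delta)$; choosing representatives with $\Delta_n\subseteq\base(\mu_n)\cap\base(\nu_n)$ gives $\dcs(\mu_n,\nu_n)\le 2$, whence $\lio{\dcs(\mu_n,\nu_n)}<\infty$, and Lemma~\ref{restr} forces $\bs\notin\fu(\seq\mu,\seq\nu)$ --- contradicting $\fu(\seq\mu,\seq\nu)=\{\bs\}$.

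So everything reduces to the pseudo-Anosov case, which is the heart of the matter. Here $\bs\in\fu(\seq\mu,\seq\nu)$, and since $g\seq\mu\ne\seq\mu$ one has $\dmm(\mu_n,g_n\mu_n)\asymp d_n$. I would invoke the rank-one (Morse) property of pseudo-Anosov mapping classes recalled in the introduction: $\langle g_n\rangle$ acts on $\MM(S)$ with a Morse quasi-axis $A_n$, with Morse constants uniform over all pseudo-Anosovs on $S$. Let $z_n\in A_n$ be a nearest point to $\mu_n$, put $R_n:=\dmm(\mu_n,A_n)$, and let $\tau_n$ be the translation length of $g_n$ along $A_n$. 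The Morse property yields $\dmm(\mu_n,g_n\mu_n)\le 2R_n+\tau_n+O(1)$ and, for every $k\ge 1$, $\dmm(\mu_n,g_n^{k}\mu_n)\ge 2R_n-O(1)$, because the geodesics $[\mu_n,g_n\mu_n]$ and $[\mu_n,g_n^{k}\mu_n]$ share an initial segment staying close to $[\mu_n,z_n]$. From the latter bound $R_n=O(d_n)$, so $\seq\alpha:=\lio{z_n}$ is a point of $\AM$; since $z_n$ and $g_n^{j}z_n$ lie on $A_n$ at arclength $\asymp j\tau_n$, boundedness of $\{g^{j}\seq\alpha\}$ forces $\tau_n=o(d_n)$, and together with $\dmm(\mu_n,g_n\mu_n)\asymp d_n$ the upper bound then gives $R_n\asymp d_n$. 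Therefore
\[
\dist_\AM(\seq\mu,g^{k}\seq\mu)=\lio{\frac{\dmm(\mu_n,g_n^{k}\mu_n)}{d_n}}\ \ge\ \lio{\frac{2R_n}{d_n}}\ >\ 0,
\]
contradicting $g^{k}\seq\mu=\seq\mu$. The main obstacle in the whole argument is precisely this last case: one must prevent $g$ from acting on $\AM$ as a finite-order rotation fixing a nondegenerate subtree, and the Morse estimate above is exactly what forbids this for pseudo-Anosov $g$.
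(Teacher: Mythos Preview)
Your inductive reduction is correct and is genuinely different from the paper's approach. The paper argues by a direct case analysis: for a pseudo-Anosov $g$ it treats separately the cases where $\seq\mu$ lies in a $g$--invariant piece (Case~1.a, using that only boundedly many $g^{-i}\bu$ can appear in $\fu(\seq\mu,g\seq\mu)$ because the shadowed $\CC(S)$--geodesic is short) and where it does not (Case~1.b, via middle cut-points/cut-pieces and a detailed analysis on the $\CC(S)$--axis); the reducible case is then reduced to the pseudo-Anosov one componentwise. Your route---observe that $g^k$ fixes both $\seq\mu$ and $g\seq\mu$, use Lemma~\ref{fixedpair} and purity to get $g\bu=\bu$ for every $\bu\in\fu(\seq\mu,g\seq\mu)$, and then induct on complexity whenever a \emph{proper} $\bu$ appears---is cleaner and unifies Cases~1.a and~2 of the paper (both force a proper $\bu$, by Lemma~\ref{2ptsinpiece} in the first case and by Lemma~\ref{fixedred} in the second). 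What remains is exactly the transversal case $\fu(\seq\mu,g\seq\mu)=\{\bs\}$, and here your use of the Morse quasi-axis of a pseudo-Anosov in $\MM(S)$ replaces the paper's $\CC(S)$--axis computation.

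There is, however, a genuine gap in your Morse step. The inequality $\dmm(\mu_n,g_n^{k}\mu_n)\ge 2R_n-O(1)$ does \emph{not} follow from the Morse property for arbitrary $k\ge 1$; the contracting/Morse estimate only yields this once the projections $z_n$ and $g_n^{k}z_n$ are farther apart than a fixed threshold $C_0$ depending only on the (uniform) Morse constants of pseudo-Anosov axes. Your stated justification (``the geodesics share an initial segment close to $[\mu_n,z_n]$'') implicitly assumes this threshold is met. The fix is short: pseudo-Anosovs have $\CC(S)$--translation length bounded below by a constant $c(S)>0$ (Bowditch), hence $\MM(S)$--translation length $\tau_n\ge c'(S)>0$ uniformly; since $g^{k}\seq\mu=\seq\mu$ implies $g^{Nk}\seq\mu=\seq\mu$ for every $N$, replace $k$ by $Nk$ with $N$ large enough (depending only on $S$) so that $Nk\,\tau_n>C_0$ $\omega$--a.s. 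Then the Morse lower bound is valid for this power, and the rest of your chain (bounded orbits $\Rightarrow R_n=O(d_n)$; boundedness of $\{g^{j}\seq\alpha\}$ $\Rightarrow \tau_n=o(d_n)$; the upper bound plus $g\seq\mu\ne\seq\mu$ $\Rightarrow R_n\asymp d_n$; hence $\dist_\AM(\seq\mu,g^{Nk}\seq\mu)\ge 2\lio{R_n/d_n}>0$) goes through and yields the desired contradiction.
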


 \proof \emph{Case 1.}\quad Assume that $g$ is a pseudo-Anosov element.

\emph{Case 1.a} Assume moreover that $\seq\mu$ is in a piece $P$ stabilized by $g$. Let $\bu $ be a subsurface in $\fu (\seq\mu , g\seq\mu)$. As $\seq\mu , g\seq\mu$ are both in $P$ it follows by Lemma \ref{2ptsinpiece} that $\bu \subsetneq \bs $.

Assume that the subsurfaces $g^{-i_1}
\bu ,$ $....,$ $g^{-i_k} \bu$ are also in $\fu (\seq\mu , g\seq\mu )$, where $i_1 < \cdots <i_k$. Let $3\epsilon
>0$ be the minimum of $\td_{g^{-i} \bu} (\seq\mu , g\seq\mu
), i=0,i_1,...,i_k$. Since $P$ is the closure of its interior $U(P)$
(Proposition~\ref{unique})
there exists $\seq\nu \in U(P)$ such
that $\td (\seq\mu , \seq\nu) < \epsilon $. It follows that $\td_{g^{-i} \bu} (\seq\nu , g\seq\nu )
\geq \epsilon$ for $i=0,i_1,...,i_k$. Then by Lemma \ref{restr}, $\lim_\omega \dist_{C(g^{-i} U_n)}
(\nu_n , g_n\nu_n )= \infty $. Let $\fh =\seqrep \fh $ be a hierarchy path joining $\seq\nu $ and
$g\seq\nu$. The above implies that \uass $\fh_n$ intersects $Q(g^{-i_j} \partial U_n )$, hence there
exists a vertex $v^j_n$ on the tight geodesic $\ft_n$ shadowed by $\fh_n$ such that $g^{-i_j}
U_n\subseteq S\setminus v^j_n$. In particular $\dcs (g^{-i_j}
\partial U_n, v^j_n )\leq 1$. Since $\seq\nu \in U(P)$ and $g$ stabilizes $U(P)$ it follows that $g\seq\nu \in
U(P)$, whence $\dcs (\nu_n , g_n\nu_n ) \leq D=D(g)$ \uas. In particular the length of the tight
geodesic $\ft_n$ is at most $D+2$ \uas.

According to \cite[Theorem 1.4]{Bowditch:tightgeod}, there exists $m=m(S)$ such that \uass
$g_n^m$ preserves a bi-infinite geodesic $\g_n$ in $C(S)$.
To denote $g^m$  we write $g_{1}$ for the sequence with terms $g_{1,n}$.

For every curve $\gamma$ let $ \gamma'$ be a projection of it on $\g_n$.  A standard hyperbolic
geometry argument implies that for every $i\geq 1$ $$\dcs (\gamma , g_{1,n}^{-i} \gamma )\geq \dcs (\gamma' , g_{1,n}^{-i} \gamma' ) + O(1)\geq i+O(1)\, . $$

The same estimate holds for $\gamma$ replaced by $\partial U_n$. Now assume that the maximal power $i_k
= m q +r$, where $0\leq r <m$. Then $\dcs (g_n^{-i_k}\partial U_n, g_n^{-mq}\partial U_n)=
\dcs (g_n^{-r}\partial U_n, \partial U_n)\leq 2(D+2)+ \dcs (g_n^{-r}\nu_n, \nu_n)\leq
2(D+2) +rD=D_1$. It follows that $\dcs (g_n^{-i_k}\partial U_n,
\partial U_n)\geq \dcs ( \partial U_n,
g_n^{-mq}\partial U_n) -D_1 \geq q + O(1) -D_1$.

On the other hand $\dcs (g_n^{-i_k}\partial U_n,
\partial U_n)\leq 2+ \dcs (v^k_n , v^0_n )\leq D+4$, whence $q\leq D+D_1+4+O(1)=D_2$ and $i_k\leq
m(D_2+1)$. Thus the sequence $i_1,...,i_k$ is bounded, and it has a maximal element. It follows that
there exists a subsurface $\bu $ in $\fu (\seq\mu , g\seq\mu )$ such that for every $k>0$,
$\td_{g^{-k} \bu} (\seq\nu , g\seq\nu )=\td_{\bu} (g^{k}\seq\nu , g^{k+1}\seq\nu )=0$. The
triangle inequality in $T_\bu$ implies that $\dlu (\seq\mu , g\seq\mu )= \dlu (\seq\mu , g^k\seq\mu )>0$ for every
$k\geq 1$. It follows that no power $g^k$ fixes $\seq\mu$.

\medskip

\emph{Case 1.b}\quad Assume now that $\seq\mu$ is not contained in any piece fixed by $g$. By Lemma
\ref{isompiece} $g$ fixes either the middle cut-piece $P$ or the middle cut-point $m$ of $\seq\mu,
g\seq\mu$.

Assume that $\seq\mu , g\seq\mu$ have a middle cut-piece $P$, and let $\seq\nu , \seq\nu'$ be the
endpoints of the intersection with $P$ of any arc joining $\seq\mu , g\seq\mu$ . Then $g \seq\nu =
\seq\nu'$ hence $g\seq\nu \neq \seq \nu$. By Case 1.a it then follows that for every $k\neq 0$,
$g^k\seq\nu \neq \seq \nu$, and since $[\seq\mu , \seq\nu ] \sqcup [\seq\nu , g^k\seq\nu] \sqcup
[g^k\seq\nu , g^k\seq\mu]$ is a geodesic, it follows that $g^k\seq\mu \neq \seq \mu$.

We now assume that $\seq\mu , g\seq\mu$ have a middle cut-point $m$, fixed by $g$. Assume that $\fu
(\seq\mu , g\seq\mu)$ contains a strict subsurface of $\bs$. Then the same thing holds for $\fu
(\seq\mu , m)$. Let $\bu \subsetneq \bs$ be an element in $\fu (\seq\mu , m)$.

If $g^k \seq\mu = \seq\mu$ for some $k \neq 0$, since $g^k m = m$ it follows that $g^{kn} (\bu )=\bu$ for some $n \neq 0$, by Lemma \ref{fixedpair}. But this is impossible, since $g$ is a pseudo-Anosov.

Thus, we may assume that $\fu (\seq\mu , g\seq\mu) = \{ \bs \}$, i.e. that $\seq\mu , g\seq\mu$ are in the same transversal tree.

Let $\g_n$ be a bi-infinite geodesic in $C(S)$ such that $g_n\g_n$ is at Hausdorff distance $O(1)$ from $\g_n$. Let $\gamma_n$ be the projection of $\pcs (\mu_n)$ onto $\g_n$. A hierarchy path $\seq\fh =\seqrep \fh$ joining $\mu_n$ and $g_n\mu_n$ contains two points $\nu_n , \nu_n'$ such that:
\begin{itemize}
  \item the sub-path with endpoints $\mu_n , \nu_n$ is at $C(S)$-distance $O(1)$ from any $C(S)$-geodesic joining $\pcs (\mu_n)$ and $\gamma_n$;
  \item the sub-path with endpoints $g\mu_n , \nu_n'$ is at   $C(S)$-distance $O(1)$ from any $C(S)$-geodesic joining $\pcs (g\mu_n)$ and $g\gamma_n$;
  \item if $\dcs (\nu_n , \nu_n') $ is large enough then the sub-path
  with endpoints $\nu_n , \nu_n'$ is  at   $C(S)$-distance $O(1)$ from $\g_n$;

  \item $\dcs (\nu_n' , g\nu_n) $ is $O(1)$.
\end{itemize}

Let $\seq\nu = \seqrep \nu$ and $\seq\nu' = \seqrep{\nu'}$. The last property above implies that $\tdls (\seq\nu' , g\seq\nu )=0$. Assume that $\tdls (\seq\nu , \seq\nu') >0$ hence $\tdls (\seq\nu , g\seq\nu ) >0$. Let $\fh'$ be a hierarchy sub-path with endpoints $\seq\nu , g\seq\nu$. Its projection onto $T_\bs$ and the projection of $g\fh'$ onto $T_\bs$ have in common only their endpoint. Otherwise there would exist $\seq\alpha $ on  $\fh'\cap g\fh'$ with $\tdls (\seq\alpha , g\seq\mu ) >0$, and such that $\Cutp \{ \seq\alpha , g\seq\mu \}$ is in the intersection of $\Cutp (\fh')$ with $\Cutp (g \fh')$. Consider $\seq \beta \in \Cutp \{ \seq\alpha , g\seq\mu \}$ at equal $\tdls$-distance from $\seq\alpha , g\seq\mu$.
 Take $\alpha_n , \beta_n$ on $\fh'_n$ and  $\alpha_n' , \beta_n'$ on $g\fh'_n$ such that $\seq\alpha = \seqrep{\alpha}= \seqrep{\alpha'}$ and  $\seq\beta = \seqrep{\beta}= \seqrep{\beta'}$. Since $\alpha_n , \alpha_n'$ and $\beta_n , \beta_n'$ are at distance $o(d_ n)$ it follows that $\fh_n'$ between $\alpha_n , \beta_n$ and $g\fh_n'$ between $\alpha_n' , \beta_n'$ share a large domain $U_n$. Let $\sigma_n$ and $\sigma_n'$ be the corresponding points on the two hierarchy sub-paths contained in $Q(\partial U_n)$. The projections of $\fh'_n$ and $g\fh'_n$ onto $C(S)$, both tight geodesics, would contain the points $\pcs (\sigma_n)$ and $\pcs (\sigma_n')$ at $\dcs$-distance $O(1)$ while $\lim_\omega \dcs (\sigma_n , g\nu_n)=\infty$ and $\lim_\omega \dcs (\sigma_n' , g\nu_n)=\infty$. This contradicts the fact that the projection of $\fh'_n \sqcup g\fh'_n$ is at $\dcs$-distance $O(1)$ from the geodesic $\g_n$.

We may thus conclude that the projections of $\fh'$ and $g\fh'$ on $T_\bs$ intersect only in their endpoints. From this fact one can easily deduce by induction that $\la g\ra $ has unbounded orbits in $T_\bs$, hence in $\free$.

Assume now that $\tdls (\seq\nu , \seq\nu') =0$ (hence $\seq\nu =m$) and that $\tdls (\seq\mu , \seq\nu) >0$. Let $\seq\alpha$ be the point on the hierarchy path joining $\seq\mu , \seq\nu$ at equal $\tdls$-distance from its extremities and let $\seq{\fh''}=\seqrep{\fh''}$ be the sub-path of endpoints $\seq\mu , \seq\alpha$. All the domains of $\fh''_n$ have $C(S)$ distance to $\g_n$ going to infinity, likewise for the $C(S)$ distance to any geodesic joining $\pcs (g^k\mu_n)$ and $\pcs (g^k\nu_n)$ with $k\neq 0$. It follows that $\dist (\seq\mu , g^k\seq\mu) \geq \dist (\seq\mu , \seq\alpha )>0$.

\medskip

\emph{Case 2.}\quad Assume now that $g$ is a reducible element, and let $\seq\Delta =(\Delta_n)^\omega$ be a multicurve
as in Lemma \ref{red}. According to the same lemma, $Q(\seq\Delta )\subset U(P)$.

If $\seq\mu \not\in Q(\seq\Delta )$ then $g^k \seq\mu \neq \seq\mu$ by Lemma \ref{fixedred}.
Assume therefore that  $\seq\mu \in Q(\seq\Delta )$. The set $Q(\seq\Delta )$ can be identified to $\prod_{i=1}^m \MM
  (\bu_i)$ and $\seq\mu$ can be therefore identified to $(\seq\mu_1,...,\seq\mu_m)$.
  If for every $i\in \{1,2,...,k\}$ the component of $g$ acting on $\bu_i$ would fix $\seq\mu_i$ in $\MM
  (\bu_i)$ then $g$ would fix $\seq\mu$. This would contradict the hypothesis on $g$.
   Thus for some $i\in \{1,2,...,k\}$
   the corresponding component
  of $g$ acts on $\MM (\bu_i)$ as a pseudo-Anosov and does not fix $\seq\mu_i$. According to the first case
  for every $k\in \Z \setminus \{ 0 \}$ the component of $g^k$ acting on $\bu_i$ does  not fix $\seq\mu_i$ either,
   hence $g^k$ does not fix  $\seq\mu$. \endproof

\begin{lemma}\label{split}
Let $g\in \mcgb$ be a pure element, and let $\seq\mu =\seqrep \mu$ be
a point such that $g\seq\mu \neq \seq\mu$.  If $g$ is reducible take
$\seq\Delta=(\Delta_n)^\omega$, and $U_n^1,....,U_n^m$ as in Lemma
\ref{red}, while if $g$ is pseudo-Anosov take $\seq\Delta = \emptyset$
and $\{ U_n^1,....,U_n^m \} =\{ S\} $, and by convention $Q(\Delta_n)
= \MM (S)$.  Assume that $g$ is such that for any $\nu_n \in
Q(\Delta_n)$, $\dist_{C(U^i_n)} (\nu_n, g_n\nu_n) >D$ \uass for every
$i\in \{1,...,k\}$, where $D$ is a fixed constant, depending only on
$\xi(S)$ (this may be achieved for instance by replacing $g$ with
a large enough power of it).

Then $\fu=\fu (\seq\mu , g\seq\mu )$ splits as $\fu_0 \sqcup \fu_1
\sqcup g\fu_1 \sqcup \mathfrak{P}$, where
\begin{itemize}
  \item $\fu_0$ is the set of elements $\bu\in \fu$ such that no
  $g^k\bu$ with $k\in \Z \setminus \{0 \}$ is in $\fu$,
  \item $\mathfrak{P}$ is the
  intersection of $\fu$ with $\{\bu^1, ...,\bu^k \}$, where $\bu^j =
  (U^j_n)^\omega$,
  \item $\fu_1$ is the set of elements $\bu\in \fu\setminus \mathfrak{P}$
  such that
  $g^k\bu\in \fu$ only for $k=0,1$ (hence $g\fu_1$ is the set of
  elements $\bu\in \fu\setminus \mathfrak{P}$
  such that $g^k\bu\in \fu$ only for $k=0,-1$).
\end{itemize}

Moreover, if either $\fu_0\neq \emptyset$ or $\tdlu (\seq\mu ,
g\seq\mu )\neq \td_{g\bu }(\seq\mu , g\seq\mu )$ for some $\bu \in
\fu_1$ then the $\la g\ra$-orbit of $\seq\mu$ is unbounded.
\end{lemma}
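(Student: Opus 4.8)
The plan is to first establish the partition $\fu=\fu_0\sqcup\fu_1\sqcup g\fu_1\sqcup\mathfrak P$ and then, for the ``moreover'', to run the classical ``axis'' argument for an isometry of an $\R$--tree inside the median space $(\AM,\td)$. The first observation I would record is that, since $g\in\mcgb$ acts on $\AM$ by $\td$--isometries permuting the factor trees of $\prod_\bu T_\bu$, it induces isometries $T_\bu\to T_{g\bu}$ compatible with the projections, so that $\psi_{g\bu}(g\seq x)=g\,\psi_\bu(\seq x)$; hence $\fu(g^i\seq\mu,g^{i+1}\seq\mu)=g^i\fu$ and $\td_{g\bu}(g\seq\mu,g^2\seq\mu)=\tdlu(\seq\mu,g\seq\mu)$. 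Because $g$ is pure it fixes each $\bu^j$ (it does not permute the components of $S\setminus\seq\Delta$), so the elements of $\mathfrak P$ are $g$--invariant, while a subsurface strictly inside a component on which $g$ is the identity is never in $\fu$ (its two projections of $\seq\mu$ and $g\seq\mu$ agree up to $O(1)$ in every subcomplex of curves). For $\bu\in\fu$ I will write $E(\bu)=\{k\in\Z:g^k\bu\in\fu\}$, which contains $0$ and, by Lemma~\ref{restr}, records exactly which translates $g^k\bu$ are large domains of (limits of) hierarchy paths from $\mu_n$ to $g_n\mu_n$.

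The content of the decomposition is then the single statement: \emph{for every $\bu\in\fu\setminus\mathfrak P$ the set $E(\bu)$ is a set of at most two consecutive integers}, i.e.\ $E(\bu)\in\{\{0\},\{0,1\},\{-1,0\}\}$; granting this, sorting $\bu$ by $E(\bu)$ yields $\fu_0$, $\fu_1$, $g\fu_1$ respectively, and the four sets are disjoint by construction. To prove it I would argue as in Lemma~\ref{fixpower}, Case~1.a, but in a sharper form: since $\bu\notin\mathfrak P$ and $\bu\in\fu$, either $\bu\subsetneq\bu^i$ with $g$ pseudo-Anosov on $\bu^i$, or $\bu$ overlaps $\seq\Delta$ and hence some $\partial\bu^i$; after the power--replacement in the hypothesis, $g$ acts on $C(U_n^i)$ (and, via Theorem~\ref{projest}, on $C(S)$) as an isometry of translation length at least $1$, so $\dist_{C(U_n^i)}(g_n^a\partial U_n,g_n^b\partial U_n)\ge|a-b|+O(1)$ by projection to the axis, while for each $k\in E(\bu)$ the curve $g_n^k\partial U_n$ sits near a vertex of the (bounded length) tight geodesic shadowed by a fixed hierarchy path $[\mu_n,g_n\mu_n]$ (Lemma~\ref{MM2:LLL}); matching these vertices gives $|E(\bu)|\le2$, and consecutiveness follows from a betweenness argument: were $g^a\bu,g^b\bu\in\fu$ with $|a-b|\ge2$, one could build from the translates $g^a\fh\sqcup\cdots\sqcup g^b\fh$ (with the projections of Lemma~\ref{lem:betw}) a $\td$--geodesic along which, by Lemma~\ref{projtu}, the subsurface $g^a\bu$ could not reappear, a contradiction. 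I expect this consecutiveness estimate to be the main obstacle; everything around it is bookkeeping.

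For the ``moreover'' I would fix a limit hierarchy path $\fh$ from $\seq\mu$ to $g\seq\mu$ and consider, for $n\ge1$, the concatenation $\fh\sqcup g\fh\sqcup\cdots\sqcup g^{n-1}\fh$ from $\seq\mu$ to $g^n\seq\mu$, of $\td$--length $n\,\td(\seq\mu,g\seq\mu)$. By Lemma~\ref{projtu} each $\psi_\bw(g^i\fh)$ is a geodesic in the $\R$--tree $T_\bw$, and by the decomposition (applied to $\bw$ and its $g$--translates) $\psi_\bw$ is non-constant on at most two legs, and those two are consecutive unless $\bw\in\mathfrak P$; so in $T_\bw$ the projected path is either a two--segment concatenation of geodesics (endpoints at distance ``sum of the two lengths minus twice the overlap $\ell_\bw$'') or, for $\bw=\bu^j\in\mathfrak P\cap\fu$, the $g$--orbit of $\psi_{\bu^j}(\seq\mu)$ for the isometry $g$ of $T_{\bu^j}$. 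Summing over $\bw$ and using that $\td$ is the $\ell^1$--sum of the $\tdlu$ (Theorem~\ref{tilde}), one obtains $\td(\seq\mu,g^n\seq\mu)\ge n\bigl(\td(\seq\mu,g\seq\mu)-2\beta\bigr)-O(1)$, where $\beta=\sum_{\bu\in\fu_1}\min\bigl(\tdlu(\seq\mu,g\seq\mu),\td_{g\bu}(\seq\mu,g\seq\mu)\bigr)+\sum_{\bu^j\in\mathfrak P\cap\fu}\ell_{\bu^j}$ is the per--junction overlap (finiteness of $\beta$ and the fact that a fixed $\bw$ backtracks at only one junction both use the consecutiveness from the previous paragraph). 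Since $\td(\seq\mu,g\seq\mu)-2\beta$ equals $\sum_{\bu\in\fu_0}\tdlu(\seq\mu,g\seq\mu)+\sum_{\bu\in\fu_1}\bigl|\tdlu(\seq\mu,g\seq\mu)-\td_{g\bu}(\seq\mu,g\seq\mu)\bigr|$ plus the term $\sum_{\bu^j\in\mathfrak P\cap\fu}\bigl(\td_{\bu^j}(\seq\mu,g\seq\mu)-2\ell_{\bu^j}\bigr)$, which is $\ge0$ because in an $\R$--tree $[p,gp]$ and $[gp,g^2p]$ overlap in at most $\tfrac12 d(p,gp)$, either hypothesis of the ``moreover'' forces $\td(\seq\mu,g\seq\mu)-2\beta>0$, whence $\td(\seq\mu,g^n\seq\mu)\to\infty$ and the $\la g\ra$--orbit of $\seq\mu$ is unbounded.

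The routine parts I would then have to fill in are: the elementary $\R$--tree identity expressing the distance between the endpoints of a two--segment (and of an $n$--segment) concatenation of geodesics in terms of the overlaps; the verification that $\psi_\bw$ of the concatenation really is such a concatenation (which is exactly where Lemma~\ref{projtu}, and in the $\mathfrak P$--case the structure of the $g$--action on $T_{\bu^j}$, enter); and the identification of which subsurfaces can create backtracking at a junction (namely the $g$--translates of $\fu\cap g\fu=g\fu_1\sqcup(\fu\cap\mathfrak P)$), together with the count that, by consecutiveness, each such subsurface does so at exactly one junction. None of these should present difficulty once the consecutiveness statement of the decomposition is in hand.
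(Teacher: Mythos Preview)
Your argument for the decomposition $\fu=\fu_0\sqcup\fu_1\sqcup g\fu_1\sqcup\mathfrak P$ is in spirit the paper's argument (project $\partial U_n$ and its $g$--translates to the axis of the pseudo-Anosov in the relevant curve complex and observe that all these projections must land in a window of length $D+O(1)$), but your sketch contains a misstatement: the tight geodesic shadowed by $[\mu_n,g_n\mu_n]$ is \emph{not} of bounded length in general; what is bounded is its nearest--point projection to the axis $\pgot_n$. That bounded projection is exactly what forces $|i|\le 1$, and it also gives consecutiveness directly (if both $g\bu$ and $g^{-1}\bu$ were in $\fu$ the projections would be spread over length $2D+O(1)$, impossible for $D$ large), so your separate ``betweenness'' argument for consecutiveness is unnecessary and, as written, too vague to evaluate.

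Your approach to the ``moreover'' is genuinely different from the paper's, and it has a real gap. The paper never tries to control all subsurfaces at once: it fixes a single $\bu\in\fu_0$ (or $\fu_1$) and lower--bounds $\td(g^i\seq\mu,g^j\seq\mu)$ by $\sum_{k=i}^{j-1}\td_{g^k\bu}(g^i\seq\mu,g^j\seq\mu)$, which already grows linearly. Your global estimate $\td(\seq\mu,g^n\seq\mu)\ge n(L-2\beta)$ requires the $\mathfrak P$--term $\sum_{\bu^j}(\td_{\bu^j}(\seq\mu,g\seq\mu)-2\ell_{\bu^j})$ to be nonnegative, and you justify this by the assertion that in an $\R$--tree the overlap of $[p,gp]$ and $[gp,g^2p]$ is at most $\tfrac12 d(p,gp)$. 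That assertion is \emph{false}: for an elliptic isometry fixing the midpoint $m$ of $[p,gp]$, the overlap equals $\tfrac12 d(p,gp)+(p\mid g^2p)_m$, and the Gromov product can be positive (take $g$ fixing $m$ and swapping two branches at $m$ while $g^2$ shifts the sub-branches inside one of them; then $g$ has infinite order and $(p\mid g^2p)_m>0$). Since nothing in your setup forces the induced action of $g$ on $T_{\bu^j}$ to be hyperbolic, your $\mathfrak P$--term could be negative and swamp the positive $\fu_0/\fu_1$ contributions, so $L-2\beta>0$ is not established.

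The fix is easy and brings you back to the paper's line: simply discard the $\mathfrak P$--contribution (it is nonnegative, being a sum of tree distances) and keep only the $g^k\bu$ with $\bu\in\fu_0\cup\fu_1$; these contributions are pairwise distinct subsurfaces and give $\td(\seq\mu,g^n\seq\mu)\ge n\sum_{\fu_0}\tdlu+(n-1)\sum_{\fu_1}|\tdlu-\td_{g\bu}|$, which is exactly the linear growth you want under either hypothesis. This is essentially what the paper does, only the paper is even more parsimonious and uses a single $\bu$.
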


\proof \emph{Case 1.} \quad Assume that $g$ is a pseudo-Anosov with $C(S)$-translation length $D$,
where $D$ is a large enough constant. There exists a bi-infinite axis $\pgot_n$ such that $g_n\pgot_n$ is at Hausdorff distance $O(1)$ from $\pgot_n$. Consider $\fh =\seqrep \fh$ a hierarchy path joining $\seq\mu$ and $g\seq\mu$, such
that $\fh_n$ shadows a tight geodesic $\ft_n$. Choose two points $\gamma_n , \gamma_n'$ on $\pgot_n$ that are nearest
to $\pcs (\mu_n )$, and $\pcs (g\mu_n )$ respectively. Note that $\dcs (\gamma_n' , g_n \gamma_n)=O(1)$.

Standard arguments concerning the way hyperbolic elements act on hyperbolic metric
spaces imply that the geodesic $\ft_n$ is in a tubular neighborhood with radius $O(1)$ of the union of
$C(S)$-geodesics $[\pcs (\mu_n ), \gamma_n] \sqcup [\gamma_n , \gamma_n' ] \sqcup
[\gamma_n' , g_n\pcs (\mu_n )]$. Moreover any point on $\ft_n$ has any nearest point projection on
$\pgot_n$ at distance $O(1)$ from $[\gamma_n , \gamma_n' ] \subset \pgot_n$.

Now let $\bu =(U_n)^\omega$ be a subsurface in $\fu (\seq\mu , g\seq\mu )$, $\bu \subsetneq \bs $. Assume that for
some $i\in \Z$, $\tdlu (g^i\seq\mu , g^{i+1}\seq\mu )>0$. This implies that $\lim_\omega \dist_{C(U_n)} (g_n^j\mu_n , g_n^{j+1}\mu_n)=+\infty $ for $j\in
\{0,i\}$,  according to Lemma \ref{restr}. In particular, by Lemma \ref{MM2:LLL}, $\partial U_n$ is at $C(S)$-distance $\leq 1$ from a
vertex $u_n\in \ft_n$ and $g_n^{-i}\partial U_n$ is at $C(S)$-distance $\leq 1$ from a vertex $v_n\in
\ft_n$. It follows from the above that  $\partial U_n$ and  $g_n^{-i}\partial U_n$  have any nearest point projection on
$\pgot_n$ at distance $O(1)$ from $[\gamma_n , \gamma_n' ] \subset \pgot_n$. Let $x_n$ be a nearest point projection on
$\pgot_n$ of $\partial U_n$. Then $g_n^{-i} x_n$ is a nearest point projection on
$\pgot_n$ of $g_n^{-i} \partial U_n$. As both $x_n$ and $g_n^{-i}x_n$ are at distance $O(1)$ from $[\gamma_n , \gamma_n' ]$,
 they are at distance at most $D+O(1)$ from each other. On the other hand $\dcs (x_n , g_n^{-i} x_n )= |i|D+O(1)$.
 For $D$ large enough this implies that $i\in \{-1,0,1\}$. Moreover for $i=-1$, $\partial U_n$ projects on $\pgot_n$ at $C(S)$-distance $O(1)$ from
$\gamma_n$ while
$g_n\partial U_n$ projects on $\pgot_n$ at $C(S)$-distance $O(1)$ from $g_n\gamma_n$.
This in particular implies
that, for $D$ large enough, either $\tdlu (g\seq\mu , g^{2}\seq\mu )>0$ or  $\tdlu (g^{-1}\seq\mu , \seq\mu )>0$ but not both.

Let $\fu_0 = \fu\setminus (g\fu \cup g\iv \fu )$. Let $\fu_1 = \left( \fu \cap g\iv \fu \right)\setminus \{ \bs \}$ and $\fu_2 = \left( \fu \cap g
\fu \right) \setminus \{ \bs \}$. Clearly $\fu = \fu_0 \cup \fu_1 \cup \fu_2 \cup \mathfrak{P}$, where $\mathfrak{P}$ is either $\emptyset$ or $\{ \bs \}$. Since $g\iv \fu \cap g \fu$ is either empty or $\{ \bs \}$, $\fu_0 , \fu_1, \fu_2 , \mathfrak{P}$ are pairwise disjoint, and $\fu_2 = g\fu_1$.

\smallskip

Assume that $\fu_0$ is non-empty, and let $\bu$ be an element in $\fu_0$. Then
$d=\tdlu (\seq\mu , g\seq\mu )>0$ and $\tdlu (g^i\seq\mu , g^{i+1}\seq\mu )=0$ for every
$i\in \Z \setminus \{ 0\}$. Indeed if there existed $i\in \Z \setminus \{ 0\}$ such that
$\tdlu (g^i\seq\mu , g^{i+1}\seq\mu )>0$ then, by the choice of $D$ large enough, either $i= -1 $ or $i=1 $,
therefore either $\bu \in g\fu_1$ or $\bu \in \fu_1$, both contradicting the fact that $\bu \in \fu_0$.
The triangle inequality then implies that for every $i\leq 0 < j$, $\tdlu (g^i\seq\mu , g^{j}\seq\mu )=d$.
 Moreover for every $i\leq k\leq j$,  by applying $g^{-k}$ to the previous equality we deduce that
 $\td_{g^k \bu }(g^i\seq\mu , g^{j}\seq\mu )=d$. Thus for every $i\leq 0 < j$ the distance
 $\td (g^i\seq\mu , g^{j}\seq\mu )$ is at least
 $\sum_{i\leq k\leq j} \td_{g^k \bu }(g^i\seq\mu , g^{j}\seq\mu ) = (j-i)d$. This implies that the
 $\la g\ra$-orbit of $\seq\mu$ is unbounded.

\smallskip

Assume that $\tdlu (\seq\mu , g\seq\mu )\neq \td_{g\bu }(\seq\mu , g\seq\mu )$ for some $\bu \in \fu_1$. Then the distance $\tdlu (g\iv \seq\mu , g\seq\mu )$ is at least $ |\tdlu (\seq\mu , g\seq\mu )- \td_{g\bu }(\seq\mu , g\seq\mu )|=d>0$. Moreover since $\tdlu (g^k\seq\mu , g^{k+1}\seq\mu )=0$ for every $k\geq 1$ and $k\leq -2$, it follows that $\tdlu (g^{-k}\seq\mu , g^m \seq\mu )= \tdlu (g\iv \seq\mu , g\seq\mu ) >d$ for every $k, m\geq 1$. We then obtain that for every $\bv = g^j\bu$ with $j\in \{-k+1,....,m-1\}$, $\tdlv (g^{-k}\seq\mu , g^m \seq\mu ) >d$. Since $\bu \subsetneq \bs$ and $g$ is a pseudo-Anosov, it follows that if $i\neq j$ then $g^i\bu\neq g^j\bu$. Then $\td (g^{-k}\seq\mu , g^m \seq\mu ) \geq \sum_{j=-k+1}^{m-1} \td_{g^j\bu} (g^{-k}\seq\mu , g^m \seq\mu )\geq (k+m-1)d$. Hence the $\la g\ra$-orbit of $\seq\mu$ is unbounded.


\medskip

\emph{Case 2.} \quad Assume that $g$ is reducible.
Let $\seq\nu$ be the projection of $\seq\mu$ onto $Q(\seq\Delta)$. Consequently $g\seq\nu$ is the projection of $g\seq\mu$ onto $Q(\seq\Delta)$. Lemma \ref{uqd} implies that $\fu (\seq\mu , g\seq\mu)= \fu (\seq\mu , \seq\nu) \cup \fu (\seq\nu , g\seq\nu) \cup \fu (g\seq\nu , g\seq\mu)$.

Consider an element $\bu\in \fu$, $\bu \not\in \{\bu_1,...\bu_m\}$, and assume that for
some $i \in \Z \setminus \{ 0\}$,  $g^i \bu \in \fu$. We prove that $i\in \{-1,0,1\}$.

Assume that $\bu \in \fu (\seq\nu , g\seq\nu)$.  Then, since $\lim_\omega \dist_{C(U_n)}(\mu_n , g\mu_n )=+\infty $, it follows that $\bu \not\pitchfork \seq\Delta$ and $\bu$ is contained in $\bu^j$ for some $j\in \{1,...,k\}$. Either $\bu = \bu^j \in \mathfrak{P}$ or $\bu \subsetneq \bu^j$. In the latter case, an argument as in Case 1 implies that
for $D$ large enough $i\in \{-1,0,1\}$.

Assume that $\bu\in \fu (\seq\mu , \seq\nu)$. Then $\bu \pitchfork \seq\Delta$, since $\seq\mu$ and $\seq\nu$ do not differ inside the subsurfaces $\bu^j\, ,\, j=1,...,m$. Since $\seq\Delta = \bigcup_{j=1}^k \partial \bu^j$ it follows that for some $j\in \{1,...,k\}$,
$\bu \pitchfork \partial \bu^j $.

We have that  $\td_{g^i \bu} (g^i\seq\mu , g^i\seq\nu )>0$, hence a hierarchy path joining $g_n^i\mu_n$ and  $g_n^i\nu_n$
contains a point $\beta_n$ in $Q(g_n^i\partial U_n)$.

The hypothesis that $\td_{g^i \bu} (\seq\mu , g\seq\mu )>0$ implies that either
 $\td_{g^i \bu} (\seq\mu , \seq\nu)>0$ or $\td_{g^i \bu} (g\seq\nu , g\seq\mu)>0$.
  Assume that  $\td_{g^i \bu} (\seq\mu , \seq\nu)>0$. Then a hierarchy path joining
  $\mu_n$ and  $\nu_n$ also contains a point $\beta_n'$ in $Q(g_n^i\partial U_n)$.

 For the element $j\in \{1,...,k\}$ such that $\bu \pitchfork \partial \bu^j $, $\dist_{C(U_n^j)}(\beta_n , \beta_n')=O(1)$ since both $\beta_n$ and $\beta_n'$ contain the multicurve
 $\partial U_n$. By properties of projections, $\dist_{C(U_n^j)}(\mu_n, \nu_n)=O(1)$ and
 $\dist_{C(U_n^j)}(g_n^i\mu_n, g_n^i\nu_n)=O(1)$, which implies that
 $\dist_{C(U_n^j)}(\beta_n, g_n^i\nu_n)=O(1)$ and $\dist_{C(U_n^j)}(\beta_n', \nu_n)=O(1)$. It follows that the distance $\dist_{C(U_n^j)}(g_n^i\nu_n, \nu_n)$ has order $O(1)$. On the other hand
 $\dist_{C(U_n^j)}(g_n^i\nu_n, \nu_n)> |i|D$. For $D$ large enough this implies that $i=0$.

 Assume that $\td_{g^i \bu} (g\seq\nu , g\seq\mu)>0$. This and the fact that
  $\td_{g \bu} (g\seq\nu , g\seq\mu)>0$ imply as in the previous argument,  with $\seq\mu, \seq\nu $ and $\bu$ replaced by $g\seq\mu, g\seq\nu $ and $g\bu$, that $i=1$.

 The case when  $\tdlu (g\seq\nu , g\seq\mu)>0$ is dealt with similarly. In this case it follows that,
 if $\td_{g^i \bu} (\seq\mu , \seq\nu)>0$ then $i=-1$, and if $\td_{g^i \bu} (g\seq\nu , g\seq\mu)>0$ then $i=0$.

 We have thus proved that for every $\bu\in \fu$, $\bu \not\in \{\bu_1,...\bu_m\}$, if for
some $i \in \Z \setminus \{ 0\}$,  $g^i \bu \in \fu$ then $i\in \{-1,0,1\}$. We take $\mathfrak{P} = \fu \cap \{\bu_1,...\bu_m\}$ and $\fu' = \fu \setminus \mathfrak{P}$. We define $\fu_0 = \fu' \setminus (g\fu' \cup g\iv \fu' )$. Let $\fu_1 = \fu' \cap g\iv \fu' $ and $\fu_2 = \fu' \cap g
\fu'$. Clearly $\fu = \fu_0 \cup \fu_1 \cup \fu_2 \cup \mathfrak{P}$. Since $g\iv \fu' \cap g \fu'$ is empty, $\fu_0 , \fu_1, \fu_2 , \mathfrak{P}$ are pairwise disjoint, and $\fu_2 = g\fu_1$.

If $\fu_0\neq \emptyset$ then a proof as in Case 1 yields that the $\la g\ra$-orbit of $\seq\mu$ is unbounded.

Assume that $\tdlu (\seq\mu , g\seq\mu )\neq \td_{g\bu }(\seq\mu , g\seq\mu )$ for some $\bu \in \fu_1$.
It follows from the previous argument that $\bu \in \fu (\seq\nu , g\seq\nu )$, hence $g\bu$ is in the same set. Without loss of generality we may therefore replace $\seq\mu$ by $\seq\nu$ and assume that $\seq\mu \in Q(\Delta)$. In particular $\tdlu (\seq\mu , g\seq\mu )$ is composed only of subsurfaces that do not intersect $\Delta$. We proceed as in Case 1 and prove that the $\la g\ra$-orbit of $\seq\mu$ is unbounded. \endproof


\begin{lemma}\label{middle}
Let $g=(g_n)^\omega\in \mcgb$ be a pseudo-Anosov fixing a piece $P$,
such that $\la g\ra$ has bounded orbits in $\AM$.  Assume that \uass
the translation length of $g_n$ on $C(S)$ is larger than a
uniformly chosen constant depending
only on $\xi(S)$.  Then for any point $\seq\mu$ in $P$
and for any hierarchy path $\seq\fh$ connecting $\seq\mu$ and its
translate $g \seq\mu$, the isometry $g$ fixes the midpoint of
$\seq\fh$.
\end{lemma}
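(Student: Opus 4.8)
The plan is to combine the structural description of $\fu(\seq\mu,g\seq\mu)$ from Lemma~\ref{split} with the fact that $\seq\mu$ and $g\seq\mu$ both lie in the fixed piece $P$. Since $g$ is an isometry of the median space $(\AM,\td)$ (Theorem~\ref{thmedian}) and $\seq\fh$ is a $\td$-geodesic, $g\seq\fh$ is a hierarchy path joining $g\seq\mu$ to $g^2\seq\mu$ and $g$ sends the midpoint $\seq m$ of $\seq\fh$ to the midpoint of $g\seq\fh$; so it suffices to show $\psi_\bu(\seq m)=\psi_\bu(g\seq m)$ in $T_\bu$ for every $\bu\in\upss$, which gives $\td(\seq m,g\seq m)=\sum_\bu\td_\bu(\seq m,g\seq m)=0$. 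We may assume $g\seq\mu\neq\seq\mu$; since $g$ fixes $P$ we have $g\seq\mu\in P$, hence $\bs\notin\fu(\seq\mu,g\seq\mu)$ by Lemma~\ref{2ptsinpiece}.

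First I would record the structure of $\fu(\seq\mu,g\seq\mu)$. As $g$ is a pure pseudo-Anosov whose $\CC(S)$-translation length exceeds the constant of Lemma~\ref{split}, that lemma applies directly (with $\seq\Delta=\emptyset$; no passage to a power is needed). Because $\la g\ra$ has bounded orbits, the ``moreover'' clause forces $\fu_0=\emptyset$ and $\td_\bu(\seq\mu,g\seq\mu)=\td_{g\bu}(\seq\mu,g\seq\mu)$ for every $\bu\in\fu_1$, while $\bs\notin\fu$ gives $\mathfrak P=\emptyset$. Thus $\fu(\seq\mu,g\seq\mu)=\fu_1\sqcup g\fu_1$ with those distances equal, so $\td(\seq\mu,g\seq\mu)=2\sum_{\bu\in\fu_1}\td_\bu(\seq\mu,g\seq\mu)$; and the defining property of $\fu_1$ makes $\fu_1,g\fu_1,g^2\fu_1$ pairwise disjoint.

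The core of the proof is to locate $\seq m$ along $\seq\fh$. Write $\seq\fh=\seqrep\fh$ with $\fh_n$ shadowing a tight geodesic $\ft_n$ from $\pcs(\mu_n)$ to $\pcs(g_n\mu_n)$, let $\pgot_n$ be a quasi-axis for $g_n$ in $\CC(S)$ and $\gamma_n$ the nearest-point projection of $\pcs(\mu_n)$ to $\pgot_n$. Exactly as in the proof of Lemma~\ref{split}, $\ft_n$ stays within bounded distance of $[\pcs(\mu_n),\gamma_n]\cup[\gamma_n,g_n\gamma_n]\cup[g_n\gamma_n,\pcs(g_n\mu_n)]$ and, using $\fu_0=\emptyset$, for every $\bu=(U_n)^\omega\in\fu(\seq\mu,g\seq\mu)$ the curve $\partial U_n$ projects to $\pgot_n$ within bounded distance of $\gamma_n$ when $\bu\in\fu_1$ and of $g_n\gamma_n$ when $\bu\in g\fu_1$. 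Since $\dcs(\gamma_n,g_n\gamma_n)$ exceeds the relevant constant, the vertices of $\ft_n$ near $\fu_1$-boundaries and those near $g\fu_1$-boundaries are disjoint and separated by an ``axis portion'' of $\ft_n$ which, by Lemma~\ref{MM2:LLL}, carries no $M$-large domain lying in $\fu$. Combining this with $\CC$-monotonicity (Lemma~\ref{projtu}) and Lemma~\ref{restr} (so that $\psi_\bw$ is constant along $\seq\fh$ whenever $\bw\notin\fu(\seq\mu,g\seq\mu)$), one obtains a point $\seq p$ on $\seq\fh$ with $\psi_\bu(\seq p)=\psi_\bu(g\seq\mu)$ for all $\bu\in\fu_1$, $\psi_\bu(\seq p)=\psi_\bu(\seq\mu)$ for all $\bu\in g\fu_1$, and with only $\fu_1$-coordinates having moved from $\seq\mu$ to $\seq p$; hence $\td(\seq\mu,\seq p)=\sum_{\bu\in\fu_1}\td_\bu(\seq\mu,g\seq\mu)=\tfrac12\td(\seq\mu,g\seq\mu)$, so $\seq p=\seq m$. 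Applying the same analysis to the pair $(g\seq\mu,g^2\seq\mu)$ (whose $\fu$ is $g\fu_1\sqcup g^2\fu_1$) computes $\psi_\bu(g\seq m)$ in the same way.

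Comparing the two coordinate lists and using the pairwise disjointness of $\fu_1,g\fu_1,g^2\fu_1$, the equality $\psi_\bu(\seq m)=\psi_\bu(g\seq m)$ is immediate for all $\bu$ except $\bu\in g\fu_1$, where it reduces to the single identity $\psi_\bu(\seq\mu)=\psi_\bu(g^2\seq\mu)$, i.e. $\bu\notin\fu(\seq\mu,g^2\seq\mu)$. This residual identity is the main obstacle; I would settle it by applying Lemma~\ref{split} once more, to $g^2$ (a pure pseudo-Anosov with bounded $\la g^2\ra$-orbits and large $\CC(S)$-translation length) at the base point $\seq\mu$: for $\bu\in g\fu_1$ the trajectory $k'\mapsto\psi_\bu(g^{k'}\seq\mu)$ in $T_\bu$ is constant except between $k'=0,1$ and between $k'=1,2$, from which one checks $\{k:g^{2k}\bu\in\fu(\seq\mu,g^2\seq\mu)\}\subseteq\{0\}$; so if $\bu$ belonged to $\fu(\seq\mu,g^2\seq\mu)$ it would lie in the ``$\fu_0$'' for $g^2$, which is empty by the ``moreover'' clause of Lemma~\ref{split}. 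Hence $\psi_\bu(\seq\mu)=\psi_\bu(g^2\seq\mu)$, and therefore $\td_\bu(\seq m,g\seq m)=0$ for all $\bu$, so $g\seq m=\seq m$. The step I expect to demand the most care is the third paragraph: making precise, from the $\CC(S)$-axis picture underlying Lemma~\ref{split}, that along $\seq\fh$ every $\fu_1$-coordinate attains its final value before any $g\fu_1$-coordinate starts to move, so that the transition point is exactly the $\td$-midpoint.
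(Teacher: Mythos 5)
Your argument is correct, but it reorganizes the proof rather differently from the paper, and it is worth recording the trade-off. The paper also begins with the splitting $\fu=\fu_1\sqcup g\fu_1$ from Lemma~\ref{split}, but then simply defines $\seq\alpha$ as the limit of the \emph{last} points of $\fh_n$ lying in $\QQ(\partial U_n)$ for $U_n$ representing an element of $\fu_1$, and proves $g\seq\alpha=\seq\alpha$ head-on: it shows that every subsurface $\bv$ with $\tdlv(\seq\alpha,g\seq\alpha)>0$ must lie in $g\fu_1$ (the choice of $\seq\alpha$ rules out $\fu_1$ in one case, the axis picture rules out $g\fu_1$ for $g^{-1}\bv$ in the other), so the witnesses for $\td(g^k\seq\alpha,g^{k+1}\seq\alpha)$ lie in the pairwise disjoint families $g^{k+1}\fu_1$ and the $\la g\ra$-orbit of $\seq\alpha$ would otherwise be unbounded; a $g$-fixed point on the $\td$-geodesic $\seq\fh$ is then automatically the midpoint. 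You instead pin down the coordinates of the midpoint and check $g$-invariance coordinatewise, disposing of the one nontrivial family $g\fu_1$ by applying Lemma~\ref{split} a second time, to $g^2$; your verification that $\{k: g^{2k}\bu\in\fu(\seq\mu,g^2\seq\mu)\}\subseteq\{0\}$ for $\bu\in g\fu_1$, hence that such a $\bu$ would land in the empty set $\fu_0$ for $g^2$, is correct and is an elegant substitute for the paper's fresh unbounded-orbit computation. The price is exactly the step you flag: you need the full time-ordering statement that every $\fu_1$-coordinate reaches its terminal value before any $g\fu_1$-coordinate begins to move. This is true and provable with the tools at hand --- each $\bu\in\fu_1$ overlaps each $\bv\in g\fu_1$ because their boundaries are far apart in $\CC(S)$, and Theorem~\ref{projest} together with Lemma~\ref{restr} upgrades the coarse consecutivity to an exact statement in the cone, after which a supremum over exit times produces your point $\seq p$ --- but as written it is only sketched, and it is strictly more than the paper requires: taking the single last exit point sidesteps the global ordering of the (possibly infinite) family of domains entirely. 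If you flesh out that paragraph, your proof stands on its own.
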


\proof Let $\seq\mu$ be an arbitrary point in $P$ and $\fh =\seqrep
\fh$ a hierarchy path joining $\seq\mu$ and $g\seq\mu$, such that
$\fh_n$ shadows a tight geodesic $\ft_n$.  We may assume that
$g\seq\mu \neq \seq\mu$, and consider the splitting defined in Lemma
\ref{split}, $\fu=\fu (\seq\mu , g\seq\mu )=\fu_0 \sqcup \fu_1 \sqcup
g\fu_1$.  Note that since $\seq\mu$ and $g\seq\mu$ are both in the
same piece $P$, $\fu (\seq\mu , g\seq\mu )$ cannot contain $\bs$, by
Lemma \ref{2ptsinpiece}.  As $\la g\ra $ has bounded orbits, we may
assume that $\fu_0$ is empty, and that $\fu = \fu_1 \cup g\fu_1$.  We
denote $g\fu_1$ also by $\fu_2$.  For every $\bu \in \fu$ choose a
sequence $(U_n)$ representing it, and define $\fu (n),$ $\fu_1 (n),$
$\fu_2(n)$ as the set of $U_n$ corresponding to $\bu$ in $\fu , \fu_1,
\fu_2$ respectively.

Let $\alpha_n$ be the last point on the hierarchy path $\fh_n$
belonging to $Q(\partial U_n)$ for some $U_n \in \fu_1 (n)$.  Let
$\seq\alpha = \seqrep \alpha$.  Assume that $g\seq\alpha \neq
\seq\alpha$.  For every subsurface $\bv=(V_n)^\omega \in \upss$ such
that $\tdlv (\seq\alpha , g\seq\alpha )>0$ it follows by the triangle
inequality that either $\tdlv (\seq\alpha , g\seq\mu )>0$ or $\tdlv
(g\seq\mu , g\seq\alpha )>0$.  In the first case $\bv \in \fu$.  If $\bv \in
\fu_1$ then $\bv = (U_n)^\omega$ for one of the chosen sequences
$(U_n)$ representing an element in $\fu_1$, whence $\lim_\omega
\dist_{C(U_n)} (\alpha_n , g_n\mu_n )= \infty$ and the hierarchy
sub-path of $\fh_n$ between $\alpha_n$ and $g_n\mu_n $ has a large
intersection with $Q(\partial U_n)$.  This contradicts the choice of
$\alpha_n$.  Thus in this case we must have that $\bv \in g\fu_1$.

We now consider the second case, where
$\tdlv (g\seq\mu , g\seq\alpha )>0$. Since this condition is
equivalent to $\td_{g\iv \bv } (\seq\mu , \seq\alpha )>0$ it
follows that $g\iv \bv \in \fu$.  Moreover \uass the hierarchy
sub-path of $\fh_n$ between $\mu_n$ and $\alpha_n$ has a large
intersection with $Q(g_n\iv \partial V_n)$.

Define $\pgot_n$ and the points $\gamma_n , \gamma_n'$ on $\pgot_n$ as in Case 1 of the proof of Lemma \ref{split}. The argument in that proof shows that for every $\bu=(U_n)^\omega \in \fu_1$, \uass
$\partial U_n$ has any nearest point projection on $\pgot_n$ at $C(S)$-distance $O(1)$ from $\gamma_n$
while $g_n\partial U_n$ has any nearest point projection on $\pgot_n$ at $C(S)$-distance $O(1)$
from $g_n\gamma_n$. In particular $\alpha_n$ has any nearest point projection on $\pgot_n$ at $C(S)$-distance $O(1)$
from $\gamma_n$ whence $g_n\iv \partial V_n$  has any nearest point projection on $\pgot_n$ at
$C(S)$-distance $O(1)$ from $\gamma_n$ too. For sufficiently large
translation length (i.e. the constant in the hypothesis of the
lemma), this implies that $g_n\iv \partial V_n$  cannot have a nearest point projection on $\pgot_n$ at
$C(S)$-distance $O(1)$ from $g\gamma_n$. Thus \uass $g_n\iv  V_n \not\in \fu_2 (n)$, therefore $g\iv \bv \not\in \fu_2$. It
follows that $g\iv \bv \in \fu_1$, whence $\bv \in g\fu_1$.

We have thus obtained that $\tdlv (\seq\alpha , g\seq\alpha ) >0$ implies that $\bv \in g
\fu_1$, therefore for every $k\in \Z$, $\tdlv (g^k\seq\alpha , g^{k+1}\seq\alpha ) >0$ implies that
$\bv \in g^{k+1} \fu_1$. Since the collections of subsurfaces $g^{i} \fu_1$ and
$g^{j} \fu_1$ are disjoint for $i\neq j$ it follows that $\td (g^{-i}\seq\alpha , g^j \seq\alpha
)=\sum_{k=-i}^{j-1} \sum_{\bv \in g^{k+1}\fu_1} \tdlv (g^{-i}\seq\alpha , g^j \seq\alpha ) =
\sum_{k=-i}^{j-1} \sum_{\bv \in g^{k+1}\fu_1 } \tdlv (g^k\seq\alpha , g^{k+1}\seq\alpha ) =
(j+i-1) \td ( \seq\alpha , g \seq\alpha )$. This implies that the $\la g \ra$-orbit of $\seq\alpha$ is unbounded,
 contradicting our hypothesis.

 Therefore $\seq\alpha = g \seq\alpha $. From this, the fact that $g$ acts as an isometry on $(\AM , \td)$, and
 the fact that hierarchy paths are geodesics in $(\AM , \td)$, it follows that $\seq\alpha$ is
 the midpoint of $\seq\fh$.\endproof

 \medskip

\begin{lemma}\label{middlepA}
Let $g=(g_n)^\omega\in \mcgb$ be a pseudo-Anosov such that $\la g\ra $
has bounded orbits in $\AM$.  Assume that \uass the translation length
of $g_n$ on $C(S)$ is larger than a uniformly chosen constant depending
only on $\xi(S)$. Then for any point
 $\seq\mu$ and for any hierarchy path $\seq\fh$ connecting $\seq\mu$ and
 $g \seq\mu$, the isometry $g$ fixes the midpoint of  $\seq\fh$.
\end{lemma}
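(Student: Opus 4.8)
The plan is to derive Lemma~\ref{middlepA} from Lemma~\ref{middle} by means of the results on isometries of tree-graded spaces in Section~\ref{section:treegradedisoms}. We may assume $g\seq\mu \neq \seq\mu$, since otherwise $\seq\fh$ is a point and there is nothing to prove. The isometry $g$ of $\AM$ permutes the pieces of its (most refined) tree-graded structure, and $\la g\ra$ has bounded orbits by hypothesis; hence Lemma~\ref{isompiece}(\ref{fixpoint}) applies and $g$ fixes either the middle cut-point $m$, or the middle cut-piece $P$, of the pair $\seq\mu,g\seq\mu$.

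Suppose first $g$ fixes the middle cut-point $m$. Then $m\in\Cutp\{\seq\mu,g\seq\mu\}$, so $m$ lies on every topological arc joining $\seq\mu$ and $g\seq\mu$, and in particular (extracting a topological sub-arc from $\seq\fh$ if necessary) $m$ lies on $\seq\fh$. By Theorem~\ref{thmedian}, $\seq\fh$ is a geodesic of $(\AM,\td)$, on which $g$ acts by isometries; thus $\td(\seq\mu,m)+\td(m,g\seq\mu)=\td(\seq\mu,g\seq\mu)$, while $gm=m$ gives $\td(\seq\mu,m)=\td(g\seq\mu,m)$. Hence $\td(\seq\mu,m)=\frac12\td(\seq\mu,g\seq\mu)$ and $m$ is the midpoint of $\seq\fh$.

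Suppose now $g$ fixes the middle cut-piece $P$. Let $\seq\nu,\seq\nu'$ be the points delimiting the maximal sub-arc of $\seq\fh$ lying in $P$, with $\seq\nu$ nearer $\seq\mu$; by Lemma~\ref{midpiece} these are well defined, lie in $P$, and are distinct. The first step is to check that $g\seq\nu=\seq\nu'$: if $\seq\mu\in P$ this is immediate ($\seq\nu=\seq\mu$, and $\seq\nu'=g\seq\mu=g\seq\nu$ since $gP=P$), and if $\seq\mu\notin P$ then the piece $P_{\seq\mu}$ containing $\seq\mu$ is distinct from $P$, $g$ carries the pair of pieces $(P_{\seq\mu},P)$ to $(gP_{\seq\mu},P)$, and by the $g$-equivariance of the distinguished pair of points furnished by Lemma~\ref{ab}, $g$ sends the point of $P$ facing $P_{\seq\mu}$ (which is $\seq\nu$, the point of $\seq\fh\cap P$ nearest $\seq\mu$) to the point of $P$ facing $gP_{\seq\mu}$ (which is $\seq\nu'$). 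Now $[\seq\nu,\seq\nu']$ is a sub-hierarchy-path of $\seq\fh$ joining $\seq\nu\in P$ to $g\seq\nu=\seq\nu'$, and $gP=P$; since $g$ is pseudo-Anosov with large $C(S)$-translation length and bounded orbits, Lemma~\ref{middle} (applied with base point $\seq\nu$) shows $g$ fixes the midpoint $\seq\alpha$ of $[\seq\nu,\seq\nu']$. As $\seq\alpha$ lies on $\seq\fh$ and $g\seq\alpha=\seq\alpha$, the same isometry-and-geodesic computation as above yields $\td(\seq\mu,\seq\alpha)=\frac12\td(\seq\mu,g\seq\mu)$, so $\seq\alpha$ is the midpoint of $\seq\fh$.

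The main obstacle I expect is the identification $g\seq\nu=\seq\nu'$ in the cut-piece case, i.e.\ extracting the equivariance of the entrance/exit points from Lemma~\ref{ab}; one must also handle, at the level of the approximating hierarchy paths $\fh_n$, the minor point that limits of hierarchy paths need not be injective (they can bound loops inside pieces, where $\td$ vanishes), but this is harmless here since we only use that $\seq\fh$ realizes the $\td$-distance between its endpoints and passes through the fixed point. Everything else is an assembly of the cited lemmas.
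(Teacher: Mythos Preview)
Your proof is correct and follows essentially the same approach as the paper: apply Lemma~\ref{isompiece}(\ref{fixpoint}) to split into the middle cut-point and middle cut-piece cases, and in the latter invoke Lemma~\ref{middle} on the entrance and exit points $\seq\nu,\seq\nu'=g\seq\nu$ of $\seq\fh$ in $P$. You have in fact supplied more detail than the paper does---in particular the justification that $g\seq\nu=\seq\nu'$ (which the paper simply asserts) and the verification that a fixed point lying on the $\td$-geodesic $\seq\fh$ is automatically its midpoint (the paper just writes ``we are done'' in the cut-point case).
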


\proof Let $\seq\mu$ be an arbitrary point in $\AM$ and assume
$g\seq\mu \neq \seq\mu$.  Lemma \ref{isompiece}, (\ref{fixpoint}),
implies that $g$ fixes either the middle cut-point or the middle cut-piece of $\seq\mu , g\seq\mu$.  In the former case we are done.  In
the latter case consider $P$ the middle cut-piece, $\seq\nu$ and
$\seq\nu'$ the entrance and respectively exit points of $\seq\fh$ from
$P$.  Then $\seq\nu' = g\seq\nu \neq \seq\nu$ and we may apply Lemma
\ref{middle} to $g$ and $\seq\nu$ to finish the argument.
\endproof

\medskip

\begin{lemma}\label{fixpA}
Let $g=(g_n)^\omega\in \mcgb$ be a pseudo-Anosov. The set of fixed points of $g$ is either empty or it is a convex subset of a transversal tree of $\AM$.
\end{lemma}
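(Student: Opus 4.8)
The plan is to prove that a non-empty $\Fix(g)$ is contained in a single transversal tree of $\AM$, after which convexity will follow from the elementary fact that an isometry of an $\R$-tree has convex (indeed, subtree) fixed-point set. The only topological input I will use is that, $g=(g_n)^\omega$ being pseudo-Anosov, for every proper subsurface $\bu\subsetneq\bs$ and every $k\in\Z\setminus\{0\}$ one has $g^k\bu\neq\bu$: otherwise \uass the pseudo-Anosov $g_n^k$ would fix the essential proper subsurface $g_n^{-k}U_n$ (isotopic to $U_n$) and hence its boundary multicurve, which is impossible.

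First I would take two distinct points $\seq\mu,\seq\nu\in\Fix(g)$ and apply Lemma~\ref{fixedpair} to the pure isometry $g$ (so that $k$ may be taken equal to $1$): every $\bu\in\fu(\seq\mu,\seq\nu)$ satisfies $g\bu=\bu$, hence $\fu(\seq\mu,\seq\nu)\subseteq\{\bs\}$ by the preceding paragraph. Next, Corollary~\ref{Utransv} provides $\bu\in\upss$ with $\dlu(\seq\mu,\seq\nu)>0$, with $\dist_{\mathcal{M}^\omega(\by)}(\seq\mu,\seq\nu)=0$ for every $\by\subsetneq\bu$, and with $\plu\seq\mu,\plu\seq\nu$ lying in a common transversal tree of $\lu$ (here I use Remark~\ref{rnested} freely to identify the projections into $\mathcal{M}^\omega(\by)$ of $\seq\mu$ and of its image $\plu\seq\mu$). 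By Lemma~\ref{quotientT}(3) applied to the tree-graded space $\lu$, this forces $\tdlu(\seq\mu,\seq\nu)=\dlu(\seq\mu,\seq\nu)>0$, so $\bu\in\fu(\seq\mu,\seq\nu)\subseteq\{\bs\}$, i.e. $\bu=\bs$. Therefore $\dist_{\mathcal{M}^\omega(\by)}(\seq\mu,\seq\nu)=0$ for all $\by\subsetneq\bs$, and Lemma~\ref{lem:transv} gives $\seq\nu\in T_{\seq\mu}$.

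Fixing now $\seq\mu_0\in\Fix(g)$, the above shows $\Fix(g)\subseteq T:=T_{\seq\mu_0}$, since transversal trees sharing a point coincide (Lemma~\ref{ptx}). As $g$ is an isometry of $\AM$ permuting pieces with $g\seq\mu_0=\seq\mu_0\in T$, Lemma~\ref{ptx} gives $gT=T_{g\seq\mu_0}=T$, so $g$ restricts to an isometry of the $\R$-tree $T$ whose fixed-point set is exactly $\Fix(g)$. An isometry of an $\R$-tree fixing two points fixes pointwise the unique arc between them, so $\Fix(g)$ is a subtree of $T$; moreover by Lemma~\ref{quotientT}(3) that arc meets no piece of $\AM$ in a non-degenerate segment, hence is a transversal geodesic and in fact the unique $\AM$-geodesic between its endpoints. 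This yields that $\Fix(g)$ is a convex subset of the transversal tree $T$, as claimed.

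The step I expect to be the main obstacle is ruling out a proper $\bu$ in the application of Corollary~\ref{Utransv} — that is, excluding that two fixed points of $g$ could differ only ``within a piece'' of some $\lu$ with $\bu\subsetneq\bs$. This is where care is needed that the pseudo-metric $\widetilde{\dist}$, the transversal trees, and the piece structure being invoked are the ones intrinsic to $\lu$ (so that Lemmas~\ref{lem:transv} and~\ref{quotientT} may be applied there), and that the projections $\plu$ record this data faithfully through Remark~\ref{rnested}.
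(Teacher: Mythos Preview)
Your proof is correct, but it proceeds by a genuinely different route from the paper's. The paper argues directly with the tree-graded structure of $\AM$: given two fixed points $\seq\mu,\seq\nu$, it observes that since $g$ permutes pieces, $g$ fixes every point of $\Cutp\{\seq\mu,\seq\nu\}$ (via property $(T_2')$); hence if a geodesic $[\seq\mu,\seq\nu]$ met a piece in a nondegenerate arc $[\seq\alpha,\seq\beta]$, the endpoints $\seq\alpha,\seq\beta$ would themselves be fixed, and then Lemmas~\ref{2ptsinpiece} and~\ref{fixedpair} would produce a proper subsurface fixed by $g$. Your approach bypasses the piece/cut-point argument entirely: you apply Lemma~\ref{fixedpair} directly to $\seq\mu,\seq\nu$ to force $\fu(\seq\mu,\seq\nu)\subseteq\{\bs\}$, and then use Corollary~\ref{Utransv} together with Lemma~\ref{quotientT}(3) (inside $\lu$) to upgrade this to $\dist_{\mathcal{M}^\omega(\by)}(\seq\mu,\seq\nu)=0$ for all $\by\subsetneq\bs$, invoking Lemma~\ref{lem:transv} at the end. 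The paper's argument is shorter and uses only the ambient tree-graded geometry; yours leans on the projection machinery developed in Section~\ref{sacmcg}, but has the virtue of making transparent exactly which coordinate (namely $\bs$) can distinguish two fixed points. Your worry about ``ruling out a proper $\bu$'' is well-placed but fully handled by your transversal-tree argument in $\lu$; and your care with Remark~\ref{rnested} is exactly what is needed to identify the two a priori different projections to $\mathcal{M}^\omega(\by)$.
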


\proof Assume there exists a point $\seq\mu\in \AM$ fixed by $g$. Let $\seq\nu$ be another point fixed by $g$. Since $g$ is an isometry permuting pieces, this and property $(T_2')$ implies that $g$ fixes every point in $\Cutp \{\seq\mu , \seq\nu \}$. If a geodesic (any geodesic) joining $\seq\mu$ and $\seq\nu$ has a  non-trivial intersection $[\seq\alpha,\seq\beta ]$ with a piece then $\seq\alpha,\seq\beta$ are also fixed by $g$. By Lemma \ref{2ptsinpiece}, $\fu (\seq\alpha,\seq\beta )$ contains a proper subsurface $\bu \subsetneq \bs$, and by Lemma \ref{fixedpair}, $g\bu =\bu$, which is impossible.

It follows that any geodesic joining $\seq\mu$ and $\seq\nu$ intersects all pieces in points. This means that the set of points fixed by $g$ is contained in the transversal tree $T_{\seq\mu }$ (as defined in Definition \ref{ttrees}). It is clearly a convex subset of $T_{\seq\mu }$.\endproof

\medskip

\begin{lemma}\label{middlered}
Let $g\in \mcgb$ be a reducible element such that $\la g \ra$ has
bounded orbits in $\AM$, and let $\seq\Delta =(\Delta_n)^\omega$ and
$\bu_1=(U^1_n)^\omega,...,\bu_m=(U^m_n)^\omega$ be the multicurve and
the subsurfaces associated to $g$ as in Lemma \ref{red}.  Assume that
for any $i\in \{1,2,...,k \}$ and for any $\seq\nu \in Q(\seq\Delta )$ the distance $\dist_{C(U_i)} (\seq\nu ,
g\seq\nu )$ is larger than some sufficiently large constant, $D$,
depending only on $\xi(S)$.

Then for any point $\seq\mu$ there exists a geodesic in $(\AM, \td )$
connecting $\seq\mu$ and its translate $g \seq\mu$, such that the
isometry $g$ fixes its midpoint.
\end{lemma}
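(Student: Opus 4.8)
\textbf{Proof plan for Lemma \ref{middlered}.} The plan is to reduce the reducible case to the pseudo-Anosov case already handled in Lemma \ref{middlepA}, by exploiting the product structure of $Q(\seq\Delta)$ and the fact that $g$ acts as a pseudo-Anosov (or the identity) on each factor. First I would dispose of the case $\seq\mu = g\seq\mu$, where there is nothing to prove. So assume $g\seq\mu \neq \seq\mu$. The first step is to locate $\seq\mu$ relative to $Q(\seq\Delta)$. If $\seq\mu \notin Q(\seq\Delta)$, let $\seq\mu'$ be the projection of $\seq\mu$ onto $Q(\seq\Delta)$; then $g\seq\mu'$ is the projection of $g\seq\mu$, and by Corollary \ref{distsubsurf} (as in the proof of Lemma \ref{fixedred}) we have $g\seq\mu' \neq \seq\mu'$ as soon as $g\seq\mu\neq\seq\mu$ — actually one must be a little careful here, since $g$ could fix $\seq\mu'$; if it does, then $g$ fixes a point of $Q(\seq\Delta)$ and I would instead argue as follows. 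The key reduction: concatenating a hierarchy path $\fh_1$ from $\seq\mu$ to $\seq\mu'$, a hierarchy path $\fh_2$ from $\seq\mu'$ to $g\seq\mu'$ inside $Q(\seq\Delta)$, and $g\fh_1$ from $g\seq\mu'$ to $g\seq\mu$, and invoking Lemma \ref{lem:betw} (whose hypotheses are arranged by the standing assumption on $\dist_{C(U_i)}(\seq\nu,g\seq\nu)>D$, after possibly noting $\seq\Delta = \bigcup_{i=1}^k\partial\bu_i$), this concatenation is a geodesic in $(\AM,\td)$. Moreover $\fu(\seq\mu,\seq\mu')$ consists of subsurfaces overlapping $\seq\Delta$ (Lemma \ref{uqd}(\ref{orthq})), while $\fu$-sets of subpaths inside $Q(\seq\Delta)$ consist of subsurfaces not overlapping $\seq\Delta$; hence $g$ maps $\fu(\seq\mu,\seq\mu')$ to $\fu(g\seq\mu',g\seq\mu)$ isometrically, so the midpoint of the whole geodesic lies inside $Q(\seq\Delta)$ precisely when the midpoint of $\fh_2$ is fixed by $g$ — reducing to the case $\seq\mu \in Q(\seq\Delta)$.

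So the main work is the case $\seq\mu\in Q(\seq\Delta)$. Here I would use the identification $Q(\seq\Delta)\cong \prod_{i=1}^m \MM^\omega(\bu_i)$ (via Remark \ref{rmk:qqproduct} and the fact that $U^1_n,\dots,U^m_n$ are all components and annuli of $S\setminus\Delta_n$), write $\seq\mu = (\seq\mu_1,\dots,\seq\mu_m)$, and observe that $g$ acts coordinate-wise: on $\bu_i$ with $i\le k$ it acts as a pseudo-Anosov with large translation length on $C(\bu_i)$ (this is exactly the standing hypothesis $\dist_{C(U_i)}(\seq\nu,g\seq\nu)>D$), and on $\bu_i$ with $i>k$ it acts as the identity. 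By Lemma \ref{middlepA} applied inside $\MM^\omega(\bu_i)$ for each $i\le k$ (note $\la g\ra$ has bounded orbits in $\AM$, hence its $i$-th component has bounded orbits in $\MM^\omega(\bu_i)$ by Corollary \ref{distsubsurf}), the pseudo-Anosov component of $g$ on $\bu_i$ fixes the midpoint of any hierarchy path $\g^i$ connecting $\seq\mu_i$ and $g\seq\mu_i$; call this midpoint $\seq\zeta_i$. For $i>k$ set $\seq\zeta_i=\seq\mu_i$. Then $\seq\zeta=(\seq\zeta_1,\dots,\seq\zeta_m)\in Q(\seq\Delta)$ is fixed by $g$.

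The final step is to build an actual $\td$-geodesic from $\seq\mu$ to $g\seq\mu$ through $\seq\zeta$. Following the construction in the proof of Lemma \ref{meddisj}, I would concatenate paths $\widetilde{\g}^i_n$ in $\MM(S)$ that successively modify the $\bu_i$-coordinate along $\g^i_n$ while keeping all other coordinates constant; the union $\seq{\fh} = \seq{\widetilde{\g}^1}\sqcup\cdots\sqcup\seq{\widetilde{\g}^m}$ is a path from $\seq\mu$ to $g\seq\mu$. The verification that $\psi_\bv(\seq\fh)$ is a geodesic in $T_\bv$ for every $\bv\in\upss$ is exactly the disjoint-support argument from Lemma \ref{meddisj}: if $\bv\subseteq\bu_i$ for some $i$, the restriction of $\seq\fh$ to $\bu_i$ is the hierarchy path $\seq\g^i$, so Lemma \ref{hiergeod} applies; if $\bv$ is disjoint from all $\bu_i$ or overlaps/contains some $\bu_i$, then $\psi_\bv(\seq\fh)$ is a singleton because all markings along $\fh_n$ share the relevant intersection or contain $\partial U^i_n$, and Lemma \ref{restr} gives the conclusion. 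Then Lemma \ref{l1geod} (equivalently Lemma \ref{hiergeod}) shows $\seq\fh$ is a geodesic in $(\AM,\td)$, and by construction $\seq\zeta$ is its midpoint — indeed $\td(\seq\mu,\seq\zeta) = \sum_i \td_{\bu_i\text{-part}} = \sum_i \td(\seq\zeta,g\seq\mu\text{-part})$ by the coordinate-wise midpoint property — and $g\seq\zeta=\seq\zeta$. The main obstacle I anticipate is the bookkeeping in the first reduction: carefully handling the subcase where $g$ already fixes the projection $\seq\mu'$, and making sure the concatenation $\fh_1\sqcup\fh_2\sqcup g\fh_1$ is genuinely a topological arc (not just a path) so that Lemma \ref{lem:betw}'s geodesic conclusion transfers cleanly, together with verifying that the midpoint of this composite geodesic indeed lands inside $Q(\seq\Delta)$ — this requires knowing $\td(\seq\mu,\seq\mu') = \td(g\seq\mu',g\seq\mu)$ and that $\fh_2$ carries at least half the total $\td$-length, or else adjusting the argument when $\seq\mu$ is far from $Q(\seq\Delta)$.
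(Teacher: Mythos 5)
Your proposal is correct and follows essentially the same route as the paper: project $\seq\mu$ onto $Q(\seq\Delta)$, use Lemma \ref{lem:betw} to see that the concatenation of hierarchy paths through the projections is a $\td$-geodesic, and then apply Lemma \ref{middlepA} coordinate-wise in $Q(\seq\Delta)\cong\prod_i\MM^\omega(\bu_i)$ to produce the fixed midpoint. The only cosmetic difference is that the paper first invokes Lemma \ref{isompiece}(1) to reduce to the case where $\seq\mu$ lies in a piece fixed by $g$, a step your direct argument renders unnecessary, and your worries in the last paragraph all resolve exactly as you suspect (in particular $\td(\seq\mu,\seq\mu')=\td(g\seq\mu',g\seq\mu)$ holds automatically since $g$ is a $\td$-isometry commuting with projection onto $Q(\seq\Delta)$).
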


\proof Let $\seq\mu$ be an arbitrary point in $\AM$.  By means of
Lemma \ref{isompiece}, (\ref{fixpoint}), we may reduce the argument to
the case when $\seq\mu$ is contained in a piece $P$ fixed set-wise by
$g$.  Lemma \ref{red} implies that $U(P)$ contains $Q(\seq\Delta)$.
Let $\seq\nu$ be the projection of $\seq\mu$ onto $Q(\seq\Delta )$.
According to Lemma \ref{lem:betw}, if $D$ is large enough then given
$\fh_1,\fh_2,\fh_3$ hierarchy paths connecting respectively $\seq\mu ,
\seq\nu$, $\seq\nu , g\seq\nu$, and $g\seq\nu , g\seq\mu$,
$\fh_1\sqcup \fh_2 \sqcup \fh_3$ is a geodesic in $(\AM ,\td)$
connecting $\seq\mu$ and $g\seq\mu$.

If $\seq\nu=g\seq\nu$ then we are done.  If not, we apply Lemma
\ref{middlepA} to $g$ restricted to each $\bu^j$ and we find a point
$\seq\alpha$ between $\seq\nu$ and $g\seq\nu$ fixed by $g$.  Since
both $\seq\nu$ and $g\seq\nu$ are between $\seq\mu$ and $g\seq\mu$ it
follows that $\seq\alpha$ is between $\seq\mu$ and $g\seq\mu$, hence
on a geodesic in $(\AM ,\td)$ connecting them.\endproof

\begin{lemma}\label{fixred}
Let $g\in \mcgb$ be a reducible element, and let $\seq\Delta =(\Delta_n)^\omega$ and $\bu_1=(U^1_n)^\omega,...,\bu_m=(U^m_n)^\omega$ be the
multicurve and the subsurfaces associated to $g$ as in Lemma
\ref{red}.

If the set $\Fix(g)$ of points fixed by $g$ contains a point $\seq\mu$
then, when identifying $Q(\seq\Delta )$ with $\MM (\bu_1) \times
\cdots \times \MM (\bu_m)$ and correspondingly $\sm$ with a point
$(\sm_1,...,\sm_m)$, $\Fix (g)$ identifies with $C_1\times \cdots
\times C_k \times \MM (\bu_{k+1})\times \cdots \times \MM (\bu_m)$,
where $C_i$ is a convex subset contained in the transversal tree
$T_{\seq\mu_i}$.
\end{lemma}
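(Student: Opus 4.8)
The plan is to first establish that $\Fix(g)\subseteq Q(\seq\Delta)$, and then to show that under the product identification $Q(\seq\Delta)\cong \MM(\bu_1)\times\cdots\times\MM(\bu_m)$, a point $(\sn_1,\dots,\sn_m)$ is fixed by $g$ if and only if each coordinate is fixed by the corresponding component of $g$. For the inclusion $\Fix(g)\subseteq Q(\seq\Delta)$: suppose $\seq\nu$ is fixed by $g$ but $\seq\nu\notin Q(\seq\Delta)$. Let $\seq\nu'$ be the projection of $\seq\nu$ onto $Q(\seq\Delta)$; by Corollary \ref{distsubsurf} $g$ fixes $\seq\nu'$ as well. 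Replacing $g$ by a power we may assume the hypothesis of Lemma \ref{lem:betw} holds, so a concatenation of hierarchy paths $[\seq\nu,\seq\nu']\sqcup[\seq\nu',g\seq\nu']\sqcup[g\seq\nu',g\seq\nu]$ is a geodesic; but $\seq\nu=g\seq\nu$ and $\seq\nu'=g\seq\nu'$ force this ``geodesic'' to backtrack along $[\seq\nu,\seq\nu']$, a contradiction (this is exactly the argument of Lemma \ref{fixedred}). Hence $\Fix(g)\subseteq Q(\seq\Delta)$, and since $Q(\seq\Delta)$ is the ultralimit of the product spaces $\MM(U^1_n)\times\cdots\times\MM(U^m_n)$ (each factor carrying the metric it inherits, with the product metric bi-Lipschitz to the restriction of $\dist_\AM$), it is isometric, after the bi-Lipschitz change to $\td$, to $\prod_{j}\MM(\bu_j)$ with the $\ell^1$ metric $\sum_j \td_{\bu_j}$.

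Next I would analyze the action of $g$ on this product. By construction of $\seq\Delta$ (Lemma \ref{red}), $g$ acts coordinatewise: it acts as a pseudo-Anosov $g^{(i)}$ on $\MM(\bu_i)$ for $i\le k$ and as the identity on $\MM(\bu_j)$ for $j>k$. Therefore, for $\seq\nu=(\seq\nu_1,\dots,\seq\nu_m)\in Q(\seq\Delta)$ we have $g\seq\nu=(g^{(1)}\seq\nu_1,\dots,g^{(k)}\seq\nu_k,\seq\nu_{k+1},\dots,\seq\nu_m)$, so $g\seq\nu=\seq\nu$ iff $g^{(i)}\seq\nu_i=\seq\nu_i$ for all $i\le k$. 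Consequently $\Fix(g)=\Fix(g^{(1)})\times\cdots\times\Fix(g^{(k)})\times\MM(\bu_{k+1})\times\cdots\times\MM(\bu_m)$. Since $\seq\mu=(\seq\mu_1,\dots,\seq\mu_m)\in\Fix(g)$, each $\Fix(g^{(i)})$ is non-empty, and Lemma \ref{fixpA} applied inside $\MM(\bu_i)$ (which is an asymptotic cone of the mapping class group of a surface of complexity $<\xi(S)$, hence itself tree-graded with a well-defined notion of transversal tree) shows that $\Fix(g^{(i)})$ is a convex subset of the transversal tree $T_{\seq\mu_i}$ of $\MM(\bu_i)$. Setting $C_i=\Fix(g^{(i)})$ gives the claimed description.

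The main obstacle, I expect, is the bookkeeping around the two metrics and the product decomposition of $Q(\seq\Delta)$: one must check that the identification of $Q(\seq\Delta)$ with $\prod_j \MM(\bu_j)$ is compatible with the projections $\psi_\bv$ (so that $\td_{\bu_i}$ on $Q(\seq\Delta)$ really is the $i$-th coordinate pseudo-distance and the cross terms vanish), which is where Lemma \ref{lem:duproj} and Lemma \ref{uqd}(\ref{inq}) are needed — any $\bv\in\fu(\seq\nu,\seq\rho)$ for $\seq\nu,\seq\rho\in Q(\seq\Delta)$ satisfies $\bv\notpitchfork\seq\Delta$, hence is nested in a single $\bu_j$, so the pseudo-distances genuinely split as a sum over factors with no interaction. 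Granting that, the pseudo-Anosov component $g^{(i)}$ has no invariant multicurve in $\bu_i$, so Lemma \ref{fixpA} applies verbatim: its fixed-point set avoids every non-trivial piece intersection (else Lemma \ref{fixedpair}/Lemma \ref{2ptsinpiece} would force $g^{(i)}$ to fix a proper subsurface of $\bu_i$), lies in one transversal tree, and is convex there. Assembling the factors then yields the stated product form of $\Fix(g)$.
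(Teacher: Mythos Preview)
Your proposal is correct and follows essentially the same route as the paper: the paper's proof is the one-line observation that $\Fix(g)=\Fix(g(1))\times\cdots\times\Fix(g(m))$ (using Lemma~\ref{fixedred} implicitly to land in $Q(\seq\Delta)$) together with Lemma~\ref{fixpA} applied to each pseudo-Anosov factor, and you have simply unpacked these steps and the bookkeeping behind the product identification in more detail.
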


\proof This follows immediately from the fact that $\Fix (g) = \Fix (g(1))\times \cdots \times \Fix (g(m))$, where $g(i)$ is the restriction of $g$ to the subsurface $\bu_i$, and from Lemma \ref{fixpA}.\endproof

\begin{lemma}\label{splitandmid}
Let $g$ be a pure element such that $\la g\ra$ has bounded orbits in $\AM$.  Let $\sm$
be a point in $\AM$ such that $g\sm \neq \sm$ and let $\seq m$ be a
midpoint of a $\td$-geodesic joining $\sm$ and $g\sm$, $\seq m$ fixed by
$g$.  Then, in the splitting of $\fu (\sm , g\sm )$ given by Lemma
\ref{split}, the set $\fu_1$ coincides with $\fu (\sm , \seq m) \setminus
\fp$.
\end{lemma}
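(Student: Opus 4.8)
The plan is to reduce everything to a short combinatorial chase, fed by three inputs: the purely combinatorial description of $\fu_0,\fu_1,g\fu_1,\fp$ inside $\fu=\fu(\sm,g\sm)=\fu_0\sqcup\fu_1\sqcup g\fu_1\sqcup\fp$ furnished by Lemma~\ref{split}; the $g$-equivariance of the family of projections $\psi_\bu\co\AM\to T_\bu$; and the elementary observation that a midpoint of a $\td$-geodesic is automatically \emph{between} its endpoints in the sense of Definition~\ref{def:between}. For the equivariance, recall that for $g=(g_n)^\omega\in\mcgb$ and $\bu=(U_n)^\omega$ the maps $g_n\co\MM(U_n)\to\MM(g_nU_n)$ induce an isometry $T_\bu\to T_{g\bu}$ intertwining $\psi_\bu$ with $\psi_{g\bu}$; hence $\td_\bu(\seq x,\seq y)=\td_{g\bu}(g\seq x,g\seq y)$ for all $\seq x,\seq y\in\AM$, so $\bu\in\fu(\seq x,\seq y)$ if and only if $g\bu\in\fu(g\seq x,g\seq y)$, i.e. $\fu(g\seq x,g\seq y)=g\,\fu(\seq x,\seq y)$.

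First I would check that $\seq m$ lies between $\sm$ and $g\sm$. Since $\td=\sum_{\bu\in\upss}\td_\bu$ and $\seq m$ is a midpoint of a $\td$-geodesic from $\sm$ to $g\sm$, summing the triangle inequalities $\td_\bu(\sm,g\sm)\le\td_\bu(\sm,\seq m)+\td_\bu(\seq m,g\sm)$ over all $\bu$ gives $\td(\sm,g\sm)\le\td(\sm,\seq m)+\td(\seq m,g\sm)=\td(\sm,g\sm)$, so each of these triangle inequalities is an equality, which in the tree $T_\bu$ says $\psi_\bu(\seq m)\in[\psi_\bu(\sm),\psi_\bu(g\sm)]$. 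From this I would extract three facts: $\fu(\sm,\seq m)$ and $\fu(\seq m,g\sm)$ are both contained in $\fu=\fu(\sm,g\sm)$; $\fu=\fu(\sm,\seq m)\cup\fu(\seq m,g\sm)$ (any $\bu$ with $\td_\bu(\sm,g\sm)>0$ has $\td_\bu(\sm,\seq m)>0$ or $\td_\bu(\seq m,g\sm)>0$); and, using the equivariance together with $g\seq m=\seq m$ and the symmetry of $\fu(\cdot,\cdot)$, $\fu(\seq m,g\sm)=g\,\fu(\seq m,\sm)=g\,\fu(\sm,\seq m)$.

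It then remains to run the chase, using the defining properties from Lemma~\ref{split}: for $\bw\in\fu_1$ one has $g^k\bw\in\fu$ only when $k\in\{0,1\}$; for $\bw\in g\fu_1$ only when $k\in\{-1,0\}$, so in particular $g^2(g\iv\bw)\notin\fu$; and for $\bw\in\fu_0$ no $g^k\bw$ with $k\ne0$ lies in $\fu$. To see $\fu_1\subseteq\fu(\sm,\seq m)\setminus\fp$: an element $\bu\in\fu_1$ is disjoint from $\fp$ and lies in $\fu=\fu(\sm,\seq m)\cup\fu(\seq m,g\sm)$; were it in $\fu(\seq m,g\sm)=g\,\fu(\sm,\seq m)$ then $g\iv\bu\in\fu(\sm,\seq m)\subseteq\fu$, contradicting $\bu\in\fu_1$; hence $\bu\in\fu(\sm,\seq m)$. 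For the reverse inclusion, take $\bu\in\fu(\sm,\seq m)\setminus\fp$; then $\bu\in\fu$ and $g\bu\in g\,\fu(\sm,\seq m)=\fu(\seq m,g\sm)\subseteq\fu$, so $\bu\notin\fu_0$; and if $\bu\in g\fu_1$, writing $\bu=g\bw$ with $\bw\in\fu_1$, the inclusion already proved gives $\bw\in\fu(\sm,\seq m)$, whence $g^2\bw=g\bu\in\fu$, contradicting $\bw\in\fu_1$; therefore $\bu\in\fu_1$, and $\fu_1=\fu(\sm,\seq m)\setminus\fp$. The hard part, such as it is, will be the equivariance bookkeeping in the first two paragraphs — pinning down that $g$ really does permute the trees $T_\bu$ compatibly with the $\psi_\bu$, so that $\fu(g\seq x,g\seq y)=g\,\fu(\seq x,\seq y)$ and the identity $g\seq m=\seq m$ may be propagated through all the projections — after which the rest is immediate.
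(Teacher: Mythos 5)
Your proof is correct and follows essentially the same route as the paper's: both arguments rest on the midpoint being between $\sm$ and $g\sm$ (so $\fu=\fu(\sm,\seq m)\cup\fu(\seq m,g\sm)$), on $g\seq m=\seq m$ together with equivariance of the projections to identify $\fu(\seq m,g\sm)$ with $g\,\fu(\sm,\seq m)$, and on the defining power conditions for $\fu_0,\fu_1,g\fu_1$ from Lemma~\ref{split}. The only cosmetic difference is that the paper first invokes $\fu_0=\emptyset$ from bounded orbits, whereas you rule out $\fu_0$ directly; your write-up also makes explicit the betweenness and equivariance bookkeeping that the paper leaves implicit.
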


\proof As $\la g\ra$ has bounded orbits, we have that $\fu_0=\emptyset$, according to the last part of the statement of Lemma \ref{split}.

Since $\seq m$ is on a geodesic joining $\sm$ and $g\sm$, $\fu (\sm , g\sm ) = \fu (\sm , \seq m ) \cup \fu (\seq m , g\sm )$. From the definition of $\fu_1$ it follows that  $\fu (\sm , \seq m) \setminus \fp$ is contained in $\fu_1$. Also, if an element $\bu\in \fu_1$ would be contained in $\fu (\seq m , g\sm )$ then it would follow that $g\iv \bu $ is also in $\fu$, a contradiction.\endproof

\medskip

\Notat \quad In what follows, for any reducible element $t\in \mcgb$ we denote by $\seq\Delta_t$ the multicurve associated to $t$ as in Lemma \ref{red}.

\begin{lemma}\label{minfix}
\begin{enumerate}
  \item\label{minfixpt} Let $g$ be a pure element with $\Fix (g)$ non-empty. For every $\seq x \in \AM$ there exists a unique point $\seq y \in \Fix(g)$ such that $\td (\seq x , \seq y )= \td (\seq x , \Fix(g) )$.

  \item\label{minfix2} Let $g$ and $h$ be two pure elements not fixing a common multicurve. If $\Fix (g)$ and $\Fix (h)$ are non-empty then there exists a unique pair of points $\sm \in \Fix (g)$ and $\sn \in \Fix (h)$ such that $\td(\sm , \sn )=\td (\Fix (g), \Fix (h))$.

Moreover, for every $\sa \in \Fix (g)$, $\td (\sa , \sn )= \td (\sa , \Fix(h) )$, and $\sn$ is the unique point with this property; likewise for every $\sb \in \Fix (h)$, $\td (\sb , \sm )= \td (\sb , \Fix(g) )$ and $\sm$ is the unique point with this property.
\end{enumerate}
\end{lemma}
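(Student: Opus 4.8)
\textbf{Proof plan for Lemma \ref{minfix}.}

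The plan is to treat both parts by exploiting the fact, established in Lemmas \ref{fixpA} and \ref{fixred}, that $\Fix(g)$ is, up to the product decomposition of $Q(\seq\Delta_g)$, a product of convex subsets of transversal trees and of whole factors $\MM(\bu_i)$. Since $(\AM,\td)$ is a complete median space (Theorem \ref{thmedian}), and since $\td$-geodesics between points of $Q(\seq\Delta_g)$ stay inside $Q(\seq\Delta_g)$ (Lemma \ref{uqd}(\ref{geodq})), the set $\Fix(g)$ is a closed convex subset of the median space $\AM$. In a complete median space, a closed convex set has a well-defined nearest-point projection; this gives both existence and uniqueness of $\seq y$ in part (\ref{minfixpt}). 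Concretely, for $\seq x\in\AM$ I would first project $\seq x$ to $Q(\seq\Delta_g)$ (using Corollary \ref{distsubsurf} and Lemma \ref{uqd}, which guarantee the projection does not increase distance and behaves well with $\fu$), reducing to the case $\seq x\in Q(\seq\Delta_g)=\prod_j \MM(\bu_j)$; then project coordinate-wise onto each convex factor $C_i\subset T_{\seq\mu_i}$ (for $i\le k$) using the fact that a convex subset of an $\R$-tree admits a unique nearest point, and leave the coordinates $i>k$ unchanged. Because $\td$ restricted to $Q(\seq\Delta_g)$ is the $\ell^1$-sum of the factor metrics (by the distance formula, Theorem \ref{tilde}, and the product description of $Q$ via Remark \ref{rmk:qqproduct}), the coordinate-wise nearest point is the nearest point of the product, and it is unique.

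For part (\ref{minfix2}), the idea is the standard ``bridge'' argument for two disjoint closed convex sets in a median (or CAT(0)-like) space, combined with the transversal-tree structure. Let $\sm\in\Fix(g)$ and $\sn\in\Fix(h)$ realize $\td(\Fix(g),\Fix(h))$ — such a pair exists because $\Fix(g)$ and $\Fix(h)$ are closed, and one of them can be shown to be ``bounded in the relevant directions'' or, more robustly, because one can take a minimizing sequence and use completeness plus the median structure to extract a limit pair (the $\ell^1$-product structure of the trees $T_\bu$ makes this a routine argument in each coordinate). For uniqueness and the ``moreover'' clause: if $\sa\in\Fix(g)$ is arbitrary, I would look at the median point $\seq w$ of $\sa,\sm,\sn$; convexity of $\Fix(g)$ forces $\seq w\in\Fix(g)$, and then $\td(\sa,\sn)=\td(\sa,\seq w)+\td(\seq w,\sn)\ge \td(\seq w,\sn)\ge \td(\Fix(g),\Fix(h))=\td(\sm,\sn)$, while minimality forces $\seq w=\sm$, so the geodesic $[\sa,\sn]$ passes through $\sm$ and $\td(\sa,\sn)=\td(\sa,\sm)+\td(\sm,\sn)$; in particular $\sn$ is the nearest point of $\Fix(h)$ to $\sa$, and by part (\ref{minfixpt}) applied to $h$ it is the unique such point. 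The symmetric statement for $\sb\in\Fix(h)$ is proved the same way, and uniqueness of the pair $(\sm,\sn)$ follows by applying this to $\sa=\sm'$ for any other minimizing $\sm'$.

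The main obstacle is making the existence of the minimizing pair in part (\ref{minfix2}) rigorous: two closed convex subsets of a general complete median space need not realize their distance (there is no local compactness here). This is where the hypothesis that $g,h$ do \emph{not} fix a common multicurve must be used. I expect the argument to go as follows: write $\Fix(g)$ and $\Fix(h)$ via their product descriptions over the multicurves $\seq\Delta_g$ and $\seq\Delta_h$; using the projection estimates (Theorem \ref{projest}) and the structure theorem (Theorem \ref{projection trichotomy}), one shows that for each subsurface $\bv$ the projections $\psi_\bv(\Fix(g))$ and $\psi_\bv(\Fix(h))$ are convex subsets of the tree $T_\bv$, at least one of which is a single point \emph{unless} $\bv$ is ``compatible'' with both $\seq\Delta_g$ and $\seq\Delta_h$ — and the no-common-multicurve hypothesis bounds, coordinate by coordinate, the directions in which both sets are unbounded, so that the infimum of $\sum_\bv \tdlv$ is attained coordinatewise (a nearest point between two convex subsets of an $\R$-tree always exists) and the attaining point lies in $\calt_0$. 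Completeness of $\psi(\AM)$ in $\calt_0$ (proved in Theorem \ref{thmedian}) then upgrades this coordinatewise minimizer to an honest pair of points in $\AM$. Once existence is secured, uniqueness and the nearest-point characterization are the soft median-geometry arguments sketched above.
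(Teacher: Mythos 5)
Your proposal is correct and matches the paper's argument: both reduce to the coordinate-wise description of $\Fix(g)$ from Lemmas \ref{fixpA} and \ref{fixred} as a product of convex subtrees of transversal trees (and whole factors), take nearest-point projections coordinate by coordinate in the trees $T_\bu$, and use the no-common-multicurve hypothesis exactly as you indicate — to guarantee that in every coordinate where $\Fix(g)$ does not project to a point, $\Fix(h)$ does, so the minimizing pair exists coordinatewise. The only caution is your opening appeal to nearest-point projections onto closed convex subsets of general complete median spaces (which can fail, as you yourself essentially note when discussing pairs of convex sets); but your concrete coordinate-wise implementation, which is what the paper actually does, makes that general appeal unnecessary.
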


\proof We identify $\AM$ with a subset of the product of trees
$\prod_{\bu \in \upss} T_\bu$.  Let $g$ be a pure element with $\Fix
(g)$ non-empty.  By Lemma \ref{fixedpair},
for any $\bu$ such that $g(\bu )\neq \bu $ we have that
the projection of $\Fix(g)$
onto $T_\bu$ is a point which we denote $\sm_\bu$.  If $\bu$ is such that $\bu \pitchfork \delg$ then
the projection of $\Fix(g)$, and indeed of $Q(\delg )$ onto $T_\bu$
also reduces to a point, by Lemma \ref{uqd}, (\ref{inq}).  The only
surfaces $\bu $ such that $g(\bu )=\bu$ and $\bu \notpitchfork \delg$
are $\bu_1, ..., \bu_k$ and $\by \subseteq \bu_j$ with $j\in
\{k+1,...,m\}$, where $\bu_1,...,\bu_m$ are the subsurfaces determined
on $\bs$ by $\delg$, $g$ restricted to $\bu_1, ..., \bu_k$ is a
pseudo-Anosov, $g$ restricted to $\bu_{k+1}, ..., \bu_m$ is identity.
By Lemma \ref{fixred}, the projection of $\Fix (g)$ onto $T_{\bu_i}$
is a convex tree $C_{\bu_i}$, when $i=1,...,k$, and the projection of
$\Fix (g)$ onto $T_{\by}$ with $\by \subseteq \bu_j$ and $j\in
\{k+1,...,m\}$ is $T_\by$.

\medskip

(\ref{minfixpt})\quad The point $\seq x$ in $\AM$ is identified to the
element $( {\seq x}_\bu)_\bu$ in the product of trees $\prod_{\bu \in
\upss} T_\bu$.

For every $i\in \{1,...,k\}$ we choose the unique point ${\seq
y}_{\bu_i} $ in the tree $C_{\bu_i}$ realizing the distance from
${\seq x}_{\bu_i}$ to that tree.  The point ${\seq y}_{\bu_i} $ lifts
to a unique point ${\seq y}_i$ in the transversal sub-tree $C_i$.

Let $i \in \{ k+1,..., m\}$ and let ${\seq y}_{\bu_i} =({\seq x}_\by
)_\by $ be the projection of $( {\seq x}_\bu)_\bu$ onto $\prod_{\by
\subseteq \bu_i} T_\by$.  Now the projection of $\AM$ onto $\prod_{\by
\subseteq \bu_i} T_\by$ coincides with the embedded image of $\MM
(\bu_i )$, since for every $\seq x \in \AM$ its projection in $T_\by$
coincides with the projection of $\pi_{\MM (\bu_i)} (\seq x)$.
Therefore there exists a unique element ${\seq y}_i \in \MM (\bu_i )$
such that its image in $\prod_{\by \subseteq \bu_i} T_\by$ is ${\seq
y}_{\bu_i}$.  Note that the point ${\seq y}_i$ can also be found as
the projection of $\seq x$ onto $\MM (\bu_i)$.

Let $\seq z$ be an arbitrary point in $\Fix (g)$.  For every
subsurface $\bu$ the point $\seq z$ has the property that $\tdlu (\seq
z , \seq x ) \geq \tdlu (\seq y , \seq x)$.  Moreover if $\seq z \neq
\seq y$ then there exist at least one subsurface $\bv$ with
$g(\bv)=\bv$ and $\bv \notpitchfork \delg $ such that ${\seq z}_\bv
\neq {\seq y}_\bv$.  By the choice of ${\seq y}_\bv$ it follows that
$\tdlv ({\seq z}_\bv , {\seq x}_\bv) > \tdlv ({\seq y}_\bv , {\seq
x}_\bv)$.  Therefore $\td ({\seq z}, {\seq x}) \geq \td ({\seq y} ,
{\seq x} )$, and the inequality is strict if ${\seq z}\neq {\seq y}$.

\medskip

(\ref{minfix2})\quad Let $\bv_1,...,\bv_s$ be the subsurfaces determined on $\bs$ by $\delh$, such that $h$ restricted to $\bv_1, ..., \bv_l$ is a pseudo-Anosov, $h$ restricted to  $\bv_{l+1}, ..., \bv_s$ is identity. The projection of $\Fix (h)$ onto $T_{\bv_i}$ is a convex tree $C_{\bv_i}$, when $i=1,...,l$, the projection of $\Fix (g)$ onto $T_{\bz}$ with $\bz \subseteq \bv_j$ and $j\in \{l+1,...,s\}$ is $T_\bz$, and for any other subsurface $\bu$ the projection of $\Fix(h)$ is one point $\sn_\bu$.

For every $i\in \{1,...,k\}$ $\Fix (h)$ projects onto a point $\sn_{\bu_i}$ by the hypothesis that $g,h$ do not fix a common multicurve (hence a common subsurface). Consider $\sm_{\bu_i }$ the nearest to $\sn_{\bu_i}$ point in the convex tree $C_{\bu_i}$. This point lifts to a unique point $\sm_i$ in the transversal sub-tree $C_i$. Let $i \in \{ k+1,..., m\}$. On $\prod_{\by \subseteq \bu_i} T_\by$ $\Fix(h)$ projects onto a unique point, since it has a unique projection in each $T_\by$.  As pointed out already in the proof of (\ref{minfixpt}), the projection of $\AM$ onto $\prod_{\by \subseteq \bu_i} T_\by$ coincides with the embedded image of $\MM (\bu_i )$. Therefore there exists a unique element $\sm_i \in \MM (\bu_i )$ such that its image in $\prod_{\by \subseteq \bu_i} T_\by$ is $(\sn_\by )_\by$. Note that the point $\sm_i$ can also be found as the unique point which is the projection of $\Fix (h)$ onto $\MM (\bu_i)$ for $i=k+1,...,m$. We consider the point $\sm =(\sm_1,...., \sm_m)\in C_1\times \cdots \times C_k \times \MM (\bu_{k+1}) \times \cdots \times \MM (\bu_m)$. Let $\sa$ be an arbitrary point in $\Fix (g)$ and let $\sb$ be an arbitrary point in $\Fix (h)$. For every subsurface $\bu$ the point $\sm$ has the property that $\tdlu (\sm , \sb ) \leq \tdlu (\sa , \sb)$. Moreover if $\sa \neq \sm$ then there exist at least one subsurface $\bv$ with $g(\bv)=\bv$ and $\bv \notpitchfork \delg $ such that $\sa_\bv \neq \sm_\bv$. By the choice of $\sm_\bv$ it follows that $\tdlv (\sm_\bv , \sb_\bv) < \tdlv (\sa_\bv , \sb_\bv)$. Therefore $\td (\sm , \sb ) \leq \td (\sa , \sb )$, and the inequality is strict if $\sa\neq \sm$.

We construct similarly a point $\sn \in \Fix (h)$. Then $\td (\sm , \sn )\leq \td (\sm , \sb )\leq \td (\sa , \sb)$ for any $\sa \in \Fix (g)$ and $\sb \in \Fix (h)$. Moreover the first inequality is strict if $\sb \neq \sn$, and the second inequality is strict if $\sa \neq \sm $.\endproof

\begin{lemma}\label{lem:power}
Let $g\in \mcgb$ be a pure element satisfying the hypotheses from Lemma \ref{split}, and moreover assume that $\la g\ra$ has bounded orbits, whence $\Fix (g) \neq \emptyset$, by Lemmas \ref{middlepA} and \ref{middlered}. Let $\sm$ be an element such that $g\sm \neq \sm$ and let $\sn$ be the unique projection of $\sm$ onto $\Fix (g)$ defined in Lemma \ref{minfix}, (\ref{minfixpt}).

Then for every $k\in \Z \setminus \{0\}$, $\sn$ is on a geodesic joining $\sm$ and $g^k \sm$.
\end{lemma}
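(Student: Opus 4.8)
The plan is to prove the statement uniformly in $k$ by exhibiting $\sn$ as the midpoint of a geodesic from $\sm$ to $g^k\sm$ which is fixed by $g^k$; the whole point is that the nearest--point projection onto $\Fix(g)$ is characterised metrically, and a $g^k$--fixed midpoint of a geodesic between $\sm$ and $g^k\sm$ automatically realises that projection.

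First I would record that every hypothesis placed on $g$ is inherited by each power $g^k$ with $k\in\Z\setminus\{0\}$: the element $g^k$ is again pure, it fixes exactly the same subsurfaces as $g$ (in particular it is associated, as in Lemma~\ref{red}, with the same multicurve $\delg$ and the same subsurfaces $\bu_1,\dots,\bu_m$, with the same splitting into pseudo--Anosov and identity parts), the cyclic group $\la g^k\ra\le\la g\ra$ has bounded orbits, and the ``large translation length'' conditions required in Lemmas~\ref{split}, \ref{middlepA} and \ref{middlered} are only strengthened on passing to a power. Hence Lemmas~\ref{middlepA} and \ref{middlered} apply to $g^k$ as well, so $\Fix(g^k)\neq\emptyset$; together with the trivial inclusion $\Fix(g)\subseteq\Fix(g^k)$ and the contrapositive of Lemma~\ref{fixpower} (which states precisely that $\Fix(g^k)\subseteq\Fix(g)$), this yields $\Fix(g^k)=\Fix(g)$ for every $k\neq0$. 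Moreover, Lemma~\ref{fixpower} applied to $g$ and $\sm$ gives $g^k\sm\neq\sm$.

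Next I would fix $k\neq0$ and invoke Lemma~\ref{middlepA} (if $g$, and hence $g^k$, is pseudo--Anosov; in that case one may take $\Gamma$ to be any hierarchy path joining $\sm$ and $g^k\sm$, which is a $\td$--geodesic by Theorem~\ref{thmedian}) or Lemma~\ref{middlered} (if $g$ is reducible), applied to the isometry $g^k$ and the point $\sm$. This produces a geodesic $\Gamma$ of $(\AM,\td)$ from $\sm$ to $g^k\sm$ whose midpoint $\seq m$ is fixed by $g^k$; thus $\seq m\in\Fix(g^k)=\Fix(g)$, and, being a midpoint, $\td(\sm,\seq m)=\tfrac12\td(\sm,g^k\sm)$.

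The key step is then to identify $\seq m$ with $\sn$. For any $\seq\alpha\in\Fix(g)=\Fix(g^k)$ the isometry $g^k$ of $(\AM,\td)$ fixes $\seq\alpha$, so $\td(\seq\alpha,g^k\sm)=\td(g^k\seq\alpha,g^k\sm)=\td(\seq\alpha,\sm)$, and therefore $\td(\sm,g^k\sm)\le\td(\sm,\seq\alpha)+\td(\seq\alpha,g^k\sm)=2\td(\sm,\seq\alpha)$. Hence $\td(\sm,\seq\alpha)\ge\tfrac12\td(\sm,g^k\sm)=\td(\sm,\seq m)$ for every $\seq\alpha\in\Fix(g)$, which shows that $\seq m$ realises the distance $\td(\sm,\Fix(g))$; by the uniqueness assertion in Lemma~\ref{minfix},~(\ref{minfixpt}), $\seq m=\sn$. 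Since $\sn=\seq m$ lies on $\Gamma$, the proof is complete. The only genuinely delicate point is the bookkeeping in the second paragraph --- checking that the quantitative hypotheses truly pass to powers and that Lemma~\ref{fixpower} delivers the equality $\Fix(g^k)=\Fix(g)$ --- and everything after that is a one--line computation with the $\td$--metric together with the uniqueness of the nearest--point projection onto $\Fix(g)$.
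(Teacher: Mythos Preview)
Your proof is correct and follows essentially the same approach as the paper: obtain a $g^k$--fixed midpoint $\seq m$ of a $\td$--geodesic from $\sm$ to $g^k\sm$ via Lemmas~\ref{middlepA}/\ref{middlered}, identify $\Fix(g^k)=\Fix(g)$ via Lemma~\ref{fixpower}, and then use the triangle inequality together with the uniqueness clause of Lemma~\ref{minfix},~(\ref{minfixpt}) to conclude $\seq m=\sn$. The paper phrases the last step as a contradiction (assuming $\seq m\neq\sn$ and deriving $\td(\sm,g^k\sm)<\td(\sm,g^k\sm)$), whereas you argue directly that $\seq m$ minimises the distance to $\Fix(g)$; these are the same computation, and your explicit verification that the hypotheses pass to powers is a welcome addition.
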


\proof By  Lemmas \ref{middlepA} and \ref{middlered} there exists $\seq m$ middle of a geodesic joining $\sm$ and $g^k \sm$ such that ${\seq m} \in \Fix (g^k)$. By Lemma \ref{fixpower}, $\Fix (g^k) = \Fix (g)$. Assume that $\seq m \neq \sn$. Then by Lemma \ref{minfix}, (\ref{minfixpt}), $\td (\sm , \sn ) < \td (\sm , \seq m)$. Then $\td (\sm , g^k \sm ) \leq \td (\sm , \sn ) + \td (\sn , g^k \sm )= 2\td (\sm , \sn ) < 2\td (\sm , \seq m)=\td (\sm , g^k \sm )$, which is impossible.\endproof

\begin{lemma}\label{transv}
Let $g=(g_n)^\omega$ and  $h=(h_n)^\omega$ be two pure reducible elements in $\mcgb$, such that they do not both fix a multicurve. If  a proper subsurface $\bu $ has the property that $h(\bu ) =\bu$ then
\begin{enumerate}
  \item\label{2inters} $g^m \bu \pitchfork \delh$ for $|m|\geq N=N(g)$;
  \item\label{2fixed}  the equality $h (g^k (\bu )) = g^k (\bu ) $ can hold only for finitely many $k\in \Z$.
\end{enumerate}
\end{lemma}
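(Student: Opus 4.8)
\textbf{Proof plan for Lemma \ref{transv}.}

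The idea is to reduce both statements to a statement about how the pseudo-Anosov components of $g$ move curves in the curve complexes of the subsurfaces $\bu_i$ on which $g$ acts as a pseudo-Anosov, using the fact that a pseudo-Anosov on a subsurface has translation length bounded below in the curve complex of that subsurface (so that $\dist_{C(\bu_i)}(\gamma, g^m\gamma)$ grows linearly in $|m|$ away from its axis). First I would set up notation: let $\seq\Delta_g=\delg$ and let $\bu_1,\dots,\bu_m$ be the subsurfaces of $\bs$ determined by $\delg$, with $g$ pseudo-Anosov on $\bu_1,\dots,\bu_k$ and the identity on the rest. Since $h(\bu)=\bu$ for a proper subsurface $\bu$, the boundary $\partial\bu$ is a multicurve fixed (componentwise, up to the purity convention) by $h$, and $\bu\notpitchfork\delh$. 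I want to understand when $g^m\bu$ fails to overlap $\delh$, i.e.\ when $g^m\bu\notpitchfork\delh$.

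For (\ref{2inters}): suppose $g^m\bu\notpitchfork\delh$ for some $m$ with $|m|$ large. Then $\partial(g^m\bu)=g^m\partial\bu$ does not overlap $\delh$, so for each component $\delta$ of $\delh$ and each component $\beta$ of $\partial\bu$, the curves $g^m\beta$ and $\delta$ are disjoint or nested. Using the projection estimates (Theorem \ref{projest}) together with Lemma \ref{r1}, this forces $\dist_{C(\bz)}(g^m\partial\bu, \delh)$ to be uniformly bounded for every subsurface $\bz$ meeting both, and in particular, restricting to one of the subsurfaces $\bu_i$ with $i\le k$ that $\partial\bu$ actually enters (there must be one, since $\bu$ is proper and $\delg,\delh$ don't fix a common multicurve, so $\partial\bu$ is not contained in $\delg$ up to the usual diameter-$3$ ambiguity), we get $\dist_{C(\bu_i)}(g^m\partial\bu, \delh)=O(1)$. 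On the other hand $\dist_{C(\bu_i)}(g_n^m\partial\bu_n, \partial\bu_n)$ grows at least linearly in $|m|$ (the pseudo-Anosov component of $g$ on $\bu_i$ has positive translation length in $C(\bu_i)$), and $\dist_{C(\bu_i)}(\partial\bu_n,\delh)$ is fixed independent of $m$; combining these, $\dist_{C(\bu_i)}(g_n^m\partial\bu_n,\delh)\to\infty$ with $|m|$, a contradiction once $|m|\geq N$ for suitable $N=N(g)$. Here I must be careful that $\partial\bu$ genuinely overlaps or enters $\bu_i$; the hypothesis that $g$ and $h$ do not fix a common multicurve is exactly what prevents $\partial\bu$ from being disjoint from every $\bu_i$ with $i\le k$ — if it were disjoint from all the pseudo-Anosov pieces, then $\partial\bu$ would be supported in $\delg$ together with the identity pieces $\bu_{k+1},\dots,\bu_m$, and one checks this yields a $g$-invariant multicurve refining $\partial\bu$, which together with $\partial\bu$ being $h$-invariant contradicts the hypothesis.

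Statement (\ref{2fixed}) is then almost immediate from (\ref{2inters}): if $h(g^k\bu)=g^k\bu$ then $g^k\bu\notpitchfork\delh$, so by (\ref{2inters}) we must have $|k|<N=N(g)$, hence there are only finitely many such $k$. The main obstacle I anticipate is the bookkeeping in the first paragraph of the argument for (\ref{2inters}): making precise, via Theorem \ref{projest} and the triangle inequality in the various curve complexes, that ``$g^m\bu$ does not overlap $\delh$'' forces a genuine bounded-distance statement in $C(\bu_i)$ for a \emph{fixed} $i$ (independent of $m$), rather than allowing the relevant subsurface to drift with $m$; this is where one uses that $g$ fixes each $\bu_i$ setwise, so $g^m\partial\bu$ and $\partial\bu$ both have controlled projections to the same fixed $C(\bu_i)$, and the linear lower bound on $\dist_{C(\bu_i)}(g_n^m\partial\bu_n,\partial\bu_n)$ then does the rest.
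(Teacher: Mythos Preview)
Your plan for part (\ref{2inters}) follows the same curve--complex idea as the paper's main case, but there is a genuine gap. You pick a pseudo-Anosov component $\bu_i$ of $g$ that $\partial\bu$ enters and then want to compare $g^m\partial\bu$ with $\delh$ inside $C(\bu_i)$. For that comparison to make sense you need $\delh$ itself to intersect $\bu_i$; you never check this, and it can fail. The paper treats this as a separate case: if $\delh$ is disjoint from the chosen pseudo-Anosov piece $\bv$, then $\bv$ lies in a single component $\mathbf{V}$ of $\bs\setminus\delh$. Using $h(\bu)=\bu$ and the fact that $\bu$ \emph{overlaps} $\bv$, one argues that $h$ cannot be pseudo-Anosov on $\mathbf{V}$ (a pseudo-Anosov on $\mathbf{V}$ would force $\bu\cap\mathbf{V}$ to be all of $\mathbf{V}$ or empty, contradicting overlap), so $h$ is the identity on $\mathbf{V}$; hence $h\bv=\bv$ and $g\bv=\bv$, producing a common invariant multicurve $\partial\bv$ and contradicting the hypothesis. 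Your write-up should include this second case; it is not just bookkeeping but a different argument.

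Your deduction of (\ref{2fixed}) from (\ref{2inters}), on the other hand, is correct and is cleaner than the route the paper takes. The point is that if $h$ is pure and $h(\mathbf{W})=\mathbf{W}$ then each boundary curve of $\mathbf{W}$ is $h$--fixed; since every curve of $\delh$ bounds a pseudo-Anosov component of $h$ (this is the minimality built into $\delh$ in Lemma~\ref{red}), no $h$--fixed curve can cross $\delh$, so $\partial\mathbf{W}$ is disjoint from $\delh$. But the proof of (\ref{2inters}) (once the missing case is handled) actually establishes that $g^m\partial\bu$ \emph{does} cross $\delh$ for $|m|\ge N$. Hence $h(g^k\bu)=g^k\bu$ forces $|k|<N$. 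The paper instead analyzes the decomposition of the $h$--invariant subsurface $g^k\bu$ into pieces of $\bs\setminus\delh$ and reduces to a single boundary curve before invoking the argument of (\ref{2inters}); your observation shortcuts this.
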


\proof (\ref{2inters}) Assume by contradiction that $g^m \bu \notpitchfork \delh$ for $|m|$ large. Since $h(\bu )=\bu$ it follows that $\bu$ must overlap a component $\bv$ of $\bs \setminus \seq\Delta_g$ on which $g$ is a pseudo-Anosov (otherwise $g\bu =\bu $). If $\delh$ would also intersect $\bv$ then the projections of $\Delta_{h,n}$ and of $\partial U_n$ onto the curve complex $C(V_n)$ would be at distance $O(1)$. On the other hand, since $\dist_{C(V_n)} (g^m \partial U_n , \partial U_n ) \geq |m| +O(1)$ it follows that for $|m|$ large enough $\dist_{C(V_n)} (g^m \partial U_n, \Delta_{h,n} ) >3$, that is $g^{m}\partial\bu$ would intersect $\delh$, a contradiction. Thus $\delh$ does not intersect $\bv$. It follows that $\bu$ does not have all boundary components from $\delh$, thus the only possibility for $h(\bu )=\bu$ to be achieved is that $\bu$ is a finite union of subsurfaces determined by $\delh$ and subsurfaces contained in a component of $\bs \setminus \delh$ on which $h$ is identity. Since $\bv$ intersects $\bu$ and not $\delh$, $\bv$ intersects only a subsurface $\bu_1 \subseteq \bu $ restricted to which $h$ is identity; moreover $\bv$ is in the same component of $\bs \setminus \delh$ as $\bu_1$. Therefore $h\bv=\bv$, and we also had that $g\bv=\bv$, a contradiction.

\medskip

(\ref{2fixed}) Assume that  $h (g^k (\bu )) = g^k (\bu ) $ holds for infinitely many $k\in \Z$. Without loss of generality we may assume that all $k$ are positive integers and that for all $k$, $g^k \bu \pitchfork \delh$. Up to taking a subsequence of $k$ we may assume that there exist $\bu_1,...,\bu_m$ subsurfaces determined by $\delh$ and $1\leq r\leq m$ such that $h$ restricted to $\bu_1,...,\bu_r$ is either a pseudo-Anosov or identity, $h$ restricted to $\bu_{r+1},...,\bu_m$ is identity, and $g^k(\bu )= \bu_1\cup ...\cup \bu_r\cup \bv_{r+1}(k)\cup ...\cup \bv_m(k)$, where $\bv_j(k) \subsetneq \bu_j$ for $j=r+1,...,m$. The boundary of $g^k(\bu )$ decomposes as $\partial'S \sqcup \delh'\sqcup \partial_k$, where $\partial'S$ is the part of $\partial g^k(\bu )$ contained in $\partial S$, $\delh'$ is the part contained in $\delh$, and $\partial_k$ is the remaining part (coming from the subsurfaces $\bv_j(k)$). Up to taking a subsequence and pre-composing with some $g^{-k_0}$ we may assume that $\bu= \bu_1\cup ...\cup \bu_r\cup \bv_{r+1}(0)\cup ...\cup \bv_m(0)$ and that $g^k$ do not permute the boundary components. It follows that $\delh'= \emptyset$, hence $\partial_k \neq \emptyset$. Take a boundary curve $\gamma \in \partial_0$. Then $\gamma \in \partial \bv_j (0)$ for some $j\in \{ r+1 , ..., m\}$, and for every $k$, $g^k\gamma \in \partial \bv_j (k) \subset \bu_j $, in particular $g^k\gamma \notpitchfork \delh$. An argument as in (\ref{2inters}) yields a contradiction.\endproof

\medskip

\begin{lemma}\label{2redgen}
Let $g=(g_n)^\omega$ and  $h=(h_n)^\omega$ be two pure elements in $\mcgb$, such that $\la g,h \ra$ is composed only of pure elements and its orbits in $\AM$ are bounded. Then $g$ and $h$ fix a point.
\end{lemma}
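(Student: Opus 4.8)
The strategy is to mimic the classical argument for two isometries of an $\R$-tree: if two hyperbolic-type isometries do not fix a common point, then a suitable combination of them (e.g. $gh$ or $gh^{-1}$) has strictly larger translation length than expected, producing an unbounded orbit and contradicting the hypothesis. First I would dispose of the easy cases using the results already developed. If $g$ or $h$ has unbounded cyclic orbit we are done by hypothesis, so by Lemmas~\ref{middlepA} and~\ref{middlered} both $\Fix(g)$ and $\Fix(h)$ are non-empty. If $\Fix(g)\cap\Fix(h)\neq\emptyset$ we are done, so assume they are disjoint. The first reduction: if $g$ and $h$ fix a common multicurve $\seq\Delta$, then both preserve $\QQ(\seq\Delta)\cong\prod_i\MM(\bu_i)$, and by Lemma~\ref{uqd}(\ref{geodq}) one can restrict attention to the factors; an induction on complexity (each $\xi(\bu_i)<\xi(S)$) lets one replace $g,h$ by their restrictions to a factor where they still have no common fixed point, contradicting the inductive hypothesis. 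So we may assume $g$ and $h$ fix \emph{no} common multicurve; in particular neither restriction is trivial where it matters, and Lemma~\ref{transv} applies.

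Next, using Convention-style replacement (replacing $g,h$ by powers, which by Lemma~\ref{fixpower} changes nothing about fixed-point sets and by Lemmas~\ref{lem:power} changes nothing about which point of $\Fix$ a geodesic passes through), I would arrange that $g,h$ satisfy the hypotheses of Lemma~\ref{split} with the large constant $D$, and moreover that the pseudo-Anosov components have $\CC$-translation lengths exceeding the constants needed in Lemmas~\ref{middle} and~\ref{middlered}. Let $\sm\in\Fix(g)$ and $\sn\in\Fix(h)$ be the unique closest pair furnished by Lemma~\ref{minfix}(\ref{minfix2}), so $\td(\sm,\sn)=\td(\Fix(g),\Fix(h))>0$, and $\sn$ is the unique projection of $\sm$ onto $\Fix(h)$ and vice versa. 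Set $d=\td(\sm,\sn)$. The key geometric claim is that the concatenation $[\seq h\sm,\sm]\sqcup[\sm,\sn]\sqcup[\sn,\seq g\sn]$ — or rather its orbit under the group generated by $g,h$ — fits together into a bi-infinite geodesic with $\la g,h\ra$ acting on it with positive translation, forcing unbounded orbits. Concretely, I would show that $\td(\seq g\sn,\sm)=\td(\seq g\sn,\sn)+\td(\sn,\sm)$ and $\td(\seq h\sm,\sn)=\td(\seq h\sm,\sm)+\td(\sm,\sn)$: this is where Lemma~\ref{minfix}(\ref{minfix2}) is used, since $\seq g\sn\in\Fix(g)$ (as $g$ commutes with itself and $\sn$... actually $\seq g\sn$ need not be in $\Fix(g)$) — here one must instead argue via the splitting of $\fu(\sm,\seq g\sn)$ from Lemma~\ref{split} together with Lemma~\ref{splitandmid}, showing that the subsurfaces contributing to $\td(\sm,\sn)$ and those contributing to $\td(\sn,\seq g\sn)$ are disjoint, so distances add along $\sm,\sn,\seq g\sn$; and symmetrically for $\sn,\sm,\seq h\sm$.

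Granting the additivity, an induction on word length in $\{g,h\}$ shows that for an alternating word $w=g^{\pm}h^{\pm}\cdots$ of length $k$, one has $\td(\sm, w\sm)\geq (k-O(1))\,d' $ for a fixed $d'>0$ depending on $d$ and on the pseudo-Anosov translation lengths, because the relevant collections of subsurfaces $\fu$ are carried by the group action into pairwise disjoint families (this is exactly the role of Lemma~\ref{transv}: $g^m\bu\pitchfork\delh$ for large $|m|$, so the $h$-fixed subsurfaces and their $g$-translates do not interfere). Hence $\la g,h\ra$ has unbounded orbits in $\AM$, contradicting the hypothesis; therefore $\Fix(g)\cap\Fix(h)\neq\emptyset$ after all. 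The main obstacle I anticipate is precisely the bookkeeping in this last step: controlling how the subsurface families $\fu(\sm,\sn),\fu(\sn,\seq g\sn),\fu(\seq h\sm,\sm)$ and all their $\la g,h\ra$-translates interlock, so that the $\td$-lengths genuinely add up rather than cancel — this requires the careful combination of Lemmas~\ref{split}, \ref{splitandmid}, \ref{minfix}, and~\ref{transv}, and is the technical heart of the argument, the analogue of the ``ping-pong'' estimate for isometries of an $\R$-tree.
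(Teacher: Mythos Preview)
Your overall architecture matches the paper's: reduce to the case where $g,h$ fix no common multicurve, take the unique closest pair $\sm\in\Fix(g)$, $\sn\in\Fix(h)$ from Lemma~\ref{minfix}(\ref{minfix2}), and derive a contradiction by producing unbounded $\la g,h\ra$-orbits. You also identify the correct supporting lemmas. However, the execution of the final step is both incorrect in its specific geometry and strategically different from what actually works.

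The additivities you propose are backwards: since $\sm$ is the projection of $\sn$ onto $\Fix(g)$, Lemma~\ref{lem:power} places $\sm$ \emph{between} $\sn$ and $g\sn$, so $\td(g\sn,\sm)=\td(\sm,\sn)=d$ while $\td(\sn,g\sn)=2d$; your claimed identity $\td(g\sn,\sm)=\td(g\sn,\sn)+\td(\sn,\sm)$ would give $3d$. More fundamentally, even with the correct betweenness, a direct ping-pong with a fixed alternating word does not go through: $\fu(\sm,\sn)$ may contain subsurfaces $\bu$ with $h\bu=\bu$ (pseudo-Anosov components of $h$, or subsurfaces of its identity components), and these are not displaced by $h$, so the translated subsurface families overlap under iteration and the $\td$-lengths need not add. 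The paper's argument is instead \emph{approximate and adaptive}. First it shows that for any $\sa$ and any $\epsilon>0$ one can choose a \emph{sufficiently large} power $g^k$ so that $g^k\sa$ projects onto $\Fix(h)$ within $\epsilon$ of $\sn$: by Lemma~\ref{transv}(\ref{2fixed}), for $k$ large the finitely many dominant subsurfaces $g^k\bv_j$ (with $\bv_j\in\fu_1$) are no longer fixed by $h$ and hence cannot lie in $\fu(\sn,\sn_1)$, forcing $\td(\sn,\sn_1)<\epsilon$. Then it builds words $w=g^{k_r}h^{m_r}\cdots$ with the exponents chosen anew at each stage to keep the accumulated error below a fixed $\epsilon$, obtaining $\td(\sn,w\sn)\in[2rd-\epsilon,2rd]$ for arbitrarily large $r$. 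The adaptive choice of exponents, and the resulting $\epsilon$-tolerance, are essential; a uniform ping-pong estimate of the kind you sketch is not available here.
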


\proof \textbf{(1)} \quad Assume that $g$ and $h$ do not fix a common multicurve. We argue by contradiction and assume that $g$ and $h$ do not fix a point and we shall deduce from this that  $\la g,h \ra$ has unbounded orbits.

Since $g$ and $h$ do not fix a point, by Lemma \ref{minfix},
(\ref{minfix2}), $\Fix (g)$ and $\Fix (h)$ do not intersect, therefore
the $\td$-distance between them is $d>0$.  Let $\sm \in \Fix (g)$ and
$\sn \in \Fix (h)$ be the unique pair of points realizing this
distance $d$, according to Lemma \ref{minfix}.  Possibly by replacing
$g$ and $h$ by some powers we may assume that $g, h$ and all their
powers have the property that each pseudo-Anosov components has
sufficiently large translation
lengths in their respective curve complexes.

\medskip

\textbf{(1.a)} \quad We prove that for every $\sa\in \AM$ and every $\epsilon >0$ there exists $k$ such that $g^k(\sa )$ projects onto $\Fix (h)$ at distance at most $\epsilon$ from $\sn$.

Let $\sm_1$ be the unique projection of $\sa $ on $\Fix (g)$, as defined in Lemma \ref{minfix}, (\ref{minfixpt}).
According to Lemma \ref{lem:power}, $\sm_1$ is on a geodesic joining $\sa$ and $p\sa$ for every $p\in \la g\ra \setminus \{ id \}$. Let $\sn_1$ be the unique point on $\Fix (h)$ that is nearest to $p(\sa)$, whose existence is ensured by Lemma \ref{minfix}, (\ref{minfixpt}).

By Lemma \ref{split} $\fu (\sa , p(\sa) ) = \fu_1^p  \sqcup p\fu_1^p \sqcup \fp$. Moreover, by Lemma \ref{splitandmid}, $\fu_1^p = \fu (\sa , \sm_1) \setminus \fp$, therefore $\fu_1^p$ is independent of the power $p$. Therefore we shall henceforth denote it simply by $\fu_1$.

Let $\bu$ be a subsurface in $\fu (\sn , \sn_1)$. If $\bu$ is a pseudo-Anosov component of $h$ then the projection of $\Fix (h)$ onto $T_{\bu}$ is a subtree $C_{\bu}$,  the whole set $\Fix (g)$ projects onto a point $\sm_\bu$, and $\sn_{\bu}$ is the projection of $\sm_\bu$ onto $C_\bu$, $(\sn_1)_\bu$ is the projection of $(p(\sa))_\bu$ onto  $C_{\bu}$, and $\sn_{\bu}\, ,\, (\sn_1)_\bu$ are distinct. It follows that the geodesic joining $(\sm_1)_{\bu}$  and  $(p(\sa))_\bu$ covers the geodesic joining $\sn_{\bu}$ and $(\sn_1)_\bu$, whence $\tdlu (\sm_1 , p(\sa )) \geq \tdlu (\sn , \sn_1)$.

If $\bu$ is a subsurface of an identity component of $h$ then the projection of $\Fix (h)$ onto $T_{\bu}$ is the whole tree $T_{\bu}$, $\Fix (g)$ projects onto a unique point $\sm_{\bu}=\sn_\bu$ and $(\sn_1)_\bu=(p(\sa))_\bu$. It follows that the geodesic joining $(\sm_1)_{\bu}$  and  $(p(\sa))_\bu$ is the same as the geodesic joining $\sn_{\bu}$ and $(\sn_1)_\bu$, whence $\tdlu (\sm_1 , p(\sa )) = \tdlu (\sn , \sn_1)$.

Thus in both cases $\tdlu (\sm_1 , p(\sa )) \geq \tdlu (\sn , \sn_1)>0$, in particular $\bu \in \fu (\sm_1 , p(\sa ))$. Since $g$ and $h$ do not fix a common subsurface, $\fu (\sn , \sn_1) \cap \fp =\emptyset$, therefore $\fu (\sn , \sn_1) \subset \fu (\sm_1 , p(\sa )) \setminus \fp = p \fu_1 $. The last equality holds by Lemma \ref{splitandmid}.

Now consider $\bv_1,...,\bv_r$ subsurfaces in $\fu_1$ such that the sum
$$
\sum_{j=1}^r \left( \td_{\bv_j} (\sa , p(\sa) )+ \td_{g\bv_j} (\sa , p(\sa) )\right) +
\sum_{\bu \in \fp } \td_{\bu} (\sa , p(\sa) ) )
$$ is at least $\td (\sa , p(\sa) ) -\epsilon\, .$

According to Lemma \ref{transv}, (\ref{2fixed}), by taking $p$ a large enough power of $g$ we may ensure that $h(p(\bv_j))\neq p(\bv_j)$ for every $j=1,...,r$. Then
$$\td (\sn , \sn_1) = \sum_{\bu \in \fu (\sn , \sn_1) } \tdlu (\sn , \sn_1) \leq \sum_{\bu \in \fu (\sn , \sn_1) } \tdlu (\sm_1 , p(\sa )) \leq
$$
$$\sum_{\bu \in p \fu_1 , \bu \neq p \bv_j } \tdlu (\sm_1 , p(\sa ))= \sum_{\bu \in p \fu_1, \bu \neq p\bv_j } \tdlu (\sa , p(\sa ))\leq \epsilon \, . $$

\medskip

\textbf{(1.b)} \quad In a similar way we prove that for every $\sb\in \AM$ and every $\delta >0$ there exists $m$ such that $h^m(\sb )$ projects onto $\Fix (g)$ at distance at most $\delta$ from $\sm$.

\medskip

\textbf{(1.c)} \quad We now prove by induction on $k$ that for every $\epsilon >0$ there exists a word $w$ in $g$ and $h$ such that:
\begin{itemize}
  \item $\td (\sn , w \sn )$ is in the interval $[2kd - \epsilon , 2kd]$;
  \item $\td (\sm , w \sn )$ is in the interval $[(2k-1)d - \epsilon , (2k-1)d]$;
  \item $w\sn $ projects onto $\Fix (h)$ at distance at most $\epsilon$ from $\sn$.
\end{itemize}

This will show that the $\sn$-orbit of $\la g,h \ra$ is unbounded, contradicting the hypothesis.

Take $k=1$. Then (1.a) applied to $\sn $ and $\epsilon$ implies that there exists a power $p$ of $g$ such that $p\sn$ projects onto $\Fix (h)$ at distance at most $\epsilon$ from $\sn$. Note that  by Lemma \ref{lem:power}, $\sm$ is the middle of a geodesic joining $\sn , p\sn$, hence $\td (\sn , p\sn )= 2d$ and $\td (\sm , p\sn )= d$.

Assume that the statement is true for $k$, and consider $\epsilon >0$ arbitrary. The induction hypothesis applied to $\epsilon_1= \frac{\epsilon}{16}$ produces a word $w$ in $g$ and $h$. Property (1.b) applied to $\sb = w \sn$ implies that there exists a power $h^m$ such that $h^m w \sn $ projects onto $\Fix (g)$ at distance at most $\delta=\frac{\epsilon}{4}$.

The distance $\td (h^m w \sn , \sn )$ is equal to $\td ( w \sn , \sn )$, hence it is in $[2kd - \epsilon_1 , 2kd]$. The distance $\td (h^m w \sn , \sm )$ is at most $\td (h^m w \sn , \sn ) +d = (2k+1)d$. Also $\td (h^m w \sn , \sm )\geq \td (h^m w\sn , w\sn )- \td (w\sn , \sm ) \geq 2 (\td (w\sn , \sn )- \epsilon_1) - (2k-1)d \geq 2(2kd -2 \epsilon_1)- (2k-1)d = (2k+1)d-4\epsilon_1 \geq (2k+1)d-\epsilon$.

We apply (1.a) to $\sa = h^m w\sn$ and $\epsilon$ and obtain that for some $k$, $g^kh^m w\sn$ projects onto $\Fix (h)$ at distance at most $\epsilon$ from $\sn$. Take $w'=g^kh^m w$. We have $\td (\sm , w' \sn )= \td (\sm , h^m w \sn )$, and the latter is in $[(2k+1)d-\epsilon , (2k+1)d]$.

The distance $\td (\sn , w' \sn )$ is at most $\td (\sm , w' \sn )+d$, hence at most $(2k+2)d$. Also $\td (\sn , w' \sn )$ is at least $\td (w'\sn , h^m w\sn )- \td (h^m w\sn  , \sn)\geq 2(\td (h^m w\sn , \sm )-\delta )-2kd \geq 2((2k+1)d - 4\epsilon_1 - \delta )-2kd = (2k+2)d - 8 \epsilon_1-2\delta = (2k+2)d - \epsilon $.

\medskip

\textbf{(2)} \quad Let $\seq\Delta$ be a multicurve fixed by both $g$ and $h$, and let $\bu_1,...\bu_n$ be the subsurfaces determined by $\seq\Delta$. The restrictions of  $g$ and $h$ to each $\bu_i$, $g(i)$ and $h(i)$, do not fix any multicurve. By (1), $g(i)$ and $h(i)$ fix a point $\seq\nu_i$ in $\MM (\bu_i)$. It then follows that $g$ and $h$ fix the point $(\seq\nu_1,...\seq\nu_n)\in \MM (\bu_1)\times \cdots \times \MM (\bu_n) =Q(\seq\Delta)$. \endproof

\medskip

We are now ready to prove Proposition \ref{nredgen}.

\medskip

\noindent \textit{Proof of Proposition \ref{nredgen}.}\quad  According to Lemma \ref{gpisompiece} it suffices to prove the following statement: if $g_1$,...,$g_m$ are pure elements in $\mcgb$, such that $\la g_1,...,g_m \ra$ is composed only of pure elements, its orbits in $\AM$ are bounded and it fixes set-wise a piece $P$ then $g_1,...,g_m$ fix a point in $P$. We prove this statement by induction on $k$. For $k=1$ and $k=2$ it follows from Lemma \ref{2redgen}. Note that if an isometry of a tree-graded space fixes a point $x$ and a piece $P$ then it fixes the projection of $x$ on $P$.

Assume by induction that the statement is true for $k$ elements, and
consider $g_1,...,g_{k+1}$ pure elements in $\mcgb$, such that $\la
g_1,...,g_{k+1} \ra$ is composed only of pure elements, its orbits in
$\AM$ are bounded and it fixes set-wise a piece $P$.
\medskip

\textbf{(1)}\quad Assume that $g_1,...,g_{k+1}$ do not fix a common
multicurve.  By the induction hypothesis $g_1,...,g_{k-2}, g_{k-1},
g_k$ fixes a point $\seq\alpha \in P$, $g_1,...,g_{k-2}, g_{k-1},
g_{k+1}$ fixes a point $\seq\beta \in P$ and $g_1,...,g_{k-2}, g_{k},
g_{k+1}$ fixes a point $\seq\gamma \in P$.  If $\seq\alpha , \seq\beta
, \seq\gamma$ are not pairwise distinct then we are done.  Assume
therefore that $\seq\alpha , \seq\beta , \seq\gamma$ are pairwise
distinct, and let $\seq\mu$ be their unique median point.  Since
pieces are convex in tree-graded spaces, $\seq\mu \in P$.  For $i\in
\{1,..., k-2\}$, $g_i$ fixes each of the points $\seq\alpha ,
\seq\beta , \seq\gamma$, hence it fixes their median point $\seq\mu$.

Assume that $g_{k-1}\seq\mu \neq \seq\mu $. Then $\fu (\seq\mu , g_{k-1}\seq\mu ) \subset \fu (\seq\mu , \seq\alpha ) \cup \fu (\seq\alpha , g_{k-1} \seq\mu) = \fu (\seq\mu , \seq\alpha ) \cup g_{k-1} \fu (\seq\mu , \seq\alpha ) $. Now $\fu (\seq\mu , \seq\alpha ) \subset \fu (\seq\beta , \seq\alpha )$, and since $g_{k-1}$ fixes both $\seq\beta $ and $ \seq\alpha$ it fixes every subsurface $\bu \in \fu (\seq\beta , \seq\alpha )$, by Lemma \ref{fixedpair}. In particular $g_{k-1} \fu (\seq\mu , \seq\alpha ) = \fu (\seq\mu , \seq\alpha )$. Hence $\fu (\seq\mu , g_{k-1}\seq\mu ) \subset \fu (\seq\mu , \seq\alpha )$. A similar argument implies that $\fu (\seq\mu , g_{k-1}\seq\mu ) \subset \fu (\seq\mu , \seq\beta ) $. Take $\bv \in (\seq\mu , g_{k-1}\seq\mu )$. Then $\bv \in \fu (\seq\mu , \seq\alpha )$. In particular $\bv \in \fu (\seq\beta , \seq\alpha )$, hence each $g_i$ with $i=1,2,..., k-1$ fixes $\bv$, since it fixes the points $\seq\beta , \seq\alpha$. Also $\bv \in \fu (\seq\gamma , \seq\alpha )$, whence $g_k \bv = \bv$. Finally, as $\bv \in \fu (\seq\mu , \seq\beta ) \subset \fu (\seq\gamma, \seq\beta )$ it follows that $g_{k+1} \bv =\bv$. This contradicts the hypothesis that $g_1,...,g_{k+1}$ do not fix a common multicurve. Note that $\bv \subsetneq \bs$ by Lemma \ref{2ptsinpiece}, since $\sa , \sb$ are in the same piece and $\bv \in \fu (\sa , \sb)$.

We conclude that $g_{k-1}\seq\mu = \seq\mu $. Similar arguments imply that $g_{k}\seq\mu = \seq\mu $ and $g_{k+1}\seq\mu = \seq\mu $.

\medskip

\textbf{(2)}\quad Assume that $g_1,...,g_{k+1}$ fix a common multicurve.  Let $\seq\Delta$ be this multicurve, and let $\bu_1,...\bu_m$ be the subsurfaces determined by $\seq\Delta$. According to Lemma \ref{red}, $Q(\seq\Delta) \subset U(P)$.

The restrictions of $g_1,...,g_{k+1}$ to each $\bu_i$, $g_1(i),...,g_{k+1}(i)$, do not fix any multicurve. By Lemma \ref{gpisompiece} either $g_1(i),...,g_{k+1}(i)$ fix a point $\seq\nu_i$ in $\MM (\bu_i)$ or they fix set-wise a piece $P_i$  in $\MM (\bu_i)$. In the latter case, by (1) we may conclude that $g_1(i),...,g_{k+1}(i)$ fix a point $\seq\nu_i \in P_i$.

It then follows that $g_1,...,g_{k+1}$ fix the point $(\seq\nu_1,...\seq\nu_n)\in \MM (\bu_1)\times \cdots \times \MM (\bu_m) =Q(\seq\Delta)\subset U(P)$.\hspace*{\fill}$\Box$



\providecommand{\bysame}{\leavevmode\hbox to3em{\hrulefill}\thinspace}
\providecommand{\MR}{\relax\ifhmode\unskip\space\fi MR }
\providecommand{\MRhref}[2]{%
  \href{http://www.ams.org/mathscinet-getitem?mr=#1}{#2}
}
\providecommand{\href}[2]{#2}

\end{document}